\documentclass[reqno]{amsart}
\usepackage{amsmath, amsthm, amssymb, xypic, enumitem}
\usepackage[nobysame, non-sorted-cites, alphabetic]{amsrefs}
\usepackage{stmaryrd} 
\usepackage{hyperref}
\usepackage{tikz}
	\usetikzlibrary{cd}
\usepackage{color}

\setenumerate{label=\textnormal{(\arabic*)}} 

\newcommand{\cat}[1]{\ensuremath{\mathsf{#1}}}

\newcommand{\C}{\mathcal{C}}
\newcommand{\catC}{\mathcal{C}}
\newcommand{\op}{\ensuremath{^\mathrm{op}}}

\newcommand{\str}{\ensuremath{_\mathrm{str}}}

\DeclareMathOperator{\Ab}{\cat{Ab}}

\DeclareMathOperator{\Sub}{Sub}

\DeclareMathOperator{\Set}{\cat{Set}}

\DeclareMathOperator{\Par}{\cat{Par}}
\DeclareMathOperator{\dom}{dom}

\DeclareMathOperator{\Mon}{\cat{Mon}}
\DeclareMathOperator{\Grp}{\cat{Grp}}

\newcommand{\tightbox}[1]{
  \tikz{\node[shape=rectangle,draw,inner sep=0.25pt,line width=0.5pt] (A) {$#1$};}
}
\newcommand{\boxwedge}{\mathbin{\tightbox{\wedge}}}

\DeclareMathOperator{\HMag}{\cat{HMag}}
\DeclareMathOperator{\uHMag}{\cat{uHMag}}

\DeclareMathOperator{\cHMag}{\cat{cHMag}}
\DeclareMathOperator{\cuHMag}{\cat{cuHMag}}

\DeclareMathOperator{\Msc}{\cat{Msc}}
\DeclareMathOperator{\cMsc}{\cat{cMsc}}
\DeclareMathOperator{\Can}{\cat{Can}}
\DeclareMathOperator{\HGrp}{\cat{HGrp}}
\DeclareMathOperator{\HMon}{\cat{HMon}}

\DeclareMathOperator{\wHMag}{\cat{wHMag}}

\DeclareMathOperator{\MRing}{\cat{MRing}}
\DeclareMathOperator{\HRing}{\cat{HRing}}

\DeclareMathOperator{\pGrp}{\cat{pGrp}}
\DeclareMathOperator{\Crowd}{\cat{Crowd}}

\DeclareMathOperator{\Bim}{Bim}

\DeclareMathOperator{\Clos}{\cat{Clos}}
\DeclareMathOperator{\Mat}{\cat{Mat}}
\DeclareMathOperator{\sMat}{\cat{sMat}}
\DeclareMathOperator{\pMat}{\cat{pMat}}
\DeclareMathOperator{\Proj}{\cat{Proj}}

\DeclareMathOperator{\si}{si}

\DeclareMathOperator{\lMod}{\!-\cat{Mod}}

\DeclareMathOperator{\Hom}{Hom}

\DeclareMathOperator{\id}{id}

\DeclareMathOperator{\colim}{colim}

\newcommand{\Z}{\mathbb{Z}}

\newcommand{\F}{\mathbb{F}}
\newcommand{\FF}{\mathbf{F}}

\newcommand{\K}{\mathbf{K}}

\renewcommand{\P}{\mathcal{P}}

\newcommand{\E}{\mathcal{E}}

\newcommand{\two}{\mathbf{2}}

\newcommand{\ostar}{\circledast}

\newcommand{\separate}{\bigskip}

\numberwithin{equation}{section}

\theoremstyle{plain}
\newtheorem{theorem}[equation]{Theorem}
\newtheorem{corollary}[equation]{Corollary}
\newtheorem{lemma}[equation]{Lemma}
\newtheorem{proposition}[equation]{Proposition}

\theoremstyle{definition}
\newtheorem{definition}[equation]{Definition}

\newtheorem{example}[equation]{Example}
 
\newtheorem{remark}[equation]{Remark}

\begin{document}

\title[Categories of hypermagmas and hypergroups]{Categories of hypermagmas, hypergroups, and related hyperstructures}
\author{So Nakamura}
\email{snakamu2@uci.edu}
\author{Manuel L. Reyes}
\email{mreyes57@uci.edu}
\address{Department of Mathematics\\ University of California, Irvine \\
340 Rowland Hall\\ Irvine, CA 96267--3875 \\ USA}

\date{April 15, 2025}
\thanks{This work was supported by NSF grant DMS-2201273}
\keywords{canonical hypergroup, category of hypergroups, closed monoidal structure, pointed simple matroid, category of projective geometries}
\subjclass[2010]{
Primary:
18D15, 
20N20. 
Secondary:
05B35, 
51A05. 
}

\begin{abstract}
In order to diagnose the cause of some defects in the category of canonical hypergroups, we investigate several categories of hyperstructures that generalize hypergroups.
By allowing hyperoperations with possibly empty products, one obtains categories with desirable features such as completeness and cocompleteness, free functors, regularity, and closed monoidal structures. We show by counterexamples that such constructions cannot be carried out within the category of canonical hypergroups. This suggests that (commutative) unital, reversible hypermagmas---which we call \emph{mosaics}---form a worthwhile generalization of (canonical) hypergroups from the categorical perspective. Notably, mosaics contain pointed simple matroids as a subcategory, and projective geometries as a full subcategory. 
\end{abstract}

\maketitle

\setcounter{tocdepth}{2} 
\makeatletter
\def\l@subsection{\@tocline{2}{0pt}{2.5pc}{5pc}{}}
\def\l@subsubsection{\@tocline{2}{0pt}{5pc}{7.5pc}{}} 
\makeatother
\tableofcontents

\section{Introduction}

Hypergroups~\cite{Marty} are a generalization of groups that allow for the product of two elements to be a (nonempty) set of elements, while hyperrings and hyperfields~\cite[\S 3]{Krasner} are ring-like objects whose additive structure is a particular kind of hypergroup. 
Although these structures were defined several decades ago, they have recently seen a flurry of renewed activity as they have been integrated into several different mathematical topics, including:
\begin{itemize}
\item tropical geometry~\cite{Viro:hyperfields, Jun:hyperfields, Lorscheid:hyperfield},
\item number theory and the field with one element~\cite{ConnesConsani:arithmetic, ConnesConsani:adele},
\item algebraic geometry~\cite{Jun:group, Jun:geometry}, and
\item matroid theory~\cite{BakerBowler:matroids, BakerLorscheid:moduli}.
\end{itemize}
(Several classical examples of hypergroups arising from group theory are also recalled in Subsection~\ref{sub:examples} below.)
These developments suggest that many future applications of the methods of hyperstructures are waiting to be revealed. 

In the spirit of aiding such cross-disciplinary relationships and spurring new ones, the goal of this paper is to analyze hypergroups and related structures within a categorical context, which we hope will allow for a clearer study and application of these objects across mathematical contexts.
A handful of papers have been devoted to various categories of hypermodules, such as~\cite{Madanshekaf:exact, Mousavi:free, JunSzczensyTolliver:protoexact, TenorioRoberto:category}, whose underlying additive structure forms a canonical hypergroup. 
In the case of (ordinary) modules over a ring $R$, many desirable properties of the category $R \lMod$ follow from those of the category $\Ab$ of abelian groups, perhaps most notably the property of being an abelian category. Thus we began this project with the goal of understanding the category of canonical hypergroups.

In our own investigations of the categories of (canonical) hypergroups, we were surprised to find that certain desirable properties do not hold for these categories, but \emph{do} hold for categories of more general hyperstructures. At the heart of this misbehavior of hypergroups seems to be the assumption that the product of any two elements returns a \emph{nonempty} set. While it is not initially obvious, this is in fact a consequence (assuming the other axioms) of the requirement that the product be associative (Lemma~\ref{lem:nondegenerate}). 

\begin{figure}
\begin{tikzcd}[row sep=small, column sep=small]
\cat{cMsc} \arrow[r, phantom, "\subseteq"] 
  & \cat{Msc} \arrow[r, phantom, "\subseteq"] 
  & \uHMag \arrow[r, hookrightarrow] 
  & \HMag \\  
\Can \arrow[r, phantom, "\subseteq"] \arrow[u, phantom, sloped, "\subseteq"] 
  & \HGrp \arrow[r, phantom, "\subseteq"] \arrow[u, phantom, sloped, "\subseteq"] 
  & \HMon \arrow[u, phantom, sloped, "\subseteq"]  
  & \\ 
\Ab \arrow[r, phantom, "\subseteq"] \arrow[u, phantom, sloped, "\subseteq"] 
  & \Grp \arrow[r, phantom, "\subseteq"] \arrow[u, phantom, sloped, "\subseteq"] 
  & \Mon \arrow[u, phantom, sloped, "\subseteq"]  
  & 
\end{tikzcd}
\caption{Categories of hyperstructures. A ``$\subseteq$'' denotes a full subcategory, while the ``$\hookrightarrow$'' is a faithful forgetful functor.}
\label{fig:inclusions} 
\end{figure}

For this reason we study categories of sets equipped with hyperoperations---called \emph{hypermagmas}---without any restriction on the subset returned by a hyperoperation. This is in line with the treatment of hypermagmas in~\cite{Dudzik:quantales}. We call the objects that generalize hypergroups in this setting by the name of \emph{mosaics}.\footnote{The term \emph{mosaic} is a play on the word \emph{group} as a collection of objects, but with an artistic twist. Instead of adding to the large number of structures whose names attach a prefix to the word \emph{group} (e.g., hypergroup, semigroup, quasigroup, multigroup, polygroup), we decided to choose a brand new term.} They are assumed to have an identity, unique inverses, and reversibility, but no associativity and with possibly empty products. Then commutative mosaics can be viewed as a nonassociative generalization of canonical hypergroups. We denote these categories as follows:
\begin{itemize}
\item $\HMag$, $\uHMag$, and $\HMon$ are the categories of (unital) hypermagmas and hypermonoids; 
\item $\Msc$ and $\cMsc$ are the categories of mosaics and commutative mosaics;
\item $\HGrp$, $\Can$, and $\HMon$ are the categories of hypergroups, canonical hypergroups, and hypermonoids;
\end{itemize}
while $\Mon$, $\Grp$, and $\Ab$ denote the ususal categories of monoids and (abelian) groups. See Figure~\ref{fig:inclusions} for the relationships between these various categories.

\separate

We now survey some of the results proved below.
Among the first categorical properties one would naturally ask about are certainly completeness and cocompleteness. The categories of (unital) hypermagmas and mosaics behave well in this respect, while the categoroes of hypergroups and canonical hypergroups fail to be either complete or cocomplete.

\begin{theorem}
The categories $\HMag$, $\uHMag$, $\Msc$, and $\cMsc$ are complete and cocomplete, and free objects exist in these categories. 

The categories $\HGrp$ and $\Can$ have small products, kernels, and cokernels. However, there are binary coproducts, equalizers, and coequalizers that do not exist in $\HGrp$ or $\Can$.
\end{theorem}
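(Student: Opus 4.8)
The plan is to treat the two halves of the statement by completely different methods. For $\HMag$ I would observe that a hypermagma is precisely a set equipped with a ternary relation (the graph of the hyperoperation) and that a morphism is a relation-preserving map, so that the forgetful functor $\HMag\to\Set$ is topological: every structured source has an initial lift (the largest hyperoperation making all the maps morphisms, obtained by intersecting preimage relations) and every structured sink has a final lift (the smallest such, obtained by unioning image relations). This at once yields completeness, cocompleteness, and a left adjoint to the forgetful functor (the discrete structure is the empty hyperoperation, so the free hypermagma on $S$ is $S$ with $x\cdot y=\emptyset$ throughout). Unwinding the constructions: $\prod_i M_i$ carries $(x_i)\cdot(y_i)=\prod_i(x_i\cdot y_i)$; the equalizer of $f,g$ is $\{x:f(x)=g(x)\}$ with the hyperoperation cut down to that subset; the coproduct $\coprod_i M_i$ is the disjoint union with $x\cdot y=\emptyset$ whenever $x$ and $y$ lie in different summands---this is exactly the place where allowing empty products is indispensable---and the coequalizer is the evident set quotient carrying the smallest hyperoperation making the quotient map a morphism.

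For $\uHMag$, $\Msc$, and $\cMsc$ I would enlarge the signature by a constant $e$ and, for mosaics, a unary inverse operation, and note that all the defining axioms---the unit law, $(x^{-1})^{-1}=x$, the membership $e\in x\cdot x^{-1}$, uniqueness of inverses, the reversibility implications, and commutativity---are Horn clauses. Hence each of these categories is locally presentable, so in particular complete and cocomplete, and the forgetful functor to $\Set$ (limit-preserving and accessible) has a left adjoint. Limits are computed componentwise, the only point worth checking being that the equalizer of a parallel pair of mosaic morphisms contains the unit and is closed under inverses, which holds because mosaic morphisms preserve both. For concreteness I would also record the free objects explicitly: the free unital hypermagma on $S$ is $S\sqcup\{e\}$ with only the forced unit products, and the free (commutative) mosaic on $S$ is $S\sqcup S^{-1}\sqcup\{e\}$ with $s\cdot s^{-1}=s^{-1}\cdot s=\{e\}$ and all other products empty. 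Colimits in $\uHMag$ are obtained by reflecting those in $\HMag$ (adjoin a fresh unit); colimits in $\Msc$ and $\cMsc$ exist by local presentability, their explicit form requiring one to freely impose the inverse and reversibility axioms on the naive construction.

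For $\HGrp$ and $\Can$ the plan is to show they are closed, inside $\Msc$ and $\cMsc$ respectively, under small products and under coequalizers; together with fullness this yields the same (co)limits in $\HGrp$ and $\Can$. A product of hypermagmas with nonempty products again has nonempty products, and the coequalizer of $f,g\colon M\to N$ formed in $\Msc$ is a surjective image of $N$, so it too has nonempty products. By Lemma~\ref{lem:nondegenerate}, ``nonempty products'' promotes (under the remaining axioms) to associativity, so in both cases the object in question is once more a (canonical) hypergroup. Thus small products and coequalizers exist in $\HGrp$ and $\Can$ and agree with those computed in $\Msc$ and $\cMsc$.

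The negative assertions are the substantive part and call for explicit counterexamples; the dichotomy in the previous paragraph is the right guide. The coproduct in $\Msc$ of two nontrivial hypergroups has empty products between the summands, and the $\Msc$-equalizer of a parallel pair typically acquires empty products, so neither is a hypergroup---and the content is that there is no universal way to repair this within $\HGrp$ or $\Can$. Since an equalizer, when it exists, is a subobject of its domain, for the equalizer counterexample it suffices to produce $f,g\colon M\to N$ for which the $\Msc$-equalizer admits two incomparable maximal hypergroup structures refining its hyperoperation with no common associative refinement, so that no candidate subobject is couniversal; here I would cook up a small finite $M$ whose hyperoperation on $\{x:f(x)=g(x)\}$ is loose enough that, say, $x\cdot y=\{x\}$ and $x\cdot y=\{y\}$ are both valid refinements while their join violates associativity. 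For coproducts I would assume $A\sqcup B$ exists for two carefully chosen small (canonical) hypergroups and derive a contradiction by pitting competing cocones into auxiliary hypergroups against the coproduct injections, forcing the putative coproduct to satisfy incompatible associativity constraints. The hard part throughout is designing the examples so that one can rigorously exclude \emph{every} candidate object, not merely the obvious ones.
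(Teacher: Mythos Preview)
Your approach to the positive assertions is sound and in places cleaner than the paper's. Viewing $\HMag$ as a topological category over $\Set$, and recognizing $\uHMag$, $\Msc$, $\cMsc$ as categories of models of Horn theories (hence locally presentable), dispatches completeness, cocompleteness, and free functors at once; the paper instead builds everything by hand (Propositions~\ref{prop:free and cofree} and~\ref{prop:forgetful creates}, Theorems~\ref{thm:unital cocomplete}, \ref{thm:reversible forgetful}, \ref{thm:free reversible}). Your explicit free objects agree with the paper's. One quibble: ``reflect colimits from $\HMag$ by adjoining a fresh unit'' is not how coequalizers in $\uHMag$ actually arise---the paper needs a nontrivial unitization (Lemma~\ref{lem:unitization})---but since your real argument rests on local presentability this remark is not load-bearing.

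There are, however, two genuine gaps.

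First, your argument that $\HGrp$ and $\Can$ are closed under coequalizers misreads Lemma~\ref{lem:nondegenerate}: that lemma says that for a mosaic, \emph{associativity implies totality}, not the converse. Showing the coequalizer has nonempty products does not establish associativity, and a total mosaic need not be a hypergroup (e.g.\ the lattice construction of Example~\ref{ex:modular} for a non-modular lattice). The paper fills this gap by observing that the coequalizer map is \emph{short} (Theorem~\ref{thm:regular epi and mono}) and that short surjections preserve associativity (Lemma~\ref{lem:short image}(2)).

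Second, and more seriously, the counterexamples are not actually given. Your equalizer strategy---exhibiting incomparable associative refinements of the $\Msc$-equalizer---is not clearly sufficient: one must rule out \emph{every} hypergroup equipped with an equalizing morphism, not merely show that the induced structure on the set-theoretic equalizer has no canonical associative completion. The paper's arguments are concrete and do not follow your template. For coproducts (Proposition~\ref{prop:bad coproduct}) it takes $\Z/2\Z \coprod \Z/2\Z$ and uses the representability $\Sub\str(-) \cong \Can(-,\K)$ (Proposition~\ref{prop:strict classifier}) to count the strict subhypergroups of the putative coproduct and force a contradiction between the images of the two inclusions. For equalizers (Proposition~\ref{prop:bad equalizer}) it takes the pair $(\id,\text{Frobenius})$ on the quotient hyperfield $\F_9/\F_3^\times$: the multiplicative structure forces every element in the image of an equalizer to satisfy $\beta^3=\beta$, while morphisms from $\K$ force specific elements into the equalizer whose (necessarily nonempty) sum violates this. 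Both arguments rely on structure well beyond ``small finite hypergroups with loose products,'' and designing them is, as you acknowledge, the hard part.
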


\begin{proof}
This combines Propositions~\ref{prop:free and cofree} and~\ref{prop:forgetful creates} and Theorem~\ref{thm:unital cocomplete} of subsection~\ref{sub:limits and colimits}; Theorems ~\ref{thm:reversible forgetful} and~\ref{thm:free reversible} of subsection~\ref{sub:convenient}; and Theorem~\ref{thm:hypergroup operations} along with Propositions~\ref{prop:bad coproduct}, \ref{prop:bad equalizer}, and~\ref{prop:bad coequalizer} of subsection~\ref{sub:hypergroups}.
\end{proof}

A more subtle aspect of these categories is the nature of various epimorphisms and monomorphisms. In contrast to the categories of (abelian) groups, there are various types of epimorphisms and monomorphisms that do not coincide in these categories. Recall that in any category, a \emph{regular} epimorphism (resp., monomorphism) is defined to be a coequalizer (resp., equalizer) of a pair of morphisms. Furthermore, in any category with a zero object, a \emph{normal} epimorphism (resp. monomorphism) is defined to be a cokernel (resp., kernel) of a morphism. 
We characterize these morphisms as follows.

\begin{theorem}
In each of the categories $\HMag$, $\uHMag$, $\Msc$, and $\cMsc$:
\begin{itemize}
\item The epimorphisms (resp., monomorphisms) are the surjective (resp., injective) morphisms;
\item The regular epimorphisms (resp., monomorphisms) can be characterized as the short (resp., coshort) morphisms (Definition~\ref{def:short}).
\end{itemize}
Each of these four categories is regular.
Furthermore, in the categories $\uHMag$, $\Msc$, and $\cMsc$, the normal monomorphisms correspond to strict absorptive subhypermagmas (Definitions~\ref{def:strict sub} and~\ref{def:absorptive}), and normal epimorphisms correspond to unitizations (Definition~\ref{def:unitization}). 
\end{theorem}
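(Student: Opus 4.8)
The plan is to deduce the first bullet from the explicit descriptions of (co)limits together with free objects, to obtain the regular characterizations by recognizing equalizers and coequalizers directly, and to read off the normal ones as their pointed specializations.

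\emph{Regular monomorphisms, and coshortness.} First I would observe that in each of the four categories the equalizer of a parallel pair $g,h\colon B\rightrightarrows C$ has underlying set $E=\{b\in B:g(b)=h(b)\}$ and hyperoperation $b_1 b_2:=(b_1\cdot_B b_2)\cap E$ (with the restricted unit and inverses where relevant); that this is an object of the category and is universal is an instance of Proposition~\ref{prop:forgetful creates}, transported into the full subcategories $\Msc,\cMsc$ via Theorem~\ref{thm:reversible forgetful}. Since this is the largest hyperoperation on $E$ making $E\hookrightarrow B$ a morphism, the inclusion is coshort (Definition~\ref{def:short}), so regular monomorphisms are coshort. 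For the converse, given a coshort $m\colon A\to B$, I would form its cokernel pair $B\rightrightarrows P$ (cocompleteness) and note that the forgetful functor preserves the pushout of a monomorphism along itself --- adjoining the needed unit, inverses, and commutativity to the push-forward structure identifies no points --- so $P$ has underlying set $B\sqcup_{m(A)}B$, and the equalizer of $B\rightrightarrows P$ is $m(A)$ with its induced structure, which by coshortness is precisely the transported copy of $A$. Thus $m$ is the equalizer of its cokernel pair, hence a regular monomorphism.

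\emph{Regular epimorphisms, and the main difficulty.} Dually, every coequalizer is a surjection onto a quotient carrying the smallest structure of the appropriate kind making the quotient map a morphism --- the defining property of a short morphism --- and conversely a short $q\colon A\to B$ is the coequalizer of its kernel pair $A\times_B A\rightrightarrows A$: a morphism out of $A$ equalizes the two projections iff it is constant on the fibers of $q$, and such a morphism then factors uniquely and structure-preservingly through $B$ exactly because $B$ bears the smallest compatible structure. I expect the real work of the theorem to sit here, in justifying that coequalizers (hence also the cokernels and quotients used below) in $\uHMag,\Msc,\cMsc$ are indeed computed by this recipe --- that is, that the push-forward hyperoperation, closed up so as to restore the unit, reversibility, and commutativity axioms, is again a coequalizer. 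This is what Theorems~\ref{thm:unital cocomplete} and~\ref{thm:free reversible} provide; by contrast the equalizer side causes no trouble, since the $\cap E$ structure above is automatically a valid object.

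\emph{Monomorphisms and epimorphisms.} Injective morphisms are monic and surjective ones epic, by faithfulness of the forgetful functor. For the converses: if $m$ is monic with $m(a_1)=m(a_2)$, I would precompose $m$ with the two morphisms out of the free object on one generator (Proposition~\ref{prop:free and cofree}, Theorem~\ref{thm:free reversible}) selecting $a_1$ and $a_2$; these agree after $m$, hence coincide, forcing $a_1=a_2$. If $e\colon A\to B$ is epic, I would factor it as $A\twoheadrightarrow\operatorname{im}(e)\hookrightarrow B$ with $\operatorname{im}(e)$ carrying the induced (coshort) structure; by the first step the inclusion is a regular monomorphism, and it is epic since $e$ is, hence an isomorphism, so $e$ is surjective.

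\emph{Normal monomorphisms and epimorphisms.} The one-element unital hypermagma is a zero object of $\uHMag,\Msc,\cMsc$ (simultaneously initial and terminal), whereas $\HMag$ has no zero object --- the reason these statements concern only the first three. The zero map $A\to B$ is constant at $0_B$, so $\ker f=f^{-1}(0_B)$ with the coshort structure; unitality of $B$ gives $f(k_1\cdot_A k_2)\subseteq f(k_1)\cdot_B f(k_2)=\{0_B\}$, whence $k_1\cdot_A k_2\subseteq K$ and the coshort structure on $K$ is simply the restriction from $A$: $K$ is a strict subhypermagma (Definition~\ref{def:strict sub}), and it is absorptive (Definition~\ref{def:absorptive}) because $(k\cdot_A a)\cap K\neq\emptyset$ with $k\in K$ forces $f(a)=0_B$ on applying $f$ (reversibility, and commutativity for $\cMsc$, covering the symmetric version). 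Conversely, a strict absorptive subhypermagma $S\subseteq A$ is the kernel of the quotient $A\to A/S$ collapsing $S$ to the identity --- here absorptivity, together with the unit and reversibility axioms, is exactly what makes $A/S$ a well-defined object with $f^{-1}(0)=S$, once again relying on the quotient recipe flagged above. Finally, the cokernel of $f\colon A\to B$ is the coequalizer of $f$ with the zero map, i.e.\ the quotient of $B$ collapsing (the strict absorptive subhypermagma generated by) $\operatorname{im}(f)$; such morphisms are precisely the unitizations (Definition~\ref{def:unitization}), and conversely any unitization $q$ is the cokernel of the inclusion $q^{-1}(0)\hookrightarrow B$ of a strict absorptive subhypermagma.
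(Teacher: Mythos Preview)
Your argument is correct, but it follows a genuinely different path from the paper's.

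\textbf{Where the approaches diverge.} For \emph{coshort $\Rightarrow$ regular mono}, the paper exhibits an explicit target: it builds a concrete unital hypermagma $D=\{0,a,b\}$ (or the cofree object in the nonunital case) and two morphisms $B\rightrightarrows D$ whose equalizer is $m$. You instead form the cokernel pair and identify $m$ with the equalizer of $B\rightrightarrows B\sqcup_{m(A)}B$. Likewise, for \emph{short $\Rightarrow$ regular epi}, the paper manufactures a specific parallel pair out of the free object $F(U(M))$, while you exhibit $q$ as the coequalizer of its kernel pair. Finally, for \emph{epi $\Rightarrow$ surjective}, the paper gives an ad hoc separating construction in $\uHMag$ (and invokes the right adjoint in $\HMag$), whereas you factor through the coshort image and use ``epic regular mono $\Rightarrow$ iso.'' Your arguments are the standard abstract ones available in any regular category; the paper's are self-contained and avoid appealing to the general theory, at the cost of needing bespoke objects for each case.

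\textbf{Two points to tighten.} First, your claim that the forgetful functor preserves the pushout of a monomorphism along itself is true, but the one-line justification hides real content: in $\uHMag$ the coequalizer is built by first taking the $\HMag$ coequalizer and then unitizing (Theorem~\ref{thm:unital cocomplete}), and you must check that the unitization step is trivial here---which it is, since the class of $e$ in the intermediate quotient is already a singleton and an identity. You gesture at this, but a reader following your argument will want that sentence. Second, in verifying that $\ker f$ is absorptive you invoke reversibility ``covering the symmetric version''; in fact no reversibility is needed in $\uHMag$, since $z\in a\star k$ with $k,z\in K$ gives $e=f(z)\in f(a)\star e=\{f(a)\}$ directly (this is Remark~\ref{rem:absorptive}(1)). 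Your conclusion is correct, but the stated reason is misleading.
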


\begin{proof}
See subsection~\ref{sub:epi and mono} and Corollary~\ref{cor:mosaic morphisms}.
\end{proof}

The relationships between these morphisms is visualized in Figures~\ref{fig:monomorphisms} and~\ref{fig:epimorphisms}.

\begin{figure}[b]
\scriptsize
\begin{tikzcd}
\begin{tabular}{c} normal \\ monomorphism \end{tabular} \ar[r, Rightarrow] \ar[d, Leftrightarrow] & 
	\begin{tabular}{c} regular \\ monomorphism \end{tabular} \ar[r, Rightarrow] \ar[d, Leftrightarrow] & 
	\mbox{monomorphism} \ar[d, Leftrightarrow] \\
\begin{tabular}{c} strict absorptive \\ subhypermagma \end{tabular}  \ar[r, Rightarrow] &
	\begin{tabular}{c} coshort \\ morphism \end{tabular} \ar[r, Rightarrow] & 
	\begin{tabular}{c} injective \\ morphism \end{tabular}
\end{tikzcd}
\caption{Characterizations of various monomorphisms.}
\label{fig:monomorphisms}
\end{figure}

\begin{figure}[h]
\scriptsize
\begin{tikzcd}
\begin{tabular}{c} normal \\ epimorphism \end{tabular} \ar[r, Rightarrow] \ar[d, Leftrightarrow] & 
	\begin{tabular}{c} regular \\ epimorphism \end{tabular} \ar[r, Rightarrow] \ar[d, Leftrightarrow] & 
	\mbox{epimorphism} \ar[d, Leftrightarrow] \\
\mbox{unitization}  \ar[r, Rightarrow] & 
	\begin{tabular}{c} short \\ morphism \end{tabular} \ar[r, Rightarrow] & 
	\begin{tabular}{c} surjective \\ morphism \end{tabular}
\end{tikzcd}
\caption{Characterizations of various epimorphisms.}
\label{fig:epimorphisms}
\end{figure}

\separate

One other crucial aspect of the theory of abelian groups is the formation of tensor products and hom-groups, along with the tensor-hom adjunction.
In category-theoretic terms, the structure $(\Ab, \otimes_\Z, \Z)$ forms a closed monoidal category.
In this paper we are able to define a closed monoidal structure $(\cMsc, \boxtimes, \mathbf{F})$ on the category of commutative mosaics, which can be viewed as a replacement for the tensor product of abelian groups. Somewhat surprisingly, it turns out that the categories of hypermagmas and unital hypermagmas also have closed monoidal structures! These monoidal products are  constructed by a sequence of successive quotient objects
\[
M \boxdot N \twoheadrightarrow M \boxwedge N \twoheadrightarrow M \boxtimes N.
\]
The internal hom for each of these categories is defined in a natural way by endowing the ordinary hom set $\Hom(M,N)$ with a hyperoperation of the form
\begin{equation}\label{eq:natural}
f \star g = \{h \in \Hom(M,N) \mid h(x) \in f(x) \star g(x) \mbox{ for all } x \in M\}.
\end{equation}
In addition, we provide explicit counterexamples to show that one cannot hope to provide a similar ``tensor product'' for canonical hypergroups.

\begin{theorem}
Each of the categories $\HMag$, $\uHMag$, and $\cMsc$ has a closed monoidal structure such that:
\begin{itemize}
\item the monoidal product represents the functor of bimorphisms~\cite{Jagiello:tensor} on the category;
\item the monoidal unit is the free object generated by one element;
\item the internal hom is given by endowing the ordinary hom set with the natural hyperoperation~\eqref{eq:natural}.
\end{itemize}

However,
there exists a canonical hypergroup $H$ such that $\Can(\Z/2\Z, H)$ does not form a hypergroup under the natural hyperoperation~\eqref{eq:natural}. Furthermore, for the Klein four-group $V$, the functor of bimorphisms $\Bim_{\Can}(V,V;-)$ is not representable on $\Can$.
\end{theorem}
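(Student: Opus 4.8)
The plan is to establish the positive assertion and the two negative ones separately.

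\textbf{Closed monoidal structures on $\HMag$, $\uHMag$, $\cMsc$.} For $\cat{C}$ one of these three categories I would first fix the notion of a \emph{bimorphism} $\beta\colon M\times N\to P$ --- a function that is a morphism of $\cat{C}$ in each variable separately (and, in the unital cases, sends $(e_M,n)$ and $(m,e_N)$ to $e_P$) --- and build the representing object as the chain of quotient hypermagmas $M\boxdot N\twoheadrightarrow M\boxwedge N\twoheadrightarrow M\boxtimes N$: here $M\boxdot N$ is the quotient of the free hypermagma on the pointed set $M\times N$ by the relations making the tautological map $M\times N\to M\boxdot N$ a bimorphism of hypermagmas, and the two further quotients impose unitality and then commutativity together with reversibility. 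A check of universal properties shows that $M\boxdot N$, $M\boxwedge N$, $M\boxtimes N$ represent the bimorphism functor on $\HMag$, $\uHMag$, $\cMsc$ respectively. Next I would verify that for $M,N$ in $\cat C$ the set $\Hom(M,N)$ equipped with \eqref{eq:natural} again lies in $\cat C$ --- the requisite identity (the constant map to $e_N$), reversibility, and uniqueness of inverses being inherited pointwise from $N$ --- and that currying a bimorphism in its first variable gives a natural bijection $\Bim_{\cat C}(M,N;P)\cong\cat C(M,(\Hom(N,P),\star))$; combined with the representability this yields the tensor--hom adjunction $\cat C(M\boxtimes N,P)\cong\cat C(M,[N,P])$. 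Finally one verifies the monoidal axioms: the unit is the free object on one generator (written $\FF$ for $\cMsc$), with $\FF\boxtimes M\cong M$ because a bimorphism out of $\FF\times M$ is just a morphism out of $M$; associativity comes from identifying $(M\boxtimes N)\boxtimes P$ and $M\boxtimes(N\boxtimes P)$ with a representing object for trimorphisms; symmetry for $\cMsc$ is immediate from $M\times N\cong N\times M$ and commutativity of $P$. The one delicate point is confirming that the displayed quotients are precisely the right ones, i.e.\ that no further relations are forced.

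\textbf{Failure of the natural hyperoperation on $\Can(\Z/2\Z,H)$.} A morphism $\Z/2\Z\to H$ of canonical hypergroups is determined by the image $h$ of the generator subject only to $0\in h+h$, which by uniqueness of inverses means $h=-h$; thus $\Can(\Z/2\Z,H)$ is naturally the set $I(H)=\{h\in H:h=-h\}$, and under this identification \eqref{eq:natural} becomes $a\star b=(a+b)\cap I(H)$. Since $-(a+b)=(-a)+(-b)$ in any canonical hypergroup, $a+b$ is closed under negation when $a,b\in I(H)$, so it misses $I(H)$ exactly when it is a nonempty union of two-element antipodal sets; in that case $a\star b=\emptyset$ and $(\Can(\Z/2\Z,H),\star)$ violates the nonemptiness axiom for a hypergroup. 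It therefore suffices to exhibit a canonical hypergroup with two involutions whose hypersum contains no involution, and for this I would take the \emph{class hypergroup} $H=\mathrm{Cl}(G)$ of the group $G=\langle a,b\mid a^{8}=b^{2}=1,\ bab^{-1}=a^{3}\rangle$ of order $16$ (the map $a\mapsto a^{3}$ being an order-two automorphism of $\Z/8\Z$ other than inversion): its elements are the conjugacy classes of $G$, with $[x]+[y]=\{[z]:z\in C_xC_y\}$, and this is a canonical hypergroup. Listing the seven conjugacy classes of $G$, one finds that $C_b$ and $C_{ab}$ are real, hence involutions, while $C_bC_{ab}$ consists precisely of the two non-real classes $C_a$ and $C_{a^{-1}}$; so $C_b\star C_{ab}=\emptyset$. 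The work in this part is the bookkeeping of conjugacy classes and class products in $G$.

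\textbf{Non-representability of $\Bim_{\Can}(V,V;-)$.} Since $V$ is a group, a $\Can$-bimorphism $V\times V\to H$ is the same as a $\cMsc$-bimorphism when $H\in\Can$, so by the closed monoidal structure on $\cMsc$ established above one has $\Bim_{\Can}(V,V;H)\cong\cMsc(V\boxtimes_{\cMsc}V,H)$ naturally in $H\in\Can$; thus the functor is representable on $\Can$ if and only if the commutative mosaic $W:=V\boxtimes_{\cMsc}V$ admits a reflection along $\Can\hookrightarrow\cMsc$. Now $W$ is \emph{not} itself a canonical hypergroup: writing $V=\{0,a,b,c\}$ with $c=a+b$, a direct inspection of the construction of $\boxtimes$ shows that the product in $W$ of the images of $(a,a)$ and $(b,b)$ is empty --- no relation defining a bimorphism out of $V\times V$ places anything there --- so $W$ is non-associative by Lemma~\ref{lem:nondegenerate}. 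The substance of the proof is then to show $W$ has no $\Can$-reflection. Replacing a hypothetical representing object $T$ by the subhypergroup generated by the image of the universal bimorphism, $T$ would be an \emph{initial} canonical hypergroup generated by a $3\times3$ array of involutions satisfying the six membership relations that define bimorphisms out of $V\times V$; I would then rule out every such $T$. The obstruction is the empty product already visible in $W$: any canonical hypergroup carrying such an array must provide an element of $M_{aa}+M_{bb}$, and chasing the membership relations shows there is no coherent ``universal'' choice --- concretely, one exhibits two bimorphisms $V\times V\to H_1$ and $V\times V\to H_2$ whose induced relations cannot both hold inside a single canonical hypergroup, an incompatibility of the same nature that, in a more symmetric guise, prevents $\Can$ from having all binary coproducts (cf.\ Proposition~\ref{prop:bad coproduct}). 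Making this last step precise --- ruling out \emph{all} candidate representing objects, not merely the evident ones --- is the principal obstacle of the theorem, and is where the phenomenon isolated in Lemma~\ref{lem:nondegenerate} (nonemptiness forcing associativity) does the decisive work.
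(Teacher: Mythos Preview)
Your treatment of the closed monoidal structures is essentially the paper's approach, and is fine as a sketch.

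For the second part you take a genuinely different route. The paper exhibits $H$ as the quotient hyperring $R/G$ of the infinite commutative ring $R=\Z[x,y,z\mid 2x=y(z+1)=0,\ z^2=1]$ by $G=\{1,z\}$, and checks directly that $[x]+[y]=\{[x+y],[xz+y]\}$ contains no $[w]$ with $0\in[w]+[w]$. Your class-hypergroup example over the group $\langle a,b\mid a^8=b^2=1,\ bab^{-1}=a^3\rangle$ of order $16$ is correct: the seven conjugacy classes are as you describe, $C_b$ and $C_{ab}$ are real, and $C_b\cdot C_{ab}=\{a,a^3,a^5,a^7\}$ is the union of the two non-real classes $\{a,a^3\}$ and $\{a^5,a^7\}$, so $C_b\star C_{ab}=\varnothing$. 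This is more elementary than the paper's example (finite, and the verification is a short class-product computation), while the paper's example has the virtue of arising from a Krasner-type quotient hyperring.

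For the third part there is a genuine gap. Your reduction to ``$W=V\boxtimes_{\cMsc}V$ admits no $\Can$-reflection'' is correct, and your observation that $a\boxtimes a+b\boxtimes b=\varnothing$ in $W$ is the right signal. But the step you flag as ``the principal obstacle'' is exactly where the content lies, and you have not supplied it: saying one will ``exhibit two bimorphisms whose induced relations cannot both hold'' is not a proof, and it is not obvious a priori which bimorphisms to use or why their requirements conflict. The paper's argument is quite specific. Assuming a representing object $V\otimes V$ exists, it uses Proposition~\ref{prop:strict classifier} to identify $\Sub\str(V\otimes V)\cong\Can(V\otimes V,\K)$ with the set of $\{0,1\}$-valued $3\times 3$ matrices satisfying the bimorphism constraints; inspecting these, one sees that $a_1\otimes a_1\neq a_2\otimes a_3$ (they are separated by the matrix $\left(\begin{smallmatrix}0&0&0\\0&1&1\\0&1&1\end{smallmatrix}\right)$) and that the zero subhypergroup is the unique strict subhypergroup containing none of the $a_i\otimes a_j$. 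Then the bimorphism $V\times V\to V$ given by the Latin-square matrix $\left(\begin{smallmatrix}a_1&a_2&a_3\\a_2&a_3&a_1\\a_3&a_1&a_2\end{smallmatrix}\right)$ has trivial kernel, yet sends $a_1\otimes a_1$ and $a_2\otimes a_3$ to the same element, forcing $a_1\otimes a_1-a_2\otimes a_3=\{0\}$ and hence $a_1\otimes a_1=a_2\otimes a_3$, a contradiction. The two test objects $\K$ and $V$ and the specific matrices are the missing idea in your proposal.
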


\begin{proof}
See subsection~\ref{sub:closed monoidal} along with Theorems~\ref{thm:mosaic monoidal}, \ref{thm:empty sum}, and~\ref{thm:Klein four}.
\end{proof}

The classical tensor-hom adjunction becomes particularly useful when viewing abelian groups as the underlying additive structure of rings. For instance, the structure of a ring $R$ can alternatively be encoded in terms of a monoid object in $\Ab$. We show that the closed monoidal structure on $\cMsc$ allows us to do something similar for multirings (including hyperrings). 

\begin{theorem}[Theorem~\ref{thm:multirings}]
The categories of multirings and hyperrings have fully faithful embeddings into the category of monoid objects of $\cMsc$.
\end{theorem}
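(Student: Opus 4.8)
The plan is to construct an explicit functor $F$ from multirings to monoid objects of $\cMsc$ and then check it is fully faithful by hand. Recall that a multiring $R = (R,+,\cdot,0,1)$ consists of a commutative canonical hypergroup $(R,+,0)$, a (commutative) monoid $(R,\cdot,1)$ with $0$ absorbing, and the distributive law $a\cdot(b+b')\subseteq a\cdot b + a\cdot b'$; a hyperring is the special case in which this inclusion is an equality. Since $\Can$ is a full subcategory of $\cMsc$ (Figure~\ref{fig:inclusions}), the additive part $R_+ := (R,+,0)$ is an object of $\cMsc$, and $F$ will send $R$ to the triple $(R_+,\mu,\eta)$, where $\mu$ and $\eta$ are extracted from $\cdot$ and $1$ via the representing property of the monoidal product established in subsection~\ref{sub:closed monoidal} (Theorem~\ref{thm:mosaic monoidal}).

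First I would verify that the multiplication $m\colon R\times R\to R$, $m(a,b)=a\cdot b$, is a bimorphism of commutative mosaics. For each fixed $a$ the (single-valued) map $b\mapsto a\cdot b$ sends $0$ to $a\cdot 0 = 0$ and satisfies $a\cdot(b+b')\subseteq a\cdot b + a\cdot b'$ by distributivity, so it is a morphism $R_+\to R_+$; symmetrically in the first variable (using distributivity there, or commutativity of $\cdot$). Because $R_+\boxtimes R_+$ represents $\Bim_{\cMsc}(R_+,R_+;-)$, the bimorphism $m$ corresponds to a unique morphism $\mu\colon R_+\boxtimes R_+\to R_+$, and the element $1$ corresponds, via the representing property of the monoidal unit $\FF$, to a morphism $\eta\colon\FF\to R_+$. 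I would then transport the monoid-object axioms across these correspondences: the two composites $R_+^{\boxtimes 3}\to R_+$ built from $\mu$ and the associator correspond to the trimorphisms $(a,b,c)\mapsto(a\cdot b)\cdot c$ and $(a,b,c)\mapsto a\cdot(b\cdot c)$, which agree by associativity of $\cdot$, while the two unit composites correspond to $a\mapsto 1\cdot a$ and $a\mapsto a\cdot 1$, both equal to the identity. This makes $F(R)=(R_+,\mu,\eta)$ a (commutative) monoid object of $\cMsc$.

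Next I would address functoriality and full faithfulness. A multiring homomorphism $\varphi\colon R\to S$ is additive with $\varphi(0)=0$, hence a morphism $R_+\to S_+$ in $\cMsc$, and it is multiplicative and unital; unwinding the bijection $\cMsc(R_+\boxtimes R_+,S_+)\cong\Bim_{\cMsc}(R_+,R_+;S_+)$ together with the representing property of $\FF$ shows this is exactly the statement that $\varphi$ intertwines $\mu$ and $\eta$, so $F(\varphi)$ is a morphism of monoid objects and $F$ is a functor. Conversely, a morphism $\psi$ of monoid objects $F(R)\to F(S)$ is in particular a morphism $R_+\to S_+$ in $\cMsc$, i.e.\ an additive map with $\psi(0)=0$, and its compatibility with $\mu$ and $\eta$ unwinds to $\psi(a\cdot b)=\psi(a)\cdot\psi(b)$ and $\psi(1)=1$, so $\psi$ is a multiring homomorphism; thus $F$ is full and faithful, and it is clearly injective on objects since $(R,+,0)$, $\cdot$, and $1$ are recovered from $R_+$, $\mu$, and $\eta$. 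Finally, since a hyperring is a multiring satisfying the distributive law with equality and carries no extra restriction on its morphisms, $\HRing$ is a full subcategory of $\MRing$, so the restriction of $F$ is a fully faithful embedding of $\HRing$ as well.

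The step I expect to be the main obstacle is the transport of the associativity and unit axioms across the representing property. One must know not merely that $M\boxtimes N$ corepresents bimorphisms, but that $R_+^{\boxtimes 3}$ corepresents trimorphisms compatibly with the associator, and that the unit isomorphism $\FF\boxtimes R_+\cong R_+$ acts on multimorphisms as expected, so that the monoid-object diagrams for $(\mu,\eta)$ genuinely reduce to the identities $(ab)c = a(bc)$ and $1\cdot a = a = a\cdot 1$ in $R$. This is a formal consequence of the symmetric monoidal closed structure of subsection~\ref{sub:closed monoidal}, but it should be stated precisely (or quoted) rather than merely asserted; the remaining work — distributivity producing bimorphisms, and matching multiring homomorphisms with monoid-object morphisms — is then routine.
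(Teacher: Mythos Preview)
Your proposal is correct and follows essentially the same approach as the paper: verify that multiplication is a bimorphism via subdistributivity and the absorbing property of $0$, invoke the universal property of $\boxtimes$ (Theorem~\ref{thm:mosaic bimorphisms}) to obtain $\mu$, use freeness of $\FF$ on one generator for $\eta$, and then check that multiring homomorphisms correspond exactly to monoid-object morphisms. If anything, you are more explicit than the paper about the step you flag as the ``main obstacle''---the paper simply asserts that associativity of $\ast_R$ ``readily implies'' associativity of $m_R$ without spelling out the trimorphism compatibility---so your caution there is well placed but not a gap.
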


The above suggests that monoid objects in $\cMsc$ are an interesting and potentially useful generalization of multirings and hyperrings, whose theory we hope to develop in the future.

Finally, we wish to show that there is a rich supply of commutative mosaics aside from canonical hypergroups. This is accomplished via a functor from pointed simple matroids to commutative mosaics. The construction is inspired by a similar hyperoperation defined on irreducible projective geometries, as in~\cite[Section~3]{ConnesConsani:adele}. In fact, this functor allows us to extend this operation to projective geometries~\cite{FaureFrolicher} that are not necessarily irreducible.

\begin{theorem}[Theorems~\ref{thm:matroid mosaics} and~\ref{thm:projective embedding}]
There is a faithful functor $\sMat_\bullet \to \cMsc$ from the category of simple pointed matroids to the category of commutative mosaics. This functor induces a fully faithful embedding from the category of projective geometries to the category of commutative mosaics.
\end{theorem}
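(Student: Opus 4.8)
The plan is to construct the functor $F \colon \sMat_\bullet \to \cMsc$ explicitly and then to show that its restriction to projective geometries is full and injective on objects. Given a simple pointed matroid $M$, its underlying set $\underline M$ carries a distinguished basepoint $0$; one equips $\underline M$ with the hyperoperation $\star$ for which $0$ is a unit ($0 \star x = \{x\}$), $x \star x = \{0,x\}$ when $x \ne 0$, and, for distinct $x,y \ne 0$, $x \star y$ is the set of points other than $x$ and $y$ on the matroid line that they span. This last set may be empty --- exactly the phenomenon that forces the target to be $\cMsc$ and not $\Can$. Commutativity and the unit axiom are clear; since $0 \notin x \star y$ whenever $x,y \ne 0$, every element is its own and only inverse; and the reversibility axiom $z \in x \star y \Rightarrow x \in z \star y$ is trivial unless $x \ne y$ are nonzero and $z$ is a further point of the line they span, in which case simplicity of $M$ (with $z \ne y$) gives $z \notin \mathrm{cl}_M\{y\}$, so the matroid exchange property yields $x \in \mathrm{cl}_M\{z,y\}$ and hence $x \in z \star y$. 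Thus $(\underline M, \star, 0) \in \cMsc$.

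On morphisms, one takes $F$ to send a morphism of $\sMat_\bullet$ to its underlying basepoint-preserving set map, after checking that the structure-preservation condition defining $\sMat_\bullet$ is equivalent to the requirement $f(x \star y) \subseteq f(x) \star f(y)$ that makes $f$ a homomorphism of the associated commutative mosaics. Then $F$ is a functor, and it is faithful because morphisms on both sides are determined by their underlying set maps. Note that $F$ need not be injective on objects in general --- for instance $U_{3,4}$ and $U_{4,4}$ have the same lines, hence the same associated mosaic --- so one should not expect fullness on all of $\sMat_\bullet$.

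Next I would exhibit $\Proj$ as a full subcategory of $\sMat_\bullet$: a projective geometry in the sense of~\cite{FaureFrolicher} is a simple matroid whose flats are its projective subspaces (the exchange property following from the Pasch axiom), with every line having at least three points, and the Faure--Fr\"olicher morphisms between such geometries are precisely the $\sMat_\bullet$-morphisms. Consequently $F$ restricts to a faithful functor $\Proj \to \cMsc$, and this restriction is injective on objects: from $F(P) = (\underline P, \star, 0)$ one recovers the point set as $\underline P \setminus \{0\}$ and the line through $x \ne y$ as $\{x,y\} \cup (x \star y)$, and a projective geometry is determined by its points and lines. What remains is to prove fullness.

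For fullness, let $P, Q$ be projective geometries and let $g \colon F(P) \to F(Q)$ be a homomorphism of commutative mosaics. First, $g^{-1}(0)$ is a subspace of $P$: if $a,b \in g^{-1}(0)$ then $g(a \star b) \subseteq g(a) \star g(b) = 0 \star 0 = \{0\}$, so the whole line through $a$ and $b$ lies in $g^{-1}(0)$, and the subsets of a projective geometry closed under taking lines are exactly the subspaces. Second, $g$ respects collinearity away from $g^{-1}(0)$: if $z$ lies on the line through distinct $a,b$ and $g(z) \ne 0$, then $z \in a \star b$ forces $g(z) \in g(a) \star g(b)$, which places $g(z)$ on the line through $g(a)$ and $g(b)$. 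One then checks that the degenerate configurations --- images coinciding, or landing in $g^{-1}(0)$ --- are exactly those permitted by the Faure--Fr\"olicher morphism axioms, so that $g$ is a morphism of projective geometries with $F$ of it equal to $g$; hence $F|_{\Proj}$ is full. I expect this last step to be the principal obstacle: it requires reconciling the deliberately weak inequality $g(x \star y) \subseteq g(x) \star g(y)$ with the full strength of the Faure--Fr\"olicher axioms for a morphism (in particular the Pasch-type condition, rather than a naive preservation of collinearity), while carefully tracking the basepoint conventions --- especially the choice $x \star x = \{0,x\}$ --- together with the behaviour on points sent to $0$ and on collapsed lines.
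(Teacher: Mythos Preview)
Your construction of the mosaic and the reversibility argument match the paper's. Two slips to flag: your phrase ``the structure-preservation condition defining $\sMat_\bullet$ is \emph{equivalent} to the requirement $f(x \star y) \subseteq f(x) \star f(y)$'' cannot be right---if it were, the functor would already be full on all of $\sMat_\bullet$, and your own $U_{3,4}$/$U_{4,4}$ remark furnishes a mosaic isomorphism that is not a strong map. What you need (and presumably meant) is only the forward implication, and this is not a one-liner: the paper spends a page of case analysis (Theorem~\ref{thm:matroid mosaics}) handling the situations $f(x)=0$, $f(x)=f(y)\neq 0$, and $f(x)\neq f(y)$ separately, using reversibility to rule out $f(z)\in\{f(x),f(y)\}$ in the last case. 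Also, the paper's projective geometries allow two-point lines, so drop ``at least three points.''

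For fullness on projective geometries your route genuinely differs from the paper's. You propose to verify the three Faure--Fr\"olicher morphism axioms directly from $g(x\star y)\subseteq g(x)\star g(y)$; this works after the degeneracy bookkeeping you anticipate, since each axiom reduces to a statement about a single line. The paper instead proves a structural lemma (Proposition~\ref{prop:projective closure}): in a pointed projective matroid, the closed subsets coincide with the strict submosaics, i.e.\ $C(S)=\langle S\rangle$ for all $S$. Fullness is then one line---preimages of strict submosaics under a mosaic morphism are strict submosaics, hence preimages of flats are flats, so $g$ is a strong map. The paper's approach isolates exactly where the projective law is used (the inductive step $C(S)\subseteq\langle S\rangle$ needs that closures decompose through rank-two flats) and yields a reusable characterization of flats; your approach avoids that lemma but trades it for axiom-by-axiom casework and must still invoke, at least implicitly, that line-closed subsets of a projective geometry are flats.
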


In light of the above results, it seems reasonable to consider commutative mosaics as a ``convenient category of canonical hypergroups,'' borrowing well known terminology from~\cite{Steenrod}.
Our hope is that this framework will provide a flexible context in which to study representations of hyperrings, multirings, and related structures. 
In future work, we will examine the properties of generalized hyperrings whose underlying additive structure forms a commutative mosaic, as well as their appropriate categories of modules. This will allow us to examine the category of representations of an ordinary ring over a base hyperfield. 
We are particularly interested in representations of rings over the Krasner hyperfield, which we expect could provide a novel ``base-free'' representation theory for general rings.

The results above suggest an intriguing question. The category of mosaics gains its advantages by omission of the associative axiom in the objects. Is there a useful subcategory of ``weakly associative'' objects in $\Msc$ (or $\cMsc$) that retains good categorical properties while keeping some measure of algebraic constraint on the structures?

\separate

\textbf{Acknowledgments.} We wish to thank Matt Satriano for a number of stimulating discussions that led to the present work, Oliver Lorscheid for informing us about crowds and the reference~\cite{LorscheidThas}, and Siddhant Jajodia and the anonymous referee for several corrections and suggestions. We also thank Zo\"{e} Reyes for suggesting the term \emph{mosaic}.

\section{A brief overview of hyperstructures}

Let $M$ be a set, and let $\P(M)$ denote its power set. A \emph{hyperoperation} on $M$ is a function
\[
\star \colon M \times M \to \P(M).
\]
A hyperoperation extends to a binary operation on the power set
\[
\star \colon \P(M) \times \P(M) \to \P(M)
\]
in the obvious way: given $X,Y \subseteq M$ we set
\[
X \star Y = \bigcup_{(x,y) \in X \times Y} x \star y.
\]
Notice that by this definition, for all $X \subseteq M$ we have
\begin{equation}\label{eq:times empty}
X \star \varnothing = \varnothing = \varnothing \star X.
\end{equation}
In case $x \star y = \{z\}$ is a singleton, it is customary to instead write the shorthand equation
$x \star y = z$. 
We also extend this shorthand to operations on subsets $Y \subseteq M$, by setting $x \star Y = \{x\} \star Y$. 
In general, we will often use an element interchangeably with the singleton containing that element throughout this paper.

The following terminology is inspired by~\cite[D\'{e}finition~1.2]{Mittas} and~\cite[Definition~2.17]{Dudzik:quantales}.

\begin{definition}\label{def:hypermagma}
A \emph{hypermagma} $(M,\star)$ is a set endowed with a hyperoperation. A function $f \colon M \to N$ between hypermagmas is said to be:
\begin{itemize}
\item a \emph{(colax) morphism} if it satisfies $f(x \star y) \subseteq f(x) \star f(y)$ for all $x,y \in M$;
\item a \emph{lax morphism} if it satisfies $f(x \star y) \supseteq f(x) \star f(y)$ for all $x,y \in M$;
\item a \emph{strict morphism} if it is both a lax and colax morphism, satisfying $f(x \star y) = f(x) \star f(y)$ for all $x,y \in M$.
\end{itemize}
We let $\HMag$ denote the category of hypermagmas with colax morphisms, which we simply refer to as \emph{morphisms} of hypermagmas.
\end{definition}

On occasions where we wish to emphasize the hypermagma $M$ to which a hyperoperation $\star$ belongs, we will use the notation $\star_M = \star$.

Unlike much of the literature on hypergroups, we wish to allow for the possibility that $x \star y = \varnothing$ in certain cases. For this reason, it is convenient to introduce a relation $\odot \subseteq M \times M$ by setting
\[
x \odot y \iff x \star y \neq \varnothing.
\]
We will say that $\star$ is a \emph{total} hyperopration if $M \neq \varnothing$ and $\odot = M \times M$ (i.e., $x \star y \neq \varnothing$ for all $x,y \in M$).
Note that if $\star$ is total, and if we let $\P(M)^*$ denote the set of nonempty subsets of $G$, then $\star$ corestricts to an operation 
\[
\star \colon M \times M \to \P(M)^*,
\]
which induces a binary operation $\star \colon \P(M)^* \times \P(M)^* \to \P(M)^*$.

\separate

Let $M$ be a hypermagma.  We say that an element $e \in M$ is a  \emph{weak identity} if, for all $x \in M$, 
\[
x \in e \star x \, \cap \, x \star e.
\]
Note that a weak identity need not be unique. For example, endow any nonempty set $X$ with the hyperproduct is given by $x \star y = X$ for all $x,y \in X$. (This will be shown in Proposition~\ref{prop:free and cofree} to be a cofree hypermagma.) Then every element of $X$ is a weak identity, so that uniqueness fails when $X$ has at least two elements. 

If the stronger condition
\[
x = e \star x = x \star e
\]
holds for all $x \in M$, then we will say that $e$ is an \emph{identity} for $G$. In this case, a familiar argument shows that an identity is unique if it exists. (In the literature on hypergroups, what we call a weak identity is typically called an \emph{identity}, while what we call an identity is referred to as a \emph{scalar} identity. See~\cite{CorsiniLeoreanu:hyperstructure} for more details.) 
We refer to a hypermagma with identity as a \emph{unital} hypermagma, and we let $\uHMag$ denote the category of unital hypermagmas with unit-preserving morphisms.

\separate

Let $M$ be a hypermagma with identity $e$, and fix $x \in M$. An element $x' \in M$ is an \emph{inverse} for $x$ if $e \in x \star x' \, \cap \, x' \star x$. Note that inverses of this sort can be far from unique. (For instance, given any set $X$ endow $M = X \sqcup \{e\}$ with a hyperoperation such that $e$ is an identity and $x \star y = M$ for all $x,y \in X$; then any two nonidentity elements are inverse to one another.) Thus we will reserve the notation $x^{-1}$ for the situations in which $x$ has a unique inverse. One can easily verify that the identity is its own unique inverse, so that we are justified in writing $e^{-1} = e$.

\separate

Beyond simply demanding the uniqueness of inverses, there is a more principled assumption that is typically employed in the study of hypergroups. Roughly speaking, the idea is to require that inverses allow us to ``solve for'' elements that appear in products (or sums).
A hypermagma $M$ is said to be \emph{reversible} if there is a function $(-)^{-1} \colon M \to M$ such that
\[
x \in y \star z \implies y \in x \star z^{-1} \mbox{ and } z \in y^{-1} \star x
\]
for all $x, y, z \in M$. (Although it was not endowed with a name, a version of this property was already assumed in the seminal work of Marty~\cite[p.~46]{Marty}.)
In the case where $M$ has an identity $e$, 
reversibility implies the existence of unique inverses: for $x \in x \star e$ implies $e \in x \star x^{-1}$ and $e \in x^{-1} \star x$, while if $x'$ is any other inverse for $x$ then $e \in x x'$ implies that $x' \in x^{-1} \star e$ so that $x' = x^{-1}$. It follows that $(-)^{-1}$ is an involution, and one can check that the reversibility implication can be strengthened to the following equivalence for all $x,y,z \in M$:
\begin{align*}
x \in y \star z  &\iff y \in x \star z^{-1} \\
 &\iff z \in y^{-1} \star x.
\end{align*}

If $M$ and $N$ are reversible hypermagmas with identity and $f \in \uHMag(M,N)$, then by uniqueness of inverses we may deduce that $f(m^{-1}) = f(m)^{-1}$ for all $m \in M$. Thus the reversible structure is automatically preserved by unital morphisms. 
Reversible hypergroups with identity form a particularly important category for our considerations. For convenience, we introduce the following terminology.

\begin{definition}
A \emph{mosaic} $(M, \star, e)$ is a hypermagma with identity that is also reversible. We let $\Msc$ denote the full subcategory of $\uHMag$ whose objects are mosaics, and we let $\cMsc$ denote the full subcategory of commutative mosaics. 
\end{definition}

Our commutative mosaics (including canonical hypergroups) will typically be written additively $(M,+,0)$, with the additive inverse of $y \in M$ written as $-y$ and $x + (-y) = x-y$.
In the commutative case, the reversibility axiom takes the simplified form 
\[
x \in y+z \implies z \in x - y.
\]

Given any hypermagma $M$, one can define its \emph{opposite} hypermagma in the familiar way: as a set $M\op = \{m\op \mid m \in M\}$ with hypermultiplication
\[
x\op \star y\op = (y \star x)\op := \{z\op \mid z \in y \star x\}.
\]
As in the case of groups, the inversion of a mosaic gives an isomorphism of $M$ with its opposite, or an anti-isomorphism, thanks to the next lemma. For a subset $S \subseteq M$ of a mosaic, we will use the notation 
\[
S^{-1} = \{s^{-1} \mid s \in S\}.
\]

\begin{lemma}\label{lem:inversion}
If $M$ is a mosaic and $x,y \in M$, then
\[
(x \star y)^{-1} = y^{-1} \star x^{-1}.
\]
\end{lemma}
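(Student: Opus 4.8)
The plan is to prove the identity $(x \star y)^{-1} = y^{-1} \star x^{-1}$ by a double-inclusion argument, using the reversibility equivalences established just before the statement together with the fact that $(-)^{-1}$ is an involution. Recall that for a mosaic $M$ we have the chain of equivalences $a \in b \star c \iff b \in a \star c^{-1} \iff c \in b^{-1} \star a$ for all $a,b,c \in M$, and that $(z^{-1})^{-1} = z$.

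First I would unwind both sides in terms of elements. An element $w$ lies in $(x \star y)^{-1}$ if and only if $w = z^{-1}$ for some $z \in x \star y$, equivalently (since inversion is an involution) if and only if $w^{-1} \in x \star y$. On the other hand, $w \in y^{-1} \star x^{-1}$. So the goal reduces to showing $w^{-1} \in x \star y \iff w \in y^{-1} \star x^{-1}$ for all $w \in M$. Writing $a = w^{-1}$ (so $w = a^{-1}$), this is the claim $a \in x \star y \iff a^{-1} \in y^{-1} \star x^{-1}$.

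Next I would prove this last equivalence by chaining the reversibility equivalences. Starting from $a \in x \star y$: by the first reversibility equivalence (with $b = x$, $c = y$) this is equivalent to $x \in a \star y^{-1}$; applying the opposite-style equivalence again, or more directly the second reversibility equivalence to $x \in a \star y^{-1}$, gives $y^{-1} \in x^{-1} \star a$ — wait, I should be careful about which variable plays which role. Concretely: $a \in x \star y$ $\iff$ $y \in x^{-1} \star a$ (second equivalence with the roles $b \mapsto x$); then $y \in x^{-1} \star a \iff y^{-1} \in ?$. It is cleanest to instead directly verify: $a \in x \star y \iff a^{-1} \in (x\star y)^{-1}$ is trivial, so really the substance is to show $(x \star y)^{-1} \subseteq y^{-1}\star x^{-1}$ and the reverse; and by the symmetry $x \leftrightarrow y^{-1}$, $y \leftrightarrow x^{-1}$ (using involutivity) one inclusion follows formally from the other. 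So I would prove just one inclusion, say $(x\star y)^{-1} \subseteq y^{-1} \star x^{-1}$, take $z \in x \star y$, and show $z^{-1} \in y^{-1} \star x^{-1}$ by applying reversibility twice: from $z \in x \star y$ get $y \in x^{-1} \star z$, hence (reversibility again on this membership) $z^{-1} \in y^{-1}\star x^{-1}$ after one more application — then invoke the symmetry to get equality.

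I do not expect any serious obstacle here: this is a purely formal manipulation of the reversibility equivalences, entirely analogous to the group-theoretic identity $(xy)^{-1} = y^{-1}x^{-1}$. The only point requiring mild care is bookkeeping which of the two reversibility equivalences to apply and in which order, and making sure the involutivity of $(-)^{-1}$ is used to close the loop; stating the symmetric inclusion as a consequence of the first (via the substitution $x \mapsto y^{-1}$, $y \mapsto x^{-1}$) keeps the argument short.
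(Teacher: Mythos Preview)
Your proposal is correct and follows essentially the same approach as the paper: reduce the set equality to the elementwise equivalence $z^{-1} \in x \star y \iff z \in y^{-1} \star x^{-1}$ and then chain the reversibility equivalences (three applications, not one, but your sketch makes clear you see this). The paper's writeup is simply the cleaned-up version of your idea, presenting the three reversibility steps as a single biconditional chain $z^{-1} \in x \star y \iff y \in x^{-1} \star z^{-1} \iff x^{-1} \in y \star z \iff z \in y^{-1} \star x^{-1}$ rather than proving one inclusion and invoking symmetry.
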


\begin{proof}
Let $z \in M$. Then $z \in (x \star y)^{-1}$ if and only if $z^{-1} \in x \star y$. By reversibility,
\[ 
z^{-1} \in x \star y \iff y \in x^{-1} \star z^{-1} \iff x^{-1} \in y \star z \iff z \in y^{-1} \star x^{-1}. \qedhere
\]
\end{proof}

\separate

Finally we arrive at hypergroups, the original motivation for this study. We will work with the following definition of hypergroups. Although it is a bit stronger than some definitions given in the literature, it is quite close to Marty's original definition~\cite{Marty}, with the only difference being the requirement of an identity. (This is what Marty called a ``completely regular'' hypergroup). This is also the convention used in~\cite{Zieschang}. 

A hypermagma $(M,\star)$ is said to be \emph{associative} if it satisfies 
\[
x \star (y \star z) = (x \star y) \star z
\]
for all $x,y,z \in M$.

\begin{definition}
A \emph{hypermonoid} is an associative hypermagma with identity.
A \emph{hypergroup}  $(G, \star, e)$ is a hypermonoid that is total and reversible.
A \emph{canonical hypergroup} $(G, +, 0)$ is a commutative reversible hypergroup. 
We let $\HMon$ denote the full subcategory of $\uHMag$ whose objects are the hypermonoids, while $\HGrp$ and $\Can$ respectively denote the full subcategories of hypergroups and canonical hypergroups.
\end{definition}

The standard definition for hypergroups requires that the hyperoperation be total. In practice, we have found it easy to accidentally overlook this condition. The following fact alleviates this problem. A version of this was remarked in~\cite[p.~168]{Mittas}. 

\begin{lemma}\label{lem:nondegenerate}
Let $(G,\star)$ be a hypermagma. Suppose that $G$ is associative and that there exists $z \in G$ such that:
\begin{itemize}
\item $x \star z \neq \varnothing$ for all $x \in G$;
\item for all $y \in G$, there exists $y' \in G$ such that $z \in y \star y'$.
\end{itemize}
Then $\star$ is total. 
In particular, a mosaic is a hypergroup if and only if it is associative.
\end{lemma}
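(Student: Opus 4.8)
The plan is to show that under the stated hypotheses, $x \star y \neq \varnothing$ for all $x, y \in G$, and separately that $G \neq \varnothing$ (which is immediate, since the element $z$ exists). The element $z$ plays the role of a ``pivot'': the first bullet says every element has nonempty product with $z$ on the right, and the second bullet says $z$ itself is always attainable as a product $y \star y'$, which is a surrogate for the existence of right inverses relative to $z$.

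First I would fix arbitrary $x, y \in G$ and aim to produce an element of $x \star y$. The idea is to insert $z$ between them using associativity. By the second hypothesis applied to $y$, there is some $y'$ with $z \in y \star y'$; by the first hypothesis, $y' \star z \neq \varnothing$, but more usefully I want to route through the left side. Here is the cleaner route: pick $w$ with $z \in w \star w'$ for a suitable $w$ chosen so that $w$ relates to $x$. Actually the argument I would run is: by the first hypothesis $x \star z \neq \varnothing$, so choose $t \in x \star z$; by the second hypothesis applied to some element, realize $z$ as a product and then use associativity $x \star (z \star \text{something}) = (x \star z) \star \text{something}$ to push the nonemptiness of $x \star z$ into a statement about $x \star y$. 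Concretely, apply the second hypothesis to an element $u$ with $z \in u \star y$ — wait, that is not given directly. So instead I would work on the right: the second hypothesis gives $y'$ with $z \in y \star y'$, hence $z \in y \star y'$ means, applying $(-)^{-1}$-free reasoning purely via associativity, that $y \star (y' \star w) \ni$ things whenever $z \star w$ is understood. Since we only have the two bullets (not full reversibility), the correct manipulation is: $x \star z \neq \varnothing$ and $z \in y \star y'$ give $\varnothing \neq x \star z \subseteq x \star (y \star y') = (x \star y) \star y'$, so $(x \star y) \star y' \neq \varnothing$, which forces $x \star y \neq \varnothing$ (because if $x \star y = \varnothing$ then $(x \star y) \star y' = \varnothing$ by~\eqref{eq:times empty}). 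This is the whole argument, and it uses associativity exactly once.

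Then for the final sentence: a mosaic $M$ has an identity $e$ and is reversible, so it has unique two-sided inverses. If $M$ is associative, I would apply the criterion just proved with $z = e$: the first bullet holds since $x \star e = x \ni x$ is nonempty (indeed $e$ is a scalar identity), and the second bullet holds since for any $y$ we have $e \in y \star y^{-1}$ by reversibility. Hence $\star$ is total, and being also associative and reversible with identity, $M$ is a hypergroup. Conversely a hypergroup is associative by definition, so the equivalence holds.

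The main obstacle — really the only subtle point — is getting the associativity insertion pointed in the right direction: one must multiply the nonempty set $x \star z$ on the appropriate side and re-associate so that $x \star y$ appears as a left factor, then invoke~\eqref{eq:times empty} to conclude $x \star y \neq \varnothing$ from $(x \star y) \star y' \neq \varnothing$. No reversibility or inverses are needed for the general hypermagma statement; they enter only to verify the two bullets in the mosaic application.
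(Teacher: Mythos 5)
Your argument is correct and is essentially identical to the paper's proof: both use $z \in y \star y'$ to get $\varnothing \neq x \star z \subseteq x \star (y \star y') = (x \star y) \star y'$ and then invoke~\eqref{eq:times empty} to conclude $x \star y \neq \varnothing$, and both handle the mosaic case by taking $z = e$ and $y' = y^{-1}$. The exploratory detours in your middle paragraph could be trimmed, but the final argument is exactly right.
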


\begin{proof}
Note that $G \neq \varnothing$ because $z \in G$.
Let $x,y \in G$, and fix $y' \in G$ such that $z \in y \star y'$. Note that
\[
\varnothing \neq x \star z \subseteq x \star (y \star y') = (x \star y) \star y'.
\]
So $(x \star y) \star y'$ is nonempty. It follows from~\eqref{eq:times empty}
that $x \star y \neq \varnothing$, proving that $\star$ is total.

Now suppose that $(G, \star, e)$ is an associative mosaic, with $\star$ not necessarily total. Then $z = e$ and the elements $y' = y^{-1}$ for each $y \in G$ satisfy the hypotheses above. It follows that $\star$ is total and $G$ is a hypergroup. 
\end{proof}

Another situation in which this lemma may be useful is when $G$ is a hypermagma with a \emph{zero} (or \emph{absorbing}) element $0 \in G$, in the sense 
that $x \cdot 0 = 0 = 0 \cdot x$ for all $x \in G$. For if we set $z = 0$ and all $y' = 0$, then the
hypotheses of Lemma~\ref{lem:nondegenerate} are satisfied.

\separate

We remark that if $\C$ is any of the categories of hyperstructures pictured in Figure~\ref{fig:inclusions}, then we let $\C\str$ denote the wide subcategory of $\C$ whose morphisms are the \emph{strict} morphisms as in Definition~\ref{def:hypermagma}.

\subsection{Substructures of hyperstructures and associated morphisms}

The theory of subobjects in these categories is subtle and will be revisited in Subsection~\ref{sub:epi and mono} below. For the moment, we introduce the following definitions in an effort to distinguish between different types of injective morphisms.

\begin{definition}\label{def:strict sub}
Let $M$ be a hypermagma. A \emph{strict subhypermagma} of $M$ is a subset $L \subseteq M$ such that, for any $x,y \in L$, we have $x \star y \subseteq L$. If $M$ is a unital hypermagma (resp., mosaic, hypergroup), we define a \emph{strict unital subhypermagma} (resp., \emph{strict submosaic} or \emph{strict subhypergroup}) to be a strict subhypermagma that also contains the identity (resp., and closed under taking inverses).
\end{definition}

The reasoning behind this terminology is that strict subhypermagmas (mosaics, etc.) $L$ of $M$ exactly correspond to injective strict morphisms $i \colon L \to M$ of hypermagmas (mosaics, etc.).

On the other hand, every subset of a hypermagma induces an injective morphism in the following way.

\begin{definition}
Let $M$ be a hypermagma and let $L \subseteq M$ be a subset. Define a hyperoperation on $L$ by
\[
x \star_L y = (x \star_M y) \cap L.
\]
We refer to $(L, \star_L)$ as a \emph{weak subhypermagma} of $L$. Given such a weak subhypermagma, we furthermore define the following:
\begin{itemize}
\item If $M$ is unital, then $L$ is a \emph{weak unital subhypermagma} if $e_M \in L$. 
\item If $M$ is a mosaic, then $L$ is a \emph{weak submosaic} if $e_M \in L$ and $L$ is closed under formation of inverses.
\end{itemize}
\end{definition}

It is clear that if $L$ is a weak subhypermagma (resp., unital subhypermagma, submosaic) of $M$, then the inclusion function $i \colon L \to M$ is a morphism in $\HMag$ (resp., $\uHMag$, $\Msc$).
Note that if $M$ is an associative hypermagma, then any strict subhypermagma of $M$ is automatically associative. However, there is no reason for a weak subhypermagma of $M$ to remain associative.

\separate

We take this opportunity to define two properties of morphisms that will be of importance throughout this paper, one of which corresponds to weak subhypermagmas.

\begin{definition}\label{def:short}
A morphism of hypermagmas $p \colon M \to N$ is \emph{short} if it is surjective and satisfies
\begin{equation}\label{eq:short}
x \star y = p(p^{-1}(x) \star p^{-1}(y))
\end{equation}
for all $x,y \in N$.  
Dually, a morphism $i \colon L \to M$ is \emph{coshort} if it is injective and satisfies
\begin{equation}\label{eq:coshort}
i^{-1}(i(x) \star i(y)) = x \star y
\end{equation}
for all $x,y \in L$.
\end{definition}

It is straightforward to check that a coshort morphism $i \colon L \to M$ is the same as an isomorphism of $L$ onto the weak subhypermagma $i(L)$ of $M$. This makes it easy to verify that coshort morphisms are closed under composition. Notice that the containment ``$\supseteq$'' of~\eqref{eq:coshort} holds for any morphism $i$, so that the condition is equivalent to 
\[
i^{-1}(i(x) \star i(y)) \subseteq x \star y.
\]

Dually, short morphisms are closed under composition; indeed, if $L \overset{p}{\twoheadrightarrow} M \overset{q}{\twoheadrightarrow} N$ is a sequence of short morphisms, then for $x,y \in N$ we have
\begin{align*}
x \star y &= q(q^{-1}(x) \star q^{-1}(y)) \\
&= q(p(p^{-1}(q^{-1}(x))\star p^{-1}(q^{-1}(y)))) \\
&= (q \circ p) ((q \circ p)^{-1}(x) \star (q \circ p)^{-1}(y)).
\end{align*}
Once again the containment ``$\supseteq$'' of~\eqref{eq:short} always holds, so that the condition is equivalent to
\[
x \star y \subseteq p(p^{-1}(x) \star p^{-1}(y)).
\]

The following shows that the conditions of injectivity and surjectivity in the definition above are redundant in the unital case.

\begin{lemma}
Let $L$, $M$, and $N$ be unital hypermagmas, and let $i \in \uHMag(L,M)$ and $p \in \uHMag(M,N)$. Then $p$ is short if and only if it satisfies~\eqref{eq:short}, and $i$ is coshort if and only if it satisfies~\eqref{eq:coshort}.
\end{lemma}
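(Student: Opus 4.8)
The only substantive content is that, for unital morphisms, condition~\eqref{eq:short} already forces surjectivity and condition~\eqref{eq:coshort} already forces injectivity; the reverse implications hold by the very definitions of short and coshort. So the plan is to establish these two implications, each by a short argument that leverages the strict (scalar) identity together with the annihilation rule~\eqref{eq:times empty}.

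For the short case, I would take an arbitrary $y \in N$ and apply~\eqref{eq:short} to the pair $(e_N, y)$. Since $e_N$ is an identity, $e_N \star y = \{y\}$, so the right-hand side $p(p^{-1}(e_N) \star p^{-1}(y))$ is nonempty. Because $p$ is unital, $e_M \in p^{-1}(e_N)$, so $p^{-1}(e_N)$ is nonempty; hence $p^{-1}(e_N) \star p^{-1}(y)$ is a union indexed by $p^{-1}(e_N) \times p^{-1}(y)$, which by~\eqref{eq:times empty} would be empty if $p^{-1}(y)$ were empty. Therefore $p^{-1}(y) \neq \varnothing$, i.e.\ $y$ lies in the image of $p$, and $p$ is surjective.

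For the coshort case, I would argue dually: given $x, y \in L$ with $i(x) = i(y)$, apply~\eqref{eq:coshort} to the pair $(x, e_L)$. Unitality gives $i(e_L) = e_M$, so $i(x) \star i(e_L) = i(x) \star e_M = \{i(x)\}$, and hence $i^{-1}\bigl(\{i(x)\}\bigr) = x \star e_L = \{x\}$. Since $i(y) = i(x)$ places $y$ in $i^{-1}(\{i(x)\})$, we get $y = x$, so $i$ is injective.

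I do not expect any real obstacle here. The one point worth flagging is that the argument relies essentially on the identity being a \emph{scalar} identity, so that $e \star x$ is the singleton $\{x\}$ rather than merely a set containing $x$; the same reasoning would fail for a mere weak identity, which is consistent with the fact that the redundancy of injectivity/surjectivity is asserted only in the unital setting.
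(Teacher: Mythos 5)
Your proposal is correct and follows essentially the same route as the paper: both directions reduce to showing that~\eqref{eq:short} forces surjectivity and~\eqref{eq:coshort} forces injectivity by evaluating the respective condition on a pair involving the identity element, exactly as the paper does. The only cosmetic difference is that for surjectivity the paper reads off a preimage directly from $\{x\} = p(p^{-1}(x) \star p^{-1}(e_N))$, whereas you argue by contradiction via~\eqref{eq:times empty}; both are valid and equivalent in substance.
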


\begin{proof}
Suppose that $p$ is unital and satisfies~\eqref{eq:short}. To see that $p$ is surjective, let $x \in N$. Then
\[
x = x \star e_N = p(p^{-1}(x) \star p^{-1}(e_N))) 
\]
shows that $x$ is in the image of $p$. So $p$ is surjective and thus is short.

Now suppose that $i$ is unital and satisfies~\eqref{eq:coshort}. Fix $x \in L$. Then
\[
i^{-1}(i(x)) = i^{-1}(i(x) \star e_M) \subseteq x \star e_M = x
\]
shows that there is a unique element mapping to $i(x)$ under~$i$. So $i$ is injective and therefore short.

The converse implications are trivial.
\end{proof}

We will revisit short and coshort morphisms in Subsections~\ref{sub:epi and mono} and~\ref{sub:convenient}, which will reveal their role as regular epimorphisms and monomorphisms. 
In the meantime, we record the following result describing properties that pass to ``quotient'' hypermagmas via short or strict surjective morphisms.

\begin{lemma}\label{lem:short image}
Let $p \colon M \to N$ be a surjective morphism of hypermagmas. Then:
\begin{enumerate}
\item If $p$ is strict, then $p$ is short.
\item If $p$ is short and $M$ is commutative, then $N$ is commutative.
\item If $p$ is strict and $M$ is associative, then $N$ is associative.
\end{enumerate}
\end{lemma}

\begin{proof}
(1) Let $x,y \in N$. Because $p$ is surjective and strict, we may compute
\[
x \star y = p(p^{-1}(x)) \star p(p^{-1}(y)) = p(p^{-1}(x) \star p^{-1}(y)).
\]
So $p$ is short.

(2) If $M$ is commutative and $x,y \in N$, then shortness gives
\[
x \star y = p(p^{-1}(x) \star p^{-1}(y)) = p(p^{-1}(y) \star p^{-1}(x)) = y \star x.
\]
Thus $N$ is commutative.

(3) Assume $M$ is associative and $p$ is strict.
 Given $x,y,z \in N$, we claim that
\[
(x \star y) \star z = p(p^{-1}(x) \star p^{-1}(y) \star p^{-1}(z)) = x \star (y \star z).
\]
Using the fact that $p$ is strict and surjective, we have
\[
p(p^{-1}(x) \star p^{-1}(y) \star p^{-1}(z)) = p(p^{-1}(x) \star p^{-1}(y)) \star p(p^{-1}(z)) = (x \star y) \star z.
\]
This verifies the first equality above, and the second follows by a symmetric argument. Thus $M$ is associative.
\end{proof}

\subsection{Some examples of hypergroups and hypermagmas}\label{sub:examples}

Before passing to the level of categories of hyperstructures, we pause to mention a few sources of examples of hypermagmas and hypergroups. 

We begin with some classical examples arising from ordinary group theory. Two of these are obtained as certain quotients of a group in the following way. Let $G$ be a group, and let $\sim$ be an equivalence relation on $G$ such that the equivalence classes satisfy the following properties for all $a,b \in G$ and the identity $e \in G$:
\begin{enumerate}[label=(\roman*)]
\item The setwise product satisfies $[e][a] = [a]=[a][e]$,
\item Setwise inversion satisfies $[a]^{-1} = [a^{-1}]$.
\item The setwise product $[a][b]$ of any two equivalence classes is a union of equivalence classes.
\end{enumerate}
Then we define a hyperoperation on $G/{\sim}$ by
\begin{equation}\label{eq:group quotient}
[a] \star [b] = \{[c] \mid [c] \subseteq [a][b]\} = \{[c] \mid c \in [a][b]\}.
\end{equation}
This is easily verified from conditions (i)--(iii) to be reversible hypermagma with identity $[e]$ and $[a]^{-1} = [a^{-1}]$. It is also associative since
\[
[a] \star ([b] \star [c]) = \{[w] \mid w \in [a][b][c]\} = ([a] \star [b]) \star [c].
\]
Thus $G/{\sim}$ is a hypergroup, and the observation that $ab \in [a][b]$ for all $a,b \in G$ ensures that canonical surjection $G \twoheadrightarrow G/{\sim}$ sending $a \mapsto [a]$ is a morphism of hypergroups.

\begin{example}
Let $G$ be a group and let $K$ be a subgroup of $G$. The equivalence relation $\sim$  induced by the double coset partition of $G$ satisfies conditions~(i)--(iii). The resulting quotient $G/\!/K$ is the double coset space, and the hyperoperation~\eqref{eq:group quotient} in this case takes the form
\begin{align*}
KaK \star KbK &= \{KcK \mid KcK \subseteq (KaK)(KbK)\} \\
&= \{KcK \mid c \in KaKbK\}.
\end{align*}
This makes $G/\!/K$ into a hypergroup (see also~\cite[p.~720]{DresherOre}) with identity $KeK = K$ and inverses $(KaK)^{-1} = Ka^{-1}K$, which coincides with the ordinary quotient group in case $K$ is a normal subgroup.
\end{example}

\begin{example}
For any group $G$, the equivalence relation induced by the action of conjugation satisfies conditions (i)--(iii) above. The corresponding quotient $\overline{G}$ is the set of conjugacy classes in $G$; let $C_g$ denote the conjugacy class of $g \in G$. In this case the hyeroperation~\eqref{eq:group quotient} is
\[
C_a \star C_b = \{C_g \mid C_g \subseteq C_a \cdot C_b\} = \{C_g \mid g \in C_a C_b\}.
\]
Using $xC_a = xC_a x^{-1} x = C_a x$ for all $a,x \in G$, we have $C_a C_b = C_b C_a$ and therefore $C_a \star C_b = C_b \star C_a$ for all $a,b \in G$. It follows as in~\cite{Dietzman} that $\overline{G}$ is a canonical hypergroup.
\end{example}

\begin{example}\label{ex:group action}
Let $A$ be an abelian group and let $G$ be a group acting by automorphisms on $A$. The orbit equivalence relation on $A$ satisfies conditions~(i)--(iii), so that the quotient $A/G$ becomes a canonical hypergroup under the hyperoperation
\[
Ga + Gb = \{Gc \mid c \in Ga + Gb\}.
\]
This construction forms the underlying additive hypergroup of quotient hyperrings and hyperfields as introduced by Krasner in~\cite{Krasner:quotient}.
\end{example}

\begin{example}
Now suppose that $G$ is a finite group, and let $\widehat{G} = \{\chi_i\}$ denote the set of irreducible complex characters of $G$. Given $\phi, \psi \in \widehat{G}$, their product is a character and can be expressed as a nonnegative sum of irreducible characters. We obtain a hyperoperation by setting
\[
\phi \star \psi = \left\{\chi_i \mid n_i \neq 0 \mbox{ in } \phi \cdot \psi = \sum n_i \chi_i \right\}.
\]
As discussed in~\cite[Section~2]{Roth:character}, this gives $\widehat{G}$ the structure of a canonical hypergroup, whose identity is the character of the trivial representation and with inverses given by complex conjugates.
\end{example}

Many of the hypergroups and hypermagmas of interest for us do not originate from group theory. Below are a few other examples of hypergroups.

\begin{example}\label{ex:Krasner}
The \emph{Krasner hyperfield} $\K = \{0,1\}$ is the hyperring with the uniquely determined multiplication 
(where $0$ is the multiplicative zero element and $1$ is the multiplicative identity) and with addition
given by letting $0$ be the additive identity and setting
\[
1+1 = \{0,1\}.
\]
This is easily seen to be a hyperfield~\cite{ConnesConsani:adele}. In Section~\ref{sec:mosaics} we will be interested in the additive hypergroup structure of $\K$. 
\end{example}

\begin{example}\label{ex:modular}
Let $(L,\wedge)$ be a meet-semilattice. We may define a commutative hyperoperation on $L$ as follows: for $a,b \in L$, 
\[
a \owedge b = \{c \in L \mid a \wedge c = b \wedge c = a \wedge b \}.
\]
Under the inversion given by $a^{-1} = a$ for all $a \in L$, it is easily verified that this is a reversible hypermagma. Thus if $L$ has a top element $1$, then $(L, \owedge, 1)$ is a commutative mosaic.

Now assume that $(L,\vee,\wedge,0,1)$ is in fact a bounded lattice. Nakano showed~\cite[Theorem~1]{Nakano} that $(L, \owedge, 1)$ forms a hypergroup if and only if $L$ is modular (see also~\cite[Section~4.3]{CorsiniLeoreanu:hyperstructure}). (Recall that a lattice $(L, \vee, \wedge, 0, 1)$ is \emph{modular} if it satsfies the modular law: for all $a,b,c \in L$,
if $a \leq c$ then $a \vee (b \wedge c) = (a \vee b) \wedge c$.)
\end{example}

The following example illustrates that hyperoperations are flexible enough to include partial binary operations and their resulting partial structures.

\begin{example}
We recall from~\cite[Section~3.2]{HeunenReyes:active} that a \emph{partial group} is a set $G$ equipped with a reflexive, symmetric binary relation $\odot \subseteq G \times G$ of \emph{commeasurability}, with a globally defined unary operation $(-)^{-1} \colon G \to G$, a partially defined operation $\ast \colon \odot \to G$, and an element $e \in G$ such that any set $S \subseteq G$ of pairwise commeasurable elements is contained in a pariwise commeasurable set $A \subseteq G$ containing $e$ on which $\star$ restricts to give the structure of an abelian group. The category $\pGrp$ has partial groups for objects, and its morphisms are those functions that preserve pairwise commeasurability, identity, and products of commeasurable elements.

If $G$ is such a partial group, then we may extend the domain of its product to view it as a hyperoperation $\ostar \colon G \times G \to \P(G)$ by defining
\[
a \ostar b = \begin{cases}
a \ast b, & (a,b) \in \odot, \\
\varnothing, & (a,b) \notin \odot.
\end{cases}
\]
It is straightforward to verify that $(G, \ostar, e)$ becomes a commutative mosaic, and that every morphism of partial groups is a morphism of mosaics. In this way we obtain a faithful functor
\[
\begin{tikzcd}
\pGrp \ar[r, hookrightarrow] & \cMsc,
\end{tikzcd}
\]
and one can check that this is also full. In this way partial groups are isomorphic to a full subcategory of commutative mosaics.
\end{example}

Finally, we note a connection between mosaics, hypermagmas, and certain relational structures called crowds, defined by Lorscheid and Thas in~\cite[Section~5]{LorscheidThas}. 
A \emph{crowd} $G$ is a set along with a subset $R \subseteq G \times G \times G$ (called the \emph{crowd law}) and an \emph{identity} element $1 \in G$ satisfying the axioms:
\begin{itemize}
\item[(C1)] $(a,1,1) \in R$ if and only if $a = 1$,
\item[(C2)] $(a,b,1) \in R$ implies $(b,a,1) \in R$,
\item[(C3)] $(a,b,c) \in R$ implies $(c,a,b) \in R$.
\end{itemize}
With this, one defines the \emph{inverse} of an element $a \in G$ to be the subset
\[
a^{-1} = \{b \in G \mid (a,b,1) \in R\}.
\]
Axiom~(C2) guarantees that $b \in a^{-1}$ if and only if $a \in b^{-1}$. 
One can also define the \emph{product} of two elements $a,b \in G$ to be the subset
\begin{align*}
a \star_G b &= \{c \in G \mid (a,b,d) \in R \mbox{ for some } d \in c^{-1}\} \\
&= \{c \in G \mid (a,b,d) \in R \mbox{ and } c \in d^{-1}\}.
\end{align*}
A morphism of crowds is a function that preserves identity elements and crowd laws. We let $\Crowd$ denote the category of crowds and their morphisms. Then it is clear that the assignment $(G,R,1) \mapsto (G,\star_G)$ defines a functor
\[
\Crowd \to \HMag
\]
which acts identically on morphisms and therefore is faithful. 

It was shown in~\cite[Proposition~5.7]{LorscheidThas} that there is a fully faithful embedding of groups into crowds. This can be extended to an embedding of mosaics into crowds; we thank Oliver Lorscheid for first informing us about this functor.
If $M$ is a mosaic with elements $a,b,c \in M$, then reversibility and Lemma~\ref{lem:inversion} give
\[
1 \in (ab)c \iff c^{-1} \in ab \iff a^{-1} \in bc \iff 1 \in a(bc).
\]
Thus we can define a crowd law $R_M \subseteq M^3$ by
\begin{align*}
R_M &= \{(a,b,c) \in M^3 \mid 1 \in (ab)c\} \\
&= \{(a,b,c) \in M^3 \mid 1 \in a(bc)\}.
\end{align*}
The axioms (C1)--(C3) hold by uniqueness of inverses and the above observations. The assignment $(M,\star,1) \mapsto (M, R_M, 1)$ yields a functor
\[
\Msc \to \Crowd
\]
that again acts identically on morphisms and therefore is faithful. We will show in Proposition~\ref{prop:crowds} below that this is in fact fully faithful.

To better understand the relationship between these categories, we recall two further axioms defined in~\cite{LorscheidThas} that a crowd may satisfy:
\begin{itemize}
\item[(C4)] $a^{-1} \neq \varnothing$ for all $a \in G$,
\item[(C5)] If $(a,b,c) \in R$ and $a' \in a^{-1}$, $b' \in b^{-1}$, and $c' \in c^{-1}$, then $(c', b', a') \in R$.
\end{itemize}
The crowd defined from a mosaic has singleton inverses and thus satisfies~(C4). 
We let $\Crowd_4$ denote the full subcategory of $\Crowd$ consisting of those crowds satisfying axiom~(C4). We also let $\wHMag$ denote the category whose objects $(M,1)$ are weakly unital hypermagmas $M$ with a choice of weak unit $1 \in M$, and whose morphisms are those hypermagma morphisms that preserve the fixed weak units. This leads to the following.

\begin{proposition}\label{prop:crowds}
The assignments $(M,\star,1) \mapsto (M,R_M,1)$ and $(G,R,1) \mapsto (G, \star_G,1)$ define fully faithful functors
\[
\Msc \to \Crowd_4 \to \wHMag,
\]
where the first functor gives an isomorphism onto the full subcategory of crowds whose inverse sets are singletons and which satisfy axiom~(C5).
\end{proposition}

\begin{proof}
As explained above, we have faithful functors $\Msc \to \Crowd \to \HMag$ that act identically on morphisms, whose composite is the forgetful functor from mosaics to hypermagmas. We have also seen that the image of the first functor lies in $\Crowd_4$. To see that the second functor induces $\Crowd_4 \to \wHMag$ as described, first note that for any crowd $(G,R,1)$ and any element $a \in G$, we may use axioms~(C2) and~(C3) to find
\begin{align*}
1 \star_G a &= \{c \in G \mid (1,a,d) \in R \mbox{ and } c \in d^{-1}\} \\
&= \{c \in G \mid (a,d,1) \in R \mbox{ and } c \in d^{-1}\} \\ 
&= \{c \in G \mid d \in a^{-1} \mbox{ and } c \in d^{-1}\} \\
&= \bigcup_{d \in a^{-1}} d^{-1} =: (a^{-1})^{-1}.
\end{align*}
If $G$ satisfies~(C4), then there exists some $d \in a^{-1}$, so that $a \in d^{-1} \subseteq 1 \star_G a$, and a similar argument gives $a \in a \star_G 1$. This means that $1$ is a weak identity of $(G, \star_G)$, providing the functor $\Crowd_4 \to \wHMag$ as claimed.

The composite of the two functors in the statement is the forgetful functor $\Msc \to \wHMag$ which is fully faithful. Thus, if we prove that the second functor $\Crowd_4 \to \wHMag$ is full then it must also be the case that the first functor is full. 
To this end, suppose that $G$ and $H$ are crowds satisfying~(C4) and that $f \colon G \to H$ is a function which yields a morphism of hypermagmas $(G, \star_G,1_G) \to (H, \star_H,1_H)$ that preserves weak units. Using axiom~(C4) in the first implication below, we have
\begin{align*}
(a,b,c) \in R_G &\implies c' \in a \star_G b \mbox{ and } 1_G \in c' \star_G c \mbox{ for some } c' \in c^{-1} \\
&\implies f(c') \in f(a) \star_H f(b) \mbox{ and } 1_H = f(1_G) \in f(c') \star_H f(c) \\
&\implies (f(a), f(b), f(c)) \in R_H.
\end{align*}
So $f$ is a morphism of crowds, proving fullness. 

Finally we prove the claim about the image of mosaics within crowds. If $(M,\star)$ is a mosaic then the crowd $(M, R_M)$ clearly has singleton inverse sets, and reversibility implies that it satisfies (C5). Conversely, suppose that $(G,R,1)$ is a crowd with singleton inverses that satisfies~(C5). Then $(G, \star_G,1)$ is a weakly unital hypermagma. The uniqueness of crowd inverses gives
\[
1 \star_G a = (a^{-1})^{-1} = \{a\}
\]
and similarly $a \star_G 1 = \{a\}$ for all $a \in G$. Thus $1$ is a strict unit, and the crowd inverse also gives an inverse operation for the hypermagma structure. It remains to verify reversibility. Given $x,y,z \in G$, we use uniqueness of crowd inverses along with axioms~(C2) and~(C5) to obtain
\begin{align*}
x \in y \star_G z &\implies (y,z,x^{-1}) \in R \\
&\implies (x,z^{-1},y^{-1}) = ((x^{-1})^{-1}, z^{-1}, y^{-1}) \in R \\
&\implies y = (y^{-1})^{-1} \in x \star_G z^{-1},
\end{align*}
and a similar argument gives $z \in y^{-1} \star_G x$. So $(G, \star_G)$ is a mosaic as desired.
\end{proof}

\section{Categories of hypermagmas}

We now turn our attention to fundamental properties of the various categories of hyperstructures.

\subsection{Forgetful functors and (co)limits}
\label{sub:limits and colimits}

Although hyperoperations can be viewed as a type of multivalued function, morphisms of hypermagmas are ordinary functions between sets. Thus we have a forgetful functor from $\HMag$ to $\Set$, which will give us a good handle on formation of limits and colimits.

\begin{proposition}\label{prop:free and cofree}
The forgetful functor 
\[
U \colon \HMag \to \Set
\]
has both a left adjoint $F$ and a right adjoint $D$.
\end{proposition}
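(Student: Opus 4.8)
The plan is to construct both adjoints explicitly and check the universal properties. For the left adjoint $F$, given a set $X$, the natural guess is to equip $X$ with the \emph{empty} hyperoperation: set $x \star y = \varnothing$ for all $x, y \in X$. I would denote this hypermagma $FX$. The point is that a morphism of hypermagmas $g \colon FX \to M$ is a function $g \colon X \to UM$ satisfying $g(x \star y) \subseteq g(x) \star g(y)$; but $x \star y = \varnothing$ in $FX$, so $g(x\star y) = g(\varnothing) = \varnothing$, and the colax condition is vacuous. Hence $\HMag(FX, M) = \Set(X, UM)$ naturally in both variables, which is exactly the adjunction $F \dashv U$. The unit $X \to UFX$ is the identity function.

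For the right adjoint $D$, the dual guess is the \emph{codiscrete} or \emph{indiscrete} hypermagma: given a set $Y$, let $DY$ have underlying set $Y$ with $y \star y' = Y$ for all $y, y' \in Y$ (this is the ``cofree'' hypermagma already alluded to in the discussion of weak identities, and also mentioned in the statement of this proposition). I would check that for a hypermagma $M$, a function $f \colon UM \to Y$ is automatically a morphism $M \to DY$: indeed $f(x \star_M y) \subseteq Y = f(x) \star_{DY} f(y)$ holds trivially since every subset of $Y$ is contained in $Y$. Thus $\HMag(M, DY) = \Set(UM, Y)$ naturally, giving $U \dashv D$. The counit $UDY \to Y$ is the identity. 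One minor point to handle: if $Y = \varnothing$ then $DY$ is the empty hypermagma, and the bijection still holds since both hom-sets are empty unless $UM = \varnothing$ as well.

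The main thing to be careful about is \emph{naturality} of these bijections in both $X$ (resp.\ $Y$) and $M$ — that is, verifying they assemble into natural isomorphisms of functors $\Set(-, U-) \cong \HMag(F-, -)$ and $\Set(U-, -) \cong \HMag(-, D-)$ — but since in each case the bijection is literally the identity on underlying functions, naturality is immediate from functoriality of $U$ and of the hom-functors. I expect no serious obstacle here; the only thing requiring a moment's thought is confirming that $F$ and $D$ are genuinely functorial on morphisms (a function $X \to X'$ induces a morphism $FX \to FX'$ because both have empty hyperoperations; a function $Y \to Y'$ induces a morphism $DY \to DY'$ because the target hyperoperation is everything), which is routine. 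Thus the real content is simply identifying the correct two hypermagma structures — the empty one and the indiscrete one — after which the adjunctions fall out formally.
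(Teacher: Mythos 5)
Your construction is exactly the paper's: $F$ equips a set with the empty hyperoperation and $D$ with the total one ($y \star y' = Y$), and the adjunction bijections are identities on underlying functions. The proposal is correct and follows the same route, with somewhat more explicit attention to naturality and functoriality than the paper's proof.
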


\begin{proof}
For any set $X$, define hyperoperations $\star_F$ and $\star_D$ by setting 
\begin{align*}
x\star_Fy &= \varnothing \mbox{ and} \\
x\star_Dy &= X
\end{align*}
for all $x,y \in X$. We obtain functors $F,D \colon \Set \to \HMag$ that act on objects setting $F(X) = (X, \star_F)$ and $D(X) = (X, \star_D)$ and act identically on morphisms. Note that 
\[
UF=UD=1_{\Set}. 
\]
For any hypermagma $G$, it follows directly from the triviality of $\star_F$ and $\star_D$ that $U$ induces natural bijections
\begin{align*}
\HMag(F(X), G) &\cong \Set(UF(X), U(G)) = \Set(X, U(G)), \\
\HMag(G, D(X)) &\cong \Set(U(G), UD(X)) = \Set(U(G), X).
\end{align*}
Thus we have an adjoint triple $F\dashv U\dashv D$.
\end{proof}

The functors $F$ and $D$ above respectively yield \emph{free hypermagmas} and \emph{cofree hypermagmas} over a given set.
One consequence of this will be discussed in Proposition~\ref{prop:mono and epi}, that monomorphisms and epimorphisms correspond to injective and surjective morphisms, respectively.
Notice, however,  that bijective morphisms in $\HMag$ need not be isomorphisms. For instance, given a set $X$ the identity function on underlying sets gives a bijective morphism $F(X) \to D(X)$ between the free and cofree hypermagmas on $X$. But these are certainly not isomorphic as hypermagmas if $X \neq \varnothing$. Thus $U$ does not reflect isomorphisms; in particular, the adjunction $F \dashv U$ is not monadic.
On the other hand, $D$ makes $\Set$ monadic over $\HMag$. A more natural way to state this is the following.

\begin{corollary}
The cofree functor $D \colon \Set \to \HMag$ gives an isomorphism of $\Set$ onto a reflective subcategory of $\HMag$.
\end{corollary}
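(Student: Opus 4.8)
The plan is to read everything off the adjunction $U \dashv D$ from Proposition~\ref{prop:free and cofree}, whose counit is the identity. First I would observe that since $UD = 1_{\Set}$, the counit $\varepsilon \colon UD \Rightarrow 1_{\Set}$ of $U \dashv D$ is an identity; by the standard criterion, a right adjoint with invertible counit is fully faithful, so $D$ is fully faithful. Concretely this is because any function $f \colon X \to Y$ is automatically a morphism $D(X) \to D(Y)$, as $f(x \star_D y) = f(X) \subseteq Y = f(x) \star_D f(y)$ for all $x,y \in X$; thus $\HMag(D(X),D(Y)) = \Set(X,Y)$, with composition inherited from $\Set$. Since $D$ is also evidently injective on objects, it restricts to an isomorphism of categories from $\Set$ onto the full subcategory $\mathcal{D} \subseteq \HMag$ consisting of the cofree hypermagmas, i.e., those $(M,\star)$ with $x \star y = M$ for all $x,y \in M$.

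Next I would check that $\mathcal{D}$ is reflective, with reflector $G \mapsto DU(G)$ and reflection unit $\eta_G \colon G \to DU(G)$ equal to the unit of $U \dashv D$. Unwinding the adjunction bijection $\HMag(G,D(X)) \cong \Set(U(G),X)$ shows that $\eta_G$ is just the identity function on the underlying set of $G$, regarded as a morphism $(M,\star) \to (M,\star_D)$ --- which is legitimate precisely because every function into a cofree hypermagma is a morphism. For the universal property, given a morphism $g \colon G \to C$ with $C = D(X) \in \mathcal{D}$, I note that $g$ is in particular a function $U(G) \to X$, hence a morphism $DU(G) \to D(X)$, and it satisfies $g = g \circ \eta_G$; this factorization through $\eta_G$ is the unique one in $\mathcal{D}$ because $\eta_G$ is bijective on underlying sets and hypermagma morphisms are determined by their underlying functions. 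This is exactly the statement that $\mathcal{D}$ is a reflective subcategory of $\HMag$.

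I do not anticipate a genuine obstacle here; the argument is pure unwinding of the adjoint triple of Proposition~\ref{prop:free and cofree}. The two points deserving care are that the image of $D$ is a bona fide full subcategory --- so no passage to essential images is needed, since $D$ is injective on objects --- and the repeated use of the fact that morphisms of hypermagmas \emph{into} a cofree object $D(X)$ are exactly the arbitrary functions into $X$, which is what makes both $\eta_G$ and the comparison morphism well defined in $\HMag$.
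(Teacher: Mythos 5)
Your proof is correct and follows essentially the same route as the paper: both rest on the adjunction $U \dashv D$ from Proposition~\ref{prop:free and cofree} and the identity $UD = 1_{\Set}$, which makes the counit invertible. The only difference is that the paper cites the standard lemma that a right adjoint with invertible counit is an equivalence onto a reflective subcategory, whereas you unwind that lemma explicitly (full faithfulness via $\HMag(D(X),D(Y)) = \Set(X,Y)$, reflector $DU$ with unit the identity function); the content is the same.
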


\begin{proof}
We continue to let $U \colon \HMag \to \Set$ denote the forgetful functor. From the proof of Proposition~\ref{prop:free and cofree} we have $UD = 1_{\Set}$, so that the counit $\epsilon \colon UD \Rightarrow 1_{\Set}$ of the adjunction $U \dashv D$ is an isomorphism. In this case it is well known~\cite[Lemma~4.5.13]{Riehl:context} that $U$ is an equivalence onto a reflective subcategory. The fact that $U$ is an isomorphism onto its image again follows from the very strong condtion $UD = 1_{\Set}$.
\end{proof}

All small limits and colimits of hypermagmas can be constructed by endowing the corresponding (co)limit of sets with an appropriate hyperoperation. 

\begin{proposition}\label{prop:forgetful creates}
The forgetful functor $U \colon \HMag \to \Set$ lifts all limits and colimits. In particular, $\HMag$ is complete and cocomplete.
\end{proposition}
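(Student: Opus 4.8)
The plan is to show that $U \colon \HMag \to \Set$ creates limits and colimits by the standard recipe: take the (co)limit in $\Set$ of the underlying sets, and equip it with the unique hyperoperation that makes the structure maps into hypermagma morphisms; then verify this is indeed a (co)limit in $\HMag$ and that it is the only one lying over the (co)limit of sets. Since $\Set$ is complete and cocomplete, creation of (co)limits immediately yields that $\HMag$ is complete and cocomplete.

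For \textbf{limits}, suppose $D \colon \mathcal{J} \to \HMag$ is a diagram with underlying set diagram $UD$, and let $(L, (\pi_j)_{j})$ be a limit cone in $\Set$, so $L \subseteq \prod_j U(D_j)$ is the usual subset of compatible tuples. I would define a hyperoperation on $L$ by declaring, for $x, y \in L$,
\[
x \star_L y = \{ z \in L \mid \pi_j(z) \in \pi_j(x) \star_{D_j} \pi_j(y) \text{ for all } j \}.
\]
This is the largest hyperoperation making every $\pi_j$ a (colax) morphism, and one checks directly that with it each $\pi_j \colon L \to D_j$ is a morphism and the cone is a limit cone in $\HMag$: given any cone $(g_j \colon G \to D_j)$ in $\HMag$, the unique set map $g \colon G \to L$ with $\pi_j g = g_j$ satisfies $\pi_j(g(a \star b)) = g_j(a \star b) \subseteq g_j(a) \star g_j(b) = \pi_j(g(a)) \star \pi_j(g(b))$ for all $j$, hence $g(a\star b) \subseteq g(a) \star_L g(b)$, so $g$ is a morphism. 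Finally, uniqueness over $UD$: if $\star'$ is any hyperoperation on $L$ making all $\pi_j$ morphisms, then $\star' \subseteq \star_L$; but since $\star_L$ is a limit cone and the identity on $L$ is a morphism $(L, \star') \to (L, \star_L)$ over $\id$ that commutes with the $\pi_j$, applying the universal property to the cone $(\pi_j \colon (L,\star') \to D_j)$ forces the identity to be a morphism $(L,\star_L) \to (L,\star')$ as well, whence $\star_L \subseteq \star'$ and the two agree.

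For \textbf{colimits}, let $(C, (\iota_j)_j)$ be a colimit cocone in $\Set$ for $UD$. Here the natural choice is to take the \emph{smallest} hyperoperation on $C$ making every $\iota_j$ a morphism: for $u, v \in C$ set
\[
u \star_C v = \bigcup \{ \iota_j(x \star_{D_j} y) \mid j \in \mathcal{J},\ x,y \in D_j,\ \iota_j(x) = u,\ \iota_j(y) = v \}.
\]
One verifies each $\iota_j$ is a morphism (indeed $\iota_j(x\star_{D_j}y) \subseteq u\star_C v$ by construction) and that the cocone is universal in $\HMag$: given a cocone $(h_j \colon D_j \to H)$, the unique set map $h \colon C \to H$ with $h\iota_j = h_j$ is a morphism because $h(u \star_C v)$ is a union of sets $h(\iota_j(x\star_{D_j}y)) = h_j(x \star_{D_j} y) \subseteq h_j(x)\star_H h_j(y) = h(u) \star_H h(v)$. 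Uniqueness over $UD$ follows dually to the limit case. The routine-but-slightly-fiddly part, and the main place care is needed, is this colimit construction: because morphisms are only \emph{colax} (not strict), one must make sure the hyperoperation is genuinely well-defined on the colimit set (it is, since we take a union over \emph{all} representatives $x,y$ of $u,v$) and that minimality really does force the universal map to be colax rather than forcing a strictness condition that would fail. I expect the limit side to be essentially immediate; the colimit side, especially handling coequalizers where distinct elements of a single $D_j$ get identified, is where I would spend the most attention, though it still goes through cleanly with the ``smallest compatible hyperoperation'' prescription.
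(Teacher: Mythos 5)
Your proof is correct in substance but takes a genuinely different route from the paper's. The paper invokes the product--equalizer reduction and then constructs each of the four special shapes by hand: products with the componentwise hyperoperation, coproducts with the ``block-diagonal'' hyperoperation on the disjoint union, equalizers as weak subhypermagmas $x \star_E y = (x\star_G y)\cap E$, and coequalizers with $x \star_K y = \pi(\pi^{-1}(x)\star\pi^{-1}(y))$. You instead handle an arbitrary diagram uniformly by equipping the $\Set$-limit with the \emph{largest} hyperoperation making the legs colax, and the $\Set$-colimit with the \emph{smallest} such hyperoperation; your formulas specialize exactly to the paper's in the four basic cases (e.g.\ for an equalizer the condition at the vertex $H$ is implied by the condition at $G$ because $f$ is colax, so your $\star_L$ collapses to $(x\star_G y)\cap E$; for a coequalizer your union over all representatives is precisely $\pi(\pi^{-1}(u)\star\pi^{-1}(v))$). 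Your approach buys a cleaner, shape-independent argument and makes transparent \emph{why} the construction works (maximality/minimality is exactly what the colax condition on cone/cocone legs asks for); the paper's approach buys explicit descriptions of the four basic (co)limits, which it reuses later (e.g.\ the coequalizer formula is what makes regular epimorphisms short).

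One genuine wrinkle: your uniqueness argument on the limit side overreaches. As stated you claim that \emph{any} hyperoperation $\star'$ on $L$ making all $\pi_j$ colax must equal $\star_L$, but this is false --- the empty hyperoperation always works --- and the step ``applying the universal property \ldots forces the identity to be a morphism $(L,\star_L)\to(L,\star')$'' does not follow unless $(L,\star')$ is itself assumed to be a limit cone (the universal property of $(L,\star_L)$ only produces a morphism \emph{into} $(L,\star_L)$, recovering $\star'\subseteq\star_L$, which you already had). The correct statement is that $\star_L$ is the unique hyperoperation for which the lifted cone is a \emph{limit} cone, and that is what your two-sided argument proves once you add that hypothesis. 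Since the paper's own proof does not address uniqueness of lifts at all, this does not put you below the paper's standard, but the claim should be phrased as uniqueness among limit-cone lifts rather than among all lifts.
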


\begin{proof}
By the product-equalizer formulation of limits~\cite[V.2]{MacLane} and its dual, it suffices to show that $U$ lifts (co)products and (co)equalizers.

To verify the claim for (co)products, let $(G_i)_{i \in I}$ be a tuple of hypermagmas. Define a hyperoperation on the set-theoretic product $\prod G_i$ for elements $a = (a_i)_{i \in I}$ and $b = (b_i)_{i \in I}$ of $\prod G_i$ by setting
\[
a \star b = \prod (a_i \star b_i) \subseteq \prod G_i.
\]
It is routine to check that it also satisfies the universal property of a product in $\HMag$.
Similarly, we may endow  the disjoint union $\bigsqcup G_i$ with the hyperoperation 
\[
\sqcup_{i,j} G_i \times G_j \cong \left( \sqcup G_i \right) \times \left( \sqcup G_j \right)
\to \P\left( \sqcup G_i\right) 
\]
that is empty on each $G_i \times G_j$ for $i \neq j$ but agrees with the hyperoperation of $G_i$ on $G_i \times G_i \to \P(G_i) \subseteq \P(\sqcup G_i)$. One can then verify that this is a coproduct of hypermagmas.

Next let $f,g \colon G \to H$ be a parallel pair of morphisms in $\HMag$, and denote the set-theoretic equalizer and coequalizer by
\[
\begin{tikzcd}
E \arrow[r, hookrightarrow, "i"] & G \arrow[r, shift left=1, "f"] \arrow[r, shift right=1, swap, "g"] & H \arrow[r, twoheadrightarrow, "\pi"] & K.
\end{tikzcd}
\] 
Define a hyperoperation on $E = \{x \in G \mid f(x) = g(x)\}$ by 
\[
x \star_E y = (x \star_G y) \cap E
\]
for any $x,y \in E$. It is straightforward to verify that $(E,\star_E)$ acts as an equalizer of $f$ and $g$ in the category of hypermagmas. 

Finally, we define a hyperoperation on $K$ by setting
\[
x \star_K y = \pi(\pi^{-1}(x) \star_H \pi^{-1}(y))
\]
for $x,y \in K$. This equips $K$ with the structure of a hypermagma, and the definition of $\star_K$ ensures that $\pi \colon H \to K$ is a morphism of hypermagmas.
Suppose that $w \in \HMag(H,L)$ coequalizes $f$ and $g$, and let 
\[
w \colon H \overset{\pi}{\longrightarrow} K \overset{u}{\longrightarrow} L
\]
be the factorization of $w$ in $\Set$ given by the universal property of $K$. To verify that $u$ is a morphism in $\HMag$, let $x,y \in K$ and fix $z \in x \star_K y$. Then there exists $z_0 \in x_0 \star y_0$ for some $x_0 \in \pi^{-1}(x)$ and $y_0 \in \pi^{-1}(y)$ such that $z = \pi(z_0)$. Then
\begin{align*}
u(z) &= u(\pi(z_0)) = w(z_0) \\
&\in w(x_0 \star y_0) \subseteq w(x_0) \star w(y_0) \\
&= u(\pi(x_0)) \star u(\pi(y_0)) = u(x) \star u(y).
\end{align*}
This establishes $u(x \star y) \subseteq u(x) \star u(y)$, so that $u$ is a morphism of hypermagmas as desired.
\end{proof}

In the special case of the empty (co)limit, it follows that $U$ also lifts initial and terminal objects. 
Indeed, it is clear that $\HMag$ has initial object given by the empty set with its unique hyperoperation $\varnothing \times \varnothing \to \P(\varnothing)$ and terminal object given by the singleton set $1$ with the hyperoperation corresponding to its unique binary operation $1 \times 1 \to 1$.

\separate

In the case of unital hypermagmas, there is a ``finer'' forgetful functor to the category of \emph{pointed} sets 
\begin{equation}\label{eq:pointed forgetful}
\begin{tikzcd}
 \uHMag \ar[r, "U_\bullet"] \ar[dr, swap, "U"] & \Set_\bullet \ar[d, dashed]\\
 & \Set
\end{tikzcd}
\end{equation}
that can be more useful than the forgetful functor to sets. It is given by sending a unital hypermagma $M$ to the pointed set $(M,e_M)$. 

The category $\Set_\bullet$ is complete and cocomplete, with products given by the direct product and coproducts by the \emph{wedge sum}: the quotient of the disjoint union that identifies all basepoints.

\begin{theorem}\label{thm:pointed forgetful}
The forgetful functors from unital hypermagmas to each of sets and pointed sets in~\eqref{eq:pointed forgetful}
both have left adjoints. Both functors lift all limits in $\uHMag$, and $U_\bullet$ lifts all coproducts in $\uHMag$.
\end{theorem}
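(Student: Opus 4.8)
The approach follows the template of Propositions~\ref{prop:free and cofree} and~\ref{prop:forgetful creates}: exhibit the left adjoints by hand, and then realize limits (and, for $U_\bullet$, coproducts) of unital hypermagmas by transporting the corresponding set-level constructions along the forgetful functors. First I would construct a left adjoint $F_\bullet\colon\Set_\bullet\to\uHMag$ to $U_\bullet$. On a pointed set $(X,x_0)$, let $F_\bullet(X,x_0)$ be the set $X$ with identity $x_0$ and $x\star y=\varnothing$ for all $x,y\in X\setminus\{x_0\}$; on morphisms $F_\bullet$ acts identically. One checks this is a unital hypermagma, and that for any unital hypermagma $M$ every pointed function $(X,x_0)\to(M,e_M)$ is automatically a morphism of unital hypermagmas out of $F_\bullet(X,x_0)$: the colax condition $f(x\star y)\subseteq f(x)\star f(y)$ holds trivially when $x,y\neq x_0$ and is forced by the identity axiom otherwise. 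Hence $\uHMag(F_\bullet(X,x_0),M)=\Set_\bullet((X,x_0),U_\bullet M)$, naturally in both variables, so $F_\bullet\dashv U_\bullet$. For $U$, I would use that~\eqref{eq:pointed forgetful} factors $U$ as $U_\bullet$ followed by the forgetful functor $\Set_\bullet\to\Set$, whose left adjoint is the ``adjoin a disjoint basepoint'' functor $(-)_+$; since left adjoints compose, $F:=F_\bullet\circ(-)_+$ is a left adjoint to $U$, with $F(X)$ the set $X\sqcup\{e\}$ having identity $e$ and all other products empty.

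\emph{Creation of limits.} Let $D\colon J\to\uHMag$ be a small diagram, and let $(L,(\lambda_j))$ be the limit of $U_\bullet\circ D$ in $\Set_\bullet$, so $L\subseteq\prod_j D(j)$ is the set of compatible tuples, with basepoint $(e_{D(j)})_j$, and the $\lambda_j$ are the restricted projections. Endow $L$ with the hyperoperation $a\star b=\{c\in L : c_j\in a_j\star_{D(j)}b_j\text{ for all }j\in J\}$, i.e.\ the trace on $L$ of the product hyperoperation of Proposition~\ref{prop:forgetful creates}. I would then verify: (i) this makes $L$ a unital hypermagma with identity $(e_{D(j)})_j$, and each $\lambda_j$ a morphism in $\uHMag$; (ii) the resulting cone is a limit cone in $\uHMag$, since for every cone $(C,(\gamma_j))$ the unique set-map $C\to L$ over the $\gamma_j$ is automatically colax, membership in $a\star b$ being checked coordinatewise; and (iii) this is the only lift of $(L,(\lambda_j))$ to a limit cone in $\uHMag$, because any lift has hyperoperation pointwise contained in the one above, while a strictly smaller hyperoperation already fails the universal property when tested against $(L,\star)$ itself. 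Applying $U_\bullet$ --- and, composing with $\Set_\bullet\to\Set$, also $U$ --- returns the cone we began with, so both functors create all limits.

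\emph{Creation of coproducts by $U_\bullet$.} Given unital hypermagmas $(M_i)_{i\in I}$, form the wedge sum $W=\bigvee_i M_i$ in $\Set_\bullet$, the disjoint union with all basepoints glued to a single point $e$, and equip it with the hyperoperation in which $e$ is the identity, $x\star y=x\star_{M_i}y$ (computed inside $M_i\subseteq W$) whenever $x$ and $y$ lie in a common summand $M_i$, and $x\star y=\varnothing$ whenever $x$ and $y$ lie in distinct summands and are both different from $e$. I would check that each inclusion $\iota_i\colon M_i\to W$ is a morphism in $\uHMag$, and that a family $f_i\colon M_i\to N$ assembles into the evident pointed map $g\colon W\to N$ with $g\circ\iota_i=f_i$, which is colax by a short case analysis; this yields the universal property of the coproduct. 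To see that $U_\bullet$ \emph{creates} this coproduct I would then pin the hyperoperation down uniquely: probing the coproduct cocone against the free unital hypermagmas from the first step, and against the morphisms $M_j\to M_i$ that are the identity on the $i$-th summand and constant at $e$ elsewhere, forces both the within-summand and the cross-summand products to be exactly as specified. Since $U_\bullet W=\bigvee_i U_\bullet M_i$ with the glued basepoint as identity, this completes the proof.

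The routine parts are the verifications of the hyperoperation and colax axioms; the real content is the universality and uniqueness bookkeeping. For limits this is painless, since the chosen hyperoperation is the largest one compatible with the projections and everything is decided coordinatewise. For the coproduct the delicate point is showing that products across distinct summands must be empty --- precisely where the free-object probes from the first step are needed --- and I expect that step to require the most care.
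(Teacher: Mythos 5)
Your proposal is correct and follows essentially the same route as the paper: the same free unital hypermagma on a pointed set (composed with the adjoin-a-basepoint functor for $\Set$), limits equipped with the trace of the product hyperoperation on the compatible tuples, and the wedge sum with summand-wise products and empty cross-products as the coproduct, which the paper obtains instead as a colimit in $\HMag$ of the diagram gluing the $M_i$ along the terminal object (the resulting hyperoperation is the same). One minor remark: in your uniqueness step for the coproduct, the collapse morphisms $M_j \to M_i$ (identity on the $i$-th summand, constantly the unit elsewhere) already pin the hyperoperation down completely, while probes into free unital hypermagmas are of little help, since a general $M_i$ admits almost no morphisms into a free object; this does not affect the correctness of the argument.
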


\begin{proof}
Given $(X,x_0) \in \Set_\bullet$, define a hyperoperation $\star_F$ on $X$ by taking $x_0 \star_F x = x = x \star_F x_0$ for all $x \in X$ and $x \star_F y = \varnothing$ for all $x,y \in X \setminus \{x_0\}$. We obtain a functor
\begin{align*}
F \colon \Set_\bullet &\to \uHMag, \\
(X,x_0) &\mapsto (X, \star_F, x_0).
\end{align*}
Notice that $U_\bullet F = 1_{\Set_\bullet}$, and that $U_\bullet$ induces natural bijections
\[
\uHMag(F(X,x_0), M) \cong \Set_\bullet(U_\bullet F(X), U_\bullet(M)) = \Set_\bullet((X,x_0), U_\bullet(M)).
\]
Thus $F \dashv U_\bullet$ as desired. Note that the forgetful functor $\Set_\bullet \dashrightarrow \Set$ also has a left adjoint (given by freely adjoining a basepoint), which composes with $F$ to yield a left adjoint to $U \colon \uHMag \to \Set$.

To see that these forgetful functors lift products, fix a family $(M_i, \star_i, e_i)_{i \in I}$ of unital hypermagmas.
Because the product hypermagma $\prod M_i$ of Proposition~\ref{prop:forgetful creates} has identity element $e = (e_i) \in \prod M_i$ and the projections to each $M_i$ are unital, this also forms a product in the category $\uHMag$.

Next take the wedge sum $S = \bigvee (M_i, e_i)$, and again denote its basepoint by $e \in S$. Let $I_0 = I \sqcup \{0\}$ be a poset in which $0$ is the smallest element and all elements of $I$ are incomparable. Then let $D \colon I_0 \to \HMag$ be the diagram given on objects by $D(0) = 1$ and $D(i) = M_i$ for $i \in I$, and which sends each arrow $0 \leq i$ to the unique unital morphism $1 \to M_i$. 
Then the underlying set of the colimit of $D$ in $\HMag$ coincides with $S = \bigvee M_i$ and unit given by the basepoint $e \in S$. One can verify from its construction via the diagram $D$ that it satisfies the universal property of the coproduct in $\uHMag$.

Finally, the lifting of the equalizer of a parallel pair of morphisms $f,g \in \uHMag(G,H)$ is deduced just as in the proof of Proposition~\ref{prop:forgetful creates}. Indeed, the set-theoretic equalizers of $G$ and $H$ will be basepoint-preserving because $f$ and $g$ preserve the identity elements, and the hyperoperations constructed as in that proof will be unital for the same reason. 
\end{proof}

Unlike the case of hypermagmas, the forgetful functors of~\eqref{eq:pointed forgetful} do not have right adjoints. Indeed, if they had right adjoints then the forgetful functors $U$ and $U_\bullet$ would both preserve coequalizers. But the following counterexample illustrates a case where this fails to happen.

\begin{example}\label{ex:not unital}
Endow $D = \{0,1,2\}$ with the structure of a commutative hypermagma where $0$ is an identity and 
\[
1+1 = 1+2 = 2+2 = D.
\]
On the underlying pointed set $X = U_\bullet D$, we may also define the structure of a hypermagma using the free unital hypermagma $FX$ of Theorem~\ref{thm:pointed forgetful}. Let $f, g \in \uHMag(FX, D)$ be the morphisms defined by
\[
f(1) = 1, \ f(2) = 2, \quad g(1) = 0, \ g(2) = 2.
\] 
Then the coequalizer $K$ of $f$ and $g$ in either of the categories $\Set_\bullet$ or $\HMag$ has underlying set given by the set-theoretic coequalizer, which is equal to $\{[0],[2]\}$. Furthermore, the coequalizer in $\HMag$ has a hyperoperation satisfying
\begin{align*}
[0] + [0] &= \{[t] \mid t \in x+y \mbox{ where } [x] = [y]=[0] \mbox{ for } x,y \in D\} \\
&= \{[t] \mid t \in (0 + 0) \cup (0 + 1) \cup (1 + 1) \} \\
&= \{[0], [2]\}.
\end{align*}
Thus $[0]$ is not an identity, so that $K$ is not a unital hypermagma. This means that the forgetful functor $\uHMag \to \HMag$ (and by extension $\uHMag \to \Set$) as well as $\uHMag \to \Set_\bullet$ do not preserve coequalizers. 
\end{example}

To overcome this problem, we will need to make use of the following universal construction. If $f \colon M \to N$ is a morphism of hypermagmas and if $N$ has identity $e$, we define the \emph{kernel} of $f$ to be 
\[
\ker f = f^{-1}(e_N) \subseteq M.
\]
Note that this term will apply even if $M$ itself is not unital, or if $M$ is unital but $f$ does not preserve the identity. In such cases, it is possible for the kernel of a morphism to be empty.

\begin{definition}\label{def:unitization}
Given a hypermagma $M$ and a (possibly empty) subset $E \subseteq M$, a \emph{unitization of $M$ relative to $E$} $(M_E, \pi_E)$ is a unital hypermagma 
equipped with a universal morphism $\pi_E \colon M \to M_E$ that sends every element of $E$ to the unit of $M_E$. That is, the pair $(M_E, \pi_E)$ represents the functor $\uHMag \to \Set$ given by
\[
N \mapsto \{f \in \HMag(M,N) \mid E \subseteq \ker f = f^{-1}(e_N)\}.
\]
\end{definition}

We will show that this universal object exists below. Its construction will be facilitated by the following property.

\begin{definition}\label{def:absorptive}
For a hypermagma $M$, a subset $K \subseteq M$ is \emph{absorptive} if, for all $x \in M$,
\[
(x \star K \cup K \star x) \cap K \neq \varnothing \implies x \in K.
\]
\end{definition}

\begin{remark}\label{rem:absorptive}
Several comments are in order. Let $M$ denote a hypermagma throughout.
\begin{enumerate}
\item If $f \in \HMag(M,N)$ and $N$ is unital, then $\ker f \subseteq M$ is absorptive. Indeed, let $x \in M$ and suppose there exists $y \in x \star \ker f \cap \ker f$ (with a symmetric argument applying in case $y \in (\ker f) \star x \cap \ker f$). Then there exists $u \in \ker f$ with $y \in x \star u$, so that
\[
e_N = f(y) \subseteq f(x \star u) \subseteq f(x) \star f(u) = f(x) \star e_N = \{f(x)\}.
\]
This forces $f(x) = e_N$, so that $x \in \ker f$.
\item If $M$ is unital with $e = e_M$ and $K \subseteq M$ is absorptive and nonempty, then $e \in K$. Indeed, if $k \in K$ then $e \star k = k \in (e \star K) \cap K$ yields $e \in K$.
\item The absorptive property can be seen as dual to the property of being a strict subhypermagma in the following way. Suppose $K \subseteq M$ and that $x,y,z \in M$ satisfy 
\[
z \in x \star y \cup y \star x. 
\]
If $K$ is a strict subhypermagma then $x,y \in K \implies z \in K$. On the other hand, if $K$ is absorptive then $y,z \in K \implies x \in K$.
\item The properties of being either absorptive or a strict subhypermagma are preserved under arbitrary intersection in $\P(M)$. Thus every subset of $M$ is contained in a smallest (absorptive) strict subhypermagma of $M$, which we say is \emph{generated} by that set. 
\item If $M$ is a mosaic and $K \subseteq M$ is a strict submosaic, then $K$ is absorptive. Indeed, if $x \in M$ and $(x \star K \cup K \star x) \cap K \neq \varnothing$, we may assume without loss of generality that there exist $y,z \in K$ with $z \in x \star y$. But then $y^{-1} \in K$ and strictness of $K$ yield $x \in z \star y^{-1} \subseteq K$ as desired.
\item If $M$ is a mosaic and $K \subseteq M$ is absorptive and nonempty, then $K$ is closed under inverses. Indeed, suppose $y \in K$. Note that $e = e_M \in K$ by~(2), and also $e \in y^{-1} \star y \subseteq y^{-1} \star K$. By the absorptive property, we find that $y^{-1} \in K$. 
\end{enumerate}
\end{remark}

\begin{lemma}\label{lem:unitization}
Let $M$ be a hypermagma and let $E \subseteq M$ be a subset. Then there exists a unitization $(M_E, \pi_E)$, whose kernel is the smallest absorptive strict subhypermagma of $M$ containing $E$.
Furthermore, $\pi_E \colon M \to M_E$ is short if $E$ satisfies the following condition: for all $x \in M$, $x \star E \neq \varnothing \neq E \star x$.
\end{lemma}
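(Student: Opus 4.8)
The plan is to realize $(M_E,\pi_E)$ as an explicit quotient of $M$ and to verify the universal property of Definition~\ref{def:unitization} directly. Throughout, let $K$ be the smallest absorptive strict subhypermagma of $M$ containing $E$, which exists by Remark~\ref{rem:absorptive}(4). If $E=\varnothing$ then $K=\varnothing$, and $M\sqcup\{e\}$ with $e$ adjoined as a scalar identity (and $\pi_E$ the inclusion) is readily checked to be the desired unitization, with $\ker\pi_E=\varnothing=K$; the ``short'' clause is vacuous here since $x\star\varnothing=\varnothing$ by~\eqref{eq:times empty}. So assume $E\neq\varnothing$, hence $K\neq\varnothing$. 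One word of warning shapes the construction: the obvious candidate---collapse $K$ to a single point---does not work, because with the quotient hyperoperation the basepoint need not be a \emph{scalar} identity, while forcing it to be one destroys colaxness of the projection; the fix is to enlarge the congruence so that these conflicts disappear.

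First I would build the underlying quotient. Let $\sim$ be the smallest equivalence relation on $M$ such that (i) all of $K$ lies in one $\sim$-class, and (ii) for every $x\in M$ and $k\in K$ the set $(x\star_M k)\cup(k\star_M x)$ is contained in the $\sim$-class of $x$. Put $M_E=M/{\sim}$ with quotient map $\pi_E$, let $e=\pi_E(K)$, and define
\[
a\star_{M_E}b=
\begin{cases}
\{b\}, & a=e,\\
\{a\}, & b=e,\ a\neq e,\\
\pi_E\bigl(\pi_E^{-1}(a)\star_M\pi_E^{-1}(b)\bigr), & a\neq e\neq b.
\end{cases}
\]
Then $e$ is a scalar identity by construction, so $(M_E,\star_{M_E},e)$ is a unital hypermagma, and $\pi_E$ is a (colax) morphism by a brief case check: for $x,y\notin K$ it is immediate; for $x,y\in K$ it uses that $K$ is a strict subhypermagma, so $x\star_M y\subseteq K$ and $\pi_E(x\star_M y)\subseteq\{e\}$; and when exactly one of $x,y$ lies in $K$ it is precisely clause (ii).

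Next I would verify universality. Let $N$ be a unital hypermagma and $f\in\HMag(M,N)$ with $E\subseteq\ker f$. Then $\ker f$ is absorptive by Remark~\ref{rem:absorptive}(1), and it is a strict subhypermagma (if $f(x)=f(y)=e_N$ then $f(x\star_M y)\subseteq e_N\star e_N=\{e_N\}$), so $K\subseteq\ker f$. Hence $f$ is constant on $\sim$-classes: it respects (i) because $f(K)=\{e_N\}$, and respects (ii) because $z\in x\star_M k$ with $k\in K$ gives $f(z)\in f(x)\star e_N=\{f(x)\}$. Thus $f=\bar f\circ\pi_E$ for a unique function $\bar f\colon M_E\to N$; one checks $\bar f(e)=e_N$ and, clause by clause, that $\bar f$ is colax, while its uniqueness as a unital morphism follows from surjectivity of $\pi_E$. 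This exhibits $(M_E,\pi_E)$ as a representing object for the functor in Definition~\ref{def:unitization}.

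It remains to compute $\ker\pi_E$ and to treat the ``short'' clause. Since $\ker\pi_E=\pi_E^{-1}(e)$ is the $\sim$-class of $K$, I would show this class equals $K$ by induction: writing it as $\bigcup_n K_n$ with $K_0=K$ and $K_{n+1}$ obtained from $K_n$ by one round of clauses (i)--(ii), the ``forward'' elements of $K_n\star_M K\cup K\star_M K_n$ stay in $K$ because $K$ is strict, and the ``backward'' elements $x$ with $(x\star_M K\cup K\star_M x)\cap K\neq\varnothing$ lie in $K$ precisely because $K$ is \emph{absorptive} (Definition~\ref{def:absorptive}); hence $K_n=K$ for all $n$ and $\ker\pi_E=K$. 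This is the heart of the matter---absorptivity is exactly the hypothesis preventing the congruence from leaking out of $K$. Finally, assume $x\star_M E\neq\varnothing\neq E\star_M x$ for all $x\in M$. Then $\pi_E$ is surjective, and condition~\eqref{eq:short} holds by definition for pairs with $a,b\neq e$; for a pair involving $e$ one must check $\pi_E(K\star_M\pi_E^{-1}(b))=\{b\}$, where ``$\subseteq$'' is clause (ii) and ``$\supseteq$'' holds because for any $x\in\pi_E^{-1}(b)$ some $z\in e_0\star_M x$ with $e_0\in E\subseteq K$ exists by hypothesis and satisfies $\pi_E(z)=b$. Therefore $\pi_E$ is short.
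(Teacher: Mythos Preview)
Your construction and verification are essentially the same as the paper's: both quotient $M$ by the smallest equivalence relation that collapses $K$ to a point and identifies each $x$ with the elements of $x\star K\cup K\star x$, impose the piecewise hyperoperation making the basepoint a scalar identity, and check colaxness, universality, and shortness by the same case analyses. One ordering quibble: your colaxness check in the case $x,y\notin K$ implicitly uses $\pi_E^{-1}(e)=K$ (so that $[x],[y]\neq e$ and the third clause of $\star_{M_E}$ applies), which you only prove afterward---the paper establishes that $K$ is a full $\sim$-class before defining the quotient hyperoperation, and reordering your argument the same way removes the forward reference.
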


\begin{proof}
If $E = \varnothing$ we can easily extend the hyperoperation of $M$ to one on $M_\varnothing := M \sqcup \{e\}$ such that $e$ is a unit. It is then clear that this object with the natural inclusion $\pi_\varnothing = i \colon M \hookrightarrow M_\varnothing$ satisfies the universal property. 

So we may assume now that $E \neq \varnothing$. Let $K \subseteq M$ be the smallest absorptive strict subhypermagma of $M$ that contains $E$, as in Remark~\ref{rem:absorptive}(4). Let $\sim$ be the equivalence relation on $M$ defined by setting $x \sim y$ if and only if either $x,y \in K$ or there exists a sequence $x = z_1, z_2, \dots, z_n = y$ in $M$ such that each
\[
z_{i+1} \in z_i \star K \cup K \star z_i \quad \mbox{or} \quad z_i \in z_{i+1} \star K \cup K \star z_{i+1}
\]
for $i = 1, \dots, n-1$.
This is evidently reflexive (by taking $n = 1$ above) and symmetric.
It follows from the fact that $K$ is a strict absorptive subhypermagma that if $x \in M$ and $u \in K$, then $x \sim u \iff x \in K$.
From this one can verify that $\sim$ is transitive and that $K$ forms an equivalence class.

Let $M_E$ be the quotient of $M$ by $\sim$ and let $\pi_E = \pi$ denote the canonical surjection
\[
\pi \colon M \twoheadrightarrow M/{\sim} = M_E.
\]
Let $e \in M_E$ denote the image of the equivalence class $K$, so that $\pi^{-1}(e) = K \supseteq E$. 
We will alternatively denote equivalence classes of $x \in M$ as $[x] = \pi(x) \in M_E$. 
Define a hyperoperation on $M_E$ by
\[
[x] \star [y] :=
\begin{cases}
\pi(\pi^{-1}([x]) \star \pi^{-1}([y])), & [x] \neq e \neq [y], \\
\{[x]\}, & [y] = e, \\
\{[y]\}, & [x] = e.
\end{cases}
\]
By construction $e$ is an identity for $M_E$. We claim that $\pi \in \HMag(M, M_E)$ is a morphism of hypermagmas, which is to say that $\pi(x \star y) \subseteq \pi(x) \star \pi(y)$ for all $x, y \in M$. This follows immediately from the construction above if $[x] \neq e \neq [y]$. 
If $[y]=e$ then $y \sim u$ for some $u \in E$. Then if $z \in x \star y$, by construction of $\sim$ it follows that $z \sim x$. Thus $\pi(x \star y) \subseteq [x] = [x] \star e = \pi(x) \star \pi(y)$, and a symmetric argument holds in case $[x] = e$.

It remains to demonstrate the universal property. Suppose that $N$ is a unital hypermagma and that $f \in \HMag(M,N)$ satisfies $f(E) = e_N$. Let $\approx$ be the equivalence relation on $M$ given by 
\[
x \approx y \iff f(x) = f(y).
\]
Because $f(E) = e_N$, it follows from Remark~\ref{rem:absorptive}(1) that $K \subseteq \ker f$. Thus we have $K \times K \subseteq {\approx}$. Furthermore, suppose that $x,y \in M$ are such that $y \in x \star K \cup K \star x$. Fix $u \in K$ with $y \in x \star u \cup u \star x$. Then  
\begin{align*}
f(y) &\in f(x \star u) \cup f(u \star x) \\
	&\subseteq f(x) \star f(u) \cup f(u) \star f(x) \\
	&= f(x) \star e_N \cup e_N \star f(x) \\
	&= \{f(x)\}.
\end{align*}
Then $f(y) = f(x)$ so that $y \approx x$. It follows by construction of $\sim$ that ${\sim} \subseteq {\approx}$. 

Thus $f$ factors uniquely as $f = g \circ \pi$ where $g \colon M_E \to N$ is the well-defined function given by $g([x]) = f(x)$. We claim that $g \in \uHMag(M_E,N)$, from which the universal property of $(M_E, \pi)$ will follow. By construction we have $g(e) = f(\pi(E)) = e_N$, so that $g$ preserves unit elements. 
If $[x] \neq e \neq [y]$ in $M_E$, then
\begin{align*}
g([x] \star [y]) &= g(\pi(\pi^{-1}(x) \star \pi^{-1}(y))) \\
	&= f(\pi^{-1}(x) \star \pi^{-1}(y))) \\
	&\subseteq f(\pi^{-1}(x)) \star f(\pi^{-1}(y)) \\
	&= g([x]) \star g([y]).
\end{align*}
An easier argument verifies that $g([x] \star [y]) \subseteq g([x]) \star g([y])$ in case either $[x]$ or $[y]$ equals $e \in M_E$.

Finally, we verify the claim about shortness of $\pi$. Note that 
\[
[x] \star [y] = \pi(\pi^{-1}([x]) \star \pi^{-1}([x]))
\]
holds for all nonidentity elements $[x] \neq e \neq [y]$  by construction. In the case where $x \star E \neq \varnothing \neq E \star x$ for all $x \in M$, we can verify in the case $[y] = e$ as follows: 
\[
\varnothing \neq \pi( x \star E) \subseteq \pi(\pi^{-1}([x]) \star \pi^{-1}(e)) \subseteq [x] \star e = \{[x]\},
\]
from which we conclude that $\pi(\pi^{-1}([x]) \star \pi^{-1}(e)) = \{[x]\} = [x] \star e$. A symmetric argument applies in the case where $[x] = e$ and $[y]$ is arbitrary.
\end{proof}

We can now apply unitization to construct coequalizers as follows.

\begin{theorem}\label{thm:unital cocomplete}
The category $\uHMag$ is complete and cocomplete.
\end{theorem}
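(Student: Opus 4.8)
The plan is to handle completeness and cocompleteness separately, leaning on the machinery already in place. Completeness is immediate: Theorem~\ref{thm:pointed forgetful} says the forgetful functor $U_\bullet \colon \uHMag \to \Set_\bullet$ creates all limits, and $\Set_\bullet$ is complete, so $\uHMag$ is complete. For cocompleteness, by the dual of the product-equalizer formulation of limits~\cite[V.2]{MacLane} it suffices to exhibit all small coproducts and all coequalizers in $\uHMag$. Small coproducts again come for free from Theorem~\ref{thm:pointed forgetful}, since $U_\bullet$ creates coproducts and $\Set_\bullet$ has all small coproducts (wedge sums). Thus the only real work is the construction of coequalizers, and here the naive attempt---endow the set-theoretic coequalizer with a hyperoperation as in Proposition~\ref{prop:forgetful creates}---fails, because the resulting object need not be unital, exactly as in the example preceding Definition~\ref{def:unitization}.

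To construct the coequalizer of a parallel pair $f, g \colon M \to N$ in $\uHMag$, I would first form their coequalizer $\pi \colon N \to K$ in $\HMag$ (which exists by Proposition~\ref{prop:forgetful creates}) and then repair the possible non-unitality of $K$ by unitizing. Concretely, set $E = \{\pi(e_N)\} \subseteq K$ and let $(K_E, \pi_E)$ be the unitization of $K$ relative to $E$, which exists by Lemma~\ref{lem:unitization}. I claim $q := \pi_E \circ \pi \colon N \to K_E$ is a coequalizer of $f$ and $g$ in $\uHMag$. It is a unital morphism, since $\pi(e_N) \in E$ forces $\pi_E(\pi(e_N)) = e_{K_E}$; and it coequalizes $f$ and $g$ because already $\pi f = \pi g$. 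For the universal property, let $h \in \uHMag(N,L)$ with $hf = hg$. Since $\pi$ is the coequalizer in $\HMag$, $h = \bar h \circ \pi$ for a unique $\bar h \in \HMag(K,L)$; moreover $\bar h(\pi(e_N)) = h(e_N) = e_L$, so $\bar h$ sends $E$ to the unit of $L$. By the universal property of the unitization, $\bar h$ factors uniquely as $\bar h = h' \circ \pi_E$ with $h' \in \uHMag(K_E, L)$, whence $h = h' \circ q$. Uniqueness of such $h'$ follows by chaining the uniqueness clauses of the two universal properties ($\pi$ as $\HMag$-coequalizer, then $\pi_E$ as unitization).

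With all small coproducts and all coequalizers in hand, the dual of the product-equalizer theorem yields all small colimits, so $\uHMag$ is cocomplete, and together with completeness this finishes the proof. I expect the only genuine subtlety to be the recognition that the set-theoretic coequalizer must be unitized rather than directly used, and the bookkeeping of composing the two universal properties; the remaining verifications (that $q$ and $\bar h$ are the morphisms they ought to be, and that the factorizations are compatible) are a routine diagram chase.
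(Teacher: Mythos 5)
Your proposal is correct and matches the paper's argument essentially step for step: completeness and small coproducts are delegated to Theorem~\ref{thm:pointed forgetful}, and coequalizers are built by forming the $\HMag$-coequalizer and then unitizing relative to $E = \{\pi(e_N)\}$ via Lemma~\ref{lem:unitization}, with the universal property verified by chaining the two factorizations exactly as you describe.
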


\begin{proof}
In light of Theorem~\ref{thm:pointed forgetful}, it only remains to show that the category has coequalizers. Consider a parallel pair of morphisms $f,g \in \uHMag(M,N)$, form their coequalizer $(L, \pi_L)$ in $\HMag$, and denote $E = \{\pi_L(e_N)\} \subseteq L$. We claim that the morphism $\pi = \pi_E \circ \pi_L \colon N \to L_E$ in the diagram
\[
\begin{tikzcd}
M \ar[r, shift left=1, "f"] \ar[r, shift right =1, swap, "g"] 
	& N \ar[r, twoheadrightarrow, "\pi_L"] \ar[rr, dashed, twoheadrightarrow, bend right=8ex, "\pi"]
	& L \ar[r, twoheadrightarrow, "\pi_E"] 
	& L_E
\end{tikzcd}
\]
induced by the unitization $(L_E, \pi_E)$ of Lemma~\ref{lem:unitization} is a coequalizer in $\uHMag$. It follows from the construction of $\pi$ both that it is unital and that it coequalizes $f$ and $g$. Furthermore, given any other coequalizing morphism $h \in \uHMag(N,P) \subseteq \HMag(N,P)$, we see that $h$ factors uniquely through $\pi_L$ via a morphism of hypermagmas. But the fact that $h$ is unital means that it must further factor through $\pi_E$ by the universal property of $L_E$. Thus $h$ factors uniquely through $\pi = \pi_E \pi_L$ as desired. 
\end{proof}

\begin{remark}\label{rem:short coequalizer}
Note that the coequalizer $\pi = \pi_E \circ \pi_L$ constructed above is short. Indeed, $\pi_L$ is short by its construction in Proposition~\ref{prop:forgetful creates}, and because shortness is preserved under composition, it is enough to verify that $\pi_E$ is short. This will follow from Lemma~\ref{lem:unitization} if we can verify that, for all $x \in L$, we have $x \star E \neq \varnothing \neq E \star x$. Writing $x = \pi_L(y)$ for some $y \in N$, we have
\[
x \star E = \pi_L(y) \star \pi_L(e_N) \supseteq \pi_L(y \star e_N) = \{x\}.
\]
Thus $x \star E \neq \varnothing$ as desired, and similarly we have $x \in E \star x \neq \varnothing$. 
\end{remark}

\subsection{Characterizations of various morphisms}
\label{sub:epi and mono}

In this section we focus on characterizations of various morphisms in categories of hypermagmas. This includes characterizations of categorically-defined morphisms in terms of hypermagma structure, as well as characterizations of strict morphisms of hypermagmas in categorical terms. 

We begin by investigating various degrees monomorphisms and epimorphisms in the categories $\HMag$ and $\uHMag$, whose definitions we recall. There is no surprise in the characterization of ordinary monomorphisms and epimorphisms.

\begin{proposition}\label{prop:mono and epi}
In each of the categories $\HMag$ and $\uHMag$, the monomorphisms are the injective morphisms and the epimorphisms are the surjective morphisms.
\end{proposition}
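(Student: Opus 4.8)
The plan is to prove the four claims (mono $\iff$ injective, epi $\iff$ surjective) in both $\HMag$ and $\uHMag$ by exploiting the forgetful functors and the free/cofree constructions already established. The easy directions are immediate: since $U \colon \HMag \to \Set$ (and likewise $U_\bullet \colon \uHMag \to \Set_\bullet$) is faithful and preserves limits and colimits, any injective morphism is a monomorphism and any surjective morphism is an epimorphism. So the content is in the two converse directions.

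For ``mono $\implies$ injective'': given a monomorphism $f \colon M \to N$ and two elements $a, b \in M$ with $f(a) = f(b)$, I would use the free hypermagma $F(\ast)$ on a one-point set (Proposition~\ref{prop:free and cofree}) together with the adjunction $F \dashv U$ to produce morphisms $\alpha, \beta \colon F(\ast) \to M$ picking out $a$ and $b$; then $f \alpha = f \beta$ forces $\alpha = \beta$, hence $a = b$. In the unital case $\uHMag$, $F(\ast)$ has no unit, so instead I would use the free \emph{unital} hypermagma on a pointed two-element set $(\{x_0, x\}, x_0)$ from Theorem~\ref{thm:pointed forgetful}; a morphism out of it in $\uHMag$ is freely determined by the image of the single non-basepoint generator, giving the same argument. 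Dually, for ``epi $\implies$ surjective'' in $\HMag$, I would feed a morphism through the cofree hypermagma construction $D$: given an epimorphism $f \colon M \to N$, consider the two morphisms $N \rightrightarrows D(N \sqcup \{\ast\})$ that agree on $\mathrm{im}(f)$ but differ off it (one sends everything outside the image to $\ast$, the other is the ``identity-like'' inclusion), and conclude from $f$-precomposition equality that they are equal, so $f$ is surjective.

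The one genuinely delicate point is the unital case of ``epi $\implies$ surjective,'' because $\uHMag$ has no cofree object (the right adjoint fails to exist, as the excerpt notes). Here I would instead argue via coequalizers: since $\uHMag$ is cocomplete (Theorem~\ref{thm:unital cocomplete}), a morphism is an epimorphism iff it is the coequalizer of its cokernel pair, or more cheaply, I would build an explicit two-object unital hypermagma $T$ that serves as a ``cogenerator for surjectivity.'' Concretely, given $f \colon M \to N$ not surjective, pick $n_0 \in N \setminus \mathrm{im}(f)$, let $T$ be the set $N$ with one extra point adjoined, made unital, and define two unital morphisms $N \rightrightarrows T$ that separate $n_0$ from everything in the image; precomposition with $f$ makes them agree, contradicting epi-ness. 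The main obstacle is checking that these two test maps are genuinely \emph{colax} morphisms of (unital) hypermagmas into the target structure --- the colax inequality $h(x \star y) \subseteq h(x) \star h(y)$ must be verified for the concrete hyperoperations chosen, and one must take care that adjoining the separating point does not accidentally force it into some product. Choosing the target to be a cofree-type structure (all products equal to the whole set, off the unit) makes the colax condition trivially satisfied and is the cleanest route; I expect the actual write-up to route the unital epimorphism argument through such a maximally-coarse unital hypermagma so that all the colax verifications collapse.
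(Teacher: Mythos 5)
Your proposal is correct and takes essentially the same route as the paper: faithfulness of the forgetful functor handles the easy directions, the free/cofree adjoints (which you unwind into explicit test objects) give mono $\Rightarrow$ injective and, for $\HMag$, epi $\Rightarrow$ surjective, and the delicate unital epimorphism case is settled exactly as you anticipate, via a maximally coarse unital test object --- the paper uses $D=\{0,1,-1\}$ with $0$ the identity and $x+y=D$ for nonzero $x,y$, together with two morphisms out of $N$ that agree on $f(M)$ but separate a point outside it. One small correction: the easy directions follow from faithfulness alone (a faithful functor reflects monos and epis); invoking preservation of limits and colimits is unnecessary and, for $U\colon\uHMag\to\Set$, not even true for coequalizers.
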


\begin{proof}
Suppose that $\C$ is either of the categories $\HMag$ or $\uHMag$. Because the forgetful functor $U \colon \C \to \Set$ is faithful, it reflects monomorphisms and epimorphisms. The fact that $U$ has a left adjoint in the case of $\HMag$ further implies that it preserves monomorphisms. 

For $\C = \HMag$ the forgetful functor also has a right adjoint, so that it preserves epimorphisms as well. Finally, we show that if $f \in \uHMag(M,N)$ is not surjective, then it is not an epimorphism. Consider the unital hypermagma $D = \{1,-1,0\}$ for which $0$ is an (additive) identity and $x + y = D$ for $x,y \in D \setminus \{0\}$. Define morphisms $g,h \in \uHMag(N,D)$ on nonidentity elements $x \in N \setminus \{e_N\}$ by $g(x) = 1$ and 
\[
h(x) = \begin{cases}
\phantom{-}1, & x \in f(M), \\
-1, & x \notin f(M).
\end{cases}
\]
Then $g \circ f = h \circ f$, but $g \neq h$ since $f(M) \subsetneq N$. Thus $f$ is not epic.
\end{proof}

Next we consider regular monomorphisms and epimorphisms. Recall that a \emph{regular} epimorphism (resp., monomorphism) in a category $\C$ is defined to be a coequalizer (resp., equalizer) in $\C$. For hypermagmas, the (co)short morphisms of Definition~\ref{def:short} turn out to be the correct characterization.

\begin{theorem}\label{thm:regular epi and mono}
In each of the categories $\HMag$ and $\uHMag$, the regular epimorphisms are the short morphisms and the regular monomorphisms are the coshort morphisms.
\end{theorem}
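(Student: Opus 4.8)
The plan is to prove the four claims (regular epi $\Leftrightarrow$ short in $\HMag$, and in $\uHMag$; regular mono $\Leftrightarrow$ coshort in $\HMag$, and in $\uHMag$) by establishing the two implications in each of the epi cases, then dualizing for the mono cases. I would first treat the epimorphism side in $\HMag$.

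\emph{Short $\Rightarrow$ regular epi.} Given a short morphism $p\colon M\twoheadrightarrow N$, the natural candidate is to realize $p$ as the coequalizer of its kernel pair $\pi_1,\pi_2\colon M\times_N M\to M$ (the kernel pair exists since $\HMag$ is complete by Proposition~\ref{prop:forgetful creates}, and $U$ creates it, so its underlying set is the set-theoretic kernel pair). Since $U$ creates colimits, the set-theoretic coequalizer of $\pi_1,\pi_2$ is $N$ as a set, and the created hyperoperation on it is exactly $x\star_K y=\bar p(\bar p^{-1}(x)\star\bar p^{-1}(y))$ where $\bar p$ is the set-theoretic quotient map. The content is then that this created hyperoperation agrees with $\star_N$, which is precisely the shortness condition~\eqref{eq:short} rewritten through the identification of the set-theoretic coequalizer with $N$. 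So $p$ \emph{is} the coequalizer of its kernel pair, hence a regular epimorphism.

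\emph{Regular epi $\Rightarrow$ short.} Suppose $p\colon M\to N$ is a coequalizer of some pair $f,g\colon A\to M$. By Proposition~\ref{prop:forgetful creates}, $U$ creates this coequalizer, so (after identifying $N$ set-theoretically with the set-theoretic coequalizer) $p$ is surjective and $\star_N$ equals the created hyperoperation $x\star y=p(p^{-1}(x)\star p^{-1}(y))$ — which is exactly~\eqref{eq:short}. Hence $p$ is short. For the $\uHMag$ version, one repeats this using Theorem~\ref{thm:unital cocomplete} and Remark~\ref{rem:short coequalizer}: the coequalizer in $\uHMag$ is $\pi_E\circ\pi_L$, which is short, and any regular epi in $\uHMag$ is such a composite; conversely a short morphism of unital hypermagmas is surjective and unital, so by the Lemma following Definition~\ref{def:short} it suffices to check~\eqref{eq:short}, and one shows it is the coequalizer of its kernel pair computed in $\uHMag$ — here one must be slightly careful that forming the kernel pair and then the $\uHMag$-coequalizer returns the created hyperoperation $\star_N$ again, which the shortness hypothesis guarantees.

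\emph{The monomorphism cases by duality.} A coshort morphism $i\colon L\to M$ is, by the remark after Definition~\ref{def:short}, an isomorphism of $L$ onto the weak subhypermagma $i(L)$, whose hyperoperation is $x\star_{i(L)}y=(x\star_M y)\cap i(L)$. I would exhibit $i$ as the equalizer of a suitable parallel pair $M\rightrightarrows P$: take $P$ to be the cofree hypermagma $D(M/i(L))$ from Proposition~\ref{prop:free and cofree} (or in the unital case its pointed analogue), with one map the composite $M\to M/i(L)\hookrightarrow D(M/i(L))$ and the other the constant map at the basepoint; the set-theoretic equalizer is $i(L)$, $U$ creates equalizers, and the created hyperoperation on $i(L)$ is exactly $(x\star_M y)\cap i(L)$ by the equalizer construction in Proposition~\ref{prop:forgetful creates}. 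Conversely, if $i$ is a regular mono, then $U$ creates the equalizer, forcing $i$ injective with the intersection hyperoperation, i.e.\ coshort.

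\emph{Anticipated main obstacle.} The genuinely delicate point is the $\uHMag$ half of ``regular epi $\Rightarrow$ short'': unlike in $\HMag$, the forgetful functor to $\Set$ (or $\Set_\bullet$) does \emph{not} create coequalizers in $\uHMag$ (as the example with $D=\{0,1,2\}$ shows), so I cannot simply read off the hyperoperation set-theoretically. Instead I must argue directly from the unitization construction of Theorem~\ref{thm:unital cocomplete}, showing that every $\uHMag$-coequalizer factors as $\pi_E\circ\pi_L$ with $\pi_L$ short (Proposition~\ref{prop:forgetful creates}) and $\pi_E$ short (Remark~\ref{rem:short coequalizer}, whose hypothesis $x\star E\neq\varnothing\neq E\star x$ holds because $E$ is the image of a unit), then invoke closure of short morphisms under composition. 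Symmetrically, for coshort $\Rightarrow$ regular mono in $\uHMag$ I must check that the pointed cofree construction and the created equalizer stay within unital hypermagmas, which is immediate since basepoints are preserved throughout.
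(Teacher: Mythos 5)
Your epimorphism arguments and your $\HMag$ monomorphism argument are essentially correct. For ``short $\Rightarrow$ regular epi'' you coequalize the kernel pair, whereas the paper instead coequalizes two maps out of the free hypermagma $F(U(M))$ (the identity and a choice of representatives for the fibers of $p$); both routes work, and yours is the more canonical choice, at the cost of having to verify---as you correctly flag---that in $\uHMag$ the unitization step in the coequalizer of the kernel pair is trivial (it is, since $\pi_L(e_M)$ is already an identity of the $\HMag$-coequalizer, which shortness identifies with $N$). The two ``regular $\Rightarrow$ (co)short'' directions read off the explicit constructions exactly as the paper does.

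The genuine gap is in ``coshort $\Rightarrow$ regular mono'' for $\uHMag$. The ``pointed analogue of the cofree hypermagma $D(M/i(L))$'' is not available: the paper observes that $\uHMag \to \Set_\bullet$ has no right adjoint. If you keep the cofree hyperoperation (all products equal the whole set), the object has no identity, so your parallel pair lives in $\HMag$ rather than $\uHMag$, and exhibiting $i$ as an equalizer in $\HMag$ does not directly make it a regular mono in the non-full subcategory $\uHMag$. If instead you decree the basepoint to be an identity, the quotient map $q\colon M \to M/i(L)$ generally fails to be a morphism: take $M = \{e,u,v,w\}$ with $u \star v = \{w\}$ and all other products of non-identity elements empty, and $L = \{e,u\}$ a weak unital subhypermagma; then $q(u \star v) = \{[w]\}$ while $q(u) \star q(v) = {*} \star [v] = \{[v]\}$, and $[w] \neq [v]$. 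The defect is that you collapse all of $i(L)$ to the identity, which forces products against elements of $i(L)$ to be singletons in the target. The paper's repair is to send only $e_M$ to the identity and to separate $i(L)$ from its complement using the three-element unital hypermagma $D = \{0,a,b\}$ with $a+a = a+b = b+b = D$: the map sending all non-identity elements to $a$, and the map sending non-identity elements of $i(L)$ to $a$ and elements outside $i(L)$ to $b$, are both unital morphisms whose equalizer is $i(L)$ with the intersection hyperoperation. (Separately, your $\HMag$ construction needs a word when $i(L) = \varnothing$, since $M/i(L)$ then has no basepoint; two distinct constant maps into $D(\{0,1\})$ cover that case.)
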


\begin{proof}
It is clear from the construction of coequalizers in Proposition~\ref{prop:forgetful creates} that regular epimorphisms in $\HMag$ are short, and the case of $\uHMag$ was discussed in Remark~\ref{rem:short coequalizer}. 
Conversely, suppose that $p \colon M \twoheadrightarrow N$ is a short morphism of (unital) hypermagmas.
Let $\sim$ be the equivalence relation on $M$ defined by $x \sim y$ if and only if $p(x) = p(y)$, and let $\{x_\alpha\}$ be a complete system of representatives for these equivalence classes (including $e_M$ in the unital case). Let $M_0 = F(U(M))$ be the free (unital) hypermagma on the underlying (pointed) set of $M$, so that the product of any two (non-unit) elements is always empty. We define morphisms $f, g \colon M_0 \to M$ by $f(x) = x$ for all $x$ and $g(x) = x_\alpha$ for the unique $\alpha$ such that $x \sim x_\alpha$. Then $f$ and $g$ are morphisms of (unital) hypermagmas for which $p$ is the  coequalizer as (pointed) sets. 

To verify that $p$ is in fact the coequalizer of $f$ and $g$ in $\HMag$ (resp., $\uHMag$), suppose that $q \colon M \to L$ is a morphism of hypermagmas with $q \circ f = q \circ g$. Then $q$ factors uniquely as $q = h \circ p$ for some function $h \colon N \to L$ (which preserves basepoints in the unital case), and we wish to demonstrate that $h$ is a morphism of hypermagmas. Suppose that $x, y \in N$. Then because $p$ is a short morphism, we have
\begin{align*}
h(x \star y) &= h(p(p^{-1}(x) \star p^{-1}(y))) \\
&= q(p^{-1}(x) \star p^{-1}(y)) \\
&\subseteq q(p^{-1}(x)) \star q(p^{-1}(y)) \\
&= h(p(p^{-1}(x))) \star h(p(p^{-1}(y))) \\
&= h(x) \star h(y).
\end{align*}
So $h$ is a morphism of hypermagmas, proving that $p$ is a coequalizer in $\HMag$ (resp., $\uHMag$).

In the case of monomorphisms, it again follows from the proof of Proposition~\ref{prop:forgetful creates} and from Theorem~\ref{thm:pointed forgetful} that equalizers in $\HMag$ and $\uHMag$ are coshort. To verify the converse in the unital case, suppose that $i \in \uHMag(M,N)$ is a coshort morphism. Consider the commutative unital hypermagma $D = \{0, 1, 2\}$ of Example~\ref{ex:not unital}, where $0$ is the identity and $1+1 = 1+2 = 2 + 2 = D$. Define $f, g \in \uHMag(N,D)$ by 
\[
f(x) = \begin{cases}
0, &  x=e_N,\\
1, & x \neq e_N,
\end{cases}
\quad
g(x) = \begin{cases}
0, & x=e_N, \\
1, & x\in i(M) \setminus \{e_N\}, \\
2, & x \notin i(M).
\end{cases}
\]
Both of these are morphisms of unital hypermagmas, and we claim that $i$ is the equalizer of this pair. It is clear that $i$ is the set-theoretic equalizer of $f$ and $g$. So if $q \in \uHMag(L,N)$ satisfies $f \circ q = g \circ q$, then it uniquely factors as $q = i \circ h$ for some function $h \colon L \to M$. It remains to show that $h$ is a morphism of unital hypermagmas. 
Indeed, for any $x,y \in L$ we have 
\[
ih(x \star_L y) = q(x \star_L y) \subseteq q(x) \star_N q(y) = ih(x) \star_N ih(y)
\]
so it follows (invoking coshortness) that
\[
h(x \star_L y) \subseteq i^{-1}(ih(x\star_L y)) \subseteq i^{-1}(ih(x) \star_N ih(y)) \subseteq h(x) \star_M h(y)
\]
as desired.

A similar argument shows that if $i \in \HMag(M,N)$ is coshort, then it is an equalizer. In this case, one uses the cofree hypermagma $D = D(\{0,1\})$ and shows that $i$ equalizes the constant~$0$ morphism and a non-constant morphism. We omit the proof for the sake of brevity.
\end{proof}

We remark that several natural properties of hypermagmas are not preserved under epimorphic images. For instance, free (unital) hypermagmas are both commutative and associative for trivial reasons. Since every (unital) hypermagma admits a surjective morphism from the free object on its underlying set, the existence of noncommutative and nonassociative structures shows that these properties are not preserved. 
By contrast, these properties are preserved under \emph{regular} epimorphic images, as shown in Lemma~\ref{lem:short image}.

We can also use this characterization to show that (unital) hypermagmas form a regular category, which is a fundamental property in categorical algebra~\cite{Gran:regular}.
A category $\C$ is \emph{regular} if it is finitely complete, coequalizers of kernel pairs exist in $\C$, and regular epimorphisms are stable under pullback in $\C$. 
In any regular category, each morphism factors~\cite[Theorem~1.11]{Gran:regular} uniquely up to isomorphism as a regular epimorphism followed by a monomorphism. In this way, morphisms in regular categories have images that are pullback-stable.

\begin{corollary}\label{cor:regular}
The categories $\HMag$ and $\uHMag$ are both regular. 
\end{corollary}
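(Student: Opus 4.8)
The plan is to verify the three defining conditions of a regular category for each of $\HMag$ and $\uHMag$: finite completeness, existence of coequalizers of kernel pairs, and pullback-stability of regular epimorphisms. The first two are immediate from earlier results. Finite completeness follows from Proposition~\ref{prop:forgetful creates} and Theorem~\ref{thm:unital cocomplete} (or Theorem~\ref{thm:pointed forgetful}), since both categories are complete. Coequalizers of kernel pairs exist because both categories are cocomplete (Proposition~\ref{prop:forgetful creates} for $\HMag$, Theorem~\ref{thm:unital cocomplete} for $\uHMag$), so in fact all coequalizers exist. The substance of the proof is therefore the third condition: every pullback of a regular epimorphism is again a regular epimorphism.

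By Theorem~\ref{thm:regular epi and mono}, regular epimorphisms in these categories are exactly the short morphisms, so the task reduces to showing that shortness is stable under pullback. I would take a span $N \xleftarrow{g} P \xrightarrow{} $ together with a short morphism $p \colon M \twoheadrightarrow N$, form the pullback $M \times_N P$ as constructed in Proposition~\ref{prop:forgetful creates} (the set-theoretic pullback equipped with the hyperoperation $(m,q) \star (m',q') = \bigl((m \star_M m') \times (q \star_P q')\bigr) \cap (M \times_N P)$, which is the evident limit hyperoperation), and show that the projection $\pi_P \colon M \times_N P \to P$ is short. Surjectivity of $\pi_P$ follows from surjectivity of $p$: given $q \in P$, pick $m \in M$ with $p(m) = g(q)$, so $(m,q)$ lies in the pullback. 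For the shortness equation $x \star y \subseteq \pi_P(\pi_P^{-1}(x) \star \pi_P^{-1}(y))$ for $x,y \in P$: take $z \in x \star_P y$; one needs to produce preimages $(a,x), (b,y)$ in the pullback with $(c,z) \in (a,x) \star (b,y)$ for some $c$. Applying $g$, we get $g(z) \in g(x) \star g(y)$ in $N$; since $p$ is short, $g(z) = p(w)$ for some $w \in a_0 \star_M b_0$ with $p(a_0) = g(x)$, $p(b_0) = g(y)$. Then $(a_0, x)$ and $(b_0, y)$ lie in the pullback, and $(w, z)$ lies in $(a_0 \star_M b_0) \times (x \star_P y)$ with matching $g$- and $p$-images, hence in the pullback hyperoperation; projecting gives $z \in \pi_P(\pi_P^{-1}(x) \star \pi_P^{-1}(y))$.

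In the unital case the same argument applies verbatim once one notes (via the lemma following Definition~\ref{def:short}) that for a unital morphism shortness is equivalent to the shortness equation alone, and that the pullback in $\uHMag$ is created by the forgetful functor to $\Set$ (Theorem~\ref{thm:pointed forgetful}), with unit $(e_M, e_P)$; the projections are automatically unital. I expect the main obstacle to be purely bookkeeping: carefully matching up the various preimages so that the element witnessing shortness downstairs in $N$ can be lifted coherently into the pullback, keeping track of which coordinate lives in which factor. No genuinely new idea beyond the characterization of regular epimorphisms as short morphisms from Theorem~\ref{thm:regular epi and mono} is needed; the proof is an assembly of that result with the explicit limit constructions already given. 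One could also phrase the conclusion as: in a regular category every morphism admits a (regular epi, mono) factorization, which here recovers the image factorization $M \twoheadrightarrow p(M) \hookrightarrow N$ through the weak subhypermagma structure on the image, consistent with Theorem~\ref{thm:regular epi and mono}.
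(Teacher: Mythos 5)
Your proposal is correct and follows essentially the same route as the paper: reduce to pullback-stability of short morphisms via Theorem~\ref{thm:regular epi and mono}, then lift a witness $z \in x \star y$ through the short morphism $p$ to an element of the pullback's hyperoperation and project back down. The only differences are notational (the paper pulls back along $f \colon L \to N$ and checks the projection to $L$, exactly mirroring your $g \colon P \to N$).
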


\begin{proof}
Each of these categories is (finitely) complete and admits all coequalizers. Thus it remains to show that regular epimorphisms---or equivalently, short morphisms---are preserved under pullbacks. Let $p \colon M \to N$ be a short surjective morphism in either of these categories, and let $f \colon L \to N$ be any morphism. The pullback
\[
\begin{tikzcd}
L \times_N M \ar[r, "\pi"] \ar[d] & L \ar[d, "f"] \\
M \ar[r, "p"] & N
\end{tikzcd}
\]
has underlying set given by the pullback of sets $L \times_N M = \{(x,x') \in L \times M \mid f(x) = p(x')\}$, with hyperoperation
\[
(x,x') \star (y, y') = [(x \star_L y) \times (x' \star_M y')] \cap L \times_N M.
\]
Since $p$ is surejctive and the diagram is a pullback in $\Set$, the projection $\pi$ is also surjective. 
We need to show that 
\[
x \star y \subset \pi(\pi^{-1}(x)\star \pi^{-1}(y))
\] 
for all $x, y\in L$. To prove this, fix $z \in x \star y$. 
By surjectivity of $\pi$, there exist $x', y' \in M$ with $(x,x') \in \pi^{-1}(x)$ and $(y,y') \in \pi^{-1}(y)$.
This means that $f(x) = p(x')$ and $f(y) = p(y')$. From the fact that $p$ is short, we obtain
\[
f(x \star y) \subseteq f(x) \star f(y) = p(p^{-1}(f(x)) \star p^{-1}(f(y))).
\]
Since $z \in x \star y$, there exist $x'',y'' \in M$ such that $p(x'') = f(x)$, $p(y'') = f(y)$, and $f(z) \in p(x'' \star y'')$, which in turn means that there exists $z' \in x'' \star y''$ with $f(z) = p(z'')$. Thus we have $(x,x''),(y,y''), (z,z') \in L \times_N M$ with 
\[
(z,z') \in [(x \star_L y) \times (x'' \star_M y'')] \cap L \times_N M = (x,x'') \star (y,y'').
\]
Then
\[
z = \pi(z,z') \in \pi((x,x'') \star (y,y'')) \subseteq \pi(\pi^{-1}(x) \star \pi^{-1}(y))
\]
as desired.
\end{proof}

Now we consider normal monomorphisms and epimorphisms. Recall that if $\C$ is a category with a zero object, then a \emph{normal} epimorphism (resp., monomorphism) is defined to be a coequalizer (resp., equalizer) between any morphism and a zero morphism, or in other words, a cokernel (resp., kernel) of any morphism.

\begin{theorem}\label{thm:normal}
In the category $\uHMag$, the normal epimorphisms are the unitizations at nonempty subsets, and the normal monomorphisms correspond to the absorptive strict unital subhypermagmas.
\end{theorem}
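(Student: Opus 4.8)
The plan is to exploit the fact that $\uHMag$ has a zero object: the singleton unital hypermagma $1$ is both terminal and initial (the unit of any $M$ must receive the point of $1$, and this assignment is automatically a colax unital morphism), so a zero morphism $M \to N$ is the one sending everything to $e_N$. Consequently a \emph{normal monomorphism} is precisely a kernel, i.e.\ an equalizer of some $f$ with the zero morphism, and a \emph{normal epimorphism} is precisely a cokernel, i.e.\ a coequalizer of some $f$ with the zero morphism. I would prove the monomorphism statement first, since it is easier and its conclusion is needed for the epimorphism statement.

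For normal monomorphisms: suppose $i = \ker f$ for some $f\colon M\to N$ in $\uHMag$. By Theorem~\ref{thm:pointed forgetful} and Proposition~\ref{prop:forgetful creates}, the source of $i$ has underlying set $\ker f = f^{-1}(e_N)$ and hyperoperation $x\star y = (x\star_M y)\cap \ker f$. I would verify: $\ker f$ is absorptive (this is exactly Remark~\ref{rem:absorptive}(1)); it is a strict subhypermagma, since $x,y\in\ker f$ gives $f(x\star_M y)\subseteq f(x)\star f(y)=\{e_N\}$ hence $x\star_M y\subseteq\ker f$; and it contains $e_M$ since $f$ is unital. On such a subset the weak and strict subhypermagma structures coincide, so $i$ is the inclusion of an absorptive strict unital subhypermagma. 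Conversely, given an absorptive strict unital subhypermagma $K\subseteq M$, take $f=\pi_K\colon M\to M_K$, the unitization relative to $E=K$ from Lemma~\ref{lem:unitization}; it is a morphism of $\uHMag$ because $e_M\in K$, and its kernel is the smallest absorptive strict subhypermagma containing $K$, namely $K$. The equalizer of $\pi_K$ with the zero morphism then has underlying set $\ker\pi_K=K$ and hyperoperation $(x\star_M y)\cap K = x\star_M y$, so it recovers $K\hookrightarrow M$, exhibiting $i$ as a normal monomorphism.

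For normal epimorphisms: let $p\colon M\to N$ be the cokernel of $f\colon A\to M$, and set $K=\ker p=p^{-1}(e_N)$. By the previous paragraph $K$ is an absorptive strict unital subhypermagma, and $p\circ f = 0$ forces $f(A)\subseteq K$. Since $p$ kills $K$, the universal property of the unitization $\pi_K$ factors $p=\bar p\circ\pi_K$ with $\bar p\in\uHMag(M_K,N)$ unique; since $\pi_K$ kills $f(A)\subseteq K=\ker\pi_K$, the universal property of the cokernel $p$ factors $\pi_K=s\circ p$ with $s\in\uHMag(N,M_K)$ unique. As $p$ and $\pi_K$ are epimorphisms (Proposition~\ref{prop:mono and epi}), cancelling in $s\circ\bar p\circ\pi_K=\pi_K$ and $\bar p\circ s\circ p=p$ gives $s\circ\bar p=\mathrm{id}$ and $\bar p\circ s=\mathrm{id}$, so $\bar p$ is an isomorphism and $p$ is, up to isomorphism, the unitization $\pi_K$. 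Conversely, any unitization $\pi_E\colon M\to M_E$ with $E\neq\varnothing$ equals $\pi_K$ for $K$ the absorptive strict unital subhypermagma generated by $E$ (it contains $e_M$ by Remark~\ref{rem:absorptive}(2)), and $\pi_K$ is the cokernel of the inclusion $\iota_K\colon K\hookrightarrow M$: a $\uHMag$-morphism $q$ coequalizes $\iota_K$ and the zero morphism iff $q(K)=\{e\}$, i.e.\ $K\subseteq\ker q$, which is exactly the universal property defining $\pi_K$ — here one uses $e_M\in K$ so that the colax-morphism universal property of Lemma~\ref{lem:unitization} automatically lands in $\uHMag$. Hence $\pi_K$ is a normal epimorphism.

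The main obstacle is this last correspondence: carefully matching the universal property of the unitization (phrased in Lemma~\ref{lem:unitization} for colax morphisms into unital targets) with the universal property of a cokernel in $\uHMag$, after which the rest is the routine ``two mutually inverse factorizations'' argument. A minor point requiring care is the degenerate case $A=\varnothing$, where the cokernel is $\mathrm{id}_M$; this is still a unitization, namely $\pi_{\{e_M\}}$, since $\{e_M\}$ is an absorptive strict unital subhypermagma with $M_{\{e_M\}}=M$.
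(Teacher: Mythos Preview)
Your proof is correct and follows essentially the same strategy as the paper. The monomorphism half is identical: kernels are absorptive strict unital subhypermagmas by Remark~\ref{rem:absorptive}(1), and conversely such a $K$ is recovered as $\ker\pi_K$ via Lemma~\ref{lem:unitization}. For the epimorphism half the paper is slightly more direct: rather than your mutual-factorization argument identifying a cokernel $p$ with $\pi_{\ker p}$, the paper simply observes that the cokernel of $f\colon L\to M$ \emph{is} the unitization $\pi_{f(L)}$, since the two universal properties coincide (a unital morphism out of $M$ kills $f$ iff its kernel contains $f(L)$); and conversely, for $e_M\in E$, the unitization $\pi_E$ is the cokernel of the inclusion of the weak unital subhypermagma $E\hookrightarrow M$.

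One small slip: your ``degenerate case $A=\varnothing$'' does not arise, since $\uHMag$ has no empty object---every unital hypermagma contains its unit. The relevant degenerate case is $A=1$ (the zero object), where $f$ is the zero morphism and its cokernel is $\mathrm{id}_M=\pi_{\{e_M\}}$, exactly as you describe.
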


\begin{proof}
Let $E \subseteq M$ be a nonempty subset.
To see that any unitization $\pi_E \colon M \twoheadrightarrow M_E$ is normal, first note that we may replace $E$ with the kernel of $\pi_E$ to assume without loss of generality that $E \neq \varnothing$ is a strict absorptive subhypermagma of $M$. By Remark~\ref{rem:absorptive}, we have $e_M \in E$, so that $E$ is a unital subhypermagma. Then it follows from the universal property of the unitization that $\pi_E$ is the coequalizer of $e_M,i_E \colon E \to M$.
Conversely, if $f \in \uHMag(L,M)$, then it is straightforward to see that the coequalizer of $f$ and the trivial morphism $e_M \colon L \to M$ is the same as the unitization $\pi_E \colon M \to M_E$ for $E = f(L) \subseteq M$.

Next, for any $g \in \uHMag(M,N)$, the equalizer of $g$ and $e_N$ is the inclusion of the kernel $i \colon K \hookrightarrow M$ where $K = f^{-1}(e_N)$. From Remark~\ref{rem:absorptive}(1) we see that $K$ is an absorptive strict unital subhypermagma. 
Conversely, if $K \subseteq M$ is an absorptive strict unital subhypermagma, then it follows from Lemma~\ref{lem:unitization} that $K$ is the kernel of the unitization $\pi \colon M \to M_K$, so that the inclusion $K \hookrightarrow M$ is a normal monomorphism.
\end{proof}

In Section~\ref{sec:mosaics} we will consider (co)limits in the category of mosaics. At that point we will establish similar characterizations of the various epimorphisms and monomorphisms in $\Msc$ and $\cMsc$.

\separate

The previous theorem can also be viewed as a categorical characterization of strict injective morphisms of hypermagmas. Given the importance of strict morphisms in the literature on hypergroups and hyperrings, it seems natural to ask whether there is a categorical characterization of strict morphisms in general. We answer this question in Theorem~\ref{thm:strict morphisms} below.

Let $\C$ denote a category of hypermagmas, i.e.~a category with a faithful forgetful functor to $\HMag$. We have the following functor 
\begin{align*}
E \colon \C &\to \Set \\
M &\mapsto \{(x,y,z) \in M^3 \mid z \in x \star y\}.
\end{align*}
If this functor happens to be representable, we let $\mathcal{E}_\C$ denote the representing object in $\C$. 

\begin{lemma}\label{lem:representing object}
If $\C$ is any of the categories $\HMag$, $\uHMag$, $\Msc$, or $\cMsc$, then the object $\E_\C$ defined above exists in $\C$.
\end{lemma}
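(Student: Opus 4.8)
The plan is to exhibit $\E_\C$ concretely as a hypermagma on three generators, obtained as a free object subject to a single relation, and then to verify that it represents the functor $E$. In each of the four categories, let $\FF$ denote the free object (in the appropriate category $\HMag$, $\uHMag$, $\Msc$, or $\cMsc$) on a set $\{a,b,c\}$ of three generators; such free objects exist by Proposition~\ref{prop:free and cofree} and the results cited in the Introduction (e.g.\ Theorem~\ref{thm:free reversible} in the reversible cases). Then $\E_\C$ should be the quotient of $\FF$ that forces the relation $c \in a \star b$; more precisely, one takes the smallest congruence — or, more concretely in light of the explicit colimit constructions of Subsection~\ref{sub:limits and colimits} — yielding a universal morphism $\FF \to \E_\C$ whose image of $(a,b,c)$ satisfies $c \in a \star b$. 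In the non-reversible cases one can build this directly: start from the free object, designate elements $x_0, y_0, z_0$, and enlarge the hyperoperation minimally so that $z_0 \in x_0 \star y_0$, taking unitization (Lemma~\ref{lem:unitization}) afterward in $\uHMag$ and closing under inversion in the mosaic cases.

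The key steps, in order, are: (1) construct the candidate object $\E_\C$ as above, naming the distinguished triple $(x_0,y_0,z_0)$ with $z_0 \in x_0 \star y_0$; (2) given any $M$ in $\C$ and any triple $(x,y,z) \in M^3$ with $z \in x\star y$, use the universal property of the free object to get a morphism sending the generators to $x,y,z$, and check that the relation $z_0 \in x_0 \star y_0$ is respected so that the map descends to a morphism $\E_\C \to M$; (3) conversely, given a morphism $f \colon \E_\C \to M$, note that $f(z_0) \in f(x_0 \star y_0) \subseteq f(x_0)\star f(y_0)$ since $f$ is a (colax) morphism, so $(f(x_0),f(y_0),f(z_0))$ lies in $E(M)$; (4) check these two assignments are mutually inverse and natural in $M$, giving $\C(\E_\C, M) \cong E(M)$ naturally. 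Naturality is essentially automatic once the bijection is described in terms of evaluating at the universal triple.

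The main obstacle is step (1): making precise the quotient of the free object that imposes exactly the relation $c \in a\star b$ and no more, and confirming that the distinguished generators remain distinct and that no unintended collapsing occurs (for instance, in $\uHMag$ one must be careful that none of $x_0,y_0,z_0$ is forced to equal the identity, and the unitization in Lemma~\ref{lem:unitization} must be applied to the right subset). A clean way to sidestep delicate congruence bookkeeping is to observe that $E$ is a limit of representable functors: the condition ``$z \in x\star y$'' can be phrased using the representable-ish data already available, and $\E_\C$ can be assembled as a finite colimit in $\C$ of free objects (all of which exist by the cocompleteness results of Subsection~\ref{sub:limits and colimits} and Subsection~\ref{sub:convenient}), so representability of $E$ follows from the fact that $\C$ has the relevant colimits and that $E$ sends them appropriately. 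Either route works; the colimit route is shorter to write but the explicit-generators route makes $\E_\C$ usable for the applications in Theorem~\ref{thm:strict morphisms}.
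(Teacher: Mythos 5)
Your overall strategy---build the ``freest'' object on generators $a,b,c$ subject only to $c \in a \star b$, then check that evaluation at the universal triple gives the natural bijection $\C(\E_\C, M) \cong \{(x,y,z) \in M^3 \mid z \in x \star y\}$---is exactly the approach the paper takes, and steps (2)--(4) of your outline are fine. The problem is that you have identified the actual content of the lemma as ``the main obstacle'' and then not resolved it. In $\HMag$ and $\uHMag$ the construction is indeed trivial (take $\{a,b,c\}$ with $a \star b = \{c\}$ and all other products empty, then adjoin a unit), but in $\Msc$ and $\cMsc$ the relation $c \in a \star b$ is not an isolated datum: reversibility forces $a \in c \star b^{-1}$ and $b \in a^{-1} \star c$, and Lemma~\ref{lem:inversion} then forces $c^{-1} \in b^{-1} \star a^{-1}$, $b^{-1} \in c^{-1} \star a$, and $a^{-1} \in b \star c^{-1}$. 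One must write down the resulting seven-element object (with $e$ and $a^{\pm1}, b^{\pm1}, c^{\pm1}$), verify that these six forced products together with the inverse pairings $x \star x^{-1} \ni e$ close up under reversibility with nothing further forced, and check that the result is genuinely a (commutative, in the $\cMsc$ case) mosaic. ``Closing under inversion'' does not capture this; the closure is under the reversibility implications, and that is where all the work lies. The paper resolves this by exhibiting explicit multiplication tables.

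Two smaller issues. First, describing $\E_\C$ as a \emph{quotient} of the free object by a congruence is misleading: the free hypermagma on $\{a,b,c\}$ has empty hyperoperation, and imposing $c \in a \star b$ enlarges the hyperoperation on the same underlying set rather than identifying elements, so the universal map $F(\{a,b,c\}) \to \E_\C$ is a bijective non-isomorphism, not a regular quotient. You do also describe the correct ``enlarge minimally'' construction, so this is recoverable, but the congruence language should go. Second, the proposed shortcut at the end---that representability of $E$ ``follows from the fact that $\C$ has the relevant colimits''---does not work as stated: $E$ is a subfunctor of the representable $M \mapsto M^3$ cut out by the condition $z \in x \star y$, and this is not a limit or colimit of representable functors in any way that would make representability automatic. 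The explicit construction is not optional here.
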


\begin{proof}
In each case we define the ``freest'' possible object $\E$ generated by elements $a,b,c \in \E$, subject to the condition
\[
c \in a \star b. 
\]We will define the objects and omit the straightforward proofs that they represent the corresponding functor.

If $\C = \HMag$, we take $\E_\C = \{a,b,c\}$ with hyperoperation
\[
x \star y = \begin{cases}
c, & x = a \mbox{ and } y = b, \\
\varnothing, & \mbox{otherwise}.
\end{cases}
\]
If $\C = \uHMag$ we take $\E_\C = \{e,a,b,c\}$ with the hyperoperation extending the definition above so that $e$ becomes an identity element. (In other words, it is the unitization of $\E_{\HMag}$ at the empty set.)

In the case $\C = \Msc$, we define $\E_\C = \{e, a^{\pm 1}, b^{\pm 1}, c^{\pm 1}\}$ with the hyperoperation such that $e$ is an identity and whose products of nonidentity elements are given by the following table:
\[
\begin{array}{|c|c|c | c | c | c | c|} 
  \hline
   \star& a & a^{-1} & b & b^{-1} & c & c^{-1} \\ 
  \hline
  a & \varnothing & e & c & \varnothing & \varnothing & \varnothing\\ 
  \hline
  a^{-1} & e & \varnothing & \varnothing & \varnothing & b & \varnothing\\ 
  \hline
  b & \varnothing & \varnothing & \varnothing & e & \varnothing & a^{-1} \\
  \hline
  b^{-1} & \varnothing & c^{-1} & e & \ \varnothing & \varnothing & \varnothing\\
  \hline
  c & \varnothing & \varnothing & \varnothing & a & \varnothing & e\\
  \hline
  c^{-1} & b^{-1} & \varnothing & \varnothing & \varnothing & e & \varnothing\\
  \hline  
  \end{array}
\]

In the case $\C = \cMsc$, we symmetrize the table above by setting $\E_\C = \{0, \pm a, \pm b, \pm c\}$ and defining hyperaddition of nonzero elements by the table:
\[
\begin{array}{ | c | c | c | c | c | c | c |} 
  \hline
   + & a & -a & b & -b & c & -c\\ 
  \hline
  a & \varnothing & 0 & c & \varnothing & \varnothing & -b\\ 
  \hline
  -a & 0 & \varnothing & \varnothing & -c & b & \varnothing\\ 
  \hline
  b & c & \varnothing & \varnothing & 0 & \varnothing & -a\\
  \hline
  -b & \varnothing & -c & 0 & \ \varnothing & a & \varnothing\\
  \hline
  c & \varnothing & b & \varnothing & a & \varnothing & 0\\
  \hline
  -c & -b & \varnothing & -a & \varnothing & 0 & \varnothing\\
  \hline  
  \end{array}
\]

The reader can verify from construction of each $\E = \E_\C$ that we have bijections
\begin{align*}
\C(\E, M) &\cong E(M) = \{(x,y,z) \in M^3 \mid z \in x \star y\}, \\
f &\mapsto (f(a), f(b), f(c)),
\end{align*}
with $\id_\E$ corresponding to the universal element $(a,b,c) \in E(\E) \subseteq \E^3$. 
For instance, in the case $\C = \cMsc$, let $(M,+,0)$ be a commutative mosaic. 
For a morphism $f \colon \E_{\cMsc} \to M$, the fact that
\[
f(c) = f(a + b) \subseteq f(a) + f(b).
\]
shows that $(f(a), f(b), f(c)) \in E(M)$. Conversely, for any $(x,y,z) \in E(M)$, define a function $f \colon \E_{\cMsc} \to M$ by setting $f(\pm a) = \pm x$, $f(\pm b) = \pm y$, and $f(\pm c) = \pm z$. We may now verify using the table above and reversibility of $M$ that $f$ is in fact a morphism. (For instance, by construction we have $f(a+b) = f(c) = z \in x + y \in f(a) + f(b)$. Similarly, because $x \in z - y$ we have $f(c-b) \subseteq f(c) + f(-b)$.) This provides an inverse to the map $\cMsc(\E_{\cMsc}, M) \to E(M)$ defined above, showing that it is bijective.
\end{proof}

We have seen that the categories $\HMag$ and $\uHMag$ have free objects, and it will be verified in Theorem~\ref{thm:free reversible} that the same is true for $\Msc$ and $\cMsc$. In any of these categories, let $F_2 = F(\{a,b\})$ denote the free object on two elements $a$ and $b$. The inclusion $\{a,b\} \subseteq \E_\C$ then induces a morphism
\[
\iota_\C \colon F_2 \hookrightarrow \E_\C,
\]
which is injective for each choice of $\C$ above. 

This leads to the following characterization of strict morphisms in terms of lifting property, which we find reminiscent of various valuative criteria in algebraic geometry. 

\begin{theorem}\label{thm:strict morphisms}
Let $\C$ denote any of the categories $\HMag$, $\uHMag$, $\Msc$, or $\cMsc$, and retain the notation of $F_2$, $\E := \E_\C$, and $\iota := \iota_\C$ as above. Suppose that $f \colon M \to N$ is a morphism in $\C$. Then the following are equivalent:
\begin{enumerate}[label=\textnormal{(\alph*)}]
\item $f$ is strict;
\item Given any morphisms $\alpha \in \C(F_2, M)$ and $\beta \in \C(\E_\C,N)$ satisfying $f \circ \alpha = \beta \circ \iota_\C$, there exists $g \in \C(\E_\C, M)$ such that
\[
\begin{tikzcd}
	{F_2} && M \\
	\\
	\E && N
	\arrow["f", from=1-3, to=3-3]
	\arrow["\alpha", from=1-1, to=1-3]
	\arrow["\iota_\C"', hook, from=1-1, to=3-1]
	\arrow["\beta"', swap, from=3-1, to=3-3]
	\arrow["{\exists g}", dashed, from=3-1, to=1-3]
\end{tikzcd}
\]
forms a commuting diagram;
\item Every morphism from $\iota_\C$ to $f$ in the arrow category~\cite[II.4]{MacLane} $\cat{Arr}(\C) = \C^{\mathbf{2}}$ factors through the morphism 
\[
\begin{tikzcd}
M \ar[r, equal] \ar[d, equal] & M \ar[d, "f"] \\
M  \ar[r, "f"] & N
\end{tikzcd}
\]
from $\id_M$ to $f$.
\end{enumerate}
\end{theorem}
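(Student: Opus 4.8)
The plan is to prove (a) $\iff$ (b) and (b) $\iff$ (c), with the bulk of the argument being a routine translation of the lifting problem in (b) into elementwise language via the universal properties of $F_2$ and $\E_\C$.

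\emph{Translating the data of (b).} By the universal property of the free object $F_2 = F(\{a,b\})$ --- which exists in all four categories by Proposition~\ref{prop:free and cofree}, Theorem~\ref{thm:pointed forgetful}, and Theorem~\ref{thm:free reversible} --- a morphism $\alpha \in \C(F_2, M)$ is the same datum as the pair $(x,y) := (\alpha(a), \alpha(b)) \in M^2$, and the morphism $\iota_\C$ carries the generators $a,b$ of $F_2$ to the elements $a,b$ of $\E_\C$. By Lemma~\ref{lem:representing object}, a morphism $\beta \in \C(\E_\C, N)$ is the same datum as a triple $(x', y', z') := (\beta(a), \beta(b), \beta(c)) \in N^3$ with $z' \in x' \star y'$. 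Under these identifications, the condition $f \circ \alpha = \beta \circ \iota_\C$ becomes $x' = f(x)$ and $y' = f(y)$, while a lift $g \in \C(\E_\C, M)$ with $g \circ \iota_\C = \alpha$ and $f \circ g = \beta$ amounts to a triple $(x, y, z'') \in M^3$ with $z'' \in x \star y$ and $f(z'') = z'$.

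\emph{The equivalence (a) $\iff$ (b).} With the translation above, condition (b) says exactly that for every $x, y \in M$ and every $z' \in f(x) \star f(y)$ there exists some $z'' \in x \star y$ with $f(z'') = z'$ --- in other words, $f(x) \star f(y) \subseteq f(x \star y)$ for all $x,y \in M$, i.e.\ $f$ is a lax morphism in the sense of Definition~\ref{def:hypermagma}. Since $f$ is a morphism of $\C$ it is automatically colax, and a morphism that is simultaneously colax and lax is precisely a strict morphism; thus (a) $\iff$ (b).

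\emph{The equivalence (b) $\iff$ (c).} This is a formal unwinding of the arrow category $\cat{Arr}(\C)$. A morphism from $\iota_\C$ to $f$ in $\cat{Arr}(\C)$ is by definition a pair $(\alpha,\beta)$ with $\alpha \in \C(F_2,M)$, $\beta \in \C(\E_\C,N)$, and $f \circ \alpha = \beta \circ \iota_\C$ --- precisely the data appearing in (b). The displayed morphism from $\id_M$ to $f$ is the pair $(\id_M, f)$, and a morphism from $\iota_\C$ to $\id_M$ is the same as a single arrow $\delta \colon \E_\C \to M$ (its component over the codomain), its component over the domain being necessarily $\delta \circ \iota_\C$. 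Composing with $(\id_M, f)$ sends such a $\delta$ to the pair $(\delta \circ \iota_\C,\, f \circ \delta)$. Hence $(\alpha,\beta)$ factors through $(\id_M, f)$ if and only if there is some $\delta \colon \E_\C \to M$ with $\delta \circ \iota_\C = \alpha$ and $f \circ \delta = \beta$ --- exactly the existence of the lift $g$ demanded in (b). Therefore (b) $\iff$ (c).

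\emph{Where the difficulty lies.} There is no genuine obstacle: the entire argument rests on correctly identifying the representing objects $F_2$ and $\E_\C$ and observing that $\iota_\C$ matches up their generators, so that the abstract lifting problem decodes to the single containment $f(x)\star f(y) \subseteq f(x\star y)$. The one point worth care is uniformity over the four choices of $\C$ --- for $\Msc$ and $\cMsc$ one must invoke the existence of free objects and the fact (Lemma~\ref{lem:representing object}) that $\E_\C$ still represents the ``triple'' functor in those settings --- together with the harmless remark that, even in the commutative case, the pair $(x,y)$ and the triple $(x,y,z)$ are genuinely ordered, which causes no trouble since $f(x)\star f(y) \subseteq f(x\star y)$ is symmetric in $x$ and $y$.
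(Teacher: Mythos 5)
Your proposal is correct and follows essentially the same route as the paper: both reduce the lifting problem to the elementwise statement that every $z \in f(x) \star f(y)$ lifts to some $z'' \in x \star y$, i.e.\ that $f$ is lax (hence strict, being already colax), and both treat (b) $\iff$ (c) as a formal unwinding of the arrow category. The only cosmetic difference is that you organize the element-chasing through the representability bijections of Lemma~\ref{lem:representing object} up front, whereas the paper constructs $g$ and $\beta$ directly on generators.
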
 

\begin{proof}
(a)$\implies$(b): First assume that $f$ is strict and that $\alpha$ and $\beta$ are as in~(b). We must find $g : \E \to M$ that makes the diagram commute. Write $x:=\alpha(a)$ and $y:=\alpha(b)$. By the equality $f \circ \alpha = \beta \circ \iota$, we have $f(x)=\beta(a)$ and $f(y)=\beta(b)$. By strictness of $f$, 
\[
\beta(c) \in \beta(a) \star \beta(b) = f(x) \star f(y)=f(x \star y).
\]
Thus there exists $z \in x \star y$ such that $\beta(c)=f(z)$. So we may define $g \colon \E \to M$ by  setting $g(a) = x$, $g(b) = y$, and $g(c) = z$. Then $g$ makes the diagram of~(b) commute, since:
\begin{gather*}
g \circ \iota(a)=x=\alpha(a), \quad g\circ\iota(b)=y=\alpha(b), \\
f\circ g(a)=f(x)=\beta(a), \  f\circ g(b)=f(y)=\beta(b), \ f\circ g(c)=f(z)=\beta(c).
\end{gather*}

(b)$\implies$(a) Next, assume that $f$ satisfies condition~(b). We wish to verify that $f(x)\star  f(y)\subset f(x \star y)$ for all $x, y\in M$. To this end, fix $z \in f(x) \star f(y)$. Let $\alpha \colon F_2 \to M$ be the morphism defined by $\alpha(a)=x$ and $\alpha(b)=y$. Since $z\in f(x) \star f(y)$, there exists a morphism $\beta \colon \E \to N$ given by $a \mapsto f(x)$, $b \mapsto f(y)$, and $c \mapsto z$. Then we have $f\circ \alpha = \beta \circ \iota$. By hypothesis we obtain $g \colon \E \to M$ such that $\alpha = g\circ \iota$ and $\beta =f\circ g$. These imply that $x=g(a)$, $y=g(b)$, and $z =\beta(c) =f \circ g(c)$. But then
\[
z = f(g(c)) \in f(g(a) \star g(b)) = f(x \star y)
\]
as desired.

The equivalence (b)$\iff$(c) is a formal reinterpretation.
\end{proof}

Incidentally, the objects $\E_\C$ also serve to characterize short morphisms in terms of the following lifting property.

\begin{proposition}\label{prop:short lifting}
Suppose that $\C$ is any of the categories $\uHMag$, $\Msc$, or $\cMsc$ with corresponding object $\E = \E_\C$ as in Lemma~\ref{lem:representing object}. A morphism $p \colon M \to N$ in $\C$ is short if and only if, for every morphism $f \in \C(\E,N)$ there exists $g \in \C(\E,M)$ such that $f = p \circ g$, i.e.~the map $p_* \colon \C(\E, M) \to \C(\E, N)$ is surjective.
\end{proposition}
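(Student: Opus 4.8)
The plan is to unwind the representability of $\E = \E_\C$ and thereby reduce the claim to the description of shortness that follows Definition~\ref{def:short}. By Lemma~\ref{lem:representing object} there is a bijection $\C(\E, M) \cong \{(x,y,z) \in M^3 \mid z \in x \star y\}$ sending a morphism $g \colon \E \to M$ to the triple $(g(a), g(b), g(c))$, and this bijection is natural in $M$; naturality shows that $p_* \colon \C(\E, M) \to \C(\E, N)$ corresponds under it to the map $(x,y,z) \mapsto (p(x), p(y), p(z))$ (which is well defined because $p(z) \in p(x \star y) \subseteq p(x) \star p(y)$). So surjectivity of $p_*$ is equivalent to the condition: for every $(x',y',z') \in N^3$ with $z' \in x' \star y'$, there exist $x,y,z \in M$ with $z \in x \star y$, $p(x)=x'$, $p(y)=y'$, and $p(z)=z'$. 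I will show this is equivalent to $p$ being short, using throughout that $p$ is short iff it is surjective and $x' \star y' \subseteq p(p^{-1}(x') \star p^{-1}(y'))$ for all $x',y' \in N$ (the containment $\supseteq$ of~\eqref{eq:short} being automatic, as remarked after Definition~\ref{def:short}).

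For ``short $\Rightarrow$ $p_*$ surjective'': given a valid triple $(x',y',z')$ in $N$, shortness gives $z' \in x' \star y' = p(p^{-1}(x') \star p^{-1}(y'))$, so one may pick $x \in p^{-1}(x')$, $y \in p^{-1}(y')$, and $z \in x \star y$ with $p(z) = z'$, producing the required lift.

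For the converse, assume $p_*$ is surjective. First, $p$ itself is surjective: for any $x' \in N$ the triple $(x', e_N, x')$ is valid, since $x' \in x' \star e_N = \{x'\}$ in the unital category $\C$, and lifting it yields in particular some $x \in M$ with $p(x) = x'$. Next, fix $x', y' \in N$ and $z' \in x' \star y'$, and lift the valid triple $(x',y',z')$ to get $x \in p^{-1}(x')$, $y \in p^{-1}(y')$, and $z \in x \star y$ with $p(z) = z'$; then $z' = p(z) \in p(x \star y) \subseteq p(p^{-1}(x') \star p^{-1}(y'))$. Since $z' \in x' \star y'$ was arbitrary, $p$ satisfies~\eqref{eq:short} and is surjective, hence short.

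The argument is a direct unwinding, so there is no substantial obstacle; the one point requiring a little care is recovering the surjectivity clause of ``short'' for $p$ from surjectivity of $p_*$, which is precisely where the unitality of the objects of $\C$ enters, via the triple $(x', e_N, x')$. Since $\E_\C$ is defined in the same manner, mutatis mutandis, in each of $\uHMag$, $\Msc$, and $\cMsc$, the proof is uniform across the three cases; in the commutative case one simply reads $+$ for $\star$ and $0$ for $e$.
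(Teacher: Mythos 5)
Your proof is correct and follows essentially the same route as the paper: both unwind the representability of $\E_\C$ to identify $p_*$ with the map $(x,y,z)\mapsto(p(x),p(y),p(z))$ on triples, recover surjectivity of $p$ by lifting a triple built from the identity element (the paper uses $(e,n,n)$ where you use $(x',e_N,x')$), and then match the lifting condition with the containment form of shortness.
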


\begin{proof}
The map $p_*$ corresponds to the following map on the representable functors:
\begin{align*}
\{(u,v,w) \in M^3 \mid w \in u \star v\} \cong \C(\E, M) &\to \C(\E, N) \cong \{(x,y,z) \in N^3 \mid z \in x \star y\}, \\
(u,v,w) &\mapsto (p(u), p(v), p(w)).
\end{align*}
If this function is surjective, note that $p$ is surjective since, for any $n \in N$, we may fix $(x,y,z) = (e,n,n)$ which means that there exists $w \in M$ such that $p(w) = n$.

Then $p$ is short if and only if it is surjective and, for every $x,y \in N$ we have $x \star y \subseteq p(p^{-1}(x) \star p^{-1}(y))$. The latter condition occurs if and only for all $z \in x \star y$ there exist $u \in p^{-1}(x)$, $w \in p^{-1}(y)$, $w \in p^{-1}(z)$ such that $w \in u \star v$. So $p$ is short if and only if it is surjective and the function above is surjective. But surjectivity of $p$ is implied by the surjectivity of the function above, so we deduce that $p$ is short if and only if the lifting property is satisfied.
\end{proof} 

Note that the above characterization is readily extended to $\C = \HMag$ if we include the assumption that $p$ is surjective.

\subsection{Closed monoidal structures}
\label{sub:closed monoidal}

Just as $\HMag$ has its limits and colimits lifted along the forgetful functor $U$ to $\Set$, so we now describe a closed monoidal structure that is similarly lifted via $U$.
For hypermagmas $G$ and $H$, one can naively endow the set of all morphisms from $G$ to $H$ with a hyperoperation in the following way: for $f,g \in \HMag(G,H)$, define
\begin{equation}\label{eq:hyperoperation}
f \star g := \{h \in \HMag(G,H) \mid h(x) \in f(x) \star_H g(x) \mbox{ for all } x \in G\}.
\end{equation}
Under the usual identification of the set of all functions $f \colon G \to H$ with a Cartesian power as
\[
\Set(G,H) = H^G,
\]
the hyperoperation of the product $H^G = \prod_{x \in G} H$ can be viewed as a hyperoperation 
on the set of functions $\Set(G,H)$. Then~\eqref{eq:hyperoperation} is simply the restriction of this hyperoperation to the subset $\HMag(G,H) \subseteq \Set(G,H)$.

Let $X$ and $Y$ be hypermagmas. We will define a hypermagma $X \boxdot Y$ whose underlying set is $X \times Y$. Its elements are denoted $x \boxdot y := (x,y) \in X \boxdot Y$. Given subsets $S \subseteq X$ and $T \subseteq Y$, we will use the shorthand $S \boxdot T = \{s \boxdot t \mid (s,t) \in S \times T\} \subseteq X \boxdot Y$.
With this convention, the hyperoperation of $X \boxdot Y$ is given by
\[
(x \boxdot y) \star (x' \boxdot y') = 
\begin{cases}
x \boxdot (y \star y'), & x = x' \mbox{ and } y \neq y', \\
(x \star x') \boxdot y, & x \neq x' \mbox{ and } y = y', \\
(x \star x) \boxdot y \, \cup \, x \boxdot (y \star y), & (x,y) = (x',y'), \\
\varnothing, & \mbox{otherwise}.
\end{cases}
\] 
Note that this hyperoperation is defined with the smallest possible products of ordered pairs satisfying the properties
\begin{align}
x \boxdot (y_1 \star y_2) &\subseteq x \boxdot y_1 \, \star \, x \boxdot y_2, \label{eq:right dist}\\
(x_1 \star x_2) \boxdot y &\subseteq x_1 \boxdot y \, \star \, x_2 \boxdot y. \label{eq:left dist}
\end{align}

A routine case-by-case argument verifies that the assignment $(X,Y) \mapsto X \boxdot Y$ forms a bifunctor
\[
- \boxdot - \colon \HMag \times \HMag \to \HMag.
\]
This bifunctor can be understood as a kind of tensor product in the sense of~\cite{Jagiello:tensor, Pop:concrete}, as we will show in Proposition~\ref{prop:bimorphisms} below: it represents the appropriate generalization of ``bilinear maps'' in this context. We recall the following definition in the general context of concrete categories as we will use it in several different categories. 

\begin{definition}
Let $(\catC, U)$ be a concrete category, i.e.~a category with a faithful functor $U \colon \catC \to \Set$.  For objects $X,Y,Z \in \catC$, a function $B \colon U(X) \times U(Y) \to U(Z)$ is a \emph{bimorphism} if
\begin{itemize}
\item for all $x \in X$, the map $B(x,-) \colon U(Y) \to U(Z)$ satisfies $B(x,-) = U(f_x)$ for some $f_x \in \catC(Y,Z)$, and
\item for all $y \in Y$, the map $B(-,y) \colon U(X) \to U(Z)$ satisfies $B(-,y) = U(g_y)$ for some $g_y \in \catC(X,Z)$.
\end{itemize}
The set of all bimorphisms $B \colon X \times Y \to Z$ will be denoted by $\Bim(X,Y; Z) = \Bim_\catC(X,Y;Z)$. Then we obtain a multifunctor 
\[
\Bim(-,- \, ; -) \colon \catC\op \times \catC\op \times \catC \to \Set
\]
in a straightforward way.
\end{definition}

In all instances below, $U$ will be the obvious forgetful functor and it will be suppressed throughout.

\begin{proposition}\label{prop:bimorphisms}
For hypermagmas $X$ and $Y$, the hypermagma $X \boxdot Y$ represents the functor of bimorphisms $\Bim(X,Y; -)$. That is, we have bijections
\[
\HMag(X \boxdot Y, Z) \cong \Bim(X,Y; Z)
\] 
that are natural in all $X,Y,Z \in \HMag$.
\end{proposition}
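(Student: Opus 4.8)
The plan is to establish the natural bijection $\HMag(X \boxdot Y, Z) \cong \Bim(X,Y;Z)$ by first observing that both sides have the same underlying set, namely functions $X \times Y \to Z$ satisfying appropriate conditions, and then checking that the morphism condition on the left matches the bimorphism condition on the right. First I would note that a function $\varphi \colon X \times Y \to Z$ is the same as a function $U(X \boxdot Y) \to U(Z)$ since $X \boxdot Y$ has underlying set $X \times Y$. So I must show: $\varphi$ is a morphism of hypermagmas $X \boxdot Y \to Z$ (i.e.\ $\varphi((x \boxdot y) \star (x' \boxdot y')) \subseteq \varphi(x \boxdot y) \star_Z \varphi(x' \boxdot y')$ for all pairs) if and only if $\varphi$ is a bimorphism (i.e.\ each $\varphi(x, -)$ and each $\varphi(-, y)$ is a morphism of hypermagmas).

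For the forward direction, suppose $\varphi$ is a morphism out of $X \boxdot Y$. To see that $\varphi(x,-) \colon Y \to Z$ is a morphism, fix $x \in X$ and $y \neq y'$ in $Y$; then by the first case of the definition of $\star$ on $X \boxdot Y$ we have $\varphi(x \boxdot (y \star y')) = \varphi((x \boxdot y) \star (x \boxdot y')) \subseteq \varphi(x,y) \star_Z \varphi(x,y')$, which is exactly the colax condition for $\varphi(x,-)$ on distinct arguments; the diagonal case $y = y'$ is handled using the third case of the definition, since $(x \star x) \boxdot y \cup x \boxdot (y \star y)$ contains $x \boxdot (y \star y)$, giving $\varphi(x, y \star y) \subseteq \varphi((x\boxdot y)\star(x \boxdot y)) \subseteq \varphi(x,y) \star_Z \varphi(x,y)$. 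Symmetrically $\varphi(-,y)$ is a morphism for each $y$. Hence $\varphi$ is a bimorphism.

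For the reverse direction, suppose $\varphi$ is a bimorphism. I would verify the colax inequality $\varphi((x \boxdot y)\star(x' \boxdot y')) \subseteq \varphi(x,y) \star_Z \varphi(x',y')$ by checking each of the four cases in the definition of the hyperoperation on $X \boxdot Y$. In the ``otherwise'' case the left side is $\varnothing$ and there is nothing to prove. In the case $x = x'$, $y \neq y'$, the left side is $\varphi(x \boxdot (y \star y')) = \varphi(x, y \star y') \subseteq \varphi(x,y)\star_Z \varphi(x,y')$ by the morphism $\varphi(x,-)$; similarly for $x \neq x'$, $y = y'$; and the diagonal case $(x,y) = (x',y')$ is the union of the two preceding kinds of term, each contained in $\varphi(x,y) \star_Z \varphi(x,y)$ by the two one-variable morphism properties, so the union is too. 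This exhausts all cases, so $\varphi$ is a morphism of hypermagmas.

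Finally I would record that this bijection is natural in $X$, $Y$, and $Z$: naturality in $Z$ is immediate since post-composition with a morphism $Z \to Z'$ clearly sends morphisms to morphisms and bimorphisms to bimorphisms compatibly; naturality in $X$ and $Y$ follows because for a morphism $f \colon X' \to X$ the induced map $X' \boxdot Y \to X \boxdot Y$ (functoriality of $\boxdot$, already noted in the excerpt) precomposes on both sides compatibly. I do not expect a serious obstacle here — the content is entirely the case analysis matching the minimal hyperoperation on $X \boxdot Y$ (designed precisely to make \eqref{eq:right dist} and \eqref{eq:left dist} the defining relations) to the two-variable morphism conditions; the mild subtlety worth spelling out is the diagonal case $(x,y) = (x',y')$, where one must remember that the product is the \emph{union} of an $x$-fixed term and a $y$-fixed term, so both one-variable morphism properties are needed simultaneously.
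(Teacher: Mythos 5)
Your proof is correct. It takes a more elementary route than the paper: you establish the bijection $\HMag(X \boxdot Y, Z) \cong \Bim(X,Y;Z)$ by a direct two-way verification, matching the four cases of the hyperoperation on $X \boxdot Y$ against the two one-variable colax conditions, and then check naturality by hand. The paper instead argues abstractly: it observes that the identity function is a bimorphism $u \colon X \times Y \to X \boxdot Y$ (this is exactly the content of~\eqref{eq:right dist} and~\eqref{eq:left dist}), asserts that $(X \boxdot Y, u)$ is initial in the category of elements $\operatorname{el}(\Bim(X,Y;-))$, and invokes the standard representability criterion (Riehl, Proposition~2.4.8) to conclude, with naturality coming for free from the Yoneda machinery. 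The mathematical content is essentially the same --- the paper's ``easily seen to be initial'' conceals precisely the reverse-direction case analysis you carry out, and your forward direction is equivalent to composing a morphism out of $X \boxdot Y$ with the universal bimorphism $u$ --- but the abstract packaging buys the naturality statement without further argument, whereas your version makes fully explicit where the minimality of the hyperoperation on $X \boxdot Y$ is used, including the one genuinely delicate point (the diagonal case, where the product is a union of an $x$-fixed and a $y$-fixed term and both one-variable conditions are needed). Your treatment of that diagonal case, in both directions, is correct.
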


\begin{proof}
It follows from~\eqref{eq:right dist} and~\eqref{eq:left dist} that the identity function gives a bimorphism $u \colon X \times Y \to X \boxdot Y$. The pair $(X \boxdot Y, u)$ is easily seen to be initial among all pairs $(Z,B)$ where $Z$ is a hypermagma with a bimorphism $B \colon X \times Y \to Z$; more precisely, it is initial in the category of elements $\operatorname{el}(\Bim(X,Y; -))$. It follows~\cite[Proposition~2.4.8]{Riehl:context} that $u \in \Bim(X,Y; X \boxdot Y)$ is the Yoneda element corresponding to a natural isomorphism $\HMag(X \boxdot Y, -) \cong \Bim(X,Y; -)$.
\end{proof}

We let $1_\varnothing$ denote the terminal set $1$ equipped with the empty hyperoperation: if we denote its unique element by~$1$ (a slight abuse of notation), this hyperoperation is given by $1 \star 1 = \varnothing$. Note that this is the free hypermagma on one element.

\begin{theorem}\label{thm:HMag closed}
The symmetric monoidal category $(\HMag, \boxdot, 1_\varnothing)$ is closed, with internal hom given by $[M,N] := \HMag(M,N)$ under the hyperoperation~\eqref{eq:hyperoperation}.
\end{theorem}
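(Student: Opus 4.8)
The plan is to obtain the closed structure directly from the representability of bimorphisms (Proposition~\ref{prop:bimorphisms}), by showing that the proposed internal hom co-represents the same functor. Throughout write $[Y,Z] := \HMag(Y,Z)$ equipped with the hyperoperation~\eqref{eq:hyperoperation}. The first thing I would record is that $[Y,Z]$ is automatically a hypermagma: it is precisely the weak subhypermagma of the product hypermagma $Z^Y = \prod_{y\in Y} Z$ of Proposition~\ref{prop:forgetful creates} carried by the subset $\HMag(Y,Z)\subseteq Z^Y$. A short check that pre- and post-composition respect the colax condition then makes $[-,-]$ into a functor $\HMag\op\times\HMag\to\HMag$.

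The central step is the natural bijection
\[
\Bim(X,Y;Z)\;\cong\;\HMag(X,[Y,Z]).
\]
A bimorphism $B\colon X\times Y\to Z$ is sent to $\widehat{B}\colon X\to[Y,Z]$, $x\mapsto B(x,-)$; this takes values in $[Y,Z]$ because $B$ is a morphism in its second variable, and $\widehat{B}$ is itself colax because $B$ is a morphism in its first variable---unwinding~\eqref{eq:hyperoperation}, the colax inequality for $\widehat{B}$ at a pair $x,x'$ asserts exactly that $B(w,y)\in B(x,y)\star_Z B(x',y)$ for all $w\in x\star x'$ and all $y\in Y$, which is the colax condition on $B(-,y)$. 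The inverse sends $\phi\in\HMag(X,[Y,Z])$ to the bimorphism $(x,y)\mapsto\phi(x)(y)$, and naturality in $X$, $Y$, $Z$ is immediate. Composing with the isomorphism $\HMag(X\boxdot Y,Z)\cong\Bim(X,Y;Z)$ of Proposition~\ref{prop:bimorphisms} gives
\[
\HMag(X\boxdot Y,Z)\;\cong\;\HMag(X,[Y,Z])
\]
naturally in all three arguments, i.e.\ an adjunction $-\boxdot Y\dashv[Y,-]$ for every $Y$.

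It remains to set up the symmetric monoidal structure itself. Symmetry is immediate from $\Bim(X,Y;Z)\cong\Bim(Y,X;Z)$ together with the Yoneda lemma. For the unit, I would use that $1_\varnothing$ is the free hypermagma on one element, so every function out of it is a morphism; hence $\Bim(1_\varnothing,Y;Z)\cong\HMag(Y,Z)\cong\Bim(Y,1_\varnothing;Z)$, yielding $1_\varnothing\boxdot Y\cong Y\cong Y\boxdot 1_\varnothing$. For associativity, I would first note (from the case definition of $\boxdot$) that a morphism out of $X\boxdot Y$ is the same datum as a function on $X\times Y$ that is a morphism in each variable separately; iterating, both $(X\boxdot Y)\boxdot Z$ and $X\boxdot(Y\boxdot Z)$ represent the functor of ``trimorphisms'' $X\times Y\times Z\to W$, hence are canonically isomorphic. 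Finally, the forgetful functor $U\colon\HMag\to\Set$ carries $\boxdot$ to $\times$, $1_\varnothing$ to the terminal set, and all of these structural isomorphisms to the standard ones of $(\Set,\times,1)$; since $U$ is faithful, the pentagon, triangle, and hexagon identities in $\HMag$ follow from their counterparts in $\Set$. Combined with the adjunction above, this exhibits $(\HMag,\boxdot,1_\varnothing)$ as a closed symmetric monoidal category with internal hom $[-,-]$.

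The verifications of functoriality of $[-,-]$, of the case analysis describing morphisms out of a $\boxdot$-product, and of naturality of the assembled bijections are all mechanical. The only place the specific shapes of $\boxdot$ and~\eqref{eq:hyperoperation} are really used is in the currying isomorphism, where the ``smallest products'' defining $\boxdot$ (via Proposition~\ref{prop:bimorphisms}) and the pointwise formula~\eqref{eq:hyperoperation} match up on the nose; I expect the one mildly delicate point to be confirming that the associativity comparison is a morphism of hypermagmas in both directions, not merely a bijection of underlying sets.
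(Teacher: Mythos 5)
Your proof is correct, and its core — currying a morphism out of $X \boxdot Y$ into a morphism $X \to [Y,Z]$ and invoking Proposition~\ref{prop:bimorphisms} to see the correspondence is bijective — is the same argument the paper gives; the paper just routes the bijection through the Cartesian closed structure $\Set(X\times Y,Z)\cong\Set(X,Z^Y)$ and checks that it (co)restricts, which amounts to the same verifications you perform on $\widehat{B}$. Where you genuinely diverge is in how you certify the monoidal structure itself. The paper verifies the unit isomorphism $1_\varnothing\boxdot M\cong M$ by a direct element computation, whose one subtle point is the diagonal case $1\boxdot m\,\star\,1\boxdot m=(1\star 1)\boxdot m\,\cup\,1\boxdot(m\star m)=1\boxdot(m\star m)$ (using $1\star 1=\varnothing$), and it otherwise defers associativity and coherence to $(\Set,\times,1)$ with little comment. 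You instead observe that $1_\varnothing$ is free on one point, so $\Bim(1_\varnothing,Y;-)\cong\HMag(Y,-)$, and that both bracketings of a triple product represent the functor of trimorphisms; Yoneda then hands you the unitors and associators as honest isomorphisms in $\HMag$, which in particular disposes of the "mildly delicate point" you flag at the end — there is nothing left to check, since a natural isomorphism of represented functors is induced by an isomorphism of representing objects. This representability route is cleaner and absorbs the paper's case analysis; the paper's direct computation has the mild advantage of exhibiting the hyperoperation on $1_\varnothing\boxdot M$ explicitly, which makes the role of the empty product $1\star 1=\varnothing$ visible. Your appeal to faithfulness of $U$ for the pentagon, triangle, and hexagon identities matches the paper's (implicit) argument.
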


\begin{proof}
The fact that $\boxdot$ and $1_\varnothing$ induce a symmetric monoidal structure largely follows from the fact that $(\Set, \times, 1)$ is symmetric monoidal. One subtle point is the fact that $1_\varnothing$ forms a monoidal unit. If $M$ is a hypermagma, we wish to verify that the bijection $1_\varnothing \boxdot M \to M$ given by $1 \boxdot m \mapsto m$ is an isomorphism, which is to say that $1 \boxdot m \star 1 \boxdot m' = 1 \boxdot (m \star m')$ for all $m,m' \in M$. This follows immediately by construction as long as $m \neq m'$. Fortunately in the case where $m = m'$, we also have
\begin{align*}
1 \boxdot m \, \star \, 1 \boxdot m &= (1 \star 1) \boxdot m \, \cup \, 1 \boxdot (m \star m) \\
&= \varnothing \boxdot m \, \cup \, 1 \boxdot (m \star m) \\
&= 1 \boxdot (m \star m).
\end{align*}

It remains to show that this symmetric monoidal category is closed with the structure described in the statement. Fix hypermagmas $X,Y,Z \in \HMag$, and consider the natural bijection from the Cartesian closed structure of $\Set$:
\[
\begin{tikzcd}[row sep=small, column sep=large]
\Set(X \times Y, Z) \arrow[r, rightarrow, "\sim"] & \Set(X, Z^Y) \\
\HMag(X \boxdot Y, Z) \arrow[r, rightarrow, dashed, "\sim"]  \arrow[u, phantom, sloped, "\subseteq"] 
  & \HMag(X, \HMag(Y,Z)) \arrow[u, phantom, sloped, "\subseteq"] 
\end{tikzcd}
\]
Under this bijection, an element $\phi \in \HMag(X \boxdot Y, Z)$ corresponds to the function 
\begin{align*}
\widehat{\phi} \colon X &\to Z^Y, \\
x &\mapsto \phi(x \boxdot -). 
\end{align*}
Each map $\phi(x \boxdot -) \colon Y \to Z$ is a morphism of hypermagmas thanks to~\eqref{eq:right dist}. Thus after corestriction we may view the function corresponding to $\phi$ as a mapping $\widehat{\phi} \colon X \to \HMag(Y,Z)$. It is a direct consequence of~\eqref{eq:left dist} that this $\widehat{\phi}$ is a morphism of hypermagmas, so that in fact $\widehat{\phi} \in \HMag(X, \HMag(Y,Z))$.

Thus (co)restriction of the Cartesian closed structure on $\Set$ induces the dashed arrow in the diagram above. To see that it is bijective, fix $\psi \in \HMag(X, \HMag(Y,Z))$ whose action we denote by $x \mapsto \psi_x \in \HMag(Y,Z)$. The reader can verify that the function $X \times Y \to Z$ given by $(x,y) \mapsto \psi_x(y)$ is a bimorphism. By Proposition~\ref{prop:bimorphisms} this corresponds to a morphism $\psi_0 \in \HMag(X \boxdot Y, Z)$, which is readily seen to satisfy $\widehat{\psi_0} = \psi$. Similarly, for $\phi \in \HMag(X \boxdot Y, Z)$ one can verify that $( \widehat{\phi} )_0 = \phi$. So the dashed arrow above is bijective, and we obtain a natural isomorphism $\HMag(X \boxdot Y, Z) \cong \HMag(X, \HMag(Y,Z))$ as desired.
\end{proof}

\separate

Next we describe how the closed monoidal structure of $\HMag$ descends to a closed monoidal structure on $\uHMag$. Similar to the non-unital case, this structure can be viewed as an enrichment of the closed monoidal structure $(\Set_\bullet, \wedge, \two)$ of pointed sets. We briefly recall that the monoidal product here is the \emph{wedge product} (or \emph{smash} product) $(X, x_0) \wedge (Y, y_0)$ which is the quotient of the product $X \times Y$ by the equivalence relation $(x,y_0) \sim (x_0, y_0) \sim (x_0, y)$ for all $x \in X$ and $y \in Y$, and that the internal hom is the set $\Set_\bullet((X,x_0),(Y,y_0))$ with basepoint given by the constant function $y_0$.
This monoidal structure has unit $(\two, 0)$, where $\two = \{0,1\}$ is the two-element set.

As in the non-unital case, the internal hom is easier to describe. Let $M$ and $N$ be unital hypermagmas. We may (co)restrict the operation~\eqref{eq:hyperoperation} to the subset $\uHMag(M,N) \subseteq \HMag(M,N)$ as follows: for $f,g \in \uHMag(M,N)$, define 
\[
f \star g = \{h \in \uHMag(M,N) \mid h(x) \in f(x) \star g(x) \mbox{ for all } x \in M\}.
\]
This is a hyperoperation on the set of unit-preserving morphisms. If $e$ denotes the identity element of $N$, it is then clear that the constant function  $e \colon M \to N$ is an identity for $\uHMag(M,N)$.

To describe the symmetric monoidal structure, continue to let $M$ and $N$ denote unital hypermagmas. Within the hypermagma $M \boxdot N$ we fix the strict subhypermagma
\[
E = M \boxdot e_N \cup e_M \boxdot N \subseteq M \boxdot N.
\]
Then we define $M \boxwedge N := (M \boxdot N)_E$ to be the unitization relative to this subset. By construction, this is a unital hypermagma with unit $e = \pi_E(E) = \pi_E(e_M \boxdot e_N) \in M \boxwedge N$. The composite surjection
\[
\begin{tikzcd}
- \boxwedge - \colon M \times N \ar[r, "-\boxdot-"] & M \boxdot N \ar[r, "\pi_E"] & M \boxwedge N
\end{tikzcd}
\]
is given by a bimorphism followed by a hypermagma morphism. Thus it is a bimorphism of hypermagmas $M \times N \to M \boxwedge N$. Furthermore, because
\begin{align*}
x \boxwedge e_N = e = e_M \boxwedge y
\end{align*}
for all $x \in M$ and $y \in N$, it follows that $- \boxwedge -$ is in fact a bimorphism of \emph{unital} hypermagmas.

Next we verify that the underlying set of this object coincides with the smash product of the pointed sets.

\begin{lemma}
The canonical map $\pi_E \colon M \boxdot N \to M \boxwedge N$ satisfies 
\[
\pi_E^{-1}(x \boxwedge y) = 
\begin{cases}
E, & x \boxwedge y = e, \\
\{x \boxdot y\}, & x \boxwedge y \neq e.
\end{cases}
\]
In particular, the underlying pointed set of $M \boxwedge N$ is given by the smash product $(M, e_M) \wedge (N, e_N)$.
\end{lemma}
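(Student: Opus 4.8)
The plan is to read off everything from the explicit construction of the unitization in Lemma~\ref{lem:unitization}: there, $M \boxwedge N = (M \boxdot N)_E$ is realized as the quotient of $M \boxdot N$ by an equivalence relation $\sim$ whose classes are the smallest absorptive strict subhypermagma $K$ containing $E$, together with the ``chains'' built out of $K$. So the proof has two parts. First, show that $E = M \boxdot e_N \cup e_M \boxdot N$ is \emph{already} an absorptive strict subhypermagma, so that $K = E = \ker \pi_E$. Second, show that any element $w \notin E$ is $\sim$-equivalent only to itself; this pins down the classes as $E$ together with singletons $\{w\}$, $w \notin E$, which is exactly the asserted formula for $\pi_E^{-1}$.

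For the first part I would check directly that $E$ is a strict subhypermagma: given $p, q \in E$, a short case analysis against the four cases in the definition of $\star$ on $M \boxdot N$—using that $e_M$ and $e_N$ are two-sided identities, so that $e_M \star e_M = \{e_M\}$ etc.—shows $p \star q \subseteq M \boxdot e_N \cup e_M \boxdot N = E$ in every case. The key computation, on which everything downstream rests, is the following: if $w = x \boxdot y \notin E$ (equivalently $x \neq e_M$ and $y \neq e_N$) and $p = x' \boxdot y' \in E$, then $w \star p$ and $p \star w$ are each contained in $\{w\}$. Indeed, $x' = e_M$ or $y' = e_N$; if $x' = x$ then necessarily $y' = e_N \neq y$ and the first case gives $w \star p = x \boxdot (y \star e_N) = \{w\}$; if $y' = y$ then $x' = e_M \neq x$ and the second case gives $w \star p = (x \star e_M) \boxdot y = \{w\}$; otherwise $x \neq x'$ and $y \neq y'$ so $w \star p = \varnothing$; the analysis of $p \star w$ is symmetric. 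From this, $E$ is absorptive: if $w \notin E$ then $w \star E \cup E \star w \subseteq \{w\}$, which is disjoint from $E$, so the defining implication of Definition~\ref{def:absorptive} holds vacuously. Hence $K = E$.

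For the second part I would unwind the relation $\sim$ of Lemma~\ref{lem:unitization}. On the one hand, $E$ is a single $\sim$-class and $w \sim u$ with $u \in E$ forces $w \in E$. On the other hand, suppose $w \notin E$ and $w \sim w'$ via a chain $w = z_1, \dots, z_n = w'$; I claim the first link forces $z_2 = w$. If $z_2 \in z_1 \star K \cup K \star z_1$, then since $z_1 = w \notin E$ the computation above gives $z_1 \star K \cup K \star z_1 \subseteq \{w\}$, so $z_2 = w$. If instead $z_1 \in z_2 \star K \cup K \star z_2$, then either $z_2 \in K$, in which case $z_1 \in z_2 \star K \cup K \star z_2 \subseteq K = E$ by strictness, contradicting $w \notin E$; or $z_2 \notin K$, in which case the same computation applied to $z_2$ gives $z_1 = w = z_2$. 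Inducting on the chain length, $w' = w$. Therefore the $\sim$-classes are precisely $E$ and the singletons $\{w\}$ for $w \notin E$, i.e.\ $\pi_E^{-1}(x \boxwedge y) = E$ when $x \boxdot y \in E$ and $\pi_E^{-1}(x \boxwedge y) = \{x \boxdot y\}$ otherwise. Since $E$ is exactly the copy of the wedge $(M \times \{e_N\}) \cup (\{e_M\} \times N)$ inside the underlying set $M \times N$ of $M \boxdot N$, collapsing it to the single point $e = \pi_E(e_M \boxdot e_N)$ realizes the underlying pointed set of $M \boxwedge N$ as the smash product $(M, e_M) \wedge (N, e_N)$, as recalled above.

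I expect the main obstacle to be the bookkeeping in the two case analyses—verifying that $E$ is a strict subhypermagma, and especially nailing down $w \star p$ for $w \notin E$—since absorptivity of $E$, the triviality of the $\sim$-classes outside $E$, and hence the whole statement all follow formally once that one computation is in place.
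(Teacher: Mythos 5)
Your proposal is correct and follows essentially the same route as the paper: both identify the equivalence relation underlying the unitization as $(E \times E) \sqcup \Delta_{(M \boxdot N)\setminus E}$, and both rest on the same key computation that for $w = x \boxdot y \notin E$ and $p \in E$ one has $w \star p \cup p \star w \subseteq \{w\}$. You are somewhat more explicit than the paper in verifying that $E$ is a strict absorptive subhypermagma (hence $K = E$) and in unwinding the chain condition, but the substance is identical.
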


\begin{proof}
Recall from the proof of Lemma~\ref{lem:unitization} that the unitization with respect to $E$ is constructed as the quotient by the finest equivalence relation $\sim$ that contains $E \times E$ and satisfies a certain condition. The claim therefore amounts to showing that the particular equivalence relation 
\[
{\sim} = (E \times E) \sqcup \Delta_{(M \boxdot N) \setminus E}
\]
satisfies the extra condition. The fact that $E$ is a strict subhypermagma simplifies this verification to the following: 
we may assume that $u \in E$ and $x \in (M \boxdot N) \setminus E$, and we wish to prove that
\[
y \in x \star u \cup u \star x \implies y = x.
\]
We must have $x = x_1 \boxdot x_2$ where $x_1 \neq e_M$ and $x_2 \neq e_N$. Suppose first that $u$ has the form $u = m \boxdot e_N$. Then if $x \star u$ is nonempty we have
\begin{align*}
x \star u &=  (x_1 \boxdot x_2) \star (m \boxdot e_N) \neq \varnothing \\
 &\implies x_1 = m \quad (\mbox{since } x_2 \neq e_N) \\
 &\implies  (x_1 \boxdot x_2) \star (m \boxdot e_N)= x_1 \boxdot (x_2 \star e_N) = x_1 \boxdot x_2.
\end{align*}
Thus $y \in x \star u$ implies $y = x$, and symmetrically $y \in u \star x$ implies $y = x$. A similar argument holds if $u$ is of the form $u = e_M \boxdot n$, which completes the proof as $E = M \boxwedge e_N \cup e_M \boxwedge N$.
\end{proof}

In the following, we view the set $\two = \{0,1\}$ as a unital hypermagma under the  hyperoperation $\star$ for which $0$ is the identity and $1 \star 1 = \varnothing$. This is also the free unital hypermagma on the singleton $\{1\}$. 

\begin{theorem}\label{thm:unital tensor}
For $M,N \in \uHMag$, the unital hypermagma $M \boxwedge N$ represents the functor of bimorphisms
\[
\uHMag(M \boxwedge N, -) \cong \Bim(M, N; -) \colon \uHMag \to \Set.
\]
The symmetric monoidal category $(\uHMag, \boxwedge, \two)$ is closed, with internal hom given by $[M,N] := \uHMag(M,N)$ under the hyperoperation inherited from~\eqref{eq:hyperoperation}.
\end{theorem}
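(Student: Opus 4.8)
The plan is to deduce everything from the corresponding facts about $\boxdot$ on $\HMag$---namely Proposition~\ref{prop:bimorphisms} and Theorem~\ref{thm:HMag closed}---together with the universal property of unitization from Definition~\ref{def:unitization} and Lemma~\ref{lem:unitization}. First I would establish the representability statement. A bimorphism $B \colon M \times N \to Z$ in the concrete category $\uHMag$ has all of its slices $B(x,-)$ and $B(-,y)$ unit-preserving, so $B$ sends every element of the strict subhypermagma $E = M \boxdot e_N \cup e_M \boxdot N$ to $e_Z$; conversely, a bimorphism of hypermagmas that vanishes on $E$ in this sense has all of its slices automatically unital, hence lies in $\Bim_{\uHMag}(M,N;Z)$. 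Thus
\[
\Bim_{\uHMag}(M,N;Z) = \{ B \in \Bim_{\HMag}(M,N;Z) \mid B(E) = \{e_Z\} \}.
\]
Under the natural bijection $\Bim_{\HMag}(M,N;Z) \cong \HMag(M \boxdot N, Z)$ of Proposition~\ref{prop:bimorphisms}, the condition $B(E) = \{e_Z\}$ translates to $E \subseteq \ker \phi_B$ for the corresponding morphism $\phi_B \colon M \boxdot N \to Z$, so by the universal property of the unitization $(M \boxwedge N, \pi_E) = ((M \boxdot N)_E, \pi_E)$ these morphisms correspond bijectively to unital morphisms $M \boxwedge N \to Z$. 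Chaining the bijections yields $\uHMag(M \boxwedge N, -) \cong \Bim(M,N;-)$, with the universal bimorphism given by the map $-\boxwedge- \colon M \times N \to M \boxwedge N$ exhibited just before the theorem; naturality in $M$ and $N$ follows from the naturality in Proposition~\ref{prop:bimorphisms} together with the functoriality of $(-)_E$.

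Next I would promote $\boxwedge$ to a symmetric monoidal structure. Bifunctoriality of $-\boxwedge-$ is immediate from that of $-\boxdot-$ composed with the functoriality of $M \mapsto M_E$. For the coherence isomorphisms the slickest route is to observe, exactly as above, that iterated tensors are again representing objects: both $(M \boxwedge N) \boxwedge P$ and $M \boxwedge (N \boxwedge P)$ represent the functor sending $Z$ to the set of ternary bimorphisms $M \times N \times P \to Z$ (functions that are unit-preserving and a hypermagma morphism in each variable separately), so the Yoneda lemma produces a canonical associator; likewise the symmetry isomorphism $M \boxdot N \cong N \boxdot M$ descends through $\pi_E$ to a braiding $M \boxwedge N \cong N \boxwedge M$. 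The monoidal unit $1$ is the free unital hypermagma on one generator, with underlying pointed set $\{e,a\}$ and $a \star a = \varnothing$; the unit isomorphism $1 \boxwedge M \xrightarrow{\sim} M$ is checked by a direct case analysis of the hyperoperation on $(1 \boxdot M)_E$, mirroring the verification that $1_\varnothing \boxdot M \cong M$ in the proof of Theorem~\ref{thm:HMag closed}. The pentagon, triangle, and hexagon axioms then hold automatically because all objects in sight are representing objects and the structure maps are the unique comparison isomorphisms between them.

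Finally, for closedness I would argue as in Theorem~\ref{thm:HMag closed}, now carrying the units along. As already noted, $[N,Z] = \uHMag(N,Z)$ with the hyperoperation inherited from~\eqref{eq:hyperoperation} is a unital hypermagma whose identity is the constant morphism $e_Z$. It then suffices to produce a bijection $\Bim_{\uHMag}(M,N;Z) \cong \uHMag(M,[N,Z])$ natural in all three variables: a bimorphism $B$ corresponds to its curry $\widehat{B} \colon x \mapsto B(x,-)$, which takes values in $\uHMag(N,Z)$ by the bimorphism condition, preserves the identity because $B(e_M,-)$ is the constant $e_Z$, and is a hypermagma morphism because $w \in x \star x'$ forces $B(w,y) \in B(x,y) \star B(x',y)$ for every $y$ by colaxness of $B(-,y)$; the inverse sends $\psi$ to $(x,y) \mapsto \psi(x)(y)$, and the same computations read in reverse show this is a bimorphism of unital hypermagmas. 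Composing with the representability isomorphism of the first step gives $\uHMag(M \boxwedge N, Z) \cong \uHMag(M, [N,Z])$ naturally, as desired. I expect the main obstacle to lie not in any conceptual difficulty---the heavy lifting being already done in Proposition~\ref{prop:bimorphisms}, Lemma~\ref{lem:unitization}, and Theorem~\ref{thm:HMag closed}---but in the bookkeeping: correctly translating the ``vanishing on $E$'' condition back and forth across $M \times N$, $M \boxdot N$, and $M \boxwedge N$, and grinding through the case check for the unit isomorphism.
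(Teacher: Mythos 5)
Your proposal is correct, and in two places it diverges usefully from the paper's own (rather terse) proof. For representability, the paper simply reruns the argument of Proposition~\ref{prop:bimorphisms}: it checks that $-\boxwedge- \colon M \times N \to M \boxwedge N$ is a bimorphism in $\uHMag$ and shows the pair is initial in the category of elements of $\Bim(M,N;-)$. You instead chain three already-established universal properties --- unital bimorphisms into $Z$ are exactly the $\HMag$-bimorphisms killing $E$, these correspond under Proposition~\ref{prop:bimorphisms} to morphisms $M \boxdot N \to Z$ with $E \subseteq \ker\phi$, and these correspond by Definition~\ref{def:unitization} to unital morphisms out of $(M\boxdot N)_E$. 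That reduction is clean and correct, and it makes the naturality bookkeeping more transparent. For closedness, your direct currying bijection $\Bim_{\uHMag}(M,N;Z) \cong \uHMag(M,[N,Z])$ is essentially the paper's argument, which phrases the same thing as a (co)restriction of the smash--hom adjunction on $\Set_\bullet$. The most notable difference concerns the monoidal unit: the paper's proof asserts that the unit is the \emph{terminal} unital hypermagma $1 = \{e\}$, but for that object one has $E = e \boxdot M$ equal to all of $1 \boxdot M$, so $1 \boxwedge M$ collapses to a point --- the terminal object cannot be the unit (just as the one-point pointed set is not the unit for the smash product). Your identification of the unit as the free unital hypermagma on one generator, $\{e,a\}$ with $a \star a = \varnothing$ (the image of $S^0$ under the free functor of Theorem~\ref{thm:pointed forgetful}), is the correct one, consistent with the underlying-set computation $M \boxwedge N \cong M \wedge N$ and with the analogous unit $\FF$ for $\boxtimes$ on $\cMsc$; your case check that $a \boxwedge m \star a \boxwedge m = a \boxwedge (m\star m)$ because $a \star a = \varnothing$ is exactly the point that makes it work. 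So on this point your proof repairs, rather than reproduces, the paper's argument.
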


\begin{proof}
The universal property of $M \boxwedge N$ is proved following the same argument as in Proposition~\ref{prop:bimorphisms}, by showing that the canonical map $M \times N \to M \boxwedge N$ is a bimorphism in $\uHMag$. This makes it evident that we obtain a bifunctor 
\[
- \boxwedge - \colon \uHMag \times \uHMag \to \uHMag.
\]
The rest of the claim can be proved in the same manner as Theorem~\ref{thm:HMag closed}, with only two significant adjustments described below.

First, note that the monoidal unit is now the unital hypermagma $\two$ described above. For a unital hypermagma $M$, one can check that the isomorphism $\two \boxwedge M \cong M$ of underlying pointed sets is in fact an isomorphism in $\uHMag$. The computation is similar, noting that the strict subhypermagma $\{1\} \subseteq \two$ is isomorphic to $1_\varnothing$.

Second, the argument regarding the closed monoidal structure is proved using the forgetful functor $U \colon \uHMag \to \Set_\bullet$ to pointed sets rather than sets.
Let $X$, $Y$, and $Z$ denote unital hypermamgas, which we view as pointed sets whose basepoint is the identity (thereby suppressing the forgetful functor $U$ in notation below). Then the natural isomorphism
\[
\uHMag(X \boxwedge Y, Z) \cong \uHMag(X, \uHMag(Y,Z))
\]
can be deduced by (co)restriction from the closed monoidal structure on $\Set_\bullet$ via the diagram
\[
\begin{tikzcd}[row sep=small, column sep=large]
\Set_\bullet(X \wedge Y, Z) \arrow[r, rightarrow, "\sim"] & \Set_\bullet(X, \Set_\bullet(Y,Z)) \\
\uHMag(X \boxwedge Y, Z) \arrow[r, rightarrow, dashed, "\sim"]  \arrow[u, phantom, sloped, "\subseteq"] 
  & \uHMag(X, \uHMag(Y,Z)) \arrow[u, phantom, sloped, "\subseteq"] 
\end{tikzcd}
\]
following an argument that is analogous to the one given in Theorem~\ref{thm:HMag closed}.
\end{proof}

\separate

Finally, we wish to remark that these closed monoidal structures restrict well to the subcategories of commutative objects. Indeed, let $\cHMag$ and $\cuHMag$ respectively denote the full subcategories of $\HMag$ and $\uHMag$ consisting of the commutative (unital) hypermagmas. 
It is clear from the construction of $M \boxdot N$ that if $M$ and $N$ are commutative, then the same is true for $M \boxdot N$, so that $(\cHMag, \boxdot, 1_\varnothing)$ is a monidal subcategory of $\HMag$. Furthermore, the definition of the operation~\eqref{eq:hyperoperation} is such that if $N$ is commutative, then so is $\HMag(M,N)$. Similar remarks hold for $\cuHMag$ if $M$ and $N$ are unital hypermagmas. Thus we immediately arrive at the following.

\begin{corollary}
The full subcategories $\cHMag$ of $\HMag$ and $\cuHMag$ of $\uHMag$ are both exponential ideals and closed under monoidal products in the respective closed monoidal structures. Consequently, both $(\cHMag, \boxdot, 1_\varnothing)$ and $(\cuHMag, \boxwedge, \two)$ are closed monoidal categories.
\end{corollary}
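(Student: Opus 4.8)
The plan is to reduce the statement to two small verifications and then invoke a standard fact. Recall that a full subcategory $\mathcal{D}$ of a closed symmetric monoidal category $\mathcal{C}$ that contains the monoidal unit, is closed under the monoidal product, and is an \emph{exponential ideal}---meaning $[X,Y]\in\mathcal{D}$ whenever $Y\in\mathcal{D}$ and $X\in\mathcal{C}$---automatically inherits a closed symmetric monoidal structure: the associativity, unit, and symmetry isomorphisms of $\mathcal{C}$ are morphisms between objects of $\mathcal{D}$, hence morphisms of $\mathcal{D}$ by fullness, and the tensor-hom adjunction restricts because $\mathcal{D}(X\otimes Y,Z)=\mathcal{C}(X\otimes Y,Z)\cong\mathcal{C}(X,[Y,Z])=\mathcal{D}(X,[Y,Z])$ for $X,Y,Z\in\mathcal{D}$. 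Since $1_\varnothing$ and the one-element unital hypermagma $1$ are (vacuously or trivially) commutative, it suffices to check: (a) $M\boxdot N$ is commutative whenever $M,N\in\cHMag$, and $M\boxwedge N$ is commutative whenever $M,N\in\cuHMag$; and (b) $\HMag(M,N)$ (resp.\ $\uHMag(M,N)$) is commutative whenever $N$ is commutative.

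For (b) I would simply unwind~\eqref{eq:hyperoperation}: for $f,g\in\HMag(M,N)$, the set $f\star g$ consists of those $h$ with $h(x)\in f(x)\star_N g(x)$ for all $x\in M$, and commutativity of $\star_N$ renders this condition symmetric in $f$ and $g$, so $f\star g=g\star f$. The identical argument applies verbatim to $\uHMag(M,N)$.

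For (a), in the $\boxdot$ case I would inspect the four-way case definition of the hyperoperation on $M\boxdot N$: interchanging the two arguments permutes these cases, and the output values coincide once one applies commutativity of $\star_M$ and $\star_N$ (for instance $(x\star x')\boxdot y=(x'\star x)\boxdot y$). For the $\boxwedge$ case I would argue that the canonical morphism $\pi_E\colon M\boxdot N\to M\boxwedge N=(M\boxdot N)_E$, where $E=M\boxdot e_N\cup e_M\boxdot N$, is \emph{short}: for $z=x\boxdot y$ one checks directly, using that $e_M$ and $e_N$ are identities, that $z\in z\star(x\boxdot e_N)$ and $z\in(e_M\boxdot y)\star z$, so the hypothesis of Lemma~\ref{lem:unitization} is met and $\pi_E$ is short. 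When $M$ and $N$ are commutative, so is $M\boxdot N$ by the case just treated, and hence so is its short image $M\boxwedge N$ by Lemma~\ref{lem:short image}(1). Together, (a) and (b) show that $\cHMag$ and $\cuHMag$ are closed under $\boxdot$ and $\boxwedge$ respectively and are exponential ideals, so the standard fact above delivers the asserted closed monoidal structures $(\cHMag,\boxdot,1_\varnothing)$ and $(\cuHMag,\boxwedge,1)$.

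I do not expect a genuine obstacle here: the whole argument is a chain of routine checks. The one point that deserves a moment's thought is the $\boxwedge$ case, where it would be a mistake to conclude commutativity of $M\boxwedge N$ from mere surjectivity of $\pi_E$---surjective morphisms of hypermagmas do not transport commutativity, as the paper's remarks on free objects already illustrate---so one must instead use that $\pi_E$ is short and appeal to Lemma~\ref{lem:short image}.
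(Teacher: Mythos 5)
Your proof is correct and follows essentially the same route as the paper, which simply observes that commutativity of $M\boxdot N$ and of $\HMag(M,N)$ (for $N$ commutative) can be read off from the constructions, with ``similar remarks'' for the unital case. Your extra care in deducing commutativity of $M\boxwedge N$ via shortness of $\pi_E$ and Lemma~\ref{lem:short image}(1) is a valid way to fill in the detail the paper leaves implicit (one could alternatively read it off directly from the manifestly symmetric case-definition of the hyperoperation on the unitization in Lemma~\ref{lem:unitization}).
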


By contrast, the formulas defining the hyperoperations of $M \boxdot N$ and $M \boxwedge N$ is not generally associative, so that the category $\HMon$ of hypermonoids will not form a monoidal subcategory of $\uHMag$. Furthermore, Theorem~\ref{thm:empty sum} will provide an explicit example of hypergroups $M$ and $N$ such that the unital hypermagma $\uHMag(M,N) = \HGrp(M,N)$ is not associative.

\section{Categories of hypergroups and mosaics}\label{sec:mosaics}

In this final section, we consider categories of mosaics and hypergroups.
One of the lessons learned will be that while (canonical) hypergroups are attractive objects, their categories are unfortunately less well-behaved than those of other structures. The culprit in this situation is apparently the associative axiom, since the categories of (commutative) mosiacs are quite nicely behaved. For this reason we argue that the categories $\Msc$ and $\cMsc$ can be taken as ``convenient'' repacements for $\HGrp$ and $\Can$, respectively.

\subsection{A convenient category of (canonical) hypergroups}
\label{sub:convenient}

In this subsection we describe how many of the good properties of $\uHMag$ are also enjoyed by the full subcategories $\Msc$ and $\cMsc$. To begin, we note that the categories are complete and cocomplete in the following way.

\begin{theorem}\label{thm:reversible forgetful}
The subcategories $\Msc$ and $\cMsc$ are closed under limits and colimits in $\uHMag$. Thus they are complete and cocomplete.
\end{theorem}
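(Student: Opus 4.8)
The plan is to show that $\Msc$ and $\cMsc$ are closed under the limits and colimits constructed in $\uHMag$ in Theorems~\ref{thm:pointed forgetful} and~\ref{thm:unital cocomplete}; completeness and cocompleteness then follow immediately. Since both reversibility and commutativity are properties (not extra structure), it suffices to check that each standard (co)limit construction, when applied to objects satisfying these properties, again satisfies them. For commutativity this is routine: products, wedge sums, equalizers, and the coequalizer $\pi_E \circ \pi_L$ are all built from componentwise operations or short quotients, and Lemma~\ref{lem:short image}(1) handles the coequalizer case. So the substance of the proof is reversibility.

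First I would treat limits. For a product $\prod M_i$ of mosaics, define $(x_i)^{-1} = (x_i^{-1})$; since $a \in b \star c$ in $\prod M_i$ means $a_i \in b_i \star c_i$ for all $i$, reversibility in each $M_i$ gives $b_i \in a_i \star c_i^{-1}$ for all $i$, hence $b \in a \star c^{-1}$, and symmetrically for the other implication. For the equalizer $E = \{x \in M \mid f(x) = g(x)\}$ with $\star_E y = (x \star_M y) \cap E$: since $f,g$ are unital morphisms between reversible hypermagmas, $f(x^{-1}) = f(x)^{-1} = g(x)^{-1} = g(x^{-1})$, so $E$ is closed under inversion; and if $a \in x \star_E y$ then $a \in x \star_M y$ with $a \in E$, so reversibility in $M$ gives $x \in a \star_M y^{-1}$, and since $x \in E$ we get $x \in a \star_E y^{-1}$ as desired. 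Thus $\Msc$ (and $\cMsc$) is closed under limits in $\uHMag$.

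Next, colimits. The wedge sum $S = \bigvee(M_i, e_i)$ is constructed (Theorem~\ref{thm:pointed forgetful}) as a colimit in $\HMag$ whose hyperoperation is that of $M_i$ on each $M_i$ and empty across distinct factors; define inversion on $S$ factorwise, which is well-defined since all the $e_i$ are identified, and reversibility is inherited factor-by-factor exactly as for products. For the coequalizer, recall from Remark~\ref{rem:short coequalizer} that the coequalizer of $f,g \in \uHMag(M,N)$ is a short morphism $\pi = \pi_E \circ \pi_L \colon N \to L_E$. The key point I would isolate is a lemma: \emph{if $p \colon M \to N$ is a short morphism and $M$ is a mosaic, then $N$ is a mosaic.} Granting this, and noting $N$ is already a mosaic, we conclude $L_E$ is a mosaic. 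To prove the lemma, define $p(m)^{-1} := p(m^{-1})$ — this is well-defined because if $p(m) = p(m')$ then using shortness and reversibility one shows $p(m^{-1}) = p(m'^{-1})$ (e.g., $e_N = p(e_M) \in p(m \star m^{-1})$ and shortness lets one compare; alternatively deduce it from the equivalence form of reversibility applied in $N$ after lifting). Then, given $x \in y \star z$ in $N$, shortness produces lifts $x_0 \in y_0 \star z_0$ in $M$ with $p$ sending them to $x,y,z$; reversibility in $M$ gives $y_0 \in x_0 \star z_0^{-1}$, and applying $p$ yields $y \in x \star z^{-1}$. The symmetric implication is identical.

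The main obstacle is the well-definedness of inversion on a short quotient, i.e.\ showing $p(m)=p(m') \Rightarrow p(m^{-1})=p(m'^{-1})$; everything else is a direct transport of the reversibility implications along the (co)limit constructions. I expect this to follow cleanly from the strengthened equivalence form of reversibility noted in the excerpt (that $x \in y\star z \iff y \in x \star z^{-1} \iff z \in y^{-1}\star x$) together with shortness, but it is the one place where care is needed.
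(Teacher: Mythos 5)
Your treatment of products, equalizers, and the wedge-sum coproduct is correct and agrees in substance with the paper (which packages the construction of the inversion on all (co)limits uniformly, via the natural anti-isomorphism $D \cong D'$ between a diagram of mosaics and its objectwise opposite, using Lemma~\ref{lem:inversion}). The problem is the coequalizer case: the lemma you isolate --- \emph{if $p\colon M\to N$ is short and $M$ is a mosaic, then $N$ is a mosaic} --- is false, and the well-definedness step you flag as ``the one place where care is needed'' cannot be repaired from shortness alone. Concretely, let $M = F(\{a,b\}) = \{0, a, -a, b, -b\}$ be the free commutative mosaic on two generators (so $a-a = b-b = \{0\}$ and all other sums of nonzero elements are empty), let $N = \{0,x,y,z\}$, and let $p$ send $a,b\mapsto x$, $-a\mapsto y$, $-b\mapsto z$, $0 \mapsto 0$, with the hyperoperation on $N$ \emph{defined} by the shortness formula $u\star v = p(p^{-1}(u)\star p^{-1}(v))$. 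Then $p$ is a short unital morphism, but $N$ satisfies $0 \in x+y$ and $0 \in x+z$ with $y \neq z$, so $x$ has two distinct inverses; any candidate function $(-)^{-1}$ violates reversibility (e.g.\ $0 \in x + y$ would force $y = x^{-1} \star 0 = x^{-1}$ and likewise $z = x^{-1}$). So shortness does not propagate reversibility, in contrast to commutativity and associativity (Lemma~\ref{lem:short image}).

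What saves the actual coequalizer $\pi\colon N \to L$ of $f,g \in \Msc(M,N)$ is not that $\pi$ is short but that the identifications made by $\pi$ are inversion-invariant: since unital morphisms between mosaics automatically preserve inverses, the inversions of $M$ and $N$ assemble into a natural isomorphism from the parallel pair $(f,g)$ to its opposite pair, and the universal property of the colimit then produces a well-defined involution $i$ on $L$ with $i(\pi(n)) = \pi(n^{-1})$ --- no fiberwise well-definedness check against arbitrary identifications is needed. (In the counterexample above, the identification $a \sim b$ is not induced by any pair of mosaic morphisms into $M$, which is why it escapes this argument.) Once the involution on $L$ exists, your lifting argument via shortness --- lift $x \in y\star z$ to $x' \in y'\star z'$ in $N$, apply reversibility there, push forward along $\pi$ --- is exactly the paper's verification and is correct. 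So the gap is localized but real: you must replace the false general lemma by the descent-of-inversion argument specific to colimits of diagrams of mosaics.
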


\begin{proof}
It suffices to prove that $\Msc$ is closed under (co)products and (co)equalizers. Before doing so, we make the following general observation.

Recall from Lemma~\ref{lem:inversion} that the inversion of a mosaic is an anti-isomorphism. Let $D \colon J \to \Msc$ be a diagram over a small index category $J$, and let $D' \colon J \to \Msc$ be the diagram such that each $D'(j) = D(j)\op$ and which assigns the ``opposite'' morphism to each morphism of $J$. Then the inversions $i_j \colon D(j) \to D'(j)$ form components of a natural isomorphism $i \colon D \to D'$ of functors $J \to \Msc \hookrightarrow \uHMag$. It follows that we obtain an induced anti-isomorphism of unital hypermagmas on both the limit $L = \lim_J D$ and colimit $C = \colim_J D$ of the diagram as computed in $\uHMag$. 

Using this general construction of inverses, it is easy to verify that any small product or coproduct of (commutative) mosaics in $\uHMag$ is again a (commutative) mosaic, and that the equalizer of a pair of morphisms between (commutative) mosaics is again such. 
Finally, consider a coequalizer in $\uHMag$ of the form
\[
\begin{tikzcd}
M \ar[r, shift left=1, "f"] \ar[r, shift right =1, swap, "g"] & N \ar[r, twoheadrightarrow, "\pi"] & L
\end{tikzcd}
\]
where $M$ and $N$ are reversible. By the discussion above we obtain an inversion $i \colon L \to L$ satisfying $i(\pi(x)) = \pi(x^{-1})$.
Assume that $x,y,z \in L$ satisfy $x \in y \star z$. Recall from the proof of Theorem~\ref{thm:unital cocomplete} that $\pi$ is short, so that 
\[
x \in y \star z=\pi(\pi^{-1}(y)\star \pi^{-1}(z))
\]
Thus there are $x', y', z' \in N$ such that $x=\pi(x')$, $y=\pi (y')$, $z=\pi(z')$ and $x'\in y'\star z'$. By reversibility of $N$ we have $y' \in x'\star (z')^{-1}$ and $z' \in (y')^{-1} \star x'$. Applying $\pi$, we obtain 
\begin{align*}
y &= \pi(y') \in \pi(x'\star (z')^{-1}) \subseteq \pi(x') \star \pi((z')^{-1}) = x \star i(z),\\
z &= \pi(z') \in \pi((y')^{-1} \star x')\subseteq \pi((y')^{-1})\star \pi(x') = i(y)\star x.
\end{align*}
Therefore $L$ is reversible. (Finally, if $N$ is commutative then so is $L$ thanks to Lemma~\ref{lem:short image}(2).)
\end{proof}

We see immediately that the characterizations of various epimorphisms and monomorphisms as well as the property of regularity easily pass from unital hypermagmas to (commutative) mosaics.

\begin{corollary}\label{cor:mosaic morphisms}
In the categories $\Msc$ and $\cMsc$,
\begin{itemize}
\item The monomorphisms and epimorphisms are the injective and surjective morphisms, respectively;
\item The regular monomorphisms and regular epimorphisms are the coshort and short morphisms, respectively;
\item The normal monomorphisms and normal epimorphisms correspond to the strict submosaics and unitizations, respectively.
\end{itemize}
Furthermore, these categories are both regular.
\end{corollary}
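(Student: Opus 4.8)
The plan is to derive every clause from Theorem~\ref{thm:reversible forgetful}, which tells us that $\Msc$ and $\cMsc$ are full subcategories of $\uHMag$ closed under all limits and colimits, combined with the already-established descriptions for $\uHMag$ in Proposition~\ref{prop:mono and epi}, Theorems~\ref{thm:regular epi and mono} and~\ref{thm:normal}, and Corollary~\ref{cor:regular}. The only extra structural facts I need about the inclusion $\Msc \hookrightarrow \uHMag$ are that it is fully faithful and that it preserves the zero object: the terminal unital hypermagma $1$ is also initial (a unital morphism $1 \to M$ must send the unique element, which is forced to be the identity, to $e_M$), hence a zero object, and it lies in $\cMsc \subseteq \Msc$. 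Since the definitions of monomorphism, epimorphism, regular and normal (co)morphism, and of a regular category are all phrased in terms of limits and colimits (and the zero object), the strategy is to transport each characterization across this inclusion.

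\emph{Monomorphisms and epimorphisms.} Faithfulness of $U \colon \Msc \to \Set$ already gives that injective morphisms are monomorphisms, while a surjective morphism of $\Msc$ is, by Proposition~\ref{prop:mono and epi}, an epimorphism in $\uHMag$ and hence in the full subcategory $\Msc$. For the converse directions I would invoke kernel and cokernel pairs: a morphism $f$ is a monomorphism iff the two projections of its kernel pair coincide, and an epimorphism iff the two coprojections of its cokernel pair coincide. Because $\Msc$ is closed under limits and colimits in $\uHMag$, the kernel pair and cokernel pair of $f$ are the same whether formed in $\Msc$ or in $\uHMag$; therefore $f$ is a monomorphism (resp.\ epimorphism) in $\Msc$ iff it is one in $\uHMag$, i.e.\ iff it is injective (resp.\ surjective) by Proposition~\ref{prop:mono and epi}. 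The argument for $\cMsc$ is identical.

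\emph{Regular and normal morphisms.} A morphism is a regular epimorphism (resp.\ monomorphism) exactly when it occurs as a coequalizer (resp.\ equalizer); since $\Msc$ is closed under colimits and limits in $\uHMag$, a diagram is a (co)equalizer in $\Msc$ iff it is one in $\uHMag$, so the regular epimorphisms (resp.\ monomorphisms) of $\Msc$ are precisely the short (resp.\ coshort) morphisms between mosaics, by Theorem~\ref{thm:regular epi and mono}. The same reasoning, using the shared zero object, identifies the normal epimorphisms of $\Msc$ with unitizations of mosaics and the normal monomorphisms with absorptive strict unital subhypermagmas of mosaics via Theorem~\ref{thm:normal}; here one notes that the kernel in $\uHMag$ of a morphism between mosaics is again a mosaic (being an equalizer) and that the unitization $M_E$ of a mosaic is a mosaic (being a coequalizer), so these kernels and cokernels are computed the same way in $\Msc$. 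To match the statement of the corollary I would finally record that, inside a mosaic, a strict unital subhypermagma $K$ is absorptive if and only if it is closed under inverses, i.e.\ is a strict submosaic: one implication is Remark~\ref{rem:absorptive}(5), and conversely if $K$ is absorptive and $x \in K$ then $e \in x^{-1} \star x \subseteq x^{-1} \star K$ forces $x^{-1} \in K$. Again $\cMsc$ behaves identically.

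\emph{Regularity.} Both categories are complete and cocomplete by Theorem~\ref{thm:reversible forgetful}, so only pullback-stability of regular epimorphisms remains; but a pullback in $\Msc$ of a short surjection along any morphism coincides with the corresponding pullback in $\uHMag$ (limits being created from $\uHMag$), and Corollary~\ref{cor:regular} shows this pullback is again short and surjective, hence a regular epimorphism in $\Msc$ (and likewise in $\cMsc$). I expect the only point that is not pure bookkeeping to be the need to pass through kernel/cokernel pairs in the mono/epi clause: monomorphisms and epimorphisms of a full subcategory need not agree a priori with those of the ambient category, so a direct appeal to fullness would be insufficient and the closure of $\Msc$, $\cMsc$ under these particular limits and colimits is genuinely used.
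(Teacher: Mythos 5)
Your proposal is correct and follows essentially the same strategy as the paper: transport each characterization from $\uHMag$ across the fully faithful inclusion using the closure of $\Msc$ and $\cMsc$ under limits and colimits (Theorem~\ref{thm:reversible forgetful}), then cite Proposition~\ref{prop:mono and epi}, Theorems~\ref{thm:regular epi and mono} and~\ref{thm:normal}, and Corollary~\ref{cor:regular}. Your explicit kernel-pair/cokernel-pair justification for the mono/epi clause, and your direct verification that an absorptive strict unital subhypermagma of a mosaic is closed under inverses (the paper instead invokes that kernels of mosaic morphisms are strict submosaics), are just more detailed renderings of the same argument.
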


\begin{proof}
The properties of being an ordinary, regular, or normal monomorphism (resp., epimorphism) are all characterized in terms of a limit (resp., colimit). Then thanks to Theorem~\ref{thm:reversible forgetful}, a morphism in $\Msc$ or $\cMsc$ satisfies any one of these properties in $\Msc$ or $\cMsc$ if and only if it satisfies the corresponding property in $\uHMag$. Thus the characterizations of all morphisms above, with the exception of normal monomorphisms, follow directly from Proposition~\ref{prop:mono and epi}, Theorem~\ref{thm:regular epi and mono}, and Theorem~\ref{thm:normal}. The fact that normal monomorphisms correspond to strict submosaics follows from Theorem~\ref{thm:normal}, and Remark~\ref{rem:absorptive}(5,6).

Finally, because regular categories are wholly characterized in terms of certain finite limits and colimits, the subcategories $\Msc$ and $\cMsc$ inherit the property of reguarity from $\uHMag$ (Corollary~\ref{cor:regular}). 
\end{proof}

Next we show that free objects exist in $\Msc$, and that they are even commutative.

\begin{theorem}\label{thm:free reversible}
The forgetful functor $U \colon \Msc \to \Set$ has a left adjoint.
\end{theorem}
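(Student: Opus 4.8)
The plan is to construct the free mosaic on a set $X$ explicitly and then verify that it is commutative, so that it simultaneously serves as the free object in $\cMsc$. First I would form the set $\widehat{X} = X \sqcup X^{-1} \sqcup \{e\}$, where $X^{-1} = \{x^{-1} \mid x \in X\}$ is a disjoint copy of $X$, and declare an involution $(-)^{-1}$ on $\widehat{X}$ fixing $e$ and swapping $X$ with $X^{-1}$. The candidate for the free mosaic $F(X)$ will have underlying set $\widehat{X}$, with $e$ as identity, and with the hyperoperation defined by the smallest rule forced by the identity and reversibility axioms: for $a,b \in \widehat{X}\setminus\{e\}$, set
\[
a \star b = \begin{cases} e, & b = a^{-1}, \\ \varnothing, & \text{otherwise},\end{cases}
\]
while $e \star a = a \star e = a$ for all $a$. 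One checks directly that $(F(X), \star, e)$ is a unital hypermagma, that the proposed involution witnesses reversibility (the implication $x \in y \star z \Rightarrow y \in x \star z^{-1}$ and $z \in y^{-1}\star x$ is verified by a short case analysis on whether any of $x,y,z$ equals $e$), and that it is commutative since the defining cases are symmetric in $a$ and $b$. Thus $F(X) \in \cMsc \subseteq \Msc$.

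Next I would verify the universal property. Given a mosaic $M$ and a function $\varphi \colon X \to U(M)$, extend $\varphi$ to $\overline{\varphi} \colon \widehat{X} \to U(M)$ by $\overline{\varphi}(e) = e_M$ and $\overline{\varphi}(x^{-1}) = \varphi(x)^{-1}$ (well-defined by uniqueness of inverses in a mosaic). To see $\overline{\varphi}$ is a morphism in $\Msc$ I must check $\overline{\varphi}(a\star b) \subseteq \overline{\varphi}(a)\star\overline{\varphi}(b)$ for all $a,b \in \widehat{X}$; the only nontrivial case is $a,b$ nonidentity with $b = a^{-1}$, where $\overline{\varphi}(a\star b) = \overline{\varphi}(e) = e_M \in \overline{\varphi}(a)\star\overline{\varphi}(a)^{-1}$ by the inverse axiom, and all remaining cases are either $\varnothing \subseteq (\cdots)$ or forced by $e_M$ being a (scalar) identity. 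Since $\overline{\varphi}$ preserves $e$, it lies in $\uHMag(F(X),M)$, hence in $\Msc(F(X),M)$ as $\Msc$ is full in $\uHMag$; and any mosaic morphism extending $\varphi$ must agree with $\overline{\varphi}$ on $e$ and on $X^{-1}$ (by preservation of inverses, Lemma~\ref{lem:inversion} and the discussion preceding it), so it is unique. This gives the natural bijection $\Msc(F(X),M) \cong \Set(X,U(M))$, establishing the left adjoint.

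The main obstacle, modest as it is here, is choosing the hyperoperation on $F(X)$ small enough that the universal property holds but large enough to satisfy the mosaic axioms: reversibility is not automatic for an arbitrary unital hypermagma, so one must be careful that the ``freest'' structure genuinely is reversible rather than merely admitting unique inverses. The case analysis above resolves this, and the happy byproduct is that the free object is already commutative, which immediately yields that the forgetful functor $U \colon \cMsc \to \Set$ also has a left adjoint (indeed the same one), recovering the free-object claim of the first theorem for both $\Msc$ and $\cMsc$.
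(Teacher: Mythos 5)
Your construction is exactly the one the paper uses: $F(X) = X \sqcup X^{-1} \sqcup \{e\}$ with $a \star a^{-1} = e$ and all other products of nonidentity elements empty, yielding a commutative mosaic that is free in both $\Msc$ and $\cMsc$. The proposal is correct and follows the paper's proof, merely spelling out the case analysis that the paper leaves as "straightforward to verify."
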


\begin{proof}
Let $X$ be a set. Let $F(X) = (X \times \{0,1\}) \sqcup \{0\}$, equipped with the involution $- \colon F(X) \to F(X)$ that fixes zero and interchanges $(x,0)$ with $(x,0)$ for all $x \in X$. Thus if we identify $X$ with $X \times \{0\}$, we may view $F(X) = X \sqcup -X \sqcup \{0\}$.

Define a hyperaddition on $F(X) \setminus \{0\}$ by
\[
a + b = 
\begin{cases}
0 & \mbox{if } a = -b, \\
\varnothing &\mbox{otherwise}
\end{cases}
\]
and extend it to $F(X)$ by setting $0$ to be an additive identity. Then it is straightforward to verify that $F(X)$ is an object of $\cMsc$, that this construction gives a functor $F \colon \Set \to \cMsc \subseteq \Msc$, and that we obtain natural bijections
\[
\Set(X, U(M)) \cong \Msc(F(X), M)
\]
for all $X \in \Set$ and $M \in \Msc$.
\end{proof}

\separate

The additive structure of the Krasner hyperfield (Example~\ref{ex:Krasner}) plays the following special role in the category of mosaics. 

\begin{proposition}\label{prop:strict classifier}
The functor $\Sub\str \colon \Msc\op \to \Set$ of strict submosaics is representable by the Krasner hyperfield: 
\[
\Sub\str \cong \Msc(-, \K).
\]
\end{proposition}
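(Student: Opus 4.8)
The plan is to exhibit an explicit natural bijection between strict submosaics of a mosaic $M$ and morphisms $M \to \K$ in $\Msc$, where $\K = \{0,1\}$ is the additive hypergroup of the Krasner hyperfield (so $0$ is the identity, $1^{-1} = 1$, and $1 \star 1 = \{0,1\}$). Given a morphism $f \colon M \to \K$, the associated submosaic will be $\ker f = f^{-1}(0)$; conversely, given a strict submosaic $L \subseteq M$, the associated morphism $\chi_L \colon M \to \K$ will be the ``indicator'' sending $L$ to $0$ and $M \setminus L$ to $1$.

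First I would check that $\ker f$ is a strict submosaic for any $f \in \Msc(M,\K)$. Since $f(0_M) = 0$ we have $0_M \in \ker f$; since $f(x^{-1}) = f(x)^{-1}$ and $0^{-1} = 0$, the kernel is closed under inverses; and if $x,y \in \ker f$ then $f(x \star y) \subseteq f(x) \star f(y) = 0 \star 0 = \{0\}$, so $x \star y \subseteq \ker f$. Next I would check that $\chi_L$ is a morphism of mosaics for any strict submosaic $L$. It preserves the identity and inverses by the defining properties of $L$. For the colax condition $\chi_L(x \star y) \subseteq \chi_L(x) \star \chi_L(y)$: if either $x \notin L$ or $y \notin L$, then the right-hand side is $1 \star 0$, $0 \star 1$, or $1 \star 1 = \{0,1\} = \K$, which contains everything, so the inclusion is automatic; the only nontrivial case is $x,y \in L$, where the right side is $0 \star 0 = \{0\}$, and we need $\chi_L(x \star y) = \{0\}$, i.e. $x \star y \subseteq L$ — which holds precisely because $L$ is a strict submosaic. (Note this is where strictness, as opposed to merely being a weak submosaic, is essential.)

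Then I would verify the two round-trips are identities. Starting from a strict submosaic $L$, we have $\ker \chi_L = \chi_L^{-1}(0) = L$ by definition of $\chi_L$. Starting from a morphism $f \colon M \to \K$, I claim $\chi_{\ker f} = f$: indeed $\chi_{\ker f}(x) = 0 \iff x \in \ker f \iff f(x) = 0$, and since $\K$ has only two elements this forces $\chi_{\ker f}(x) = f(x)$ for all $x$. Finally, naturality: for $g \colon M' \to M$ in $\Msc$, I must check that pulling back a strict submosaic $L \subseteq M$ along $g$ corresponds to precomposing $\chi_L$ with $g$, i.e. that $\Sub\str(g)(L) = \ker(\chi_L \circ g)$; but both sides equal $g^{-1}(L) = \{x' \in M' \mid g(x') \in L\}$ (one should note $g^{-1}(L)$ is indeed a strict submosaic of $M'$, which is how $\Sub\str$ acts on morphisms — this may warrant a remark or may already be implicit in the definition of the functor $\Sub\str$). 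This gives the claimed natural isomorphism.

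I do not anticipate a serious obstacle here; the main subtlety is simply making sure the colax-morphism condition interacts correctly with the strictness hypothesis on $L$, and confirming that the functor $\Sub\str$ acts on morphisms by preimage (so that naturality is the tautology above). One minor point worth double-checking is that every strict submosaic is automatically absorptive (Remark~\ref{rem:absorptive}(5)) but this is \emph{not} needed for the present statement — strictness alone suffices for $\chi_L$ to be well defined and colax.
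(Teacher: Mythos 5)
There is a genuine gap, and it sits at the one place where the mosaic axioms actually matter. In $\K$ the element $0$ is a (scalar) identity, so $1 \star 0 = 0 \star 1 = \{1\}$, \emph{not} all of $\K$. Hence in the mixed case $x \in L$, $y \notin L$ the right-hand side of the colax condition is $\chi_L(x) \star \chi_L(y) = 0 + 1 = \{1\}$, and the required inclusion $\chi_L(x\star y) \subseteq \{1\}$ is equivalent to the nontrivial assertion that $x \star y \subseteq M \setminus L$. Your claim that this case is ``automatic'' is false: for instance, in the unital hypermagma $M = \{e,a,b\}$ with $a \star b = \{e\}$ and $L = \{e,a\}$ a strict unital subhypermagma, one has $\chi_L(a \star b) = \{0\} \not\subseteq \{1\} = \chi_L(a) + \chi_L(b)$. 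What rules this out for mosaics is reversibility: if $z \in x \star y$ with $x, z \in L$, then $y \in x^{-1} \star z \subseteq L$ since $L$ is a strict submosaic closed under inverses, contradicting $y \notin L$. This is exactly the argument the paper gives, and it is also why your closing remark is backwards --- the absorptive property of strict submosaics (Remark~\ref{rem:absorptive}(5)), which is a consequence of reversibility, is precisely what is needed here; strictness alone does not suffice, and the proposition fails in $\uHMag$.

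The remainder of your argument (the kernel of a morphism to $\K$ is a strict submosaic, the two round-trips, and naturality via preimages) is correct and matches the paper's proof. Once you repair the mixed case with the reversibility argument above, the proof is complete.
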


\begin{proof}
Let $G$ be a mosaic. Certainly if $\phi \in \Msc(G, \K)$ then $N = \phi^{-1}(0) \subseteq G$ is a strict submosaic. 
Conversely, suppose that $N \subseteq G$ is a strict submosaic. Define a function $\phi = \phi_N \colon G \to \K$ by
\[
\phi(x) =
	\begin{cases}
	0, & x \in N, \\
	1, & x \notin N.
	\end{cases}
\]
We claim that $\phi$ is a morphism of mosaics. Clearly $\phi$ preserves the unit. Given $x,y \in G$, we wish to show that
\[
\phi(x \star y) \subseteq \phi(x) + \phi(y).
\]
If $x,y \in N$ then 
\[
\phi(x \star y) \subseteq \phi(N) = 0 = 0 + 0 =  \phi(x) + \phi(y), 
\]
and if $x,y \notin N$ then 
\[
\phi(x \star y) \subseteq \{0,1\} = 1 + 1 = \phi(x) + \phi(y).
\]
So assume that $x \in N$ and $y \notin N$. It suffices to show show that $x \star y \subseteq G \setminus N$, for then it will follow that 
\[
\phi(x \star y) \subseteq \phi(G \setminus N) = 1 = 0 + 1 = \phi(x) + \phi(y).
\]
So let $z \in x \star y$, and assume toward a contradiction that $z \in N$. Then reversibility implies that $y \in x^{-1} \star z \in N$, which is a contradiction. A symmetric argument applies if $x \notin N$ and $y \in N$.

Thus we have a bijection
\begin{align*}
\Sub\str(G) &\cong \Msc(G,\K), \\
N &\mapsto \phi_N,
\end{align*}
which is evidently natural in $G$. Thus $\K$ (with its zero submosaic) represents $\Sub\str$. 
\end{proof}


\separate

Next we describe a closed monoidal structure on the category of commutative mosaics, reminiscent to that of the category $(\Ab, \otimes, \Z)$ of abelian groups. First note that the internal hom of $\uHMag$ naturally induces an internal hom on $\cMsc$, the hyperoperation on $f,g \in \cMsc(M,N)$ by
\begin{equation}\label{eq:additive hom}
f + g = \{h \in \cMsc(M,N) \mid h(x) \in f(x) + g(x) \mbox{ for all } x \in M\}.
\end{equation}
While we know this has identity given by the constant zero morphism, one can readily verify that it is also reversible: the unique inverse of $f$ is the morphism $-f \colon M \to N$ given by $(-f)(x) = -f(x)$ for $x \in M$.

Now we turn to the construction of the corresponding monoidal structure.
For any object $M \in \cMsc$, the negation map $-1 \colon M \to M$ given by $m \mapsto -m$ is a morphism (thanks to commutativity and uniqueness of inverses). Thus for two objects $M,N \in \cMsc$ we may form the endomorphism $(-1) \boxwedge (-1)$ of $M \boxwedge N$. We let $M \boxtimes N$ denote the coequalizer of the endomorphisms $i_{++} := \id_M \boxwedge \id_N = \id_{M \boxwedge N}$ and $i_{--} = (-1) \boxwedge (-1)$ in $\uHMag$: 
\begin{equation}
\begin{tikzcd}
M \boxwedge N \ar[r, shift left=.5ex, "i_{++}"] \ar[r, shift right=.5ex, swap, "i_{--}"] & M \boxwedge N \ar[r, twoheadrightarrow] & M \boxtimes N
\end{tikzcd}
\end{equation}
Thus the underlying set of $M \boxtimes N$ is the quotient of $M \boxwedge N$ by the equivalence relation $x \boxwedge y \sim (-x) \boxwedge (-y)$ (where $x \in M$, $y \in N$), which is equivalently described by $(-x) \boxwedge y \sim x \boxwedge (-y)$. Thus we may define an involution on $M \boxtimes N$ by setting
\[
- m \boxtimes n := (-m) \boxtimes n = m \boxtimes (-n).
\]
This provides an inverse for each element:
\[
0 = 0 \boxtimes n \in (-m + m) \boxtimes n = - m \boxtimes n + m \boxtimes n.
\]
By construction, there is a bijection 
\begin{equation}\label{eq:tensor set}
M \boxtimes N \cong (M \wedge N)/(\Z/2\Z),
\end{equation}
where the group $\Z/2\Z$ acts on the pointed set $M \wedge N$ by $m \wedge n \mapsto (-m) \wedge (-n)$. In particular, we immediately see the following nondegeneracy condition of this ``tensor product'' that behaves more like the case of vector spaces than the case of abelian groups:
\[
x \in M \setminus \{0\}, y \in N \setminus \{0\} \implies x \boxwedge y \neq 0.
\]

\begin{lemma}
If $M$ and $N$ are commutative mosaics, then $M \boxtimes N$ is also a commutative mosaic.
\end{lemma}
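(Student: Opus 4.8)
\emph{Proof plan.} The plan is to reduce quickly to reversibility and then build the inverse map and verify the reversibility implication by hand. Commutativity and unitality will come for free: $M\boxtimes N$ is by construction a coequalizer in $\uHMag$, so it comes with a short quotient map $q\colon M\boxwedge N\twoheadrightarrow M\boxtimes N$ (Remark~\ref{rem:short coequalizer}); since $M\boxdot N$ is commutative, so is its short image $M\boxwedge N$, and hence so is its short image $M\boxtimes N$ by Lemma~\ref{lem:short image}(1), and it has a unit $0$ as an object of $\uHMag$. So everything comes down to reversibility.

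Next I would construct the candidate inversion from the two ``twisted negations'' $j:=(-1)_M\boxwedge\id_N$ and $j':=\id_M\boxwedge(-1)_N$. These are commuting involutive automorphisms of $M\boxwedge N$ (functoriality of $\boxwedge$, Theorem~\ref{thm:unital tensor}) with $j\circ j'=i_{--}$. From the explicit description of $M\boxtimes N$ as the quotient of $M\boxwedge N$ by $x\boxwedge y\sim(-x)\boxwedge(-y)$, equivalently $(-x)\boxwedge y\sim x\boxwedge(-y)$, one gets $q\circ j=q\circ j'$; since $j\circ i_{--}=j'$, this common composite coequalizes $i_{++}=\id$ and $i_{--}$ and therefore factors through $q$ as $\iota\circ q$ for a morphism $\iota\colon M\boxtimes N\to M\boxtimes N$ with $\iota(m\boxtimes n)=(-m)\boxtimes n=m\boxtimes(-n)$ and $\iota^2=\id$. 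This $\iota$ will be the inversion, and the identity $0=0\boxtimes n\in(-m+m)\boxtimes n$ recorded just above gives $0\in\xi+\iota(\xi)$ for all $\xi$.

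For reversibility it will suffice, using commutativity, to show $w\in u+v\implies v\in w+\iota(u)$. I would lift along the short surjection $q$ to get $\tilde u,\tilde v,\tilde w\in M\boxwedge N$ with $\tilde w\in\tilde u+\tilde v$ --- recalling that non-identity elements of $M\boxwedge N$ are uniquely represented by elements of $M\boxdot N\setminus E$ --- and then prove the key claim that in $M\boxwedge N$,
\[
\tilde w\in\tilde u+\tilde v\ \Longrightarrow\ \tilde v\in\tilde w+j(\tilde u)\ \text{ or }\ \tilde v\in\tilde w+j'(\tilde u).
\]
This is a case analysis over the short list of shapes a nonempty product can have in $M\boxdot N$: the mixed cases $(x\boxdot y)+(x\boxdot y')$ with $y\neq y'$ and $(x\boxdot y)+(x'\boxdot y)$ with $x\neq x'$, the diagonal case $(x\boxdot y)+(x\boxdot y)$, and the cases where one of $\tilde u,\tilde v,\tilde w$ is the unit; in each case one applies the reversibility of $N$ (resp.\ $M$) in the coordinate that varies. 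For example, $\tilde w=x\boxdot t\in x\boxdot(y+y')$ forces $t\in y+y'$, so $y'\in t-y$ in $N$, and then $\tilde v=x\boxdot y'\in(x\boxdot t)+(x\boxdot(-y))=\tilde w+j'(\tilde u)$. Applying $q$ and using $q\circ j=q\circ j'=\iota\circ q$, either alternative yields $v=q(\tilde v)\in q(\tilde w)+\iota(q(\tilde u))=w+\iota(u)$, finishing the proof.

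The hard part is not any single step but the fact that $M\boxwedge N$ is itself \emph{not} reversible --- inverses in it need not be unique (already in $(\Z/3\Z)\boxwedge(\Z/3\Z)$ the class $1\boxwedge 1$ has the two distinct inverses $1\boxwedge 2$ and $2\boxwedge 1$) --- so there is no reversibility statement on $M\boxwedge N$ to transport directly along $q$. Carrying the \emph{two} twisted involutions $j,j'$ through $M\boxwedge N$ and exploiting that they are identified in the further quotient $M\boxtimes N$ is precisely what circumvents this. What remains is the bookkeeping in the case analysis above, with the diagonal products and the degenerate situations (a coordinate equal to its own negative, or a product meeting $E$) being the fussier ones --- though each of those reduces immediately to reversibility in $M$ or in $N$.
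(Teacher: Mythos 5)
Your proposal is correct and follows essentially the same route as the paper: commutativity and unitality are inherited through the short quotient, and reversibility is proved by lifting a relation $w\in u+v$ along the short surjection onto $M\boxtimes N$, case-analyzing the explicit hyperoperation of $M\boxdot N$, and applying reversibility of $M$ or $N$ in the coordinate that varies. Your packaging of the inversion via the twisted negations $j$, $j'$ and their identification in the coequalizer is a slightly more categorical phrasing of the paper's direct definition $-\,(m\boxtimes n):=(-m)\boxtimes n=m\boxtimes(-n)$, but the substance is the same.
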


\begin{proof}
By construction $M \boxtimes N$ is a commutative unital hypermagma with inverses as described above. 
Thus it only remains to check reversibility. Before doing so, consider that the surjective morphisms of hypermagmas
\[
M \boxdot N \twoheadrightarrow M \boxwedge N \twoheadrightarrow M \boxtimes N
\]
are both short, so that the composite surjection $\pi \colon M \boxdot N \twoheadrightarrow M \boxtimes N$ is also short.

To verify reversibility, suppose that $w_1, w_2, w_3 \in M \boxtimes N$ are elements such that 
\[
w_1 \in w_2 + w_3 = \pi(\pi^{-1}(w_2) + \pi^{-1}(w_3)).
\]
This means that there exist $w'_i = x_i \boxdot y_i \in M \boxdot N$ such that each $\pi(w'_i) = w_i$ and 
\[
x_1 \boxdot y_1 \in (x_2 \boxdot y_2) + (x_3 \boxdot y_3)
\]
Recalling the structure of $M \boxdot N$ described above, we must have either $x_2 = x_3$ or $y_2 = y_3$ for the sum on the right-hand-side to be nonempty. We may separate the argument into one of a few cases.

First suppose that $x_2 = x_3 =: x$ but $y_2 \neq y_3$. Then by reversibility of $N$ we have
\begin{align*}
x_1 \boxdot y_1 \in x \boxdot (y_2 + y_3) &\implies x_1 = x \mbox{ and } y_1 \in y_2 + y_3 \\
&\implies y_3 \in y_1 - y_2 \\
&\implies x \boxdot y_3 \in x \boxdot y_1 + x \boxdot (-y_2).
\end{align*}
Applying $\pi$, we then deduce that $w_3 \in w_1 - w_2$. A similar argument yields the same conclusion in the case where $x_2 \neq x_3$ and $y_2 = y_3$.

Finally, suppose that $x_2 = x_3 =:x $ and $y_2 = y_3 =: y$. Then we have
\[
x_1 \boxdot y_1 \in (x \boxdot y) + (x \boxdot y) = (x+x) \boxdot y \, \cup \, x \boxdot (y + y).
\]
Thus we either have $x_1 = x$ and $y_1 \in y + y$, or we have $x_1 \in x + x$ and $y_1 \in y + y$. In the first case, reversibility gives $y \in y_1 - y$, so that 
\[
x \boxdot y \in x \boxdot (y_1 -  y) = x_1 \boxdot y_1 + x \boxdot (-y).
\]
Applying $\pi$ thus implies that $w_3 \in w_2 - w_1$. A similar argument in the second case derives the same conclusion. 

Thus by exhaustion of all cases we conclude that $M \boxtimes N$ is in fact reversible.
\end{proof}

\begin{theorem}\label{thm:mosaic bimorphisms}
For commutative mosaics $M$ and $N$, the object $M \boxtimes N \in \cMsc$ represents the functor of bimorphisms:
\[
\cMsc(M \boxtimes N, -) \cong \Bim_{\cMsc}(M,N; -) \colon \cMsc \to \Set.
\]
\end{theorem}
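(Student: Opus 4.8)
The plan is to identify the canonical surjection
\[
\tau := \tau_{M,N} \colon M \times N \longrightarrow M \boxwedge N \twoheadrightarrow M \boxtimes N,
\]
given by the canonical bimorphism $M \times N \to M \boxwedge N$ of Theorem~\ref{thm:unital tensor} followed by the quotient $M \boxwedge N \twoheadrightarrow M \boxtimes N$, as the universal bimorphism out of the pair $(M,N)$ in $\cMsc$, and then to conclude via \cite[Proposition~2.4.8]{Riehl:context} exactly as in Proposition~\ref{prop:bimorphisms}. First I would record the preliminaries: since $M \boxtimes N$ is a commutative mosaic (preceding lemma) and the post-composition of a bimorphism with a morphism is again a bimorphism, the map $\tau$ is a bimorphism in $\cMsc$; moreover $\tau$ is surjective since both of its factors are; and, because $\cMsc$ is a full subcategory of $\uHMag$ with the same underlying-set functor, a bimorphism $M \times N \to Z$ in $\cMsc$ is exactly a bimorphism in $\uHMag$ whenever $Z \in \cMsc$.

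The crucial observation is that \emph{every bimorphism $B \colon M \times N \to Z$ into a commutative mosaic $Z$ is odd in each variable}: one has $B(-x,y) = B(x,-y) = -B(x,y)$, and hence $B(-x,-y) = B(x,y)$, for all $x \in M$ and $y \in N$. Indeed, fixing $x$ and using that $B(x,-) \colon N \to Z$ is a unital morphism of mosaics, from $0 \in y + (-y)$ in $N$ we obtain $0_Z = B(x,0) \in B(x,y) + B(x,-y)$, so that $B(x,-y)$ is an inverse of $B(x,y)$ in $Z$; uniqueness of inverses then forces $B(x,-y) = -B(x,y)$, and the argument in the first variable is symmetric.

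With this in hand the universal property is a routine diagram chase. Given a bimorphism $B \colon M \times N \to Z$ with $Z \in \cMsc$, Theorem~\ref{thm:unital tensor} provides a unique $\bar B \in \uHMag(M \boxwedge N, Z)$ with $\bar B \circ (-\boxwedge-) = B$. The oddness of $B$ says precisely that $\bar B \circ i_{++}$ and $\bar B \circ i_{--}$ agree on every element $x \boxwedge y$ of $M \boxwedge N$, hence are equal; so by the universal property of the coequalizer defining $M \boxtimes N$, the morphism $\bar B$ descends uniquely to $\widetilde B \in \uHMag(M \boxtimes N, Z) = \cMsc(M \boxtimes N, Z)$ with $\widetilde B \circ \tau = B$. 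Uniqueness of such $\widetilde B$ follows from the surjectivity of $\tau$. This exhibits $(M \boxtimes N, \tau)$ as an initial object of $\operatorname{el}(\Bim_{\cMsc}(M,N; -))$, and the asserted natural isomorphism $\cMsc(M \boxtimes N, -) \cong \Bim_{\cMsc}(M,N; -)$ follows by the Yoneda argument cited above.

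The main obstacle --- really the only step that is not formal bookkeeping --- is the oddness observation of the second paragraph: it is exactly what guarantees that the further quotient from $M \boxwedge N$ to $M \boxtimes N$ discards no bimorphisms into commutative mosaics, so that the smaller object $M \boxtimes N$ still represents the entire bimorphism functor. Everything else reduces to chasing the universal properties of $\boxwedge$ (Theorem~\ref{thm:unital tensor}) and of the defining coequalizer of $\boxtimes$.
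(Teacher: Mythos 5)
Your proposal is correct and takes essentially the same route as the paper: both reduce to the representability of $\boxwedge$ from Theorem~\ref{thm:unital tensor}, observe that every bimorphism into a commutative mosaic is odd in each variable (so that the induced morphism out of $M \boxwedge N$ coequalizes $i_{++}$ and $i_{--}$), and then factor uniquely through the defining coequalizer. Your explicit justification of the oddness via uniqueness of inverses is a welcome detail that the paper merely asserts.
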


\begin{proof}
Suppose that $L$ is another object in $\cMsc$. Since $\cMsc$ is a full subcategory of $\uHMag$, a function $M \times N \to L$ is a bimorphism in $\uHMag$ if and only if it is a bimorphism in $\cMsc$.
Theorem~\ref{thm:unital tensor} gives a natural isomorphism 
\begin{equation}\label{eq:bimorphisms}
\uHMag(M \boxwedge N, L) \cong \Bim_{\uHMag}(M,N;L) = \Bim_{\cMsc}(M,N;L).
\end{equation}
But also because any bimorphism $B \colon M \times N \to L$ satisfies 
\[
B(-m,n) = -B(m,n) = B(m,-n),
\] 
it follows that any morphism $f \colon M \boxwedge N \to L$ satisfies 
\[
f((-m) \boxwedge n) = f(m \boxwedge (-n)) = -f(m \boxwedge n). 
\]
Thus $f$ coequalizes the endomorphisms $i_{++}$ and $i_{--}$ of $M \boxwedge N$ described above, so that it factors uniquely via a morphism out of the coequalizer $\overline{f} \colon M \boxtimes N \to L$. Thus $f \mapsto \overline{f}$ provides a natural bijection
\[
\uHMag(M \boxwedge N, L) \cong \cMsc(M \boxtimes N, L),
\]
which combines with~\eqref{eq:bimorphisms} to yield the desired representability.
\end{proof}

Recall from Theorem~\ref{thm:free reversible} that free objects exist in $\Msc$ and that they are commutative, so that they also form free objects in $\cMsc$.
Let $\FF = \{1, 0, -1\}$ denote the free object of $\cMsc$ generated by the single element $1$, so that $1+1 = -1-1 = \varnothing$ and $1 - 1 = 0$.

\begin{theorem}\label{thm:mosaic monoidal}
The symmetric monoidal category $(\cMsc, \boxtimes, \FF)$ is closed, with internal hom given by $[M,N] := \cMsc(M,N)$ under the hyperoperation~\eqref{eq:additive hom}.
\end{theorem}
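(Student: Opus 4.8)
The plan is to imitate the proofs of Theorems~\ref{thm:HMag closed} and~\ref{thm:unital tensor}, using the bimorphism representability of Theorem~\ref{thm:mosaic bimorphisms} as the main tool. The preliminary point, already sketched after~\eqref{eq:additive hom}, is that $[M,N] := \cMsc(M,N)$ with the hyperoperation~\eqref{eq:additive hom} really is a commutative mosaic: it is a commutative unital hypermagma whose identity is the constant zero morphism, the unique inverse of $f$ is the morphism $-f$ given by $(-f)(x) = -f(x)$ (a morphism because negation is an endomorphism of $N$, using Lemma~\ref{lem:inversion} and commutativity), and reversibility is checked pointwise: if $h(x) \in f(x) + g(x)$ for all $x \in M$, then reversibility of $N$ gives $g(x) \in h(x) - f(x)$ for all $x$, i.e.\ $g \in h - f$ in $\cMsc(M,N)$.

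For the symmetric monoidal structure, I would deduce every structural isomorphism from the natural isomorphism $\cMsc(M \boxtimes N, -) \cong \Bim_{\cMsc}(M,N;-)$ of Theorem~\ref{thm:mosaic bimorphisms}, exactly as $1_\varnothing, \boxdot$ and $1, \boxwedge$ were handled. Since $\FF$ is the free commutative mosaic on one generator (Theorem~\ref{thm:free reversible}), we have $\cMsc(\FF,L) \cong UL$ naturally; unwinding the definition of a bimorphism $\FF \times M \to L$ (which is determined by the arbitrary morphism $B(1,-) \colon M \to L$, with $B(0,-) = 0$ and $B(-1,-) = -B(1,-)$) shows $\Bim_{\cMsc}(\FF, M; L) \cong \cMsc(M,L)$ naturally, so $\FF \boxtimes M \cong M$ by the Yoneda lemma, and symmetrically $M \boxtimes \FF \cong M$. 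Symmetry $M \boxtimes N \cong N \boxtimes M$ comes from interchanging the two slots of a bimorphism. For associativity I would introduce \emph{trimorphisms} $M \times N \times L \to Z$ (functions that are morphisms of commutative mosaics in each variable separately) and observe that precomposition with the universal bimorphism $M \times N \to M \boxtimes N$ yields a natural bijection $\Bim_{\cMsc}(M \boxtimes N, L; Z) \cong \mathrm{Trim}_{\cMsc}(M,N,L;Z)$, and symmetrically $\Bim_{\cMsc}(M, N \boxtimes L; Z) \cong \mathrm{Trim}_{\cMsc}(M,N,L;Z)$; hence both $(M \boxtimes N) \boxtimes L$ and $M \boxtimes (N \boxtimes L)$ represent $\mathrm{Trim}_{\cMsc}(M,N,L;-)$ and are canonically isomorphic. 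The coherence axioms (pentagon and triangle) then follow formally: applying $\cMsc(-,Z)$ and identifying each side with a set of multimorphisms via these representations, every structural isomorphism becomes the identity, so the coherence diagrams commute by the Yoneda lemma.

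Closedness is then the chain of natural bijections
\[
\cMsc(M \boxtimes N, L) \;\cong\; \Bim_{\cMsc}(M, N; L) \;\cong\; \cMsc\bigl(M, [N,L]\bigr),
\]
the first being Theorem~\ref{thm:mosaic bimorphisms} and the second being ``currying'' $B \mapsto \bigl(m \mapsto B(m,-)\bigr)$. Checking this is bijective on the nose is elementary: $m \mapsto B(m,-)$ lands in $\cMsc(N,L)$ because each $B(m,-)$ is a morphism, it sends $0$ to the constant zero because each $B(-,n)$ is unital, and its colax condition relative to~\eqref{eq:additive hom} is precisely the colax condition of the morphisms $B(-,n)$; conversely $B_\psi(m,n) := \psi(m)(n)$ is a bimorphism since $B_\psi(m,-) = \psi(m)$ and $B_\psi(-,n) = \mathrm{ev}_n \circ \psi$ are composites of morphisms (evaluation at a fixed element being a morphism out of the internal hom). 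Naturality in all three arguments gives $(-) \boxtimes N \dashv [N,-]$, which completes the proof.

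The only real difficulty is bookkeeping: making the coherence of the symmetric monoidal structure rigorous without a long diagram chase, which I would handle via the multimorphism observation above. An alternative that avoids coherence altogether is to show that $\cMsc$ is a reflective exponential ideal of the closed monoidal category $(\cuHMag, \boxwedge, 1)$ --- the internal hom $\cuHMag(v,D)$ is a commutative mosaic whenever $D$ is, by the same pointwise argument as in the first paragraph --- and then apply a reflection theorem in the style of Day: $\cMsc$ inherits a closed monoidal structure whose internal hom is the restriction of $\cuHMag(-,-)$ and whose tensor is the reflection of $-\boxwedge-$. One then identifies that reflection with $\boxtimes$ (the coequalizer defining $M \boxtimes N$ has exactly the universal property of the reflection of $M \boxwedge N$, since a morphism from $M \boxwedge N$ into a commutative mosaic corresponds to a bimorphism, hence coequalizes $i_{++}$ and $i_{--}$) and checks that the reflection of the $\boxwedge$-unit is $\FF$; the price is that one must first establish that $\cMsc$ is reflective in $\cuHMag$.
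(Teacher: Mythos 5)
Your proposal is correct and follows essentially the same route as the paper: the internal hom is checked to be a commutative mosaic pointwise, and the adjunction is obtained as the chain $\cMsc(M \boxtimes N, L) \cong \Bim_{\cMsc}(M,N;L) \cong \cMsc(M, [N,L])$ via currying, with $\FF$ verified as the unit (the paper does this by an explicit bijection rather than your Yoneda argument from freeness, but the content is the same). Your trimorphism treatment of associativity and coherence, and the alternative Day-reflection route, go beyond what the paper writes down---the paper leaves the symmetric monoidal coherence implicit---but these are sound elaborations rather than a different proof.
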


\begin{proof}
Fix objects $M,N,L \in \cMsc$. We claim that there is a natural bijection between morphisms $M \to \cMsc(N,L)$ and bimorphisms $M \times N \to L$. From this and Theorem~\ref{thm:mosaic bimorphisms} will follow the adjunction
\[
\cMsc(M \boxtimes N, L) \cong \Bim_{\cMsc}(M,N;L) \cong \cMsc(M, \cMsc(N,L)).
\]
Certainly every bimorphism $B \colon M \times N \to L$ determines a morphism $M \to \cMsc(N,L)$ given by $m \mapsto B(M,-)$. Conversely, suppose that 
\begin{align*}
\phi \colon M &\to \cMsc(N,L) \\
m &\mapsto \phi_m
\end{align*}
is a morphism of mosaics. We obtain a function $B \colon M \times N \to L$ by setting $B(m,n) = \phi_m(n)$. This is a bimorphism because we have
\begin{align*}
\phi_{m+m'}(n) &\subseteq \phi_m(n) + \phi_{m'}(n), \\
\phi_m(n+n') &\subseteq \phi_m(n) + \phi_m(n'), \\
\phi_m(0) &= 0 = \phi_0(n)
\end{align*}
for all $m,m' \in M$ and $n,n' \in N$. These assignments are readily verified to be mutually inverse and natural in $M$, $N$, and $L$, so that the adjunction is established.

It remains to show that $\FF$ is a monoidal unit. For any commutative mosaic $M$, we have a bimorphism $\FF \times M \to M$ given by $(\pm 1, m) \mapsto \pm m$ and $(0, m) \mapsto m$. This uniquely determines a morphism $\FF \boxtimes M \to M$, which one can verify is a bijection. One can check that the inverse assignment $M \to \FF \boxtimes M$ given by $m \mapsto 1 \boxtimes m$ is in fact a morphism, with the only subtle observation here is the case
\begin{align*}
1 \boxtimes m + 1 \boxtimes m &= (1 + 1) \boxtimes m \, \cup \, 1 \boxtimes (m + m) \\
&= \varnothing \, \cup \, 1 \boxtimes (m + m) \\
&= 1 \boxtimes (m + m).
\end{align*}
This gives $\FF \boxtimes M \cong M$ naturally in $M$.
\end{proof}

\begin{example}
Suppose that $G$ and $H$ are abelian groups, with (ordinary) tensor product $G \otimes H = G \otimes_\Z H$. Because the canonical map $G \times H \to G \otimes H$ is bilinear, it is also a bimorphism of mosiaics. Thus it induces a morphism of mosaics
\begin{align*}
G \boxtimes H &\to G \otimes H, \\
g \boxtimes h &\mapsto g \otimes h.
\end{align*}
The image of this morphism lies in the weak submosaic consisting of the pure tensors in $G \otimes H$. However, this morphism need not be injective: the underlying set of $G \boxtimes H$ is in bijection with $(G \wedge H)/(\Z/2\Z)$ as in~\eqref{eq:tensor set}, while one can certainly choose abelian groups such that $G \otimes H = 0$.
\end{example}

\begin{remark}
In principle, the construction of the object $M \boxtimes N$ should carry through even if we do not assume that the mosaics are commutative. Extra care is required in the definition of the object in this case: one should instead take the quotient of the unital hypermagma $M \boxwedge N$ by the equivalence relation generated by $x \boxwedge y^{-1} \sim x^{-1} \boxwedge y$ for all $x \boxwedge y \in M \boxwedge N$. However, it seems more difficult to identify the underlying set of this quotient. In particular, there is no reason to expect that $x \in M \setminus \{e\}$ and $y \in N \setminus \{e\}$ would still imply $x \boxtimes y \neq e$. Because of this extra complication, and because our motivation was to mimic the tensor product of abelian groups, we have chosen to focus only on the commutative case here.
\end{remark}

The closed monoidal structure on $\cMsc$ allows for a new view of hyperrings and, more generally, multirings. We recall from~\cite[Section~4]{Viro:hyperfields} that a \emph{multiring} $(R, +, 0, \cdot, 1)$ is a set $R$ equipped with the structures of a canonical hypergroup $(R, +, 0)$ and a monoid $(R, \cdot, 1)$ subject to the condition that $0 \cdot R = 0 = R \cdot 0$ along with the following ``subdistributive'' property: for all $a, b, c \in R$,
\begin{equation}\label{eq:subdistributive}
a(b + c) \subseteq ab + ac \quad \mbox{and} \quad (b+c)a \subseteq ba + ca.
\end{equation}
A multiring $R$ is a \emph{hyperring} if it satisfies the ``strict'' form of distributivity: for all $a,b,c \in R$,
\begin{equation}\label{eq:distributive}
a(b + c) = ab + ac \quad \mbox{and} \quad (b+c)a = ba + ca.
\end{equation}
 A \emph{morphism} of multirings is a function that is both a morphism the canonical hypergroup structure and the multiplicative monoid structure. We let $\MRing$ denote the category of multirings with these morphisms, and we let $\HRing$ denote the full subcategory whose objects are the hyperrings.
 
It is well known that the category of rings can be equivalently viewed as the category of monoid objects~\cite[VII.3]{MacLane} in $(\Ab, \otimes, \Z)$. The following shows that multirings enjoy a similar description. If $\C$ is a monoidal category (whose monoidal structure is understood from context), we let $\Mon(\C)$ denote the corresponding category of monoid objects in $\C$.

\begin{theorem}\label{thm:multirings}
The category of multirings has a fully faithful embedding
 \[
 \cat{MRing} \hookrightarrow \Mon(\cMsc) 
 \]
whose image is the full subcategory of objects with underlying additive mosaic being associative (i.e., a canonical hypergroup). 
\end{theorem}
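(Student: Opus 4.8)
The plan is to translate the data and axioms of a monoid object in the closed monoidal category $(\cMsc, \boxtimes, \FF)$ directly into the axioms of a multiring, the only difference being that the additive part of a \emph{general} monoid object is an arbitrary commutative mosaic rather than a canonical hypergroup.

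I would first build this dictionary. A monoid object in $\cMsc$ is a triple $(M, \mu, \eta)$ consisting of an object $M \in \cMsc$ together with morphisms $\mu \colon M \boxtimes M \to M$ and $\eta \colon \FF \to M$ satisfying the usual associativity and unit coherence diagrams. By Theorem~\ref{thm:mosaic bimorphisms}, giving $\mu$ is the same as giving a bimorphism $\cdot \colon M \times M \to M$; note that, since bimorphisms take values in $M$ (not in $\P(M)$), this multiplication is an honest single-valued binary operation. Unpacking the definition of a bimorphism, for each $a \in M$ the partial map $b \mapsto a \cdot b$ is a morphism of mosaics, i.e.\ $a \cdot 0 = 0$ and $a(b + c) \subseteq ab + ac$, and symmetrically in the other variable; these are exactly the $0$-absorption and subdistributivity conditions~\eqref{eq:subdistributive}. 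Since $\FF$ is the free commutative mosaic on one generator (Theorem~\ref{thm:free reversible}), giving $\eta$ is the same as choosing an element $1 := \eta(1) \in M$. It then remains to interpret the coherence diagrams: iterating the adjunction of Theorem~\ref{thm:mosaic monoidal} identifies $\cMsc(M \boxtimes M \boxtimes M, L)$ with functions $M^3 \to L$ that are morphisms of mosaics in each variable separately, and under this identification the associativity diagram of $(M, \mu, \eta)$ becomes the equation $(ab)c = a(bc)$, while the two unit triangles become $1 \cdot a = a = a \cdot 1$. Hence a monoid object in $\cMsc$ is precisely a commutative mosaic $(M, +, 0)$ equipped with a single-valued multiplication making $(M, \cdot, 1)$ a monoid that is $0$-absorbing and subdistributive over $+$. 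Such a structure is a multiring exactly when $(M, +, 0)$ is associative, which by Lemma~\ref{lem:nondegenerate} is equivalent to $(M, +, 0)$ being a canonical hypergroup.

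With the dictionary in hand, I would define $\Phi \colon \MRing \to \Mon(\cMsc)$ on objects by $R \mapsto (R, \mu_R, \eta_R)$, where $\mu_R$ is the morphism corresponding under Theorem~\ref{thm:mosaic bimorphisms} to the multiplication of $R$ (which is a bimorphism by $0$-absorption, subdistributivity, and uniqueness of additive inverses) and $\eta_R$ selects $1_R$; the monoid-object axioms hold because $(R, \cdot, 1)$ is a monoid. On morphisms, a map of multirings $f \colon R \to S$ is by definition a morphism of the additive canonical hypergroups---hence a morphism in $\cMsc$, as $\Can$ is a full subcategory of $\cMsc$---that is multiplicative and unit-preserving, and the conditions $f(ab) = f(a)f(b)$ and $f(1_R) = 1_S$ are precisely $f \circ \mu_R = \mu_S \circ (f \boxtimes f)$ and $f \circ \eta_R = \eta_S$, i.e.\ $f$ is a morphism of monoid objects. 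Thus $\Phi$ is well-defined; it is faithful because it is the identity on underlying functions, and it is full because the same translation run in reverse turns any morphism of monoid objects between multirings into a morphism of multirings. For the image: every multiring has associative additive part, so $\Phi(R)$ always has underlying mosaic a canonical hypergroup; conversely, if $(M, \mu, \eta) \in \Mon(\cMsc)$ has $M$ associative, the dictionary produces a multiring structure on $M$ which $\Phi$ sends back to $(M, \mu, \eta)$. Therefore the image of $\Phi$ consists exactly of the monoid objects whose underlying mosaic is associative, and since $\Phi$ is fully faithful this image is a full subcategory of $\Mon(\cMsc)$. (The parallel statement for hyperrings follows verbatim, replacing subdistributivity~\eqref{eq:subdistributive} by strict distributivity~\eqref{eq:distributive}, which corresponds to requiring the bimorphism $\cdot$ to be strict in each variable.)

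The step I expect to be the main obstacle is the careful bookkeeping in the dictionary: verifying that the associativity and unit coherence squares of a monoid object---which a priori involve the associator and unitors of $(\cMsc, \boxtimes, \FF)$ and morphisms out of triple monoidal products---unwind through the representability isomorphism of Theorem~\ref{thm:mosaic bimorphisms} to the bare equations $(ab)c = a(bc)$ and $1a = a = a1$, with no residual hyperoperational ambiguity hiding in the multiplication. Once that is pinned down, the remainder is a routine transport of structure along natural isomorphisms.
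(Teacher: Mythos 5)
Your proposal is correct and follows essentially the same route as the paper: use the representability of bimorphisms (Theorem~\ref{thm:mosaic bimorphisms}) to convert the subdistributive, $0$-absorbing multiplication into a morphism $M \boxtimes M \to M$, use freeness of $\FF$ to obtain the unit morphism, check full faithfulness on underlying functions, and invoke Lemma~\ref{lem:nondegenerate} to identify the image as the monoid objects with associative addition. Your explicit two-way ``dictionary'' characterizing \emph{all} monoid objects of $\cMsc$ is a slightly more careful justification of the image claim than the paper's brief remark, but it is the same argument.
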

 
\begin{proof}
Given a multiring $(R, +, 0, \ast_R , 1)$, we define a monoid object $(R, m, \eta)$ of $(\cMsc, \boxtimes, \FF)$ as follows. 
Because the monoidal unit $\FF = \{0, 1, -1\}$ of $\cMsc$ is freely generated by $1$, there is a unique morphism of mosaics $\eta_R \colon \FF \to R$ determined by $1 \mapsto 1$. 
The zero and subdistributive~\eqref{eq:subdistributive} properties of multiplication in $R$ imply that it is a bimorphism in $\cMsc$. By the universal property of Theorem~\ref{thm:mosaic bimorphisms} it factors uniquely as
 \[
 \begin{tikzcd}
 R \times R \ar[r, "\ast_R"] \ar[d, twoheadrightarrow] & R \\
 R \boxtimes R \ar[ur, dashrightarrow, swap, "m_R"]
 \end{tikzcd}
 \]
where $m_R$ is a morphism of mosaics. Associativity of~$\ast_R$ readily implies associativity of $m_R$, and the identity property of $1 \in R$ similarly implies that $\eta_R$ is a unit for $m_R$. Thus $(R, m_R, \eta_R)$ is indeed an object of $\Mon(\cMsc)$.

Using the universal properties of $\boxtimes$ and $\FF$, it is straightforward to verify that the assignment above forms a functor from multirings to monoid objects in $\cMsc$, which acts identically on morphisms. The fact that this is fully faithful amounts to the following observation for any multirings $R$ and $S$: for a morphism $f \in \Can(R,S) = \cMsc(R,S)$, 
\[
f \circ \ast_R = \ast_S \circ (f \times f) \iff f \circ m_R = m_S \circ (f \boxtimes f). 
\]

Finally, an object $(R,m,\eta)$ of $\Mon(\cMsc)$ is in the essential image of this functor if and only if its underlying additive mosaic is a hypergroup, which happens if and only if the addition is associative by Lemma~\ref{lem:nondegenerate}.
\end{proof}

As hyperrings form a full subcategory $\HRing \subseteq \MRing$ of multirings, the functor above restricts to a fully faithful embedding
\[
\HRing \hookrightarrow \Mon(\cMsc)
\]
as well. The following indicates how to describe the essential image of $\HRing$ under the functor.

\begin{remark}
Let $R$ be a multiring, viewed as a monoid $(R, m, \eta)$ in $\cMsc$. Under the adjunction $R \boxtimes - \dashv \cMsc(R, -)$, the multiplication $m \colon R \boxtimes R \to R$ corresponds to the ``left multiplication'' morphism 
\begin{align*}
\lambda \colon R &\to \cMsc(R,R), \\
r &\mapsto \lambda_r,
\end{align*}
where $\lambda_r(x) = rx$ for all $x \in R$. Similarly, the adjunction $- \boxtimes R \dashv \cMsc(R,-)$ turns $m$ into the ``right multiplication'' morphism $\rho \colon R \to \cMsc(R,R)$. Then $R$ is a hyperring if and only if strict distributivity~\eqref{eq:distributive} holds on both the left and the right, if and only if the morphisms $\lambda$ and $\rho$ have their image in the set of \emph{strict} morphisms
 \[
 \lambda, \rho \colon R \to \cMsc\str(R,R) \subseteq \cMsc(R,R).
 \]
\end{remark} 

\separate

To close this subsection, we describe an alternative way to characterize which objects of $\uHMag$ are mosaics. Recalling the representing objects of Lemma~\ref{lem:representing object}, let us denote 
\begin{align*}
\E &= \E_{\uHMag} = \{e,a,b,c\}, \\
\E_r &= \E_{\Msc}= \{e, a^{\pm 1}, b^{\pm 1}, c^{\pm 1}\}, 
\end{align*}
which are both generated by elements $a$, $b$, and $c$ satisfying $c \in a \star b$. There is a naturally induced injective morphism
\[
\iota \colon \E \hookrightarrow \E_r
\]
that acts identically on the generating objects $a,b,c \in \E$.

\begin{proposition}
Let $M$ be a unital hypermagma and let $\iota \colon \E \hookrightarrow \E_r$ be as above. Then $M$ is reversible (i.e., a mosaic) if and only if, for every morphism $f \colon \E \to M$ there exists a unique morphism $g \colon \E_r \to M$ such that $f = g \circ \iota$ (that is, the map $\iota^* \colon \uHMag(\E_r, M) \to \uHMag(\E, M)$ is bijective).
\end{proposition}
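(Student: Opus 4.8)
The plan is to prove the two implications separately, with the forward one being essentially formal and the reverse one requiring a concrete reading of the hypothesis off the hyperoperation table of $\E_r$. For the forward direction, suppose $M$ is a mosaic. Since $\E_r = \E_{\Msc}$ and $M$ both lie in the full subcategory $\Msc \subseteq \uHMag$, we have $\uHMag(\E_r, M) = \Msc(\E_r, M)$, which Lemma~\ref{lem:representing object} identifies with $\{(x,y,z) \in M^3 \mid z \in x \star y\}$, as it likewise identifies $\uHMag(\E, M)$; under both identifications the identity morphism corresponds to the universal triple $(a,b,c)$. Because $\iota$ fixes the generators $a,b,c$, the map $\iota^*$ is carried to the identity map on $\{(x,y,z) \in M^3 \mid z \in x \star y\}$, hence is bijective.

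For the reverse direction I would first translate the hypothesis. Using Lemma~\ref{lem:representing object} again, ``$\iota^*$ is a bijection'' says: for every $(x,y,z) \in M^3$ with $z \in x \star y$ there is a \emph{unique} tuple $(x',y',z') \in M^3$ such that the assignment $a \mapsto x$, $a^{-1}\mapsto x'$, $b \mapsto y$, $b^{-1}\mapsto y'$, $c \mapsto z$, $c^{-1}\mapsto z'$, $e \mapsto e_M$ defines a morphism $\E_r \to M$. Reading off the nonempty entries of the table defining $\E_r$ (the empty entries and the products with $e$ impose nothing), this assignment is a morphism exactly when the six ``inverse'' relations $e_M \in x\star x'$, $e_M \in x'\star x$, $e_M \in y\star y'$, $e_M \in y'\star y$, $e_M \in z\star z'$, $e_M \in z'\star z$ hold, together with the five relations $y \in x'\star z$, $x' \in y\star z'$, $z' \in y'\star x'$, $x \in z\star y'$, and $y' \in z'\star x$.

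Next I would use this to build the inversion. Given $m \in M$, apply the lifting property to the triple $(m,e_M,m)$ (valid since $m \star e_M = \{m\}$); the resulting morphism $g$ has $g(a^{-1})$ an inverse of $m$ by the first two relations above. The key point is that for \emph{any} inverse $n$ of $m$, a direct check against the eleven relations shows that $(a,a^{-1},b,b^{-1},c,c^{-1}) \mapsto (m,n,e_M,e_M,m,n)$ is also such a morphism extending the same triple, so the uniqueness clause forces $n = g(a^{-1})$. Hence each $m$ has a unique inverse, and setting $m^{-1} := g(a^{-1})$ gives a well-defined involution $(-)^{-1}\colon M \to M$. To finish, given $z \in x\star y$ take the morphism $g\colon \E_r \to M$ extending $(x,y,z)$ and put $x' = g(a^{-1})$, $y' = g(b^{-1})$; the inverse relations force $x' = x^{-1}$ and $y' = y^{-1}$ by uniqueness of inverses, whereupon $x \in z\star y'$ and $y \in x'\star z$ become $x \in z\star y^{-1}$ and $y \in x^{-1}\star z$, which are precisely the two conclusions of the reversibility axiom; thus $M$ is a mosaic.

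The main obstacle is the bookkeeping in the reverse direction: one must confirm carefully that ``is a morphism $\E_r \to M$'' unpacks to exactly the eleven relations above, and then spot the decisive observation that plugging an arbitrary inverse of $m$ into the extension of $(m,e_M,m)$ still yields a legitimate morphism, so that the \emph{uniqueness} half of the hypothesis delivers uniqueness of inverses. Once $(-)^{-1}$ is pinned down this way, matching $g(a^{-1})$ and $g(b^{-1})$ with the genuine inverses in the reversibility step is immediate, and the remaining verifications are routine.
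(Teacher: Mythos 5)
Your proof is correct and follows essentially the same route as the paper's: unpack morphisms $\E_r \to M$ into the relations forced by the multiplication table, use surjectivity of $\iota^*$ for existence of inverses and injectivity (via a second explicit extension of the same triple) for their uniqueness, and then read reversibility off the relations $b = a^{-1}\star c$ and $a = c\star b^{-1}$. The only cosmetic differences are that you phrase the forward direction through the representability identifications of Lemma~\ref{lem:representing object} rather than constructing $g$ by hand, and you test uniqueness of inverses on the triple $(m,e_M,m)$ where the paper uses $(e,x,x)$; both work.
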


\begin{proof}
First assume that $M$ is reversible, and let $f \colon \E \rightarrow M$ be a morphism of $\uHMag$. Then we have $f(c)\in f(a) \star f(b)$. By reversiblity, we obtain
$f(b)\in f(a)^{-1} \star f(c)$ and $f(a)\in f(c) \star f(b)^{-1}$. One can check that there exists a morphism $g \in \uHMag(\E_r, M)$ defined by $g(x^{\pm 1})=f(x)^{\pm 1}$ for $x=a, b, c$ (here we omit $\iota$ for notational convenience). Since $\E_r$ and $M$ are both reversible, inverses are unique and this condition determines $g$ uniquely.

Conversely, assume that $M$ satisfies bijectivity of $\iota^*$. To study $\uHMag(\E_r, M)$, it will be useful to note that the nontrivial products in $\E_r$ are exactly
\begin{gather*}
c = a \star b, \quad b = a^{-1} \star c, \quad a = c \star b^{-1}, \\
c^{-1} = b^{-1} \star a^{-1}, \quad b^{-1} = c^{-1} \star a, \quad a^{-1} = b \star c^{-1}.
\end{gather*}
We will first prove that every element of $M$ has a unique inverse. For existence, from $x \in e \star x$ we obtain a morphism $f \colon \E \to M$ by $f(a) = e$ and $f(b) = f(c) = x$, which extends to $g \colon \E_r \to M$ by surjectivity of $\iota^*$. Since $b$ is inverse to $b^{-1}$ in $\E_r$, it follows that $x = f(b)$ is inverse to $x_1 = f(b^{-1})$ in $M$. For uniqueness, suppose that $x_2 \in M$ is also an inverse of $x$ in $M$. Then we can construct morphisms $g_i \colon \E_r \to M$ for $i = 1,2$ by setting
\[
g_i(a) = g_i(a^{-1}) = e, \quad g_i(b) = g_i(c) = x, \quad g_i(b^{-1}) = g_i(c^{-1}) = x_i.
\]
Since these agree on $a,b,c \in \E_r$, they satisfy $g_1 \circ \iota = g_2 \circ \iota$. From injectivity of $\iota^*$ we conclude that $g_1 = g_2$ and thus $x_1 = x_2$.

Finally we verify reversibility. Suppose that $x,y,z \in M$ satisfy $x \in y \star z$. Then there is a morphism $f \colon \E \rightarrow M$ such that $f(a)=y, f(b)=z, f(c)=x$. By assumption this extends to a morphism $g \colon \E_r \rightarrow M$,  such that $g(t)=f(t)$ for $t = a,b,c$ (here we suppress the notation of $\iota$). Again $g$ maps inverses to inverses, so by uniqueness of inverses in $M$ we have $g(t^{-1}) = f(t)^{-1}$ for $t = a,b,c$.
Since $b= a^{-1} \star c$ and $a= c \star b^{-1}$ in $\E_r$, we have 
\begin{align*}
z &= g(b) \in g(a^{-1}) \star g(c) = y^{-1} \star x, \\
y &= g(a) \in g(c) \star g(b^{-1}) = x \star z^{-1}.
\end{align*}
This completes the proof.
\end{proof}

\subsection{The category of (canonical) hypergroups}
\label{sub:hypergroups}

We now discuss the categories of hypergroups and canonical hypergroups. 
The difficult lesson to be learned below is that while the objects of these categories are attractive from the algebraic point of view, the categories themselves are not so well-behaved.

\begin{theorem}\label{thm:hypergroup operations}
The categories $\HGrp$ and $\Can$ are closed under the following operations within the category $\uHMag$:
arbitrary products, kernels, strict epimorphic images, and normal epimorphic images.
\end{theorem}

\begin{proof}
Closure under products is straightforward: if $(G_i)_{i \in I}$ are hypergroups, then the unital hypermagma $\prod G_i$ is readily seen to be associative and reversible by a componentwise verification. If the $G_i$ are all commutative, then so is their product.

For the remainder of the proof, let $(G, \cdot, e)$ denote a hypergroup. First suppose $f \in \uHMag(G,M)$. We claim that $\ker f$ is a strict subhypergroup. By Theorem~\ref{thm:normal}, it is a strict unital subhypermagma of $G$, which is associative because $G$ is. It is also closed under inverses by Remark~\ref{rem:absorptive}(6), and thus is a strict subhypergroup.

To see that $\HGrp$ is closed under 
strict epimorphic images, let $p \in \uHMag(G,H)$ be a strict surjective morphism. Then $H$ is the pushout of $\begin{tikzcd}[column sep=0.5cm] H & \ar[l, swap, "p"] G \ar[r, "p"] & H \end{tikzcd}$, so $H$ is a mosaic by Theorem~\ref{thm:reversible forgetful}. Additionally, $H$ is associative by Lemma~\ref{lem:short image} (and if $G$ is commutative, so is $H$). So it follows from Lemma~\ref{lem:nondegenerate} that $H$ is a hypergroup.

Finally, for the case of normal epimorphic images, recall from Theorem~\ref{thm:normal} that normal epimorphisms in $\uHMag$ correspond to unitizations. So consider a unitzation $p \colon G \twoheadrightarrow G_K$ for unital $K \subseteq G$. Without loss of generality, we may assume $K = \ker p$. As described above, $K$ is a strict subhypergroup of $G$.
It follows again by Theorem~\ref{thm:reversible forgetful} that $G_K$ is a mosaic, so we only need to verify associativity thanks to Lemma~\ref{lem:nondegenerate}.
One can readily verify that the equivalence relation on $G$ defined in the proof of Lemma~\ref{lem:unitization}
takes on the form $x \sim y$ if and only if $x \in K y K$. Thus the equivalence classes in $G_K$ are the ``double cosets'' $[x] = KxK$, and one may verify that the operation on double cosets in $G_K$ takes the form
\[
KgK \star KhK = \{KzK \mid z \in gKh\}
\]
for any $g, h \in G$. (The construction of the double coset hypergroup is also described in~\cite[\S 3.3.4]{Zieschang}.) But this product is evidently associative as for any $f,g,h \in G$ we have
\begin{align*}
(KfK \star KgK) \star KhK &= \{KzK \mid z \in fKgKh\} \\
&= KfK \star (KgK \star KhK).
\end{align*}
Finally, if $G$ is commutative then so is $G_K$ (and in fact its elements can be written as ``ordinary cosets'' $gK$ for $g \in G$).
\end{proof}

Unfortunately, (canonical) hypergroups do not have all binary coproducts, as shown in the following example. 
Recall that $\K = \{0,1\}$ denotes the Krasner hyperfield, whose additive structure is determined by $1 + 1 = \{0,1\}$. For any integer $n \geq 1$, we let $\Z_n = \Z/n\Z$ denote the (additive) cyclic group of order~$n$.

\begin{proposition}\label{prop:bad coproduct}
The coproduct $\Z/2\Z \coprod \Z/2\Z$ does not exist in $\HGrp$ or $\Can$. 
\end{proposition}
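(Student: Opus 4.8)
The plan is to assume, toward a contradiction, that a coproduct $C := \Z/2\Z \coprod \Z/2\Z$ exists, with coproduct injections $j_1, j_2 \colon \Z/2\Z \to C$, and to set $a := j_1(1)$ and $b := j_2(1)$. I will argue for $\HGrp$; the argument for $\Can$ is identical, since every test object used below is an abelian group or the commutative hypergroup $\K$. The basic translation is that for any hypergroup $H$ a morphism $\Z/2\Z \to H$ is the same thing as an element $h \in H$ with $e_H \in h \star h$, which by reversibility and uniqueness of inverses is the same as an \emph{involution} $h = h^{-1}$; writing $\mathrm{Inv}(H)$ for this set, the universal property of $C$ becomes a bijection $\HGrp(C,H) \cong \mathrm{Inv}(H) \times \mathrm{Inv}(H)$, natural in $H$, given by $f \mapsto (f(a), f(b))$. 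Applying this with $H = \Z/2\Z$, where $\mathrm{Inv}(\Z/2\Z) = \{0,1\}$ and the target of the bijection has $4$ elements, already forces $a \neq e$, $b \neq e$, and $a \neq b$: otherwise $f \mapsto (f(a),f(b))$ would land in a subset of $\{0,1\}^2$ of size $\leq 2$ (a slice or the diagonal).

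The key step is then to pin down the strict submosaics of $C$. Since $C$ is a hypergroup (hence a mosaic) and $\HGrp$ is a full subcategory of $\Msc$ containing $\K$, Proposition~\ref{prop:strict classifier} gives
\[
\Sub\str(C) \;\cong\; \Msc(C,\K) \;=\; \HGrp(C,\K) \;\cong\; \mathrm{Inv}(\K)^2,
\]
a set of $4$ elements. Tracing through, a strict submosaic $N \subseteq C$ corresponds to the pair $(\varphi_N(a),\varphi_N(b)) \in \{0,1\}^2$, where $\varphi_N \colon C \to \K$ is $0$ on $N$ and $1$ off $N$; thus $\varphi_N(a)=1=\varphi_N(b)$ precisely when $N$ contains neither $a$ nor $b$. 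Since the correspondence is a bijection, there is \emph{exactly one} strict submosaic of $C$ containing neither $a$ nor $b$; and as $\{e\}$ is always a strict submosaic and (by the previous paragraph) contains neither $a$ nor $b$, that unique strict submosaic is $\{e\}$.

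Finally I would derive the contradiction. Choose a morphism $f \colon C \to \Z/2\Z \times \Z/2\Z$ with $f(a) = f(b) = (1,0)$; it exists because $(1,0)$ is an involution and $C$ is the coproduct. Its kernel $\ker f = f^{-1}(e)$ is a strict submosaic of $C$ (the preimage of the strict submosaic $\{e\}$ under a colax, unital, inverse-preserving morphism is a strict submosaic), and it contains neither $a$ nor $b$ since $f(a)=f(b)=(1,0)\neq e$; hence $\ker f = \{e\}$ by the previous paragraph. But a morphism into a \emph{group} with trivial kernel whose domain is total is injective: if $f(x)=f(y)$ then $f(x \star y^{-1}) \subseteq \{f(x)f(y)^{-1}\} = \{e\}$, and totality of $C$ produces some $z \in x \star y^{-1}$, necessarily with $z \in \ker f = \{e\}$, so $e \in x \star y^{-1}$ and hence $x = y$ by reversibility. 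Thus $f$ is injective, contradicting $f(a)=(1,0)=f(b)$ together with $a \neq b$. The main obstacle — the one genuinely non-routine ingredient — is recognizing that the right invariant of the hypothetical $C$ is its lattice of strict submosaics, which Proposition~\ref{prop:strict classifier} reads off as $\mathrm{Inv}(\K)^2$; everything else is bookkeeping, with totality of $C$ (the feature that distinguishes hypergroups from mosaics, and hence the reason the analogous coproduct \emph{does} exist in $\Msc$ and $\cMsc$) entering only in the final "trivial kernel into a group $\Rightarrow$ injective" observation.
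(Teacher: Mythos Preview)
Your proof is correct and follows essentially the same strategy as the paper: assume the coproduct $C$ exists, use Proposition~\ref{prop:strict classifier} to see that $\Sub\str(C)$ has exactly four elements and hence that $\{e\}$ is the unique strict submosaic avoiding both $a$ and $b$, then produce a morphism into a group with trivial kernel to force a contradiction.

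The only difference is cosmetic, in the final test map and the extraction of the contradiction. The paper uses the fold map $\phi = \id \coprod \id \colon C \to \Z/2\Z$, deduces $a+b \subseteq \ker\phi = \{0\}$ (using totality implicitly), concludes $b = -a$, and then contradicts the earlier observation that $\ker\tau_1$ separates $a$ from $b$. You instead send both $a$ and $b$ to $(1,0)$ in $\Z/2\Z \times \Z/2\Z$ and package the endgame as a clean lemma (``morphism from a total reversible hypermagma into a group with trivial kernel is injective''), contradicting $a \neq b$ directly. Your version makes the role of totality more explicit, which is appropriate given your closing remark that this is precisely what distinguishes $\HGrp$ from $\Msc$; on the other hand the second factor of $\Z/2\Z \times \Z/2\Z$ is never used, so the paper's choice of target $\Z/2\Z$ is slightly more economical.
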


\begin{proof}
We give the proof within the category $\Can$, with the case of $\HGrp$ being very similar since $\Z_2$ and $\K$ are objects in the full subcategory $\Can$.
Assume for contradiction that the coproduct $G = \Z_2 \coprod \Z_2$ exists in $\Can$. Let $\tau \colon \Z_2 \to \K$ be the morphism of hypergroups (even hyperfields) that acts as the identity on the underlying set, and note that $\Can(\Z_2,\K) = \{\tau, 0\}$. 
Then
\[
\Can(\Z_2 \coprod \Z_2, \K) \cong \Can(\Z_2, \K) \times \Can(\Z_2, \K).
\]
has four elements, given by $0 = 0 \coprod 0$, $\tau_1 = \tau \coprod 0$, $\tau_2 = 0 \coprod \tau_2$, and $\tau_{12} = \tau \coprod \tau$.
Proposition~\ref{prop:strict classifier} implies
\[
\Sub\str(G) \cong \Can(G,\K),
\]
so that $G$ has four strict subhypergroups. Two of these are given by $G = \ker 0$ and $0 = \ker \tau_{12}$, and the proper nontrivial subgroups are $\ker \tau_i$ for $i = 1,2$.
Let $x = i_1(1)$ and $y = i_2(1)$ denote the image of $1 \in \Z_2$ under the two structure maps $i_1, i_2 \colon \Z_2 \to G$. Then we have
\[
x \notin \ker \tau_1, \ x \in \ker \tau_2, \ y \in \ker \tau_1, \ y \notin \ker \tau_2.
\]
As a consequence, the four subgroups of $G$ are uniquely determined by their intersection with $\{x,y\}$.

Let $\phi = \id_{\Z_2} \coprod \id_{\Z_2} \colon G \to \Z_2$. Because $\ker \phi$ contains neither $x$ nor $y$, it follows that $\ker \phi = 0$. On the other hand,
\[
\phi(x+y) \subseteq \phi(x) + \phi(y) = 1 + 1 = 0
\]
So $x+y \subseteq \ker \phi$, which implies that $x+y = 0$. 
It follows that $y = -x$, so that $x$ and $y$ are contained in the same strict subhypergroups of $G$. But this contradicts, for instance, the claim above that $x \notin \ker \tau_1$ while $y = -x \in \ker \tau_1$.
\end{proof}

Next we present an example to show that $\Can$ does not have all equalizers.	Let $\F_q$ denote the field with~$q$ elements, considered as an abelian group under addition. Below we consider the quotient $\F_9/\F_3^\times$ hyperfield as in~\cite{Krasner:quotient}, whose underlying canonical hypergroup is of the form described in Example~\ref{ex:group action}.

\begin{proposition}\label{prop:bad equalizer}
Consider the quotient hypergroup $H = \F_9/\F_3^\times$, and let $F \colon H \to H$ be the morphism induced by the Frobenius automorphism $x \mapsto x^3$ on $\F_9$. Then there is no equalizer of the morphisms
\[
\begin{tikzcd}
H \ar[r, shift left=.5ex, "\id_H"] \ar[r, shift right=.5ex, swap, "F"] & H
\end{tikzcd}
\]
in the category $\HGrp$ or $\Can$.
\end{proposition}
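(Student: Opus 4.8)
The plan is to argue by contradiction, the essential point being that every object of $\HGrp$ (in particular of $\Can$) has a \emph{total} hyperoperation. First I would record the structure of $H = \F_9/\F_3^\times$ following Example~\ref{ex:group action}. Writing $\F_3^\times = \{1,-1\}$, the underlying set of $H$ is the zero orbit $[0]$ together with the four two-element orbits $\{a,-a\}$ for $a \in \F_9^\times$, with hyperaddition $[a]+[b] = \{[a+b],[a-b]\}$ and every element self-inverse. Fix $i \in \F_9$ with $i^2 = -1$ (such $i$ exists, as $-1$ is a square in the cyclic group $\F_9^\times$), and let $[1]$, $[i]$ denote the orbits of $1$ and $i$. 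Direct computation gives $[1]+[1] = \{[0],[1]\}$ and $[i]+[i] = \{[0],[i]\}$, so each of $\{[0],[1]\}$ and $\{[0],[i]\}$ is a strict sub-canonical-hypergroup of $H$ isomorphic to the Krasner hyperfield $\K$; write $\iota_1 \colon \K \to H$ and $\iota_i \colon \K \to H$ for the resulting morphisms sending $1$ to $[1]$ and $[i]$ respectively. The computation central to the argument is $[1]+[i] = \{[1+i],[1-i]\}$, and since $1 \pm i$ lies in neither $\F_3^\times$ nor $\F_3^\times i$, the orbits $[1+i]$ and $[1-i]$ are both different from $[0],[1],[i]$.

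Next I would determine the fixed locus of $F$. Since $F([a]) = [a^3]$, the orbit $[a]$ is fixed by $F$ precisely when $a^3 = \pm a$, i.e.\ when $a \in \F_3$ or $a^2 = -1$; hence the set of fixed points of $F$ is exactly $E := \{[0],[1],[i]\}$, and $F$ interchanges the two remaining orbits $[1+i]$ and $[1-i]$. In particular a morphism $g$ into $H$ (in $\HGrp$ or $\Can$) equalizes $\id_H$ and $F$ if and only if $g(x) \in E$ for all $x$; consequently both $\iota_1$ and $\iota_i$ equalize $\id_H$ and $F$.

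Now suppose for contradiction that the equalizer $(E',e)$ of $\id_H$ and $F$ exists in $\Can$; the argument for $\HGrp$ is identical since $\K$, $H$, $\iota_1$, $\iota_i$ all lie in the full subcategory $\Can$. By the universal property there are morphisms $\alpha,\beta \colon \K \to E'$ with $e\circ\alpha = \iota_1$ and $e\circ\beta = \iota_i$, so $e(\alpha(1)) = [1]$ and $e(\beta(1)) = [i]$. Because $E'$ is a canonical hypergroup its hyperoperation is total, so we may choose $w \in \alpha(1)+\beta(1) \subseteq E'$. Applying the morphism $e$ then gives
\[
e(w) \in e(\alpha(1)) + e(\beta(1)) = [1]+[i] = \{[1+i],[1-i]\}.
\]
On the other hand, $e$ equalizes $\id_H$ and $F$, so $F(e(w)) = e(w)$, forcing $e(w) \in E = \{[0],[1],[i]\}$. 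Since $\{[1+i],[1-i]\}$ is disjoint from $E$, this is a contradiction, so no equalizer exists.

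I expect the only genuine work to be the concrete verification of the relevant part of the hyperaddition table of $H$—namely $[1]+[1] = \{[0],[1]\}$, $[i]+[i] = \{[0],[i]\}$, and $[1]+[i] = \{[1+i],[1-i]\}$ with $[1\pm i] \notin E$—together with the fixed-point computation $F([a]) = [a]\iff a^3 = \pm a$. Once these are in place, the categorical contradiction is immediate. The conceptual heart of the example is that the set $E$ of fixed points of $F$ carries only the structure of a commutative mosaic (indeed $[1]+_E[i] = \varnothing$ for the weak substructure), not of a canonical hypergroup, so no \emph{total} object can receive the two mutually incompatible copies of $\K$ that $\iota_1$ and $\iota_i$ pick out inside $H$.
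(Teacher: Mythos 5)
Your proof is correct and follows essentially the same route as the paper's: the paper takes $f(1)=[1]$ and $g(1)=[\alpha^2]$ for a generator $\alpha$ of $\F_9^\times$ (so $[\alpha^2]=[i]$ in your notation), observes that both equalize $\id_H$ and $F$, and then uses totality of the hyperaddition in the putative equalizer to produce an element whose image lies in $[1]+[i]=\{[1+i],[1-i]\}$, contradicting the fact that the image must land in the fixed locus of $F$. The only cosmetic difference is that the paper phrases the fixed-point condition multiplicatively ($\beta^3=\beta$ in the quotient hyperfield) rather than via your explicit orbit computation.
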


\begin{proof}
Let $\alpha \in \F_9^\times$ be a generator of the multiplicative group of the field with nine elements. Then the elements of the quotient hyperfield $H$ are of the form $[0]$ and $[\alpha^i] = [\alpha]^i$ for $i = 0, \dots, 3$.

As before we write the proof in the case of $\Can$, while the proof for $\HGrp$ is essentially identical.
Assume for contradiction that the equalizer $e \colon E \to H$ exists in $\Can$. 
By construction as the equalizer of the identity and Frobenius maps, every element $\beta$ of the image of $e$ must satisfy $\beta^3 = \beta$ under the hyperfield multiplicative structure. 

On the other hand, consider the two morphisms $f,g \colon \K \to H$ given by $f(1) = [1]$ and $g(1) = [\alpha^2]$. Then $f$ and $g$ both equalize $\id_H$ and $F$ (since $\alpha^2$ is identified with $(\alpha^2)^3 = -\alpha^2$ in $H$), so that they factor through $e$. In particular, there exist $x,y \in E$ such that $e(x) = [1]$ and $e(y) = [\alpha^2]$. By the assumption that $E$ is a canonical hypergroup, we have $x + y \neq \varnothing$. Fixing any $z \in x + y$, we find that
\[
e(z) \in e(x+y) \subseteq e(x) + e(y) = [1] + [\alpha^2] = \{[\alpha], [\alpha^3]\}.
\]
However, this contradicts the requirement that $e(z) = e(z)^3$ described above. Thus the equalizer $E$ does not exist.
\end{proof}

Furthermore, $\Can$ does not have arbitrary coequalizers. 
The proof makes use of the following fact. An element $g$ of a hypergroup $G$ is defined to be \emph{scalar}~\cite[p.~299]{Roth:character} if, for all $x \in G$, the products $g \star x$ and $x \star g$ are singletons.

\begin{lemma}\label{lem:scalar}
Let $G$ be a hypergroup, and let $g \in G$. Then $g$ is scalar if and only if $g \star g^{-1} = \{e\} = g^{-1} \star g$.
The set of scalar elements of $G$ forms a strict subhypergroup that is in fact a group.
\end{lemma}

\begin{proof}
The equivalence of these two conditions is proved, for instance, in~\cite[Lemma~1.6]{Roth:character} or~\cite[Lemma~1.4.3]{Zieschang}. 
To prove the last sentence, note first that scalar elements include the identity and are closed under formation of inverses. It is then straightforward to check (using associativity) that if $g,h \in G$ are scalar, then the single element in $gh$ is also scalar. (This is also proved in~\cite[Lemma~1.4.3(iii)]{Zieschang}.) So they form a strict subhypergroup, and by the scalar property each of the products in this subhypergroup is a singleton. Thus we obtain a group.
\end{proof}

\begin{proposition}\label{prop:bad coequalizer}
In the abelian group $M = \Z_3 \oplus \Z_2$, denote $e_1 = (1,0)$ and $e_2 = (0,1)$. The group homorphisms 
\[
\begin{tikzcd}
\Z \ar[r, shift left=.5ex] \ar[r, shift right=.5ex, swap] & M
\end{tikzcd}
\]
given by $1 \mapsto e_1 + e_2$ and $1 \mapsto 2e_1 + e_2$ have no coequalizer in either of the categories $\Can$ or $\HGrp$.
\end{proposition}

\begin{proof}
Assume toward a contradiction that there is a coequalizer $p \colon M \to C$ in $\HGrp$ (the argument for $\Can$ being identical). We denote its hyperoperation and identity as $(C, \star,e)$ because we are not (yet) guaranteed that is hyperoperation is commutative. Below we will show that the image of $p$ is a strict subhypergroup of $C$ that in fact forms a group of order~4 or~5. This will mean that $p \colon M \to p(M)$ is a group homomorphism. But $|M| = 6$ is not divisible by~4 or~5, yielding a contradiction.

We first claim that the only strict subhypergroup of $C$ that does not contain either $p(e_1)$ or $p(e_2)$ is $\{0\}$.
Combining Proposition~\ref{prop:strict classifier} with the universal property of $C$, we find that there are bijections
\begin{align*}
\Sub\str(C) &\cong \Hom(C,\K) \\
&\cong \{g \in \Hom(M,\K) \mid g(e_1 + e_2) = g(2e_1 + e_2)\} \\
&\cong \{N \in \Sub\str(M) \mid e_1 + e_2, 2e_1 + e_2 \in N \mbox{ or } e_1 + e_2, 2e_1 + e_2 \notin N\} \\
&= \{0, \Z e_1, \Z e_2, M\}.
\end{align*}
(Note that a strict subhypergroup of $M$ is merely a subgroup.)
This correspondence sends each of the subgroups $N \in \{0, \Z e_1, \Z e_2, V\}$ to the strict subhypergroup $p^{-1}(N) = \ker(\chi \circ p) \subseteq C$, where $\chi \colon M \to \K$ is the morphism uniquely determined by $\ker \chi = N$. In particular, we may verify from this that the only strict subhypergroup of $C$ that does not contain either $p(e_1)$ or $p(e_2)$ is the trivial one.

We now consider two morphisms from $M$ to different canonical hypergroups $M_1$ and $M_2$ of order~4. The first hypergroup $M_1 = M/\{ \pm 1 \}$ is the quotient (as in Example~\eqref{ex:group action}) of $M$ by the group $\{\pm 1\}$ acting by negation. The second hypergroup $M_2$ is a quotient mosaic of $M$ by an equivalence relation satisfying $e_1 + e_2 \sim 2e_1 + e_2 \sim e_2$, which happens to be a canonical hypergroup. (This hypergroup is denoted $H_{4,2}$ in the classification of~\cite[\S 7.2]{Zieschang}.) The addition tables for the nonzero elements of these hypergroups are given below:
\[
\begin{array}{|c|c|c|c|}
\hline
M_1 & x_1 & y_1 & z_1 \\ 
\hline
x_1 & 0,x_1 & z_1 & y_1, z_1 \\
\hline
y_1 & z_1 & 0 & x_1 \\
\hline
z_1 & y_1, z_1 & x_1 & 0, x_1 \\ \hline
\end{array}
\hspace{2cm}
\begin{array}{|c|c|c|c|}
\hline
M_2 & x_2 & y_2 & z_2 \\ 
\hline
x_2 & z_2 & y_2 & 0 \\
\hline
y_2 & y_2 & 0,x_2, z_2 & y_2 \\
\hline
z_2 & 0 & y_2 & x_2 \\ \hline
\end{array}
\]
The morphisms $f_i \colon M \to M_i$ are each uniquely determined by the values
\[
f_i(e_1) = x_i, \quad f_i(e_2) = y_i. 
\]
The morphism $f_1 \colon M \to M_1 = M/\{\pm\}$ is simply the quotient map, so that $f_1(-e_1) = f_1(e_1) = x_1$. From this it follows that
\[
f_1(e_1 + e_2) = f_1(2e_1 + e_2) = x_1 + y_1 = z_1.
\]
On the other hand, $f_2(-e_1) = -f_2(e_1) = z_2$. In this case we can similarly verify that
\[
f_2(e_1 + e_2) = f_2(2e_1 + e_2) = y_2.
\]

It follows from the universal property of the coequalizer that each of these morphisms factors as $f_i = \phi_i \circ p$ for a morphism of hypergroups $\phi_i \colon C \to M_i$. Note that because each $\phi_i(p(e_j)) = f_i(e_j) \neq 0$, it follows from the above description of the strict subhypergroups of $M$ that $\ker \phi_i = \{0\}$.

We will finally show that $p(M) \subseteq C$ is a strict subhypergroup of order~4 or~5, which is in fact an abelian group.  Regarding the order of $p(M)$, notice that because $f_1$ and $f_2$ separate the elements $0, e_1, e_2, e_1+e_2$, their images under $p$ must be distinct, so that $|p(M)| \geq 4$. But $C$ is constructed so that $p$ identifies $e_1 + e_2$ with $2e_2 + e_2$, so that $|p(M)| \leq 5$.
To verify that $p(C)$ is a group, it will suffice by Lemma~\ref{lem:scalar} to show that its elements are scalar. First notice that 
\begin{align*}
\phi_2(p(e_1) \star p(e_1)^{-1}) &\subseteq f_2(e_1) - f_2(e_1) = x_2 + z_2 = \{0\}, \\
\phi_1(p(e_2) \star p(e_2)^{-1}) &\subseteq f_1(e_2) - f_1(e_2) = y_1 + y_1 = \{0\}.
\end{align*}
Since $\phi_1$ and $\phi_2$ have trivial kernels, we see that $p(e_1) \star p(e_1)^{-1} = \{e\} = p(e_2) \star p(e_2)^{-1}$. 
Because the $M_i$ are commutative, we similarly have $p(e_i)^{-1} \star p(e_i) = \{e\}$ for $i = 1,2$, so that both $p(e_i)$ are scalar elements.

Because $M$ is generated by $e_1$ and $e_2$, the image $p(C)$ is contained in the strict subhypergroup $S \subseteq C$ generated by $p(e_1)$ and $p(e_2)$. But this $S$ consists of scalar elements by Lemma~\ref{lem:scalar}, so that $p(C)$ contains scalar elements. Thus we have shown $p(C)$ is a group, completing the proof.
\end{proof}

\separate

Now we will show that $\Can$ is not closed under formation of the internal hom of $\cMsc \subseteq \uHMag$. This will rely on the observation that within the category $\Can$, the (hyper)group $\Z_2$ represents the following functor
\begin{equation}\label{eq:F2 represents}
\Can(\Z_2, G) \cong \{x \in G \mid 0 \in x + x\},
\end{equation}
where $f \in \Can(\Z_2, G)$ corresponds to the element $x =f(1) \in G$. Under this bijection, a sum of morphisms $f + g$ corresponds to the set $\{x \in f(1) + g(1) \mid 0 \in x + x\}$. Thus to show that there exist $f, g \in \Can(\Z_2, G)$ with $f + g = \varnothing$, it suffices to find elements $x,y \in M$ such that
\[
0 \in x + x, \ 0 \in y + y, \quad \mbox{but} \quad 0 \notin z + z \mbox{ for all } z \in x + y.
\]

To realize this strategy, consider the following commutative ring given by generators and relations:
\[
R = \Z[x,y,z \mid 2x = y(z+1) = 0, \, z^2 = 1]
\]
Within this ring, $G = \{1, z\}$ forms a multiplicative group of order two. We may then form the quotient hyperring $R/G$ as in~\cite{ConnesConsani:adele}, which has hyperaddition given by
\begin{align*}
[f] + [g] &= \{[h] \mid h \in fG + gG\} \\
&= \{[f+g], [fz + g], [f+gz], [fz+gz]\} \\
&= \{[f+g], [fz+g] \}.
\end{align*}

\begin{theorem}\label{thm:empty sum}
Let $f, g \in \Can(\Z_2, R/G)$ be the morphisms given by $f(1) = [x]$ and $g(1) = [y]$. Then $f + g = \varnothing$; in particular, the commutative unital hypermagma $\Can(\Z_2, R/G)$ is not a hypergroup.
\end{theorem}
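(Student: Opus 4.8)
The plan is to exploit the representability~\eqref{eq:F2 represents} together with the description of sums of morphisms that follows it: since $f+g$ corresponds under~\eqref{eq:F2 represents} to the set $\{w \in [x]+[y] \mid 0 \in w+w\}$, it will suffice to show that \emph{no} element $w$ of $[x]+[y]$ satisfies $0 \in w+w$. First, though, one must check that $f$ and $g$ are indeed morphisms in $\Can$, which by~\eqref{eq:F2 represents} means verifying $0 \in [x]+[x]$ and $0 \in [y]+[y]$. Using the explicit formula $[a]+[b] = \{[a+b],[az+b]\}$ for hyperaddition in $R/G$, we get $[x]+[x] = \{[2x],[x(z+1)]\}$, which contains $[0]=0$ because $2x=0$ in $R$; likewise $[y]+[y] = \{[2y],[y(z+1)]\}$ contains $0$ because $y(z+1)=0$ in $R$.

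Next I would compute the doubled sums. We have $[x]+[y] = \{[x+y],\,[xz+y]\}$. For $w=[x+y]$,
\[
w+w = \bigl\{[2(x+y)],\,[(x+y)(z+1)]\bigr\} = \bigl\{[2y],\,[x(z+1)]\bigr\},
\]
using $2x=0$ and $y(z+1)=0$. For $w=[xz+y]$, the same relations together with $z^2=1$ give $2(xz+y)=2y$ and $(xz+y)(z+1)=xz^2+xz+y(z+1)=x(z+1)$, so again $w+w = \{[2y],\,[x(z+1)]\}$. Since $[a]=0$ in $R/G$ holds only when $a=0$ in $R$, in either case $0\in w+w$ would force $2y=0$ or $x(z+1)=0$ in $R$. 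Thus the proof reduces to the ring-theoretic assertion that $2y\neq 0$ and $x(z+1)\neq 0$ in $R$.

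To prove this I would exhibit, for each of the two elements, a ring homomorphism out of $R$ witnessing non-vanishing. Sending $x\mapsto 0$, $y\mapsto t$, $z\mapsto -1$ satisfies all three defining relations $2x=0$, $y(z+1)=0$, $z^2=1$, hence defines a homomorphism $R\to\Z[t]$ carrying $2y$ to $2t\neq 0$; therefore $2y\neq 0$ in $R$. Similarly, sending $x\mapsto 1$, $y\mapsto 0$, $z\mapsto \bar z$ defines a homomorphism $R\to\F_2[z]/(z^2-1)$ (the relation $2x=0$ holds since the target has characteristic $2$), and it carries $x(z+1)$ to $\bar z+1$, which is nonzero because $\F_2[z]/(z^2-1)$ is a $2$-dimensional $\F_2$-algebra with basis $\{1,\bar z\}$; therefore $x(z+1)\neq 0$ in $R$. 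This shows $f+g=\varnothing$, so the hyperoperation on the commutative unital hypermagma $\Can(\Z_2,R/G)$ is not total, and in particular this hypermagma is not a hypergroup.

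The only real obstacle is the nondegeneracy of the presentation of $R$: one needs to be sure that imposing $2x=0$, $y(z+1)=0$, and $z^2=1$ does not inadvertently collapse $2y$ or $x(z+1)$ to zero. The two test homomorphisms above settle this with trivial checks, after which everything is a mechanical application of the hyperaddition formula for $R/G$ and the dictionary~\eqref{eq:F2 represents}.
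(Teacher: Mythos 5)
Your proposal is correct and follows essentially the same route as the paper: verify via \eqref{eq:F2 represents} and the explicit hyperaddition formula for $R/G$ that both candidates $[x+y]$ and $[xz+y]$ have doubled sum $\{[2y],[x(z+1)]\}$, and then reduce to the nonvanishing of $2y$ and $x(z+1)$ in $R$. The only difference is that you spell out this last nonvanishing claim (which the paper dismisses as ``straightforward to check'') by exhibiting the two test homomorphisms $R\to\Z[t]$ and $R\to\F_2[z]/(z^2-1)$; both are easily seen to respect the defining relations, so your argument is complete.
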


\begin{proof}
Because $0 = y(z+1) = yz + y \in yG + yG$, we have $[0] \in [y] + [y]$; an even easier argument gives $[0] \in [x] + [x]$. Thus the formulas given for $f$ and $g$ indeed describe morphisms in $\Can(\Z_2, R/G)$.

As described via~\eqref{eq:F2 represents}, to see that $f + g = \varnothing$ in $\Can(\Z_2, R/G)$ it is enough to verify that for all $[w] \in [x] + [y]$, we have $0 \notin [w] + [w]$. By construction of the quotient hyperring $R/G$ we have
\[
[x] + [y] = \{[x+y], [xz + y]\}.
\]
We verify that
\begin{align*}
[x+y] + [x+y] &= \{[x+y+x+y], [(x+y)z+x+y]\} \\
	&= \{[2y], [x(z+1)]\}, \\
[xz+y] + [xz+y] &= \{[xz+y+xz+y], [xz+y+(xz+y)z]\} \\
	&= \{[2y], [x(z+1)]\}. 
\end{align*}
It is straightforward to check that $2y, x(z+1) \neq 0$ in $R$, so that $[0] \notin [w] + [w]$ for both values of $[w] \in [x] + [y]$. This completes the proof.
\end{proof}

Thus canonical hypergroups do not naturally inherit a closed monoidal structure in the way that commutative mosaics do. Yet there is even stronger evidence for the lack of a ``tensor product'' structure on $\Can$, in the form of the following example which forbids the existence of monoidal products that represent bimorphisms. 

\begin{theorem}\label{thm:Klein four}
Let $V = \Z/2\Z \times \Z/2\Z$ denote the Klein four-group. Then the functor of bimorphisms
\[
\Bim_{\Can}(V,V; -) \colon \Can \to \Set
\]
is not representable.
\end{theorem}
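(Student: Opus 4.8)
The plan is to argue by contradiction. Suppose some $T \in \Can$ represented $\Bim_{\Can}(V,V;-)$; by the Yoneda lemma this is the same as a \emph{universal bimorphism} $u \colon V \times V \to T$, i.e.\ every bimorphism $B \colon V \times V \to A$ into a canonical hypergroup $A$ factors uniquely as $B = \phi_B \circ u$ through a morphism $\phi_B \colon T \to A$. Writing $a, b, c$ for the nonzero elements of $V$ (so $a + b = c$) and $t_{xy} := u(x,y) \in T$, I would first record the relations that any such $T$ must satisfy: since each $u(x,-)$ and $u(-,y)$ is a morphism of canonical hypergroups it preserves inverses, so (using $-y = y$ in $V$) each $t_{xy}$ is self-inverse, and along rows and columns one gets the ``Klein-four'' relations $t_{ac} \in t_{aa} + t_{ab}$, $t_{cc} \in t_{ac} + t_{bc}$, and their cyclic variants. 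In particular the three ``row'' sets $\{0, t_{xa}, t_{xb}, t_{xc}\}$ and three ``column'' sets are strict sub-Klein-four-groups of $T$.

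The decisive point is to play off the non-associativity of the commutative mosaic $W := V \boxtimes_{\cMsc} V$ (which does exist, by Theorem~\ref{thm:mosaic monoidal}) against the totality of $T$. Indeed $W$ has underlying set $\{0\} \cup \{u_{xy}\}$ with $u_{xa} + u_{xb} = u_{xc}$ along rows, $u_{ax} + u_{bx} = u_{cx}$ along columns, $u_{xy} + u_{xy} = 0$, and crucially $u_{ab} + u_{bc} = \varnothing$ while $(u_{aa} + u_{ab}) + u_{bc} = u_{cc} \neq \varnothing$; so $W$ is not associative and is not a hypergroup. The universal bimorphism corresponds to a morphism $\bar u \colon W \to T$ in $\cMsc$, and since $T$ is total we may choose $s \in t_{ab} + t_{bc}$. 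Then for \emph{every} bimorphism $B \colon V \times V \to A$ we obtain $\phi_B(s) \in B(a,b) + B(b,c)$, and chasing associativity in $T$ through the various bracketings of the ``empty'' product $u_{ab} + u_{bc}$ (e.g.\ $t_{cc} \in (t_{aa}+t_{ab})+t_{bc} = t_{aa}+(t_{ab}+t_{bc})$ forces $t_{cc} \in t_{aa} + s$, hence $\phi_B(s) \in (B(a,b)+B(b,c)) \cap (B(c,c)+B(a,a))$, and similarly from $t_{ba}+t_{ab}$ and $t_{cb}+t_{bc}$) yields a whole system of membership constraints that the single element $\phi_B(s)$ must satisfy, consistently across all $B$.

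The heart of the argument is then to construct, in the spirit of the quotient-hyperring counterexample of Theorem~\ref{thm:empty sum} --- a quotient $R/G$ of a suitable finitely presented commutative ring by a finite multiplicative group, which is a canonical hypergroup by the construction of~\cite{ConnesConsani:adele, Krasner:quotient} --- an explicit canonical hypergroup $A$ and a bimorphism $B \colon V \times V \to A$ (most naturally built from the rank-one bilinear forms $\lambda \otimes \mu$, $\lambda, \mu$ the nonzero functionals $V \to \Z/2\Z$) for which the constraints above cannot be satisfied: there is simply no element of $A$ that can serve as $\phi_B(s)$, because the required memberships are consistent only in the non-total mosaic $W$, not in any hypergroup receiving $u$. (One also uses $\Msc(-,\K) \cong \Sub_{\textnormal{str}}$ of Proposition~\ref{prop:strict classifier} to compute $\Bim_{\Can}(V,V;\K)$ explicitly and pin down how the $t_{xy}$ and the row/column sub-Klein-four-groups must intersect inside $T$, which both constrains $s$ and drives the choice of witness.) This contradiction shows no representing object $T$ can exist.

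The main obstacle is exactly this last construction: the ``soft'' route of showing $\Bim_{\Can}(V,V;-)$ fails to preserve some limit is unavailable, since $\Can$ is not complete (Proposition~\ref{prop:bad equalizer}) and the functor does preserve the products that exist; so one genuinely has to exhibit the right concrete canonical hypergroup $A$ and bimorphism $B$ whose universal-factorization constraints are incompatible. Everything else --- extracting the universal bimorphism, deriving the self-inverse and row/column relations in $T$, computing $\Bim_{\Can}(V,V;-)$ on small targets such as $\K$ and $\Z/2\Z$, and the associativity chases that produce the constraints on $\phi_B(s)$ --- is routine once that witness is identified.
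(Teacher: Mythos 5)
Your setup is correct, and your instinct---that the contradiction must come from playing the totality of a putative representing object $T$ against a sum that is empty in the ``universal'' commutative mosaic $V \boxtimes V$---is exactly the right one. But the proof as written has a genuine gap: the decisive step, producing a concrete canonical hypergroup $A$ and a bimorphism $B$ whose factorization constraints are unsatisfiable, is only described as a plan (``the heart of the argument is then to construct\dots'') and is never carried out; you acknowledge this yourself as the ``main obstacle.'' Without that witness there is no contradiction, and the suggestion that it should come from a quotient hyperring $R/G$ as in Theorem~\ref{thm:empty sum} points in a harder direction than is needed.

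The paper closes exactly this gap with much smaller witnesses. Writing $a_1,a_2,a_3$ for the nonzero elements of $V$ and identifying $\Bim_{\Can}(V,V;M)$ with $3\times 3$ matrices $(x_{ij})$ over $M$ satisfying $x_{ij}=-x_{ij}$ together with the row and column relations (as you do), one first considers morphisms $T=V\otimes V\to\K$, i.e.\ strict subhypergroups of $T$ via Proposition~\ref{prop:strict classifier}: the matrix $\left(\begin{smallmatrix} 0&0&0\\0&1&1\\0&1&1\end{smallmatrix}\right)$ shows $a_1\otimes a_1\neq a_2\otimes a_3$, and since the all-ones matrix is the only one with no zero entries, any morphism out of $T$ whose kernel contains none of the $a_i\otimes a_j$ must have kernel zero. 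Then the single bimorphism $B\colon V\times V\to V$ given by the Latin square $\left(\begin{smallmatrix} a_1&a_2&a_3\\a_2&a_3&a_1\\a_3&a_1&a_2\end{smallmatrix}\right)$ induces $\phi\colon T\to V$ with $\ker\phi=0$; totality of $T$ makes $a_1\otimes a_1 - a_2\otimes a_3$ nonempty, it maps under $\phi$ into $a_1-a_1=0$, hence it equals $\{0\}$, and uniqueness of inverses forces $a_1\otimes a_1=a_2\otimes a_3$, a contradiction. If you want to salvage your version, this Latin-square bimorphism into $V$ itself is precisely the missing witness, and the element to examine is the difference $t_{aa}-t_{bc}$ (empty in $V\boxtimes V$, forced nonempty in $T$) together with the separation of $t_{aa}$ from $t_{bc}$ by a morphism to $\K$, rather than a whole system of constraints on $t_{ab}+t_{bc}$.
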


\begin{proof}
We denote the nonzero elements of this group by $V \setminus \{0\} = \{a_1, a_2, a_3\}$. Fix a canonical hypergroup $M$, and suppose that $B \colon V \times V \to M$ is a bimorphism. Denote the elements $x_{ij} := B(a_i, a_j) \in M$, which form a matrix $(x_{ij})_{i,j = 1}^3$. Applying $B(-,a_j)$ to $a_1 = a_2 + a_3$ and $a_i + a_i = 0$ gives 
\[
x_{1j} \in x_{2j}+x_{3j} \quad \mbox{and} \quad 0 \in x_{ij} + x_{ij}. 
\]
Similarly, applying $B(-,a_j)$ to the equations $a_{\sigma(1)} = a_{\sigma(2)} + a_{\sigma(3)}$ for all permutations $\sigma$ of $\{1,2,3\}$ yields permuted relations $x_{\sigma(1)j} \in x_{\sigma(2)j}+x_{\sigma(3)j}$. However, these automatically follow from the original relation (where $\sigma$ is the identity) and $x_{ij} = -x_{ij}$. Applying similar reasoning with the morphisms $B(a_i,-)$, we find that there is a natural isomorphism
\begin{multline}\label{eq:matrix representation}
\Bim(V,V; M) \cong \{(x_{ij})_{i,j = 1}^3 \mid x_{ij} \in M \mbox{ satisfy } x_{ij} = -x_{ij},\\
 x_{1j} \in x_{2j}+x_{3j}, \  
\mbox{and } x_{i1} \in x_{i2}+x_{i3} \}.
\end{multline}
given by the assignment $B \mapsto (B(a_i,a_j))_{i,j}$.

Assume toward a contradiction that $\Bim(V,V; -)$ is represented by a canonical hypergroup $V \otimes V$. Let $a_i \otimes a_j \in V \otimes V$ denote the image of $(a_i, a_j) \in V \times V$ under the bimorphism given by the Yoneda element
\[
\id_{V \otimes V} \in \Can(V \otimes V, V \otimes V) \cong \Bim(V \times V; V \otimes V).
\]
Because a bimorphism $V \times V \to M$ is determined by its values on the $(a_i, a_j) \in V \times V$, one can deduce that a morphism $V \otimes V \to M$ is uniquely determined by the images of the $a_i \otimes a_j \in V \otimes V$. 

By Proposition~\ref{prop:strict classifier} we have a bijection 
\[
\Sub\str(V \otimes V) \cong \Can(V \otimes V, \K), 
\] 
where the set on the right is in turn described by matrices as in~\eqref{eq:matrix representation}. One such matrix is
$\left(\begin{smallmatrix} 0 & 0 & 0 \\ 0 & 1 & 1 \\ 0 & 1 & 1 \end{smallmatrix} \right)$, which corresponds to a morphism whose kernel contains $a_1 \otimes a_1$ but not $a_2 \otimes a_3$. In particular, we deduce that 
\[
a_1 \otimes a_1 \neq a_2 \otimes a_3.
\]
Furthermore, there is a unique such matrix with no zero entries, namely $\left(\begin{smallmatrix} 1 & 1 & 1 \\ 1 & 1 & 1 \\ 1 & 1 & 1 \end{smallmatrix} \right)$. 
Because this matrix corresponds to the zero subhypergroup of $V \otimes V$, we find that zero is the only strict subhypergroup containing none of the $a_i \otimes a_j$. In particular, a morphism out of $V \otimes V$ has kernel zero if all $a_i \otimes a_j$ map to nonzero elements.

Finally, consider the morphism $\phi \colon V \otimes V \to V$ that corresponds to the matrix $\left(\begin{smallmatrix} a_1 & a_2 & a_3 \\ a_2 & a_3 & a_1 \\ a_3 & a_1 & a_2 \end{smallmatrix} \right)$ which satisfies~\eqref{eq:matrix representation}. Since none of the $a_i \otimes a_j$ are in the kernel of $\phi$, it follows as above that its kernel is zero. Then because
\[
\phi(a_1 \otimes a_1 - a_2 \otimes a_3) \subseteq \phi(a_1 \otimes a_1) - \phi(a_2 \otimes a_3) = a_1 - a_1 = 0,
\]
we must have $a_1 \otimes a_1 - a_2 \otimes a_3 = \{0\}$. By uniqueness of inverses in canonical hypergroups, we derive the contradiction $a_1 \otimes a_1 = a_2 \otimes a_3$. 
\end{proof}

\subsection{Matroids as mosaics}

We now show how matroids are able to provide examples of mosaics that are not necessarily associative, and thus need not be hypergroups. This construction works for infinite matroids. It was shown in~\cite{BDKPW:matroids} that various axiom systems for infinite matroids are equivalent if one includes a certain maximality condition, which is automatically satisfied for finitary matroids. For our purposes it will be most convenient to work with the definition via closure operators, but as the maximality condition does not seem to be relevant to the construction of this functor we do not include it. While we do not make use of bases or rank below, we mention that the notions of \emph{finite} independent sets and \emph{finite} rank still behave well in this setting~\cite[Chapter~4]{FaureFrolicher}.

A \emph{closure operator}~\cite[Section~3.1]{FaureFrolicher} on a set $M$ is a map $C \colon \P(M) \to \P(M)$ that is 
\begin{itemize}
\item \emph{extensive:} $S \subseteq C(S)$,
\item \emph{monotone:} $S \subseteq T \implies C(S) \subseteq C(T)$, and
\item \emph{idempotent:} $C(C(S)) = C(S)$,
\end{itemize}
where $S,T \subseteq M$ are arbitrary subsets. A subset $S$ of $M$ is \emph{closed (with respect to $C$)} if $C(S) = S$.
A \emph{closure space} $(X,C)$ is a set $X$ with a closure operator $C = C_X$ on $X$. We define the category $\Clos$ of closure spaces to have morphisms $f \colon (X, C_X) \to (Y, C_Y)$ given by functions $f \colon X \to Y$ satisfying 
\[
f(C_X(S)) \subseteq C_Y(f(S))
\]
for all $S \subseteq X$, or equivalently, if the preimage of every closed subset of $Y$ under $f$ is again closed in $X$.

A \emph{matroid} is closure space $M$ whose closure operator $C$ satisfies the \emph{exchange property}: for $x,y \in M$ and $S \subseteq M$,
\[
x \notin C(S) \mbox{ and } x \in C(S \cup y) \implies y \in C(S \cup x).
\]
Closed subsets of a matroid $M$ are also called \emph{subspaces} (or \emph{flats}) of $M$.
We let $\Mat$ denote the full subcategory of $\Clos$ whose objects are the matroids. In the literature on matroids, the morphisms of this category are called \emph{strong maps}.

We pause to recall some necessary terminology about matroids.
Let $M$ be a matroid. 
An element of $C(\varnothing)$ (if any exists) is called a \emph{loop} of $M$; equivalently, a loop is an element contained in every closed subset of $M$. 
Two non-loop elements $x,y \in M$ are \emph{parallel} if $x \neq y$ and $x \in C(y)$, or equivalently (by the exchange axiom), $y \in C(x)$. 

A matroid $M$ is \emph{simple} if its closure operator satisfies
\[
C(\varnothing) = \varnothing \mbox{ and } C(x) = \{x\} \mbox{ for all } x \in M,
\]
or equivalently, if it has no loops or parallel elements. 
A \emph{pointed matroid} $(M,0)$ is a matroid with a choice of distiguished loop $0 = 0_M \in M$.  We similarly say that a pointed matroid $(M,0)$ is \emph{simple} if it satisfies
\[
C(\varnothing) = \{0\} \mbox{ and } C(x) = \{x, 0\} \mbox{ for all } x \in M,
\]
or equivalently, if it has no loops other than $0$ and no parallel elements.

Several related categories of matroids are defined as follows:
\begin{itemize}
\item $\Mat_\bullet$ is the category of pointed matroids with strong morphisms that preserve distinguished loops;
\item $\sMat$ is the full subcategory of $\Mat$ consisting of the simple matroids;
\item $\sMat_\bullet$ is the full subcategory of $\Mat_\bullet$ consisting of the simple pointed matroids.
\end{itemize}
These categories and several others were studied in the case of finite matroids in~\cite{HeunenPatta:matroids}.

Given a pointed simple matroid $(M,0)$, we define a commutative hyperoperation $+ \colon M \times M \to \P(M)$ by setting $0$ to be the additive identity and, for all $x,y \in M \setminus \{0\}$,
\[
x + y = \begin{cases}
C(x,y) \setminus \{x,y,0\}, & x \neq y \\
\{x,0\}, & x = y.
\end{cases}
\]
We note immediately that for all $x,y \in M$, this hyperaddition satisfies
\[
x + y \subseteq C(x,y).
\]
This is a slight adjustment of a similar hyperoperation defined in the context of projective geometries in~\cite{Prenowitz}, which was stated in terms of closure operators in~\cite[Proposition~3.3.4]{FaureFrolicher}. It was already recognized in~\cite[Proposition~3.1]{ConnesConsani:adele} that the above hyperoperation forms a hypergroup for many projective geometries. We will return to this connection with projective geometries below.

\begin{theorem}\label{thm:matroid mosaics}
If $(M,0)$ is a pointed simple matroid, then $(M,+)$ is a commutative mosaic satisfying $-x = x$ for all $x \in M$. The assignment $(M,0) \mapsto (M,+)$ which acts identically on morphisms determines a faithful functor
\[
\sMat_\bullet \longrightarrow \cMsc.
\]
\end{theorem}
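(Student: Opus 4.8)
The plan is to verify three things in sequence: that the hyperoperation $+$ makes $(M,+)$ a commutative mosaic; that every strong map of simple pointed matroids is automatically a morphism of mosaics; and that the assignment respects identities and composition (the last being trivial since it acts identically on underlying sets). The main content is the first two points.

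\textbf{Showing $(M,+)$ is a commutative mosaic.} First I would record that $+$ is commutative by construction and that $0$ is an identity by fiat, so $(M,+,0)$ is a commutative unital hypermagma. Next, since each element will be its own inverse, it suffices to check reversibility in the simplified commutative form: $x \in y+z \implies z \in x-y = x+y$. I would argue this using properties of the closure operator. The case where $y = 0$ or $z = 0$ is immediate, and the case $y = z$ follows because $x + y = \{y,0\}$ forces $x \in \{y,0\}$, and then one checks $z = y \in x + y$ in either subcase. The interesting case is $y \neq z$, both nonzero, with $x \in y+z = C(y,z)\setminus\{y,z,0\}$. Here $x \in C(y,z)$, and since $x \notin \{y,z,0\}$ we want $z \in x+y$. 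If $x = y$ this is excluded; if $x \neq y$, we need $z \in C(x,y)\setminus\{x,y,0\}$. The containment $C(x,y) \subseteq C(y,z)$ holds since $x \in C(y,z)$, and conversely the exchange property gives $z \in C(x,y)$: indeed $z \notin C(y)$ (as $M$ is simple and $z \neq y$, $z \neq 0$), and $z \in C(y,x)$ would follow from $x \in C(y,z)\setminus C(y)$ by exchange applied with $S = \{y\}$. Finally $z \notin \{x,y,0\}$ by hypothesis, so $z \in x+y$ as desired. A symmetric bookkeeping handles the ordering. That $-x = x$ works as an inversion then follows since $0 \in x+x$ for all $x$ (including $x=0$), and inverses in a mosaic are unique.

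\textbf{Showing strong maps are mosaic morphisms.} Let $f \colon (M,0) \to (N,0)$ be a strong map preserving the distinguished loop. I must check $f(x+y) \subseteq f(x)+f(y)$ for all $x,y \in M$. The cases involving $0$ reduce to $f(0) = 0$, so assume $x,y$ nonzero. For $x = y$: $f(x+x) = f(\{x,0\}) = \{f(x),0\}$, and I need this inside $f(x)+f(x)$. If $f(x) = 0$ this is $\{0\} \subseteq \{0\}$; if $f(x) \neq 0$ then $f(x)+f(x) = \{f(x),0\}$, done. For $x \neq y$: take $w \in x+y \subseteq C(x,y)$, so $f(w) \in f(C(x,y)) \subseteq C(f(x),f(y))$ by the strong-map property. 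I then need $f(w) \in f(x)+f(y)$. If $f(x) = f(y)$, then $C(f(x),f(y)) = \{f(x),0\}$ (simplicity of $N$), and $f(x)+f(y) = f(x)+f(x)$ which is $\{f(x),0\}$ or $\{0\}$ — one checks $f(w)$ lands there by eliminating the case $f(w) = f(x) \neq 0$ via a short exchange argument, or more carefully handling when $f$ collapses. If $f(x) \neq f(y)$, both nonzero: $f(x)+f(y) = C(f(x),f(y))\setminus\{f(x),f(y),0\}$, and I must rule out $f(w) \in \{f(x),f(y),0\}$; if, say, $f(w) = f(x)$ this is still consistent with $f(w) \in C(f(x),f(y))$ and lies \emph{outside} $f(x)+f(y)$, so this requires care — I expect the cleanest route is to note that $w \neq x,y,0$ and that $f$ need not be injective, so one genuinely must allow $f(w) = f(x)$; but then $f(w) = f(x) \in f(x)+f(y)$? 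No — so the correct statement must use that $f(x)+f(y)$ always contains $0$ when $f(x)=f(y)$ is impossible here...

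\textbf{The main obstacle.} The subtle point — and the step I expect to require the most care — is precisely the collapsing behavior of $f$ in the inclusion $f(x+y)\subseteq f(x)+f(y)$: when distinct elements $x \neq y$ of $M$ have $f(x) = f(y)$, or when $w \in x+y$ satisfies $f(w) \in \{f(x),f(y)\}$, the naive set-theoretic computation can push $f(w)$ to the ``boundary'' that is deleted from the hypersum. I anticipate the resolution is that the simplicity of $N$ together with the exchange axiom forces these boundary cases to collapse consistently to $0$ (e.g.\ $f(x) = f(y)$ with $x \neq y$ forces, via exchange, that the whole line $C(x,y)$ maps into $C(f(x)) = \{f(x),0\}$, making all sums degenerate), so that the containment survives; formalizing this dichotomy cleanly is the crux of the proof. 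Once the inclusion is established for all $x,y$, functoriality is immediate because the assignment is the identity on underlying sets and morphisms, completing the construction of the faithful functor $\sMat_\bullet \to \cMsc$.
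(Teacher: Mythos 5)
Your first half---that $(M,+)$ is a commutative mosaic with $-x=x$---is correct and follows the same route as the paper: reduce to the case $y\neq z$ both nonzero, note $x\notin C(y)$ by simplicity, and apply the exchange axiom with $S=\{y\}$ to get $z\in C(x,y)$, then check $z\notin\{x,y,0\}$. No issue there.

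The second half, however, contains a genuine gap that you yourself flag but do not close. Two things are missing. First, your case split on $f(x)=f(y)$ versus ``$f(x)\neq f(y)$ both nonzero'' omits the case where exactly one of $f(x),f(y)$ is $0$; there $f(x)+f(y)$ is a singleton and the containment is not automatic. Second, and more importantly, in the case $f(x)\neq f(y)$ both nonzero you correctly identify that you must rule out $f(w)\in\{f(x),f(y),0\}$ for $w\in x+y$, but you leave this unresolved, and your anticipated mechanism (``simplicity of $N$ plus exchange forces the boundary cases to collapse to $0$'') is not how the exclusion actually works. The missing idea is to use reversibility \emph{in the source} $M$: since $w\in x+y$ and $-x=x$, reversibility gives $y\in x+w\subseteq C(x,w)$, so the strong-map property applied to the set $\{x,w\}$ (rather than to $\{x,y\}$) yields $f(y)\in C(f(x),f(w))$. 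If $f(w)\in\{f(x),0\}$ this forces $f(y)\in C(f(x))=\{f(x),0\}$, contradicting the standing assumptions $f(y)\neq f(x)$ and $f(y)\neq 0$; the symmetric argument with $x\in y+w$ rules out $f(w)=f(y)$. The same trick disposes of the omitted case $f(x)=0\neq f(y)$: for $w\in x+y$ one gets $f(y)\in C(0,f(w))=\{0,f(w)\}$, hence $f(w)=f(y)$, so $f(x+y)\subseteq\{f(y)\}=0+f(y)$. Without this step the proof of $f(x+y)\subseteq f(x)+f(y)$ is incomplete, so the functor is not yet constructed.
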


\begin{proof} 
Since $0 \in x + x$ for all $x \in M$, we define $-x = x$. Suppose $x \in y + z$ is satisfied in $M$. If either $y$ or $z$ is $0$, then it is straightforward to verify that $y \in z - x$. So assume $y,z \neq 0$. 
\begin{description}
\item[Case $y = z$] Here we have $x \in y + y = \{y,0\}$. If $x = 0$ then $y \in y + 0 = z - x$, and if $x = y$ then $y \in y + y = z - x$. 
\item[Case $y \neq z$]
In this case we must have $x \notin \{y,z,0\} = C(y) \cup C(z)$ by definition of $y+z$. 
Then by the exchange property,
\[
x \in C(\{z\} \cup \{y\}) \setminus C(z) \implies y \in C(\{z\} \cup \{x\}) \setminus C(z).
\]
Since $y \neq x$ and $C(z) = \{z,0\}$, it follows that $y \in C(z,x) \setminus \{z,x,0\} = z - x$. 
\end{description}
Thus reversibility is satisfied in all cases, and $M$ is a commutative mosaic. 

Now suppose $f \colon M \to N$ is a morphism in $\sMat_\bullet$. We claim that the same function is a morphism of mosaics $(M,+) \to (N,+)$. Because $f$ preserves loops, it satisfies $f(0_M) = 0_N$. Now let $x,y \in M$; we wish to show that $f(x+y) \subseteq f(x) + f(y)$. This is trivially satisfied if either $x$ or $y$ is $0_M$, so we may assume $x,y \neq 0$. If $x = y$ then
\[
f(x + x) = f(\{0,x\}) = \{0, f(x)\} = f(x) + f(x).
\]
So we may assume that $x$ and $y$ are distinct and nonzero. In this case we have
\begin{align*}
f(x+y) = f(C(x,y) \setminus \{x,y\}) \subseteq C(f(x),f(y)).
\end{align*}
First suppose that $f(x) = 0$.
If also $f(y) = 0$ then we have $f(x+y) = 0 = f(x) + f(y)$. So assume $f(y) \neq 0$. Let $z \in x + y$, so that $y \in z - x = x + z$. Then
\[
f(y) \in f(x + z) \subseteq f(C(x,z)) \subseteq C(f(x), f(z)) = C(0,f(z)) = \{0, f(z)\}.
\]
Since $f(y) \neq 0$ we have $f(y) = f(z)$. This shows that 
\[
f(x+y) \subseteq \{f(y)\} = 0 + f(y) = f(x) + f(y).
\]
A symmetric argument applies if $f(y) = 0$. Next suppose that $f(x) = f(y) \neq 0$. Then
\[
f(x+y) \subseteq C(f(x)) = \{0,f(x)\} = f(x) + f(x) = f(x) + f(y)
\]
Finally, we may assume that $f(x)$ and $f(y)$ are distinct and nonzero. Then 
\[
f(x) + f(y) = C(f(x),f(y)) \setminus\{f(x),f(y), 0\}. 
\]
In this case it remains to check that if $z \in x+y$ then $f(z) \notin \{f(x), f(y), 0\}$.  
Since $z \in x + y$ we have $y \in z - x = x + z \subseteq C(x,z)$, from which it follows that 
\[
f(y) \in f(C(x,y)) \subseteq C(f(x),f(z)).
\] 
If $f(z)$ is equal to either $f(x)$ or $0$, then we have 
\[
f(y) \in C(f(x), f(z)) \subseteq C(f(x), 0) = \{f(x),0\},
\]
which contradicts our assumption on $f(x)$ and $f(y)$. Similarly, if we assume $f(z) = f(x)$ then we may deduce the contradiction $f(x) \in \{f(y),0\}$. This proves $f(z) \notin \{f(x),f(y),0\}$ as desried.

Thus we have verified in all cases that $f$ is a morphism of mosaics. 
It follows readily that we obtain a functor $\sMat_\bullet \to \cMsc$ which commutes with the forgetful functors to $\Set$. Since these forgetful functors are faithful, the functor just constructed is also faithful.
\end{proof}

Note that the mosaic associated to a simple pointed matroid $(M,0)$ is not associative if there exist empty sums. While this follows from Lemma~\ref{lem:nondegenerate}, we may also directly verify that if $x + y = \varnothing$, then
\[
(x + x) + y = \{0,x\} + y = y \neq \varnothing = x + (x + y).
\]
Thus we cannot corestrict the functor above to have codomain $\Can$. We require the generality of empty and nonassociative sums in order to define the hyperstructure.

The functor above can be used to define functors from several related categories of matroids by composing with various other functors (which are described in the finite case in subsections~4.1, 4.3, and~7.1 of~\cite{HeunenPatta:matroids}). First note that there is a faithful functor $\Mat \to \Mat_\bullet$ from matroids to pointed matroids, denoted on objects by $M \mapsto M_0 := M \sqcup \{0\}$,  that freely adjoins a loop to each matroid. It sends simple matroids to simple pointed matroids, thus restricting to a faithful functor $\sMat \to \sMat_\bullet$. 

Furthermore, there are \emph{simplification} $\si \colon \Mat \to \sMat$ and \emph{pointed simplification} $\si_\bullet \colon \Mat_\bullet \to \sMat_\bullet$ functors as follows. For any closure space $X$, the set $L(X)$ of closed subsets of $X$ forms a complete lattice. Conversely, if $L$ is a complete atomistic lattice, form a closure space $G(L)$ whose underlying set is the set of atoms of $L$, and with closure operator given by $C(S) = \{a \in G(L) \mid a \leq \bigvee S\}$; by construction one can verify that the smallest nonempty closed sets in $G(L)$ are the singletons consisting of the atoms. Then it follows from~\cite[Proposition~3.4.9]{FaureFrolicher} that:
\begin{itemize}
\item If $M$ is a matroid then $L(M)$ is complete, atomistic, and semimodular;
\item If $L$ is complete, atomistic, and semimodular, then $G(L)$ is a simple matroid.
\end{itemize}
Then the assignment $M \mapsto G(L(M))$ yields a functor $\si \colon \Mat \to \sMat$ as in~\cite[Definition~7.11]{HeunenPatta:matroids}. In the case where $(M,0)$ is a pointed matroid, we take $\si_\bullet(M,0) = \si(M)_0$ and one verifies that we obtain a functor $\Mat_\bullet \to \sMat_\bullet$ as in~\cite[Theorem~7.12]{HeunenPatta:matroids}. One can verify that these simplification functors are respectively left adjoint to the forgetful functors $\sMat \to \Mat$ and $\sMat_\bullet \to \Mat_\bullet$.

We obtain the following commuting diagram of functors, with horizontal arrows being faithful:
\[
\begin{tikzcd}
\sMat \ar[r, hookrightarrow] & \sMat_\bullet \ar[r, hookrightarrow] & \cMsc \\
\Mat \ar[u] \ar[r, hookrightarrow] & \Mat_\bullet \ar[u] & 
\end{tikzcd}
\]
In this way we can relate all four categories of matroids above to commutative mosaics by composing.

\separate

From the construction of the mosaic structure $(M,+)$ on a pointed matroid $(M,0)$, the functor in Theorem~\ref{thm:matroid mosaics} cannot be full. This is because the hyperoperation on $M$ encodes the action of the closure operator $C_M$ on sets of cardinality at most two (since $x + y \subseteq C(x,y)$). If one hopes to obtain a full functor, it is then sensible to restrict to matroids whose closure operators are, in some sense, fully determined by the closure of two-element sets. We will verify this intuition in Theorem~\ref{thm:projective embedding} below. Because the corresponding matroids arise from projective geometries, we pause to define these geometries and describe the connection. We follow the definitions of projective geometries and their morphisms as developed by Faure and Fr\"{o}licher in~\cite{FaureFrolicher:morphisms, FaureFrolicher:categorical}. Several equivalent descriptions of the structure of a projective geometry can be found in~\cite[Ch.~2--3]{FaureFrolicher}; the one given below is taken from~\cite[Exercise~2.8.1]{FaureFrolicher}.

A \emph{projective geometry} is a set $G$ equipped with a collection $\Delta \subseteq \P(G)$ of \emph{lines}, subject to the axioms:
\begin{itemize}
\item Each line contains at least two points;
\item Any two distinct points $a \neq b$ lie on a unique line $\overline{ab}$;
\item If $a,b,c,d$ are four distinct points and the lines $\overline{ab}$ and $\overline{cd}$ intersect, then so do $\overline{ac}$ and $\overline{bd}$.
\end{itemize}
A \emph{subspace} $E$ of $G$ is a subset such that, if $a,b \in E$ and $a \neq b$, then $\overline{ab} \subseteq E$. There is an associated hyperoperation $\star$ on $G$ that is defined by letting $a \star b$ be the smallest subspace of $G$ containing $a$ and $b$. Thus 
\[
a \star b = \begin{cases}
\overline{ab}, & a \neq b, \\
\{a\}, & a = b.
\end{cases}
\]
(One can alternatively define projective geometries in terms of such an operation, as in~\cite[Section~2.2]{FaureFrolicher}.)

Morphisms of projective geometries are certain partially defined functions. A \emph{partial function} $f \colon X \dashrightarrow Y$ is a function $f \colon \dom f \to Y$ defined on a subset $\dom f \subseteq X$. The complement of its domain is the \emph{kernel} of $f$, denoted $\ker f = X \setminus \dom f$. Any partial function induces a ``preimage'' mapping $f^\sharp \colon \P(Y) \to \P(X)$ by defining, for $E \subseteq Y$,
\[
f^\sharp(E) = f^{-1}(E) \cup \ker f.
\]
If $g \colon Y \dashrightarrow Z$ is another partial function, the composite $g \circ f \dashrightarrow X \to Z$ is defined by setting $\ker (g \circ f) = f^\sharp(\ker g)$ and defining it to be the composite function on the complement of the kernel. In this way we obtain the category $\Par$ whose objects are sets and whose morphisms are partial functions. There is a straightforward equivalence of categories~\cite[Proposition~6.1.18]{FaureFrolicher}
\[
\begin{tikzcd}
\Par \ar[r, "\sim"] & \Set_\bullet,
\end{tikzcd}
\]
which acts on objects by $X \mapsto X_0 := (X \sqcup \{0\}, 0)$ and which sends $f \in \Par(X,Y)$ to the function $f_0 \colon X_0 \to Y_0$ that extends $f \colon X \setminus \ker f \to Y \subseteq Y_0$ to $X_0$ by mapping $\ker f \sqcup \{0\}$ to $0 \in Y_0$. This evidently has the property that 
\[
f_0^{-1}(E) = f^\sharp(E) \sqcup \{0\}
\]
for all $E \subseteq Y$.

A \emph{morphism} $f \colon G \dashrightarrow H$ of projective geometries is a partially defined function such that:
\begin{itemize}
\item $\ker f$ is a subspace of $G$;
\item For $a,b \in G \setminus \ker f$ and $c \in \ker f$, if $a \in \overline{bc}$ then $f(a) = f(b)$;
\item If $a,b,c \in G \setminus \ker f$ with $a \in b \star c$, then $f(a) \in f(b) \star f(c)$.
\end{itemize}
By~\cite[Proposition~6.2.3]{FaureFrolicher}, a partial function $f \colon G \dashrightarrow H$ is a morphism of projective geometries if and only if $f^\sharp(E)$ is a subspace of $G$ whenever $E$ is a subspace of $H$.
These morphisms are  preserved under composition of partial functions. 
In this way projective geometries and their morphisms form a category, denoted by $\Proj$, with a faithful forgetful functor $\Proj \to \Par$.

The structure of a projective geometry on a set was shown to be equivalent to a certain matroid structure in~\cite[Section~3.3]{FaureFrolicher}, which we recall here.
Let $M$ be a matroid. We say that:
\begin{itemize} 
\item $M$  is \emph{finitary} if, for all $S \subseteq M$,
\[
C(S) = \bigcup \, \{C(T) \mid T \subseteq S \mbox{ is finite}\}.
\]
\item $M$ satisfies the \emph{projective law} if, for all $S,T \subseteq M$,
\[
C(S \cup T) = \bigcup \{C(x,y) \mid x \in C(S), y \in C(T)\}.
\]
\end{itemize}
Finitary simple matroids have been called \emph{(combinatorial) geometries}~\cite{CrapoRota}. 

We define a \emph{projective (pointed) matroid} to be a finitary simple (pointed) matroid that satisfies the projective law. Let $\pMat$ and $\pMat_\bullet$ respectively denote the full subcategories of $\sMat$ and $\sMat_\bullet$ consisting of the (pointed) projective matroids. 

Let $G$ be a projective geometry, and let $C$ be the closure operator on $G$ that assigns to $S \subseteq G$ the smallest subspace $C(S) \subseteq G$ containing $S$. It is shown in~\cite[Proposition~3.1.13]{FaureFrolicher} that $G$ is a finitary simple matroid. With a slight abuse of notation, we let $G$ denote both the projective geometry and its associated matroid. Then we let $G_0$ denote the simple pointed matroid associated to the simple matroid $G$, i.e.~its image under the functor $\sMat \to \sMat_\bullet$. 

We claim that this assignment extends to a functor
\begin{align}\label{eq:projective functor}
\begin{split}
\Proj &\to \sMat_\bullet, \\
G &\mapsto G_0.
\end{split}
\end{align}
Indeed, if $f \colon G \dashrightarrow H$ is a morphism of projective geometries, it extends to a function on pointed sets $f_0 \colon G_0 \to H_0$ by $f_0(\ker f \sqcup \{0\}) = 0$. Then if $E \subseteq H$ is a subspace of $H$, we have 
\[
f_0^{-1}(E \sqcup\{0\}) = f^\sharp(E) \sqcup \{0\},
\]
where $f^\sharp(E) \subseteq G$ is a subspace as discussed above. Thus the preimage of any closed set in $H_0$ is closed in $G_0$, so that $f_0$ is a morphism of matroids. The remaining axioms of functoriality for $f \mapsto f_0$ are readily verified.

\begin{proposition}
The functor~\eqref{eq:projective functor} sending a projective geometry to the corresponding pointed matroid yields an equivalence of categories 
\[
\Proj \overset{\sim}{\longrightarrow} \pMat_\bullet. 
\]
Under this correspondence, if $f \in \Proj(G,H)$ has $\ker f = N$, then the kernel of $f_0 \in \pMat_\bullet(G_0, H_0)$ is $N \sqcup \{0\}$.
\end{proposition}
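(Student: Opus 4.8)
The plan is to derive the equivalence from two results of~\cite{FaureFrolicher} recalled above: the object-level correspondence between projective geometries and finitary simple matroids satisfying the projective law~\cite[Section~3.3]{FaureFrolicher}, and the equivalence $\Par \overset{\sim}{\to} \Set_\bullet$ of~\cite[Proposition~6.1.18]{FaureFrolicher}. On objects, the functor~\eqref{eq:projective functor} is $G \mapsto G_0$, which is the object part of $\Par \overset{\sim}{\to} \Set_\bullet$ followed by the passage from pointed sets to pointed matroids; on morphisms it is $f \mapsto f_0$, the restriction of that same equivalence. Hence preservation of identities and composition is inherited from $\Par \overset{\sim}{\to} \Set_\bullet$, and it remains to check three things: (i) the functor lands in $\pMat_\bullet$ and is essentially surjective onto it; (ii) $f_0$ is a morphism of pointed matroids precisely when $f$ is a morphism of projective geometries, which gives full faithfulness; and (iii) the formula for the kernel of $f_0$.

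For (ii) I would invoke the criterion~\cite[Proposition~6.2.3]{FaureFrolicher}: a partial function $f \colon G \dashrightarrow H$ is a morphism of projective geometries if and only if $f^\sharp(E)$ is a subspace of $G$ for every subspace $E$ of $H$. The one preliminary observation is that, since $0$ is a loop of $H_0$, we have $C_{H_0}(S) = C_H(S \setminus \{0\}) \sqcup \{0\}$, so the closed subsets of $H_0$ are exactly the sets $E \sqcup \{0\}$ with $E$ a subspace of $H$ (and likewise for $G_0$). Combining this with $f_0^{-1}(E \sqcup \{0\}) = f^\sharp(E) \sqcup \{0\}$ shows that $f_0$ carries closed subsets of $H_0$ back to closed subsets of $G_0$---i.e.\ is a strong map of pointed matroids---exactly when every $f^\sharp(E)$ is a subspace of $G$, which by~\cite[Proposition~6.2.3]{FaureFrolicher} is exactly the condition that $f$ be a morphism of projective geometries. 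Since $f \mapsto f_0$ is already a bijection $\Par(G,H) \to \Set_\bullet(G_0,H_0)$, restricting to these subsets yields a bijection $\Proj(G,H) \overset{\sim}{\to} \pMat_\bullet(G_0,H_0)$ natural in $G$ and $H$.

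For (i): the matroid underlying a projective geometry is finitary and simple by~\cite[Proposition~3.1.13]{FaureFrolicher} and satisfies the projective law by~\cite[Section~3.3]{FaureFrolicher}, and freely adjoining the loop $0$ preserves all three properties, so $G_0 \in \pMat_\bullet$. Conversely, for $(M,0) \in \pMat_\bullet$, deleting the unique loop yields a finitary simple matroid on $M \setminus \{0\}$ whose closure operator agrees with that of $M$ on loop-free subsets; hence it still satisfies the projective law, and by~\cite[Section~3.3]{FaureFrolicher} it is the matroid of a projective geometry $G$ with underlying set $M \setminus \{0\}$. The bijection $G_0 \to M$ that sends the adjoined loop to $0$ is then an isomorphism of pointed matroids, because the closure operators agree once the two loops are identified. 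This gives essential surjectivity, so the functor is an equivalence. Finally, (iii) follows by applying $f_0^{-1}(E \sqcup \{0\}) = f^\sharp(E) \sqcup \{0\}$ with $E = \varnothing$: since $f^\sharp(\varnothing) = \ker f = N$ and the loop set of $H_0$ is $\{0\}$, the preimage of that loop set under $f_0$---the kernel of $f_0$---is $N \sqcup \{0\}$.

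The step I expect to demand the most care is the loop bookkeeping in part (i): checking that finitariness, simplicity, the projective law, and the lattice of closed subsets all transfer cleanly between $M$ and $M \setminus \{0\}$, so that the object-level equivalence of~\cite{FaureFrolicher} applies after stripping the basepoint. With that in hand, parts (ii) and (iii) are formal consequences of the two cited equivalences together with~\cite[Proposition~6.2.3]{FaureFrolicher}.
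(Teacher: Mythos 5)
Your proposal is correct and follows essentially the same route as the paper: essential surjectivity via the object-level correspondence of~\cite[Section~3.3]{FaureFrolicher}, and full faithfulness by restricting the bijection $\Par(G,H) \cong \Set_\bullet(G_0,H_0)$ using the criterion of~\cite[Proposition~6.2.3]{FaureFrolicher} together with the identity $f_0^{-1}(E \sqcup \{0\}) = f^\sharp(E) \sqcup \{0\}$. Your version is somewhat more explicit than the paper's (the loop bookkeeping for essential surjectivity and the derivation of the kernel formula are spelled out rather than left implicit), but the decomposition and the key cited results are the same.
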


\begin{proof}
The essential image of the functor consists of exactly the projective pointed matroids thanks to~\cite[Corollary~3.3.8]{FaureFrolicher}, which shows that that for any set $G$, there is a bijection between the structures of a projective geometry on $G$ and the structures of a projective matroid on $G$. The functor is faithful since the diagram
\[
\begin{tikzcd}
\Proj \ar[d, hookrightarrow] \ar[r] & \pMat_\bullet \ar[d, hookrightarrow] \\
\Par \ar[r, "\sim"] & \Set_\bullet 
\end{tikzcd}
\]
commutes, where the vertical arrows are faithful forgetful functors. Lastly, the functor is full by the characterization~\cite[Proposition~6.2.3]{FaureFrolicher} of morphisms of projective geometries as exactly those partial functions $f \colon G \dashrightarrow H$ such that $f^\sharp(E)$ is a subspace of $G$ whenever $E$ is a subspace of $H$.
\end{proof}

If $M$ is a commutative mosaic and $S \subseteq M$, we let $\langle S \rangle$ denote the strict submosaic of $M$ generated by $S$. 

\begin{proposition}\label{prop:projective closure}
Let $G$ be a pointed projective matroid. For any $S \subseteq G$, we have $C(S) = \langle S \rangle$.  Thus the closed subsets of $G$ are precisely the strict submosaics of $G$.
\end{proposition}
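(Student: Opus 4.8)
The plan is to establish the two inclusions $\langle S \rangle \subseteq C(S)$ and $C(S) \subseteq \langle S \rangle$ separately, and then read off the final assertion. For the first, I would check that every closed subset $L = C(L)$ of $G$ is a strict submosaic of the commutative mosaic $(G,+)$. Since $G$ is a simple pointed matroid, $C(\varnothing) = \{0\}$, so $0 \in C(\varnothing) \subseteq C(L) = L$; and $-x = x$ for all $x$, so $L$ is automatically closed under inverses. Finally, for $x,y \in L$ the containment $x+y \subseteq C(x,y) \subseteq C(L) = L$ (using monotonicity of $C$) shows $L$ is closed under the hyperaddition. Since $C(S)$ is closed by idempotence, it is therefore a strict submosaic containing $S$; as $\langle S \rangle$ is the smallest such, $\langle S \rangle \subseteq C(S)$.

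For the reverse inclusion the key point is to show that $\langle S \rangle$ is itself closed, for then $C(S) \subseteq C(\langle S \rangle) = \langle S \rangle$. I would first record the following consequence of the mosaic structure: for any $a,b \in \langle S \rangle$ one has $C(\{a,b\}) \subseteq \langle S \rangle$. Indeed, if $a = b$ or one of $a,b$ is $0$ then $C(\{a,b\}) \subseteq \{a,b,0\} \subseteq \langle S \rangle$ by simplicity, while if $a,b$ are distinct and nonzero then $C(a,b) = (a + b) \cup \{a,b,0\} \subseteq \langle S \rangle$ because $a + b \subseteq \langle S \rangle$. Next, by induction on $|T|$ I would show $C(T) \subseteq \langle S \rangle$ for every finite $T \subseteq \langle S \rangle$: the cases $|T| \le 2$ are the previous observation, and if $|T| = n \ge 3$ write $T = T' \cup \{a\}$ with $|T'| = n-1$; the projective law gives
\[
C(T) = \bigcup\{ C(x,y) \mid x \in C(T'),\ y \in C(\{a\}) = \{a,0\} \},
\]
and since $C(T') \subseteq \langle S \rangle$ by the inductive hypothesis and $a, 0 \in \langle S \rangle$, each $C(x,y)$ occurring is contained in $\langle S \rangle$ by the two-element case. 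Finally, because $G$ is finitary, $C(\langle S \rangle) = \bigcup\{C(T) \mid T \subseteq \langle S \rangle \text{ finite}\} \subseteq \langle S \rangle$, so $\langle S \rangle$ is closed and hence $C(S) \subseteq \langle S \rangle$.

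Combining the two inclusions yields $C(S) = \langle S \rangle$. The last sentence then follows at once: we have shown every closed subset of $G$ is a strict submosaic, and conversely if $L \subseteq G$ is a strict submosaic then $L = \langle L \rangle = C(L)$ by the equality just proved, so $L$ is closed. The only real obstacle is the induction showing $\langle S \rangle$ is closed; this is precisely where both defining properties of a projective matroid are used, finitariness to reduce to finite subsets and the projective law to reduce the closure of a finite set to iterated closures of pairs, after which the mosaic operation encodes exactly those pairwise closures. Everything else is a direct unwinding of the definitions.
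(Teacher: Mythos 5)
Your proof is correct and follows essentially the same route as the paper: the first inclusion is obtained by checking that closed sets are strict submosaics, and the second combines finitariness with an induction that uses the projective law to peel off one point and reduce to the two-element case, which the hyperoperation handles via $C(a,b) = (a+b) \cup \{a,b,0\}$. The only (cosmetic) difference is that you organize the induction to show $\langle S \rangle$ is itself closed, whereas the paper inducts directly on the cardinality of a finite generating set to show $C(S) \subseteq \langle S \rangle$.
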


\begin{proof}
Note that $S \cup \{0\} \subseteq C(S)$ and that if $\alpha, \beta \in C(S)$ then
\[
\alpha + \beta \subseteq C(\alpha, \beta) \subseteq C(C(S)) = C(S).
\]
Thus $C(S)$ is a strict submosaic containing $S$, and it follows that $\langle S \rangle \subseteq C(S)$. 

Conversely we wish to show that $C(S) \subseteq \langle S \rangle$. Because $G$ is finitary, we have $C(S) = \bigcup C(T)$ where $T$ ranges over the finite subsets of $S$. Thus it suffices to consider the case where $S$ is finite. The claim is easily verified if $S$ is empty or a singleton, so we may assume $S = \{x_1, \dots, x_n\}$ and assume for inductive hypothesis that the claim holds for all sets of cardinality at most~$n-1$. 

Let $\alpha \in C(S) = C(\{x_1\} \cup \{x_2, \dots, x_n\})$. It follows from the projective axiom that $\alpha \in C(x_1, \beta)$ for some $\beta \in C(x_2, \dots, x_n)$. By inductive hypothesis, $\beta \in \langle x_2, \dots, x_n \rangle \subseteq \langle S \rangle$. If $\alpha \in \{x_1, \beta, 0\}$ then certainly $\alpha \in \langle S \rangle$. Otherwise $x_1$ and $\beta$ are nonzero and distinct (else $\alpha \in C(x_1, \beta) \subseteq \{x_1, \beta, 0\}$), in which case
\[
\alpha \in C(x_1, \beta) \setminus \{x_1,\beta,0\} = x_1 + \beta \subseteq \langle S \rangle
\]
because $x_1, \beta \in \langle S \rangle$. Thus we find $C(S) \subseteq \langle S \rangle$ as desired.
\end{proof}

\begin{theorem}\label{thm:projective embedding}
The functor $\sMat_\bullet \to \cMsc$ restricts to a fully faithful functor on the full subcategory of pointed projective matroids, yielding a fully faithful functor
\[
\Proj \cong \cat{pMat}_\bullet \hookrightarrow \cMsc.
\]
\end{theorem}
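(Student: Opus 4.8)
The plan is to leverage Proposition~\ref{prop:projective closure}, which identifies the closed subsets of a pointed projective matroid with its strict submosaics. Faithfulness is immediate: Theorem~\ref{thm:matroid mosaics} already provides a faithful functor $\sMat_\bullet \to \cMsc$, and since $\pMat_\bullet$ is a full subcategory of $\sMat_\bullet$, the restriction remains faithful. So the whole task is fullness. Accordingly, I would fix pointed projective matroids $M$ and $N$, write $(M,+)$ and $(N,+)$ for their associated commutative mosaics, and take an arbitrary morphism $f \colon (M,+) \to (N,+)$ in $\cMsc$; the goal is to show $f$ is a morphism in $\pMat_\bullet$. Since morphisms in $\pMat_\bullet$ are exactly the strong maps (i.e.\ morphisms of closure spaces) preserving the distinguished loop, and $f(0_M) = 0_N$ because $f$ is unital, it suffices to verify that $f$ is a morphism of closure spaces, which by the definition of $\Clos$ amounts to: $f^{-1}(T)$ is closed in $M$ for every closed subset $T \subseteq N$.

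The key step is then purely formal. By Proposition~\ref{prop:projective closure}, a closed subset $T \subseteq N$ is the same thing as a strict submosaic of $N$, and I claim $f^{-1}(T)$ is a strict submosaic of $M$: it contains $0_M$ since $f(0_M) = 0_N \in T$; it is closed under negation since $f$ preserves inverses, so $f(-x) = -f(x) \in T$ whenever $x \in f^{-1}(T)$; and it is closed under hyperaddition, because for $x,y \in f^{-1}(T)$ and any $z \in x+y$ one has $f(z) \in f(x+y) \subseteq f(x)+f(y) \subseteq T$, the last containment holding since $T$ is a strict submosaic. Invoking Proposition~\ref{prop:projective closure} once more, $f^{-1}(T)$ is closed in $M$. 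Hence $f$ is a morphism of closure spaces, thus a strong map, and combined with $f(0_M) = 0_N$ this exhibits $f$ as a morphism in $\pMat_\bullet$. This gives fullness, so $\sMat_\bullet \to \cMsc$ restricts to a fully faithful functor on $\pMat_\bullet$, and composing with the equivalence $\Proj \cong \pMat_\bullet$ of the preceding proposition yields the fully faithful embedding $\Proj \hookrightarrow \cMsc$.

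I do not expect a genuine obstacle here, as all the real content has been absorbed into Proposition~\ref{prop:projective closure}, whose proof uses finitariness and the projective law to reconstruct $C_M$ from the pairwise closures $C_M(x,y)$ — precisely the information carried by the hyperoperation via $x+y \subseteq C(x,y)$. The only point needing a moment's attention is the direction of the inclusions: a general mosaic morphism satisfies merely $f(x+y) \subseteq f(x)+f(y)$ rather than equality, but that inclusion is exactly what is required to pull a strict submosaic back along $f$ to a strict submosaic. It is worth contrasting this with Theorem~\ref{thm:matroid mosaics}, where fullness genuinely fails over all of $\sMat_\bullet$: the hyperoperation only records $C_M$ on subsets of cardinality at most two, so one can expect mosaic morphisms to recover strong maps only when $C_M$ is determined by such subsets, i.e.\ exactly for projective matroids.
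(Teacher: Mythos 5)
Your proposal is correct and follows essentially the same route as the paper: faithfulness is inherited from Theorem~\ref{thm:matroid mosaics}, and fullness is reduced via Proposition~\ref{prop:projective closure} to showing that the preimage of a strict submosaic under a mosaic morphism is again a strict submosaic, which you verify exactly as the paper intends (the paper leaves this verification as ``straightforward,'' offering $f^{-1}(E) = \ker(\alpha f)$ via Proposition~\ref{prop:strict classifier} as an alternative). Your closing remark correctly identifies why fullness fails over all of $\sMat_\bullet$ but holds for projective matroids.
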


\begin{proof}
By Theorem~\ref{thm:matroid mosaics}, the functor $\pMat_\bullet \to \cMsc$ is faithful. To see that it is full, let $G$ and $H$ be pointed projective matroids and let $f \in \cMsc(G,H)$. We wish to show that $f$ is a strong map of matroids. Let $E \subseteq H$ be a closed subset. By Proposition~\ref{prop:projective closure} this means that $E$ is a strict submosaic of $H$. Because $f$ is a morphism of mosaics, it is straightforward to verify that $f^{-1}(E)$ is a strict submosaic of $G$. (Alternatively, by Proposition~\ref{prop:strict classifier} we have $E = \ker \alpha$ for some $\alpha \in \cMsc(H,\K)$, and then one may deduce that $f^{-1}(E) = \ker (\alpha f)$ is strict.) But then $f^{-1}(E)$ is closed in $G$ by Proposition~\ref{prop:projective closure} again, proving that $f$ is a strong map as desired.
\end{proof}

In closing, we mention one potential avenue for further work. While the embedding of simple pointed matroids in to commutative mosaics from Theorem~\ref{thm:matroid mosaics} is faithful in general and full when restricted to $\pMat_\bullet$, it would be interesting to better understand its failure to be full in general. There are several questions whose answer could shed light on this problem. How can we characterize the mosaics in the essential image of this functor? Which morphisms of mosaics are in the image of the embedding $\sMat_\bullet \to \cMsc$? Can the mosaics in the essential image be equipped with a natural extra structure to factor this embedding through a fully faithful functor?

\bibliographystyle{amsplain}
\bibliography{hyperstructures-v3}

\end{document}